\newtheorem{theorem}{Th\'eor\`eme}
\newtheorem{main}{Th\'eor\`eme}
\newtheorem{definition}{D\'efinition}
\newtheorem{proposition}{Proposition}
\newtheorem{lemma}{Lemme}
\newtheorem{corollary}{Corollaire}
\newtheorem{exam}{Exemple}
\newtheorem{exams}{Examples}
\newtheorem{rmk}{Remarque}
\newenvironment{remark}{\begin{rmk}\rm}{\end{rmk}}
\newtheorem{notat}{Notation}
\title[Feuilletages holomorphes de codimension 1: une \'etude locale]{Feuilletages holomorphes de codimension 1: une \'etude locale dans le cas dicritique}
\author{D. Cerveau, A. Lins Neto \& M. Ravara-Vago}
\address{Dominique Cerveau, Institut Universitaire de France et IRMAR, Universit\'e de Rennes 1, Campus de Bealieu, 35042 Rennes Cedex France.}\email{dominique.cerveau@univ-rennes1.fr}
\address{Alcides Lins Neto, IMPA, Estrada Dona Castorina 110, Rio de Janeiro, Brasil.}\email{alcides@impa.br}
\address{Marianna Ravara-Vago, IRMAR, Universit\'e de Rennes 1, Campus de Beaulieu, 35402 Rennes Cedex France.}\email{ravaravago@gmail.com}
\keywords{Singular holomorphic foliations, finite determinacy, unfolding, Liouvillian integration}
\date{Avril 2014}
\begin{document}

\begin{abstract}
Nous d\'ecrivons les singularit\'es de feuilletages holomorphes {\em dicritiques} de petite multiplicit\'e en dimension $3$. En particulier nous relions l'existence de d\'eformations et de d\'eploiements non triviaux \`a des probl\`emes d'int\'egrabilit\'e liouvillienne.
\end{abstract}

\maketitle

\tableofcontents

\section{Introduction}

On se propose dans cet article de donner la description de certains germes de feuilletages holomorphes de codimension un \`a l'origine de ${\mathbb C}^n$. Un tel feuilletage ${\mathcal F}$ est associ\'e \`a la donn\'e d'une $1-$forme holomorphe $\omega \in \Omega^1({\mathbb C}^n,0)$, d\'efinie \`a unit\'e multiplicative pr\`es, satisfaisant la condition d'int\'egrabilit\'e de Frobenius $\omega \wedge d\omega = 0$ et dont le lieu singulier ${\rm Sing}\ \omega =$ Z\'eros$(\omega)$ est de codimension sup\'erieure ou \'egale \`a deux. On note ${\mathcal F} = {\mathcal F}_\omega$ et ${\rm Sing}\ {\mathcal F} = {\rm Sing}\ \omega$. La condition d'int\'egrabilit\'e est non lin\'eaire en les coefficients de $\omega$, ce qui rend difficile la construction d'exemples par des proc\'ed\'es calculatoires. Il y a essentiellement deux mani\`eres simples pour construire de tels exemples. La premi\`ere consiste \`a se donner un germe $\omega_0 \in \Omega^1({\mathbb C}^2,0)$ en dimension $2$ (dans ce cas la condition d'int\'egrabilit\'e est triviale), une application holomorphe $F:{\mathbb C}^n,0 \rightarrow {\mathbb C}^2,0$ dominante et \`a consid\'erer $\omega = F^*\omega_0$ qui est automatiquement int\'egrable; les feuilles du feuilletage ${\mathcal F}_\omega$ sont alors fibr\'ees par les niveaux de $F$. Un tel feuilletage sera dit de type {\em pull-back}; on peut d'ailleurs g\'en\'eraliser cette proc\'edure en consid\'erant un feuilletage ${\mathcal F}_0$ d'une surface $S_0$, \'eventuellement singuli\`ere, et une application m\'eromorphe $F: {\mathbb C}^n \dashrightarrow S_0$ (typiquement la projection naturelle ${\mathbb C}^3 \dashrightarrow {\mathbb P}^2_{\mathbb C}$) et en rappelant ${\mathcal F}_0$ par $F$. L'autre mani\`ere est de consid\'erer les feuilletages ${\mathcal F}_\theta$ associ\'es aux $1-$formes m\'eromorphes $\theta$ ferm\'ees: $d\theta = 0$. Comme nous le rappellerons une telle forme s'\'ecrit: $$\theta = \sum \lambda_i \frac{df_i}{f_i} + dh $$ o\`u les $f_i$ sont holomorphes, $h$ m\'eromorphe et les {\em r\'esidus} $\lambda_i$ des nombres complexes. Essentiellement les feuilles de ${\mathcal F}_\theta$ sont les niveaux de la fonction multivalu\'ee $\sum \lambda_i {\rm log} f_i + h$. Ceci n'est pas sans rappeler la conjecture, concernant cette fois les feuilletages globaux, suivante:
\vskip0.2cm
\noindent {\bf Conjecture.} (Brunella, Lins Neto {\em et alt} \cite{Croc}) {\em Soit ${\mathcal F}$ un feuilletage holomorphe de codimension un sur une vari\'et\'e projective $X$. Alors ou bien ${\mathcal F}$ est transversalement projectif (sur un ouvert de Zariski) ou bien il existe $F:X \dashrightarrow S$ une application rationnelle vers une surface $S$ et un feuilletage ${\mathcal G}$ de S tels que ${\mathcal F} = F^*{\mathcal G}$.}
\vskip0.2cm

Pour \'eclairer cette conjecture disons que parmi les feuilletages transversalement projectifs il y a les feuilletages donn\'es par une $1-$forme ferm\'ee m\'eromorphe ou les feuilletages transverses \`a une fibration rationnelle (sur un ouvert de Zariski fibr\'e). Ils sont donn\'es par une $1-$forme rationnelle $\omega_0$ pour laquelle existe deux autres $1-$formes rationnelles $\omega_1$, $\omega_2$ formant un $SL(2,{\mathbb C})$ triplet \cite{Croc,LJVT}: 
$$d\omega_0 = \omega_0 \wedge \omega_1,\quad d\omega_1 = \omega_0 \wedge \omega_2,\quad d\omega_2 = \omega_1 \wedge \omega_2. $$

Soit $\omega \in \Omega^1({\mathbb C}^n,0)$ un germe de $1-$forme int\'egrable, $\omega = \sum a_idz_i$; si l'ensemble singulier \break ${\rm Sing}\ \omega:=\{a_1=\cdots=a_n=0\}$ est suffisament petit, ${\rm Cod\ Sing}\ \omega \geq 3$, le Th\'eor\`eme de Frobenius de B. Malgrange assure l'existence d'une int\'egrale premi\`ere non constante $f \in {\mathcal O}({\mathbb C}^n,0)$: $\omega = gdf$, $g \in {\mathcal O}^*({\mathbb C}^n,0)$; ici les feuilles sont les niveaux de $f$. On pourrait penser que cel\`a d\'ecrit la situation g\'en\'erique (pour la topologie de Krull), mais il n'en est rien: on sait en effet depuis Kupka et Reeb que la condition $d\omega(0)\neq0$ implique que le lieu singulier est lisse de codimension deux et cette propri\'et\'e est \'evidemment stable. Nous pr\'ecisons dans le texte ce ph\'enom\`ene bien connu et nous pr\'esentons la classification des $1-$formes int\'egrables dont le $1-$jet est non trivial suivant les travaux de \cite{Ku,Cer-Ma,L}. Une grande partie du travail que nous proposons repose sur l'id\'ee na\"ive suivante; si l'on proc\`ede au d\'eveloppement de Taylor de $\omega$: $$\omega = \omega_\nu + \omega_{\nu+1} + \cdots + \omega_k + \cdots $$ o\`u chaque $\omega_k$ est une $1-$forme homog\`ene de degr\'e $k$, i.e. \`a coefficients polyn\^omes homog\`enes de degr\'e $k$, et $\omega_\nu$ est la {\em partie homog\`ene de plus bas degr\'e non nulle}, alors la partie initiale ${\rm In}(\omega) = \omega_\nu$ de $\omega$ est int\'egrable. Mieux, il y a une homotopie $$ \omega_t = \omega_\nu + t\omega_{\nu+1} + \cdots + t^k\omega_{\nu+k} + \cdots, \quad t \in {\mathbb C} $$ reliant $\omega = \omega_{t=1}$ \`a sa partie initiale $\omega_{t=0} = {\rm In}(\omega)$. On peut donc esp\'erer, modulo des conditions raisonnables sur $\omega_\nu$, que la $1-$forme $\omega$, que l'on voit donc comme une d\'eformation de $\omega_\nu$, va conserver certaines propri\'et\'es de sa partie initiale ${\rm In}(\omega)$. Pour pr\'esenter nos r\'esultats nous avons besoin de quelques notations et rappels. On d\'esigne par $R$ le champ {\em radial}, $R = \sum z_i \frac{\partial}{\partial z_i}$. Une $1-$forme homog\`ene int\'egrable $\omega_\nu$ est dite {\em dicritique} si le polyn\^ome $P_{\nu+1} = i_R\omega_\nu$ est identiquement nul; elle est {\em non dicritique} sinon. Une forme dicritique induit un feuilletage sur l'espace projectif ${\mathbb P}^{n-1}_{\mathbb C}$, tandis que le feuilletage homog\`ene associ\'e \`a une forme non dicritique est d\'efini par une $1-$forme ferm\'ee rationnelle: en effet ${\displaystyle \frac{\omega_\nu}{P_{\nu+1}}}$ est ferm\'ee. Modulo des conditions g\'en\'eriques portant sur ${\omega}_\nu$, ces propri\'et\'es sont gard\'ees en m\'emoire par les $\omega$ telles que ${\rm In}(\omega) = \omega_\nu$. Ainsi, dans le cas non dicritique, si $[P_{\nu+1} = 0]\subset {\mathbb P}^{n-1}_{\mathbb C}$ est r\'eduit et \`a croisement normaux, alors ${\mathcal F}_\omega$ est encore d\'efini par une $1-$forme ferm\'ee m\'eromorphe d\`es que les r\'esidus $\lambda_i$ de $\omega_\nu = {\rm In}(\omega)$ satisfont une condition g\'en\'erique (satisfaite sur un ouvert dense); de m\^eme si $[P_{\nu+1} = 0]$ est irr\'eductible de degr\'e $p^s$, avec $p$ premier et $n\geq3$ \cite{Cer-L}: en fait dans ce cas $\omega$ poss\`ede une int\'egrale premi\`ere holomorphe non constante.

Dans le cas dicritique et en dimension $3$, on d\'emontre dans \cite{Cam-Alc} que si $d\omega_\nu = d({\rm In}(\omega))$ est \`a singularit\'e isol\'ee et $\nu \geq 3$, alors les feuilletages ${\mathcal F}_{\omega_\nu}$ et ${\mathcal F}_\omega$ sont holomorphiquement conjugu\'es, ce que l'on peut interpr\'eter comme un r\'esultat de {\em d\'etermination finie} ou de stabilit\'e (pour la topologie de Krull).

Nous nous int\'eressons dans cet article \`a des feuilletages ${\mathcal F}_\omega$ dont la partie homog\`ene $\omega_\nu = {\rm In}(\omega)$ ne satisfait plus les conditions pr\'ec\'edentes. Si $\omega_\nu$ est une $1-$forme homog\`ene int\'egrable dicritique, on note $[{\mathcal F}_{\omega_\nu}]$ le feuilletage de ${\mathbb P}^{n-1}_{\mathbb C}$ associ\'e; si ${\rm Cod\ Sing}\ \omega_\nu \geq 2$, c'est un feuilletage de degr\'e $\nu-1$. Rappelons qu'un point {\em central} (ou une singularit\'e de {\em type centre}) pour un feuilletage ${\mathcal G}$ du plan est un point singulier $m$ en lequel ${\mathcal G}$ poss\`ede une int\'egrale premi\`ere de Morse; toujours en dimension $2$, un point singulier $m$ est dit {\em nilpotent} si ${\mathcal G}_{,m}$ est d\'efini par un germe de $1-$forme dont la partie initiale est de type $xdx$.

Les r\'esultats qui suivent sont propres \`a la dimension $3$, premi\`ere dimension ou la condition d'int\'egrabilit\'e est non triviale.

\begin{main}\label{main} Soit $\omega \in \Omega^1({\mathbb C}^3,0)$ holomorphe int\'egrable. On suppose que la partie initiale ${\rm In}(\omega) = \omega_\nu$ est dicritique et satisfait ${\rm Cod\ Sing}\ \omega_\nu \geq 2$. Alors:
\begin{enumerate}
\item Si $\nu=1$, ${\mathcal F}_\omega$ est holomorphiquement conjugu\'e \`a ${\mathcal F}_{\omega_1}$, $\omega_1 = z_2dz_1 - z_1dz_2$; en particulier ${\mathcal F}_\omega$ poss\`ede int\'egrale premi\`ere m\'eromorphe ${\displaystyle \frac{Z_1}{Z_2}}$ ($Z_i$ coordonn\'ees).
\item Si $\nu=2$, ${\mathcal F}_\omega$ est donn\'e par une $1-$forme ferm\'ee m\'eromorphe.
\item Si $\nu \geq 3$ et $[{\mathcal F}_{\omega_\nu}]$ n'a ni singularit\'e nilpotente, ni centre, alors ${\mathcal F}_\omega$ est conjugu\'e \`a ${\mathcal F}_{\omega_\nu}$.
\item Si $\nu = 3$ et $[{\mathcal F}_{\omega_3}]$ a une singularit\'e de type centre, alors ${\mathcal F}_\omega$ est d\'efini par une $1-$forme ferm\'ee m\'eromorphe.
\item Si $\nu = 3$ et $[{\mathcal F}_{\omega_3}]$ a une singularit\'e nilpotente alors, modulo une condition de non r\'esonnance portant sur $\omega_3$, ${\mathcal F}_\omega$ est d\'efini par une $1-$forme ferm\'ee m\'eromorphe ou bien ${\mathcal F}_\omega$ est conjugu\'e \`a ${\mathcal F}_{\omega_3}$.
\item Si $\nu=3$ et ${\mathcal F}_{\omega_3}$ a une singularit\'e quadratique ($1-$jet nul), alors ${\mathcal F}_\omega$ est transversalement affine, i.e. $d\omega = \omega \wedge \omega_1$, avec $\omega_1$ m\'eromorphe ferm\'ee.
\end{enumerate}
\end{main}

Dans le Th\'eor\`eme \ref{main}, nous avons rassembl\'e divers \'enonc\'es qui apparaissent dans le texte. Par souci de coh\'erence nous avons ins\'er\'e dans le th\'eor\`eme des r\'esultats bien connus: c'est le cas des points 1 et 2. Le point 3 contient le r\'esultat de Camacho-Lins Neto \cite{Cam-Alc}. Le point 4 fait intervenir un r\'esultat d'int\'egration Liouvillienne remarquable d\'emontr\'e en 1908 par Henri Dulac \cite{D}. Le point 5 n\'ecessite une \'etude fine des d\'eploiements des feuilletages du plan \`a singularit\'e nilpotente. Cette \'etude est rendue possible par l'utilisation du Th\'eor\`eme de Pr\'eparation de F. Loray \cite{L}. A titre d'exemple, on d\'emontre que si $\omega_0 \in \Omega^1({\mathbb C}^2,0)$ est \`a singularit\'e nilpotente et \`a nombre de Milnor $\mu(\omega_0;0)$ de type $p-1$ avec $p$ premier, alors tout d\'eploiement ${\mathcal F}_\omega$ de ${\mathcal F}_{\omega_0}$ \`a singularit\'e nilpotente $({\rm In}(\omega) = {\rm In}(\omega_0))$ est trivial ou bien ${\mathcal F}_{\omega_0}$ (et ${\mathcal F}_\omega$) poss\`ede une int\'egrale premi\`ere holomorphe non constante. Rappelons que si $\omega_0 = adx + bdy$ alors $\mu(\omega_0;0)$ est la multiplicit\'e d'intersection de l'id\'eal $(a,b)$: $$\mu(\omega_0;0) = {\rm dim} \frac{{\mathcal O}({\mathbb C}^2,0)}{(a,b)}. $$

Nous donnons des exemples de d\'eploiements non triviaux pour les feuilletages ${\mathcal F}_{\omega_3}$ associ\'es \`a des $1-$formes ferm\'ees m\'eromorphes. En particulier on rencontre dans cette description des feuilletages ${\mathcal F}_\omega$, $\omega \in \Omega^1({\mathbb C}^3,0)$, qui apr\`es r\'eduction des singularit\'es sont compl\'etement r\'eguliers.

\vskip0.3cm
\noindent {\em Remerciements.} Nous remercions F. Loray, C. Rousseau et F. Touzet pour des \'echanges fructueux. Le deuxi\`eme auteur a b\'en\'efici\'e d'un financement IUF et CNPq pour un s\'ejour \`a Rennes, le troisi\`eme est financ\'e par un projet PDE/CsF du CNPq pour un s\'ejour de deux ans \`a Rennes.

\section{Notations, d\'efinitions et rappels}

On note $\mathcal{O}(\mathbb{C}^n,0)$ l'anneau des germes de fonctions holomorphes \`a l'origine de $\mathbb{C}^n$; on d\'esigne par $\Omega^k(\mathbb{C}^n,0)$ le $\mathcal{O}(\mathbb{C}^n,0)-$module des germes de $k-$formes holomorphes \`a l'origine de $\mathbb{C}^n$. Bien s\^ur $\Omega^0(\mathbb{C}^n,0) = \mathcal{O}(\mathbb{C}^n,0)$; le $\mathcal{O}(\mathbb{C}^n,0)-$module des champs de vecteurs holomorphes se note $\Theta(\mathbb{C}^n,0)$. Plus g\'en\'eralement si $M$ est une vari\'et\'e complexe on note $\mathcal{O}(M)$, $\Omega^k(M)$ et $\Theta(M)$ les faiseaux de fonctions, $k-$formes et champs. Si $\alpha\in\Omega^k$ et $X \in \Theta$ on note $i_X \alpha$ le produit int\'erieur de $\alpha$ par $X$ et par $L_X \alpha = i_X d\alpha + d(i_X\alpha)$ la d\'eriv\'ee de Lie de $\alpha$ suivant $X$.

Un germe de feuilletage holomorphe $\mathcal{F} = \mathcal{F}_\omega$ de codimension 1 \`a l'origine de $\mathbb{C}^n$ est d\'efini par la donn\'ee d'un germe de $1-$forme $\omega \in \Omega^1(\mathbb{C}^n,0)$ satisfaisant les deux conditions:
\begin{list}{(i)}
 \item $\omega\wedge d\omega=0$ (int\'egrabilit\'e)
\end{list}
\begin{list}{(ii)}
 \item ${\rm Cod\ Sing}\ \omega \geq 2$
\end{list} 
\noindent o\`u ${\rm Sing}\ \omega$ d\'esigne l'ensemble singulier de $\omega$; en coordonn\'ees si:
$$\omega = \sum\limits_{i=1}^{n} a_i dx_i,\ a_i \in \mathcal{O}(\mathbb{C}^n,0) $$
alors ${\rm Sing}\ \omega = \{a_1 = \cdots = a_n = 0\}$.

Ajoutons que par d\'efinition deux $1-$formes $\omega$, $\omega'$ satisfaisant (i), (ii) et $\omega\wedge\omega'=0$ d\'efinissent le m\^eme feuilletage. Avec le notation ci-dessus le lieu singulier ${\rm Sing}\ \mathcal{F}$ de $\mathcal{F} = \mathcal{F}_\omega$ est ${\rm Sing}\ \omega$. Deux germes de feuilletages $\mathcal{F}_\omega = \mathcal{F}$ et $\mathcal{F}_{\omega'} = \mathcal{F}'$ sont conjugu\'es s'il existe $\phi \in \rm{Diff}(\mathbb{C}^n,0)$, le groupe des germes de diff\'eomorphismes de $\mathbb{C}^n,0$, tel que $\phi^* \omega \wedge \omega' = 0$. On note $\mathcal{F}'= \phi^{-1}\mathcal{F}$. 

On a le th\'eor\`eme de structure suivant:

\begin{theorem}{\em(Malgrange \cite{Mal})} Soit $\omega$ comme ci-dessus; si ${\rm Cod\ Sing}\ \omega \geq 3$ il existe une unit\'e $g \in \mathcal{O}^*(\mathbb{C}^n,0)$ et $f \in \mathcal{O}(\mathbb{C}^n,0)$ tels que $\omega = g df$.
\end{theorem}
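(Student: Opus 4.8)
Le plan est de produire une int\'egrale premi\`ere holomorphe non constante $f$, l'unit\'e $g$ s'en d\'eduisant ensuite par division. Tout le travail repose sur l'hypoth\`ese ${\rm Cod\ Sing}\ \omega \geq 3$, que j'utilise d'abord \`a travers le lemme de division de de Rham--Saito: si $\alpha \in \Omega^q(\mathbb{C}^n,0)$ v\'erifie $\alpha \wedge \omega = 0$, alors $\alpha \in \omega \wedge \Omega^{q-1}$ pourvu que ${\rm Cod\ Sing}\ \omega \geq q+1$. Appliqu\'e \`a $\alpha = d\omega$ (donc \`a $q=2$, ce qui explique pr\'ecis\'ement le recours \`a la codimension $3$), la condition d'int\'egrabilit\'e $\omega \wedge d\omega = 0$ fournit une $1-$forme holomorphe $\eta$ telle que $d\omega = \eta \wedge \omega$. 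En identifiant les termes de plus bas degr\'e dans cette \'egalit\'e, on obtient aussit\^ot $d\omega_\nu = 0$: la composante de coefficient-degr\'e $\nu-1$ de $\eta \wedge \omega$ est nulle, tandis que celle de $d\omega$ vaut $d\omega_\nu$. La partie initiale $\omega_\nu$ est donc ferm\'ee, donc exacte par le lemme de Poincar\'e homog\`ene: $\omega_\nu = \frac{1}{\nu+1}\, d(i_R\omega_\nu)$. On notera au passage que ceci force $\omega_\nu$ \`a \^etre non dicritique d\`es qu'elle est non nulle, en coh\'erence avec le fait que les situations dicritiques \'etudi\'ees ici rel\`event toutes de ${\rm Cod\ Sing}\ \omega = 2$.

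Je r\'eduis alors le probl\`eme \`a la recherche d'un facteur int\'egrant. Il suffit en effet d'exhiber une fonction holomorphe $u$, au voisinage de l'origine, telle que $d\omega = du \wedge \omega$: la forme $e^{-u}\omega$ est alors ferm\'ee, donc exacte (lemme de Poincar\'e), $e^{-u}\omega = df$, et l'on conclut $\omega = e^{u}\,df = g\,df$ avec $g = e^u$ une unit\'e. Or $d\omega = du \wedge \omega$ signifie exactement $\eta - du \in \mathcal{O}(\mathbb{C}^n,0)\cdot\omega$, c'est-\`a-dire que la classe de $\eta$ s'annule dans le $H^1$ du complexe de de Rham relatif $\Omega^\bullet/(\omega \wedge \Omega^{\bullet - 1})$ (cette classe est bien d\'efinie puisque $d\eta \wedge \omega = 0$ entra\^ine $d\eta \in \omega \wedge \Omega^1$, de nouveau par division). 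Je construis une solution \emph{formelle} $\hat u$ de cette \'equation en r\'esolvant ordre par ordre sur le d\'eveloppement de Taylor: \`a chaque \'etape l'\'equation est lin\'eaire par rapport au jet inconnu de $u$, et les obstructions vivent dans un groupe de cohomologie de type Koszul attach\'e \`a $\omega$, contr\^ol\'e par la codimension du lieu singulier, qui s'annule gr\^ace \`a l'hypoth\`ese ${\rm Cod\ Sing}\ \omega \geq 3$. On obtient ainsi une s\'erie formelle $\hat u$ (de mani\`ere \'equivalente, une int\'egrale premi\`ere formelle $\hat f$).

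L'obstacle principal, et le c{\oe}ur analytique du th\'eor\`eme de Malgrange, est la \emph{convergence} de la s\'erie formelle $\hat u$ (ou $\hat f$). Les estimations na\"ives obtenues ordre par ordre ne se contr\^olent pas, et il faut soit la m\'ethode des voisinages privil\'egi\'es assortie de bornes a priori adapt\'ees \`a l'op\'erateur de division de de Rham, soit un th\'eor\`eme d'approximation de type Artin appliqu\'e \`a une reformulation convenable de l'\'equation $\omega = g\,df$ (par exemple en prenant pour inconnues $f$ et ses d\'eriv\'ees partielles, soumises \`a leurs relations d'int\'egrabilit\'e). C'est cette \'etape de convergence que j'attends comme la plus d\'elicate; une fois acquise, le reste est formel: $u$ devient un germe holomorphe, $e^{-u}\omega = df$ avec $f$ non constante car $\omega \neq 0$, et $\omega = g\,df$ avec ${\rm Sing}\ \omega = {\rm Sing}(df)$ de codimension $\geq 2$, ce qui ach\`eve la preuve.
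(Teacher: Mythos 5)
L'article ne d\'emontre pas cet \'enonc\'e : c'est le th\'eor\`eme de Malgrange, cit\'e comme r\'esultat ext\'erieur \cite{Mal}, sans preuve dans le texte. Votre proposition doit donc \^etre jug\'ee comme preuve autonome, et elle n'en est pas une : c'est un plan de preuve --- fid\`ele, du reste, \`a la strat\'egie de Malgrange (division, r\'eduction \`a un facteur int\'egrant, r\'esolution formelle puis convergence) --- dont les deux \'etapes essentielles ne sont pas d\'emontr\'ees. Premi\`ere lacune, l'\'etape formelle. Vous affirmez que les obstructions vivent dans une cohomologie de type Koszul attach\'ee \`a $\omega$, qui s'annulerait par l'hypoth\`ese de codimension. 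Il y a l\`a une confusion entre deux complexes distincts : le complexe de Koszul $(\Omega^\bullet,\ \omega\wedge\cdot\,)$, dont l'exactitude en degr\'e $q\leq 2$ est effectivement donn\'ee par de Rham--Saito \cite{S} lorsque ${\rm Cod\ Sing}\ \omega \geq 3$, et le complexe de de Rham relatif $(\Omega^\bullet/\omega\wedge\Omega^{\bullet-1},\ d)$, dont l'annulation du $H^1$ est pr\'ecis\'ement ce qu'il faut d\'emontrer et qui ne d\'ecoule pas de la division. Votre r\'esolution ordre par ordre n'est pas d\'etaill\'ee, et elle ne peut pas l'\^etre na\"ivement : \`a l'ordre $k$ l'\'equation s'\'ecrit $\eta_k = du_{k+1} + \sum_{j\geq\nu} h_{k-j}\,\omega_j$, de sorte que chaque inconnue $h_m$ intervient dans une infinit\'e d'\'equations et que les choix faits aux petits ordres conditionnent la r\'esolubilit\'e aux ordres sup\'erieurs. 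La r\'esolubilit\'e de ce syst\`eme coupl\'e est exactement la version formelle du th\'eor\`eme que vous voulez prouver ; en l'\'etat, vous l'admettez.

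Seconde lacune, la convergence, que vous reconnaissez vous-m\^eme ne pas traiter : c'est pourtant tout le contenu analytique du th\'eor\`eme, et chez Malgrange elle occupe l'essentiel de \cite{Mal} (coh\'erence de faisceaux convenables, voisinages privil\'egi\'es, th\'eor\`emes de finitude et de comparaison formel/convergent). La piste alternative que vous sugg\'erez --- une approximation de type Artin --- ne fonctionne pas telle quelle : le th\'eor\`eme d'Artin s'applique \`a des \'equations analytiques $F(x,y(x))=0$, pas \`a des \'equations diff\'erentielles, et l'astuce consistant \`a prendre pour inconnues $f$ et ses d\'eriv\'ees partielles ne supprime rien, puisque la contrainte $y_i = \partial f/\partial x_i$ est elle-m\^eme diff\'erentielle ; il n'existe pas de th\'eor\`eme d'approximation g\'en\'eral pour de tels syst\`emes. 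Ce qui est correct dans votre texte : la r\'eduction au facteur int\'egrant via $d\omega = \eta\wedge\omega$, la bonne d\'efinition de la classe de $\eta$ (en utilisant $d\eta\wedge\omega = 0$ puis la division), et la remarque que $d\omega_\nu = 0$, donc que $\omega_\nu$ est exacte et non dicritique. Mais ce sont l\`a les parties faciles ; les deux verrous du th\'eor\`eme restent ferm\'es, et votre texte est un sommaire de la preuve de Malgrange plut\^ot qu'une d\'emonstration.
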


Lorsque $\omega$ est non singuli\`ere, i.e. ${\rm Sing}\ \omega = \emptyset$, on retrouve le Th\'eor\`eme de Frobenius classique. Dans ce cas, $f$ est une submersion. 

Dans la th\'eorie des singularit\'es de feuilletage il y a un \'enonc\'e classique connu sous le nom de ph\'enom\`ene de Kupka-Reeb et qui est une application directe du Th\'eor\`eme de Darboux classifiant localement les $2-$formes ferm\'ees. En voici l'\'enonc\'e:

\begin{theorem}\cite{Ku,Can-Cer-Des}
Soit $\mathcal{F} = \mathcal{F}_\omega$ un germe de feuilletage \`a l'origine de $\mathbb{C}^n$ d\'efini par la $1-$forme int\'egrable $\omega$. On suppose que la diff\'erentielle $d\omega$ est non nulle \`a l'origine: $d\omega(0) \neq0$. Il existe des coordonn\'ees $(x_1,x_2,\ldots,x_n)$ telles que $\omega = A_1(x_1,x_2)dx_1 + A_2(x_1,x_2) dx_2$, $A_i \in \mathcal{O}(\mathbb{C}^2,0)$.
\end{theorem}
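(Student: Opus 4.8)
Le plan est de se ramener au Th\'eor\`eme de Darboux pour les $2-$formes ferm\'ees de rang constant. Posons $\eta = d\omega$; c'est un germe de $2-$forme holomorphe ferm\'ee ($d\eta = 0$) avec $\eta(0)\neq 0$. L'id\'ee est de montrer que $\eta$ est de rang constant \'egal \`a $2$ au voisinage de l'origine, puis d'appliquer Darboux pour normaliser $\eta$ en $dx_1\wedge dx_2$; la condition d'int\'egrabilit\'e forcera alors $\omega$ \`a ne d\'ependre que de $x_1,x_2$. L'ingr\'edient alg\'ebrique de base est le suivant: si $\beta$ est une $2-$forme constante antisym\'etrique de rang $2r$ et $\alpha$ une $1-$forme constante v\'erifiant $\alpha\wedge\beta = 0$, alors $\alpha = 0$ d\`es que $r\geq 2$ (et $\alpha$ appartient au plan engendr\'e par $\beta$ si $r = 1$). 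On le voit en mettant $\beta$ sous forme canonique $\sum_{i=1}^r dx_{2i-1}\wedge dx_{2i}$ et en examinant les coefficients de $\alpha\wedge\beta$.

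Je montrerais d'abord que $\mathrm{rang}\ \eta = 2$ au voisinage de l'origine. Comme $\eta(0)\neq 0$, son rang en $0$ vaut au moins $2$, et par semi-continuit\'e inf\'erieure du rang il reste $\geq 2$ sur une boule $B$ centr\'ee en $0$. Pour la majoration, on utilise l'int\'egrabilit\'e ponctuellement: en tout point $p$ tel que $\omega(p)\neq 0$, la relation $\omega(p)\wedge\eta(p) = 0$ et le lemme alg\'ebrique (appliqu\'e \`a $\alpha = \omega(p)$) entra\^inent $\mathrm{rang}\ \eta(p)\leq 2$. Or ${\rm Sing}\ \omega$ est de codimension $\geq 2$ (condition (ii)), donc $\{\omega\neq 0\}$ est dense. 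Si l'ouvert $\{\mathrm{rang}\ \eta \geq 4\}\cap B$ \'etait non vide, il rencontrerait cet ensemble dense, contredisant la majoration ponctuelle; il est donc vide, et $\eta$ est de rang constant $2$ sur $B$.

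Le Th\'eor\`eme de Darboux pour les $2-$formes ferm\'ees de rang constant fournit alors des coordonn\'ees holomorphes $(x_1,\ldots,x_n)$ dans lesquelles $\eta = d\omega = dx_1\wedge dx_2$. La condition $\omega\wedge d\omega = 0$ se lit $\omega\wedge dx_1\wedge dx_2 = 0$; \'ecrivant $\omega = \sum_i a_i\,dx_i$, ceci impose $a_i = 0$ pour $i\geq 3$, d'o\`u $\omega = a_1\,dx_1 + a_2\,dx_2$. Enfin, en d\'eveloppant $d\omega = da_1\wedge dx_1 + da_2\wedge dx_2$ et en identifiant avec $dx_1\wedge dx_2$, les coefficients des $2-$formes $dx_j\wedge dx_1$ et $dx_j\wedge dx_2$ pour $j\geq 3$ donnent $\partial a_1/\partial x_j = \partial a_2/\partial x_j = 0$: les $a_i$ ne d\'ependent que de $(x_1,x_2)$, ce qui est la conclusion souhait\'ee (avec $A_i = a_i$).

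Le point le plus d\'elicat est l'\'etape de rang constant --- pr\'ecis\'ement le passage du rang ponctuel au rang localement constant --- qui repose de fa\c con essentielle sur la densit\'e du lieu non singulier et sur la semi-continuit\'e, ainsi que sur la disponibilit\'e du Th\'eor\`eme de Darboux dans le cadre holomorphe pour une $2-$forme ferm\'ee de rang constant. Une fois ces deux points acquis, la conclusion est un calcul direct.
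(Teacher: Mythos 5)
Your proof is correct and follows exactly the route the paper indicates: the paper presents this statement as a direct application of the Darboux theorem for closed $2-$forms, which is precisely your argument, with the verification that $d\omega$ has constant rank $2$ (via integrability, the algebraic lemma, and density of the non-singular locus) properly spelled out. Nothing to object to.
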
 

Sous les hypoth\`eses du th\'eor\`eme, il existe une submersion $\varphi:\mathbb{C}^n,0 \rightarrow \mathbb{C}^2,0$ telle que $\mathcal{F} = \varphi^{-1}(\mathcal{F}_0)$ o\`u $\mathcal{F}_0$ est un feuilletage sur $\mathbb{C}^2,0$.

Si l'on souhaite d\'ecrire tous les feuilletages donn\'es par une $1-$forme $\omega$ dont la partie lin\'eaire est non triviale les deux \'enonc\'es pr\'ec\'edents s'av\`erent insuffisants. Ils doivent \^etre compl\'et\'es par deux autres; le premier se trouve dans \cite{Cer-Ma}, nous en donnons sa preuve dont l'esprit est important pour la compr\'ehension de la suite:

\begin{proposition}\label{prop3}
Soit $\mathcal{F}$ un germe de feuilletage holomorphe de codimension 1 \`a l'origine de $\mathbb{C}^n$ donn\'e par $\omega \in \Omega^1(\mathbb{C}^n,0)$, $n\geq3$. Soient $i: \mathbb{C}^2,0 \hookrightarrow \mathbb{C}^n,0$ une immersion et $\omega_0 = i^* \omega$. On suppose que le $1-$jet de $\omega_0$ est non nilpotent (i.e. non nul et non conjugu\'e \`a $x_1dx_1$). On a l'alternative:
\begin{enumerate}[$(1)$]
\item $\mathcal{F}$ ne d\'epend que de deux variables, i.e. il existe une submersion $j: \mathbb{C}^n,0 \rightarrow \mathbb{C}^2,0$, $j \circ i = \rm{id}_{\mathbb{C}^2}$, telle que $\mathcal{F} = j^{-1}(i^{-1}\mathcal{F})$.
\item $\mathcal{F}$ poss\`ede une int\'egrale premi\`ere holomorphe $f \in \mathcal{O}(\mathbb{C}^n,0)$ non constante, i.e. $\omega \wedge df = 0$. 
\end{enumerate}
\end{proposition}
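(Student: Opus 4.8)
The plan is to work in coordinates $(x_1,x_2,x_3,\dots,x_n)$ adapted to $i$, so that $i$ is the inclusion of the plane $\Pi=\{x_3=\cdots=x_n=0\}$ and $\omega_0=i^*\omega$ involves only $x_1,x_2$ along $\Pi$. Writing the transverse variables $z=(x_3,\dots,x_n)$, I would decompose
$$\omega = \alpha + \sum_{k\ge 3} c_k\,dx_k,\qquad \alpha = a\,dx_1+b\,dx_2,$$
with $a,b,c_k\in\mathcal{O}(\mathbb{C}^n,0)$ and $i^*\alpha=\omega_0$. The content of alternative $(1)$ is then to produce, by a change of coordinates fixing $\Pi$ together with a multiplicative unit, a new expression in which all $c_k$ vanish and $a,b$ no longer depend on $z$: the map $j=(x_1,x_2)$ is the required submersion with $j\circ i=\mathrm{id}$. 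Alternative $(2)$ is the obstruction to carrying this out. The only datum driving the whole construction is the linear part $\ell=J^1\omega$, a nonzero integrable linear $1$-form on $\mathbb{C}^n$ whose restriction $i^*\ell=J^1\omega_0$ is, by hypothesis, non-nilpotent and nonzero; I denote by $X_0$ the dual linear vector field on $\Pi$ generating $\ker(i^*\ell)$, which therefore has a nonzero eigenvalue.

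First I would record the recursion imposed by Frobenius. Feeding the straightening ansatz into the integrability condition $\omega\wedge d\omega=0$ and sorting terms by their degree in $z$ yields, at each order $m$, a linear cohomological equation $\mathsf{L}(\phi_m)=R_m$, where $R_m$ depends only on lower-order data and $\mathsf{L}$ is assembled from the Lie action of the linear part (essentially $L_{X_0}$ shifted by the relevant weights) on homogeneous $1$-forms. Because $i^*\ell$ is non-nilpotent, $\mathsf{L}$ is invertible on the space of transverse terms outside a discrete set of resonant degrees; this is precisely the regime in which no small divisors occur. When no resonance arises, I would solve the recursion order by order, the successive changes of variable tangent to $\Pi$ killing the $c_k$ and the $z$-dependence of $a,b$; the Poincar\'e-type majorant estimates furnished by the bounded inverse of $\mathsf{L}$ then make the formal transformation convergent, landing us in alternative $(1)$.

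The delicate point, and the main obstacle, is the resonant situation, where $\mathsf{L}$ has a nontrivial cokernel and the straightening stalls. Here I would argue that the surviving obstruction is rigid: a resonance forces the eigenvalues of $X_0$ to satisfy an integral relation, the classical Poincar\'e condition, which organises the non-removable part of the expansion as $df$ up to a unit and thus yields a non-constant holomorphic $f$ with $\omega\wedge df=0$, i.e.\ alternative $(2)$. Proving this cleanly --- that the cokernel of $\mathsf{L}$ is accounted for exactly by such a holomorphic $f$, with nothing intermediate between the two alternatives, and that the resulting $f$ (or the submersion $j$) is genuinely convergent rather than merely formal --- is where the real work lies. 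The non-nilpotency hypothesis is what keeps us out of the small-divisor/nilpotent regime, the latter being treated separately via Loray's preparation theorem, and it is exactly what makes the dichotomy sharp.
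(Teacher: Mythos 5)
Your strategy does not close, and the failure is located exactly at the step you defer (``the cokernel of $\mathsf{L}$ is accounted for exactly by a holomorphic $f$''). First observe that the resonance dichotomy you propose cannot match the statement. If $d\omega_0(0)\neq 0$ the conclusion is immediate from the Kupka--Reeb phenomenon (a consequence of Darboux), with no iteration at all; and if $d\omega_0(0)=0$, non-nilpotency forces $j^1\omega_0 = x_2dx_1+x_1dx_2$ up to linear conjugation, so the dual linear field is $x_1\partial_{x_1}-x_2\partial_{x_2}$, whose eigenvalues $1,-1$ are \emph{always} resonant. Thus in the only case where your recursion is actually needed, your ``non-resonant regime'' is empty, and everything rests on the resonant branch. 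But the resonant branch, as you state it, asserts something false: take for $\omega_0$ a generic resonant saddle with $1$-jet $x_2dx_1+x_1dx_2$ and \emph{no} holomorphic first integral (such germs exist in abundance; their holonomy is tangent to the identity but of infinite order, and by Mattei--Moussu this excludes any holomorphic first integral), and let $\omega$ be its pullback by the projection $(x_1,\ldots,x_n)\mapsto(x_1,x_2)$. This $\omega$ is resonant in your sense, sits squarely in case $(1)$, and admits no non-constant $f$ with $\omega\wedge df=0$; so no argument can extract a first integral from the resonance alone. The convergence claim is equally untenable: in the $1{:}(-1)$ resonant situation formal normalizations are generically divergent (Martinet--Ramis), so the ``Poincar\'e-type majorant estimates'' you invoke are simply not available there.

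The invariant that actually governs the alternative is not resonance but the codimension of ${\rm Sing}\,\omega$, and with it the proof is short and purely algebraic --- this is what the paper does. In adapted coordinates, in the case $d\omega_0(0)=0$, write $\omega = A_1dx_1+A_2dx_2+\sum_{j\geq3}B_jdy_j$ with $A_1=x_2+\cdots$, $A_2=x_1+\cdots$. If ${\rm Cod\ Sing}\,\omega\geq3$, Malgrange's theorem gives $\omega=g\,df$: case $(2)$. If ${\rm Cod\ Sing}\,\omega=2$, then ${\rm Sing}\,\omega$ coincides with the smooth codimension-two set $\{A_1=A_2=0\}$, whose ideal is prime, hence each $B_j$ lies in $\langle A_1,A_2\rangle$, say $B_j=\alpha_jA_1+\beta_jA_2$; the holomorphic vector fields $X_j=\partial_{y_j}-\alpha_j\partial_{x_1}-\beta_j\partial_{x_2}$ satisfy $i_{X_j}\omega=0$, are transverse to $\mathbb{C}^2\times\{0\}$, and their flows trivialize $\mathcal{F}$: case $(1)$, with no series to sum and no small divisors anywhere. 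If you wish to keep a normal-form flavour, the honest formulation of the obstruction in your cohomological equation is precisely this ideal-membership condition on the transverse coefficients, not an arithmetic relation on eigenvalues; your outline as written would need to be rebuilt around that dichotomy to become a proof.
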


\begin{proof} Soit $(x,y) = (x_1,x_2,y_3,\ldots,y_n)$ un syst\`eme de coordonn\'ees tel que $i$ s'identifie \`a l'inclusion $i(x) = (x,0)$. Si $d\omega_0(0) \neq 0$ on a bien s\^ur $d\omega(0) \neq 0$ et on se trouve en pr\'esence d'un ph\'enom\`ene de Kupka-Reeb; on est alors dans le cas $(1)$. Sinon, puisque le $1-$jet de $\omega_0$ est non nilpotent, on a \`a conjugaison lin\'eaire pr\`es:
$$j^1\omega_0 = x_2dx_1 + x_1dx_2. $$
On \'ecrit alors: $$\omega = A_1 dx_1 + A_2 dx_2 + B_3 dy_3 + \cdots + B_n dy_n  $$ o\`u $A_i$, $B_i \in \mathcal{O}(\mathbb{C}^n,0)$ avec $A_1 = x_2 + \cdots$, $A_2 = x_1 + \cdots$. Si le lieu singulier de $\omega$ est de codimension au moins trois, le Th\'eor\`eme de Malgrange dit que nous sommes dans l'\'eventualit\'e $(2)$. Comme l'ensemble lisse $\{A_1=A_2=0\}$ est de codimension $2$ et contient ${\rm Sing}\ \omega$, on constate, lorsque ${\rm Cod\ Sing}\ \omega = 2$, l'\'egalit\'e ${\rm Sing}\ \omega = \{A_1 = A_2 = 0\}$. En particulier chaque $B_i$ est dans l'id\'eal $<A_1,A_2>$: $$B_i = \alpha_i A_1 + \beta_i A_2,\quad \alpha_i,\beta_i \in \mathcal{O}(\mathbb{C}^n,0). $$ Par suite les champs de vecteurs $X_j = \frac{\partial}{\partial y_j} - \alpha_j \frac{\partial}{\partial x_1} - \beta_j \frac{\partial}{\partial x_2}$ annulent $\omega$ par produit int\'erieur et produisent une \emph{trivialisation} de $\mathcal{F}$: nous sommes dans le cas $(1)$.\end{proof}

La description des feuilletages donn\'es par une $1-$forme dont le $1-$jet est $x_1dx_1$ est pr\'esent\'ee par F. Loray dans \cite{L} comme cons\'equence de son Th\'eor\`eme de Pr\'eparation. En voici l'\'enonc\'e:

\begin{theorem}\label{loray}
Soit $\mathcal{F} = \mathcal{F}_\omega$ un germe de feuilletage de codimension 1 \`a l'origine de ${\mathbb C}^n$. On suppose que la partie lin\'eaire de $\omega$ est de type $x_1dx_1$. Alors \`a conjugaison et multiplication par unit\'e pr\`es $\omega$ est de la forme: 
\begin{equation}\label{formeloray} 
\omega = x_1dx_1 + (l_1(f) + x_1l_2(f)) df
\end{equation} o\`u $f = f(x_2,\ldots,x_n) \in \mathcal{O}(\mathbb{C}^{n-1},0)$, $l_1$ et $l_2$ sont dans $\mathcal{O}(\mathbb{C},0)$.
\end{theorem}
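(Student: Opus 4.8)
The plan is to reach the model \eqref{formeloray} in three stages: a Weierstrass normalization of the coefficient of $dx_1$, a reduction of the degree of $\omega$ in the transverse variable $x_1$ (the genuinely hard step, which is exactly the content of Loray's Preparation Theorem), and finally an elementary integrability computation.

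First I would normalize. Writing $\omega = A_1\,dx_1 + \sum_{i\geq2}B_i\,dx_i$, the hypothesis on the linear part gives $A_1 = x_1 + O(2)$ and $B_i = O(2)$. Since $\partial_{x_1}A_1(0)=1$, the Weierstrass preparation theorem (in order one) yields $A_1 = U\cdot(x_1 - a(x'))$ with $U$ a unit, $x' = (x_2,\ldots,x_n)$ and $a = O(2)$. The tangent-to-identity change of variables $x_1 \mapsto x_1 - a(x')$ followed by division by $U$ brings $\omega$, up to conjugation and multiplication by a unit, to the shape
\[
\omega = x_1\,dx_1 + \theta,\qquad \theta = \sum_{i\geq2}B_i\,dx_i,
\]
where $\theta$ involves only $dx_2,\ldots,dx_n$; one checks that this operation preserves the linear part $x_1\,dx_1$. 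Expanding in powers of the transverse variable I write $\theta = \sum_{k\geq0}x_1^k\,\theta_k$, each $\theta_k = \theta_k(x')$ being a $1$-form with coefficients depending only on $x'$.

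Next I would exploit integrability. Splitting $d\omega = dx_1\wedge\partial_{x_1}\theta + d_{x'}\theta$ according to whether $dx_1$ occurs, and separating in $\omega\wedge d\omega$ the part carrying $dx_1$ from the part that does not, one finds that $\omega\wedge d\omega=0$ is equivalent to
\[
\theta\wedge d_{x'}\theta = 0,\qquad x_1\,d_{x'}\theta = \theta\wedge\partial_{x_1}\theta.
\]
The heart of the proof is then to show that, after further conjugation and unit multiplication, one may assume $\theta$ affine in $x_1$, i.e. $\theta_k = 0$ for $k\geq2$. This is precisely where Loray's Preparation Theorem intervenes and where the main obstacle lies: reading the second identity degree by degree in $x_1$ only relates $d\theta_k$ to the brackets $\theta_i\wedge\theta_j$, and these relations are nonlinear and do not by themselves bound the $x_1$-degree — a generic coordinate change mixing $x_1$ with $x'$ turns the target model into an integrable form of arbitrarily high degree in $x_1$. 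Hence the terms $x_1^k\theta_k$, $k\geq2$, cannot be removed by a naive homological recursion; one needs the convergent division/normalization procedure of the Preparation Theorem to absorb them into a change of coordinates together with a multiplicative unit.

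Finally, once $\theta = \theta_0 + x_1\theta_1$ is affine, the integrability conditions collapse: matching powers of $x_1$ in $x_1\,d_{x'}\theta = \theta\wedge\partial_{x_1}\theta$ gives
\[
\theta_0\wedge\theta_1 = 0,\qquad d\theta_0 = 0,\qquad d\theta_1 = 0,
\]
and the first condition $\theta\wedge d_{x'}\theta=0$ becomes automatic. Thus $\theta_0$ and $\theta_1$ are closed and pointwise proportional. Being closed on a germ, each is exact, $\theta_i = dh_i$ with $h_i\in\mathcal{O}(\mathbb{C}^{n-1},0)$, and $\theta_0\wedge\theta_1=0$ forces $dh_0\wedge dh_1=0$, i.e. $h_0$ and $h_1$ are functionally dependent. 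Hence they share a reduced primitive first integral $f = f(x')$, so that $h_i = \psi_i(f)$ and $\theta_i = \psi_i'(f)\,df =: l_i(f)\,df$; the hypothesis ${\rm Cod\ Sing}\ \omega\geq2$ together with Hartogs extension lets one extend these expressions across the thin loci where the argument is run on the smooth part, while the degenerate case $\theta_1\equiv0$ simply gives the exact form $\omega = d\!\left(x_1^2/2 + L_1(f)\right)$. Substituting back yields $\omega = x_1\,dx_1 + \bigl(l_1(f) + x_1\,l_2(f)\bigr)\,df$, which is the announced normal form \eqref{formeloray}.
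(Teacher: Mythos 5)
Your proposal does not in fact prove the theorem: at its decisive point it invokes the very result at stake. Note first that the paper itself offers no proof of Theorem \ref{loray} --- it is quoted from Loray's article \cite{L}, where it appears as a consequence of his Preparation Theorem --- so the only thing to compare against is that citation. Within your text, steps 1, 2 and 4 are correct and cleanly executed: the Weierstrass normalization making the $dx_1$-coefficient exactly $x_1$ is sound (and does preserve the linear part, since $a = O(2)$); the splitting of $\omega\wedge d\omega = 0$ into $\theta\wedge d_{x'}\theta = 0$ and $x_1\, d_{x'}\theta = \theta\wedge\partial_{x_1}\theta$ is exact; and, granted $\theta = \theta_0 + x_1\theta_1$, matching powers of $x_1$ does give $\theta_0\wedge\theta_1 = 0$, $d\theta_0 = d\theta_1 = 0$, after which $\theta_i = l_i(f)\,df$ follows. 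For that last factorization the clean justification is not ${\rm Cod\ Sing}\ \omega \geq 2$ plus Hartogs, as you suggest, but the existence of a \emph{minimal} (primitive) first integral in the sense of Mattei--Moussu \cite{Ma-Mo}: writing $h_0 = \psi_0(f)$ with $f$ minimal, the relation $dh_0\wedge dh_1 = 0$ forces $h_1 = \psi_1(f)$ as well; this point is standard but deserves the correct reference.

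The gap is step 3. Reducing $\theta$ to an expression affine in $x_1$, up to conjugation and multiplication by a unit, is not a lemma you may borrow: combined with your (elementary) steps 1, 2 and 4 it is \emph{equivalent} to the theorem, and it is exactly the content of Loray's Preparation Theorem. You say this yourself --- you name the obstacle, explain plausibly why a degree-by-degree homological scheme cannot remove the terms $x_1^k\theta_k$, $k\geq 2$, and then simply appeal to the ``convergent division/normalization procedure of the Preparation Theorem''. Acknowledging the obstacle is not overcoming it: Loray's proof rests on a genuinely different, non-computational mechanism (a gluing construction of foliated complex manifolds), and nothing in your text reconstructs or replaces it. So, judged as an account of how the normal form \eqref{formeloray} follows from the Preparation Theorem, your write-up is correct and indeed more detailed than the paper, which gives no argument at all; judged as a blind proof of the statement, it is circular precisely at the theorem's core.
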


Une $1-$forme de type (\ref{formeloray}) sera appel\'ee {\em forme normale de Loray}. Comme dans le ph\'enom\`ene de Kupka-Reeb, le feuilletage $\mathcal{F}$ apparait comme image r\'eciproque (par l'application $(x_1,f)$) d'un feuilletage de $\mathbb{C}^2,0$ (le feuilletage d\'efini par $x_1dx_1 + (l_1(y_1) + x_1l_2(y_1))dy_1$). 

Soit $\omega$ un germe de $1-$forme int\'egrable \`a l'origine de $\mathbb{C}^n$; on note $\nu = \nu(\omega)$ la multiplicit\'e alg\'ebrique de $\omega$. Si on effectue le d\'eveloppement de Taylor de $\omega$:
$$\omega = \omega_\nu + \omega_{\nu+1} + \cdots + \omega_k + \cdots $$ o\`u chaque $\omega_k$ est une $1-$forme homog\`ene de degr\'e $k$, alors $\omega_\nu$ est une $1-$forme homog\`ene qui se trouve int\'egrable (condition (i)); par contre la condition (ii) pour $\omega$, i.e. ${\rm Cod\ Sing}\ \omega \geq 2$ n'implique pas qu'elle soit v\'erifi\'ee par $\omega_\nu$. 

\begin{definition}
Soit $\omega_\nu$ une $1-$forme homog\`ene int\'egrable sur $\mathbb{C}^n$, $\omega_\nu = \sum\limits_{i=1}^{n}A_idx_i$, $A_i$ homog\`ene de degr\'e $\nu$. On dit que $\omega_\nu$ est dicritique si $\sum\limits_{i=1}^{n}x_iA_i \equiv 0$ et non dicritique sinon.
\end{definition}

Consid\'erons le champ \emph{radial} $R = \sum\limits_{i=1}^{n}x_i \frac{\partial}{\partial x_i}$; $\omega_\nu$ est dicritique si et seulement si le polyn\^ome $P_{\nu+1} = i_R \omega_\nu$ est identiquement nul. Dans le cas non dicritique, $P_{\nu+1} \not\equiv0$, la $1-$forme $\displaystyle{\frac{\omega_\nu}{P_{\nu+1}}}$ est ferm\'ee; si la d\'ecomposition en facteurs irr\'eductibles de $P_{\nu+1}$ est:
$$P_{\nu+1} = P_1^{n_1+1}\cdots P_s^{n_s+1},\quad n_i \in \mathbb{N}, P_i\ \mbox{irr\'eductible}, $$
alors il existe des nombres $\lambda_i \in \mathbb{C}$ et un polyn\^ome $H$ de degr\'e $d^0H = \sum n_id^0P_i$ tel que (\cite{Cer-Ma}):
$$\frac{\omega_\nu}{P_{\nu+1}} = \sum \lambda_i \frac{dP_i}{P_i} + d \left( \frac{H}{P_1^{n_1} \cdots P_s^{n_s}} \right). $$
Cette $1-$forme poss\`ede \emph{l'int\'egrale premi\`ere multivalu\'ee} $\displaystyle{\sum\lambda_i {\rm log} P_i + \frac{H}{P_1^{n_1}\cdots P_s^{n_s}}}$. 

Lorsque $P_{\nu+1}$ est r\'eduit, i.e. $n_i = 0$, on peut supposer $H=0$ et:
$$\frac{\omega_\nu}{P_{\nu+1}} = \sum \lambda_i \frac{dP_i}{P_i}\quad \mbox{avec}\ \sum \lambda_i d^0P_i =1.$$
Dans ce cas on dit que $\omega_\nu$ (ou $\displaystyle{\frac{\omega_\nu}{P_{\nu+1}}}$) est {\em logarithmique} et que les $\lambda_i$ sont les {\em r\'esidus} de $\displaystyle{\frac{\omega_\nu}{P_{\nu+1}}}$. 

On dit que les r\'esidus satisfont la condition $\mathcal{P}$ si $\lambda_i \not\in\mathbb{Q}$ pour tout $i$ et l'un des $\lambda_i$ est ou bien non r\'eel ou bien un irrationnel mal approch\'e par les rationnels. 

Voici deux r\'esultats dus \`a D. Cerveau et F. Loray \cite{Cer-L} et D. Cerveau et J.-F. Mattei \cite{Cer-Ma} que nous concentrons dans un seul et qui donnent un aper\c cu raisonnable des feuilletages $\mathcal{F} = \mathcal{F}_\omega$ lorsque $\omega_\nu$ est non dicritique logarithmique.

\begin{theorem}
Soit $\omega = \omega_\nu + \cdots$ un germe de $1-$forme int\'egrable \`a l'origine de $\mathbb{C}^n$, $n\geq3$. On suppose que la partie homog\`ene de plus bas degr\'e $\omega_\nu$ est non dicritique et satisfait ${\rm Cod\ Sing}\ \omega_\nu \geq 2$. 
\begin{enumerate}[$(1)$]
\item Si $P_{\nu+1} = i_R\omega_\nu$ est irr\'eductible de degr\'e $\nu+1 = p^s$ avec $s\in \mathbb{N}$ et $p$ premier, alors $\omega$ poss\`ede une int\'egrale premi\`ere holomorphe non constante $f = P_{\nu+1}+\cdots$, $\omega\wedge df=0$.
\item Si $P_{\nu+1} = P_1 \cdots P_s$ est r\'eduit, $s\geq 2$, l'hypersurface $[P_{\nu+1}=0] \subset \mathbb{P}^{n-1}_{\mathbb{C}}$ est \`a croisement ordinaires et les r\'esidus $\lambda_i$ satisfont la condition $\mathcal{P}$, alors $\omega$ poss\`ede une int\'egrale premi\`ere $\sum \lambda_i {\rm log} f_i$, o\`u les $f_i = P_i + \cdots$ sont holomorphes.
\item Si $P_{\nu+1}$ est irr\'eductible et  $[P_{\nu+1}=0] \subset \mathbb{P}^{n-1}_{\mathbb{C}}$ est \`a croisements ordinaires alors $\omega$ poss\`ede int\'egrale premi\`ere holomorphe. 
\end{enumerate}
\end{theorem}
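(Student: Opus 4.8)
Les trois \'enonc\'es partagent une m\^eme architecture g\'eom\'etrique, que je mettrais d'abord en place. J'\'eclaterais l'origine, $\pi : \widetilde{\mathbb{C}}^n \to \mathbb{C}^n$, de diviseur exceptionnel $E \simeq \mathbb{P}^{n-1}_{\mathbb{C}}$. Comme $\omega_\nu$ est non dicritique ($P_{\nu+1} = i_R\omega_\nu \not\equiv 0$), le diviseur $E$ est invariant par $\pi^*\mathcal{F}_\omega$ et le feuilletage induit sur $E$ est pr\'ecis\'ement $[\mathcal{F}_{\omega_\nu}]$, de degr\'e $\nu-1$, d'hypersurface invariante $D = [P_{\nu+1}=0] \subset E$. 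L'id\'ee directrice est que la structure transverse port\'ee par $\frac{\omega_\nu}{P_{\nu+1}}$ --- exacte dans le cas irr\'eductible, logarithmique $\sum \lambda_i \frac{dP_i}{P_i}$ dans le cas (2) --- doit persister pour $\omega$ lui-m\^eme, le m\'ecanisme de persistance \'etant la finitude et le recollement de l'holonomie de $[\mathcal{F}_{\omega_\nu}]$ le long des feuilles de $E \setminus D$.

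Pour les cas irr\'eductibles (1) et (3), la relation $\sum \lambda_i d^0P_i = 1$ impose $\omega_\nu = \frac{1}{\nu+1}dP_{\nu+1}$, de sorte que $[\mathcal{F}_{\omega_\nu}]$ est le pinceau d'int\'egrale premi\`ere rationnelle $P_{\nu+1}/L^{\nu+1}$ ($L$ forme lin\'eaire); sa feuille g\'en\'erique est une hypersurface projective, donc compacte, de $E$. Montrer que $\omega$ admet une int\'egrale premi\`ere holomorphe $f = P_{\nu+1} + \cdots$ revient \`a \'etablir la persistance de cette compacit\'e pour $\pi^*\mathcal{F}_\omega$ au voisinage de $E$, ce qui, par un argument de stabilit\'e \`a la Reeb--Mattei--Moussu, \'equivaut \`a la finitude de l'holonomie des feuilles compactes. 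Cette finitude est assur\'ee en (1) par l'hypoth\`ese de puissance de premier --- via l'analyse du rev\^etement cyclique d'ordre $\nu+1$ ramifi\'e le long de $D$ men\'ee dans \cite{Cer-L} --- et en (3) par les croisements ordinaires de $D$, qui maintiennent les singularit\'es du pinceau assez douces pour que l'holonomie soit contr\^ol\'ee. On redescend enfin l'int\'egrale premi\`ere par $\pi$ pour obtenir $f$.

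Dans le cas logarithmique (2), la structure transverse initiale est $\frac{\omega_\nu}{P_{\nu+1}} = \sum_{i=1}^s \lambda_i \frac{dP_i}{P_i}$, \`a $s \geq 2$ composantes en croisements ordinaires. J'exploiterais l'hypoth\`ese $\mathcal{P}$: aucun r\'esidu n'\'etant rationnel et l'un d'eux \'etant non r\'eel ou mal approch\'e, l'holonomie locale de $[\mathcal{F}_{\omega_\nu}]$ autour d'un point lisse de $[P_i=0]$ est lin\'earisable (domaine de Poincar\'e ou de Siegel), et autour d'un croisement c'est un couple ab\'elien lin\'earisable. Ces formes normales fournissent des primitives logarithmiques locales $\lambda_i \log f_i$ de $\omega$, avec $f_i = P_i + \cdots$ holomorphes; la condition $\mathcal{P}$ est exactement ce qui supprime les r\'esonances obstruant la lin\'earisation. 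Je recollerais enfin ces donn\'ees sur $E \setminus \mathrm{Sing}$, l'obstruction cohomologique s'annulant car les r\'esidus sont prescrits, pour obtenir l'int\'egrale premi\`ere globale $\sum \lambda_i \log f_i$.

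L'obstacle principal, dans chaque cas, est le contr\^ole de l'holonomie et sa compatibilit\'e \`a travers le lieu singulier de $[\mathcal{F}_{\omega_\nu}]$: il faut montrer que les formes normales locales (d'ordre fini en (1), (3); logarithmiques lin\'earisables en (2)) se recollent en une unique structure transverse pour $\omega$, sans monodromie r\'esiduelle. C'est pr\'ecis\'ement l\`a qu'interviennent les hypoth\`eses arithm\'etiques --- la puissance de premier r\'eduit la monodromie du rev\^etement associ\'e \`a $D$, la condition $\mathcal{P}$ annule les obstructions r\'esonnantes \`a la lin\'earisation. V\'erifier que la structure recoll\'ee descend en une v\'eritable int\'egrale premi\`ere holomorphe (resp. logarithmique) de $\omega$, et non en un objet seulement multivalu\'e ou formel, constitue le point technique essentiel, que l'on r\`egle gr\^ace au th\'eor\`eme d'int\'egration de Mattei--Moussu joint aux r\'esultats de rigidit\'e de \cite{Cer-L,Cer-Ma}.
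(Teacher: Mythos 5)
First, a point of comparison: the paper itself gives no proof of this theorem — it is explicitly presented as a fusion of results of Cerveau--Loray \cite{Cer-L} and Cerveau--Mattei \cite{Cer-Ma} — so your proposal must be measured against the arguments of those references.

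There is a genuine gap, and it is at the foundation of your setup: you have described the \emph{dicritical} geometry, whereas the hypothesis here is that $\omega_\nu$ is \emph{non dicritique}. Since $i_R\omega_\nu = P_{\nu+1}\not\equiv 0$, the radial vector field is not tangent to $\mathcal{F}_{\omega_\nu}$, so this foliation does \emph{not} descend to $\mathbb{P}^{n-1}_{\mathbb{C}}$: there is no ``feuilletage induit $[\mathcal{F}_{\omega_\nu}]$ de degr\'e $\nu-1$ sur $E$'' (the paper reserves the notation $[\mathcal{F}_{\omega_\nu}]$ for dicritical forms precisely for this reason). What actually happens after the blow-up is the opposite picture: in a chart one gets $E^*\omega = x^{\nu}\bigl[(P_{\nu+1}(1,t)+O(x))\,dx + x(\cdots)\bigr]$, so $E$ is invariant, $\mathrm{Sing}$ traces exactly $D=[P_{\nu+1}=0]$ on $E$, and $E\setminus D$ is a \emph{single leaf} of $\pi^*\mathcal{F}_\omega$. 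Consequently the pencil $P_{\nu+1}/L^{\nu+1}$, its compact generic members, and the Reeb-type stability argument you build on them are not attached to $\pi^*\mathcal{F}_\omega$ at all: in case (1), $\omega_\nu=\frac{1}{\nu+1}dP_{\nu+1}$ and the leaves of $\mathcal{F}_{\omega_\nu}$ are the affine levels $\{P_{\nu+1}=c\}$, whose strict transforms do not trace that pencil on $E$, and the leaves of $\pi^*\mathcal{F}_\omega$ near $E$ are not perturbations of projective hypersurfaces.

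The object one must control is the holonomy representation $\pi_1(\mathbb{P}^{n-1}_{\mathbb{C}}\setminus D)\to\mathrm{Diff}(\mathbb{C},0)$ of the leaf $E\setminus D$, whose meridian around the component coming from $[P_i=0]$ has linear part $e^{2\pi i\lambda_i}$ (the residue of the pulled-back closed form along $E$ being $\sum\lambda_i\,d^0P_i=1$). The decisive inputs, absent from your sketch, are then: for (2) and (3), the hypothesis ``croisements ordinaires'' enters through the theorem of Deligne--Fulton asserting that the fundamental group of the complement of a nodal hypersurface of $\mathbb{P}^{n-1}_{\mathbb{C}}$ is \emph{abelian}; in case (3) the holonomy group is therefore a quotient of $H_1=\mathbb{Z}/(\nu+1)\mathbb{Z}$, hence finite cyclic, and the Mattei--Moussu integration theorem \cite{Ma-Mo} (finite holonomy plus closed nearby leaves) produces the holomorphic first integral, which one extends across $D$ and pushes down by $\pi$; in case (2) the holonomy group is abelian with multipliers $e^{2\pi i\lambda_i}$, and condition $\mathcal{P}$ is exactly what linearizes the whole group (Koenigs in the non-real case, Siegel in the Diophantine case, then rigidity of centralizers), yielding the closed logarithmic structure and $\sum\lambda_i\log f_i$. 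In case (1) the hypersurface $D$ may have arbitrary singularities, $\pi_1$ of its complement is in general \emph{not} abelian, and this is where the ``arithm\'etique \'el\'ementaire'' of \cite{Cer-L} intervenes: a group-theoretic argument using $\nu+1=p^s$ to force finiteness of the holonomy group generated by the (mutually conjugate) meridians. Your phrases ``singularit\'es du pinceau assez douces pour que l'holonomie soit contr\^ol\'ee'' and ``l'obstruction cohomologique s'annulant car les r\'esidus sont prescrits'' stand in place of precisely these non-trivial steps; so although you name the right toolbox (blow-up, holonomy, Mattei--Moussu, arithmetic and Diophantine conditions), it is attached to the wrong geometric objects, and the actual finiteness/abelianity mechanisms are missing.
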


\begin{remark}
Dans les cas $(1)$, $(2)$ et $(3)$ le feuilletage $\mathcal{F}_\omega$ est d\'efini par une $1-$forme ferm\'ee tout comme sa \emph{partie homog\`ene} $\mathcal{F}_{\omega_\nu}$.
\end{remark}

\begin{remark}
Dans \cite{Cer-L} on construit des feuilletages $\mathcal{F}_\omega$ de $\mathbb{C}^3,0$ tels que la partie homog\`ene $\omega_\nu$ soit de type $dP_{\nu+1}$ avec $P_{\nu+1}$ irr\'eductible et qui ne poss\`edent pas d'int\'egrale premi\`ere holomorphe non constante. Ils ont la propri\'et\'e d'\^etre image r\'eciproque par une application holomorphe d'un feuilletage de $\mathbb{C}^2,0$.
\end{remark}

\begin{remark}
Dans \cite{Cer-Mo} on s'int\'eresse au cas o\`u $P_{\nu+1} =0$ est un croisement normal, i.e. un produit de coordonn\'ees, et les $\lambda_i$ des entiers positifs. On obtient dans ce cas l'alternative (non exclusive): ou bien $\mathcal{F}_\omega$ est d\'efini par une $1-$forme ferm\'ee (qui n'est pas en g\'en\'eral \`a p\^oles simples) ou bien $\mathcal{F}_\omega$ est image r\'ecriproque d'un feuilletage de $\mathbb{C}^2,0$.
\end{remark}

Int\'eressons nous maintenant dans le cas dicritique aux propri\'et\'es de $\omega_\nu$ suceptibles d'\^etre transmises \`a tout $\omega = \omega_\nu + \cdots$. Le premier r\'esultat en ce sens est du \`a C. Camacho et A. Lins Neto \cite{Cam-Alc}: 

\begin{theorem}\label{theo10}
\begin{enumerate}[$(1)$]
\item Soit $\omega = \omega_\nu + \cdots$ un germe de $1-$forme int\'egrable \`a l'origine de $\mathbb{C}^3$ dont le $\nu-$jet est la $1-$forme homog\`ene dicritique $\omega_\nu$ avec $\nu\geq 3$. Si la $2-$forme $d\omega_\nu$ ne s'annule qu'en 0, il existe un germe de diff\'eomorphisme $\phi: \mathbb{C}^3,0 \circlearrowleft$ tel que $\omega = \phi^*\omega_\nu$, i.e. $\omega$ et $\omega_\nu$ sont conjugu\'ees.
\item Soit $\omega \in \Omega^1(\mathbb{C}^n,0)$, $n\geq4$, un germe de $1-$forme int\'egrable pour lequel il existe une section $3-$plane $i:\mathbb{C}^3,0 \rightarrow \mathbb{C}^n,0$ telle que $i^*\omega$ satisfait le point $(1)$. Alors il existe une submersion $j:\mathbb{C}^n,0 \rightarrow \mathbb{C}^3,0$ telle que $\omega = j^*(i^*\omega)$. En particulier $\omega$ est conjugu\'e \`a la $1-$forme $(i^*\omega)_\nu \in \Omega^1(\mathbb{C}^3,0)$ (vue dans $\Omega^1(\mathbb{C}^n,0)$).
\end{enumerate}
\end{theorem}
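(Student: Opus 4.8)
Le plan, pour le point $(1)$, est d'appliquer la \emph{m\'ethode du chemin} \`a l'homotopie reliant $\omega$ \`a sa partie initiale. Partant de $\omega=\omega_\nu+\omega_{\nu+1}+\cdots$, je poserais
\begin{equation*}
\omega_t = t^{-\nu}\,h_t^*\omega = \omega_\nu + t\,\omega_{\nu+1} + t^2\,\omega_{\nu+2} + \cdots,\qquad h_t(x)=tx,
\end{equation*}
de sorte que chaque $\omega_t$ soit int\'egrable, de partie initiale $\omega_\nu$, et relie $\omega_0=\omega_\nu$ \`a $\omega_1=\omega$. Je chercherais ensuite une famille $\phi_t\in\mathrm{Diff}(\mathbb{C}^3,0)$, $\phi_0=\mathrm{id}$, conjuguant $\mathcal{F}_{\omega_t}$ \`a $\mathcal{F}_{\omega_\nu}$. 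En notant $X_t$ le champ d\'ependant du temps engendrant $\phi_t$ ($\dot\phi_t=X_t\circ\phi_t$) et en d\'erivant la condition $\phi_t^*\omega_t\wedge\omega_\nu=0$, on se ram\`ene \`a r\'esoudre \`a chaque instant l'\'equation homologique
\begin{equation*}
\big(\dot\omega_t + L_{X_t}\omega_t\big)\wedge\omega_t = 0,
\end{equation*}
o\`u $\dot\omega_t$ est de multiplicit\'e $\geq\nu+1$, strictement sup\'erieure \`a celle de $\omega_\nu$; l'existence du facteur $b_t$ tel que $\dot\omega_t+L_{X_t}\omega_t=b_t\,\omega_t$ r\'esulte alors de la division de de Rham--Saito, puisque ${\rm Cod\ Sing}\ \omega_t\geq2$. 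Si l'on sait r\'esoudre cette \'equation par un champ $X_t$ s'annulant \`a l'ordre voulu \`a l'origine et holomorphe en $t$, on int\`egre le flot sur $[0,1]$ pour obtenir $\phi=\phi_1$ et la conjugaison cherch\'ee.

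Le c\oe ur de la preuve, et l'obstacle principal, est la r\'esolution de cette \'equation homologique, o\`u intervient de fa\c con essentielle l'hypoth\`ese de singularit\'e isol\'ee de $d\omega_\nu$. J'exploiterais d'abord l'identit\'e d'Euler: comme $\omega_\nu$ est homog\`ene dicritique, $i_R\omega_\nu=0$ et $L_R\omega_\nu=(\nu+1)\,\omega_\nu$, d'o\`u
\begin{equation*}
i_R\,d\omega_\nu = (\nu+1)\,\omega_\nu.
\end{equation*}
La $2-$forme ferm\'ee homog\`ene $\Omega:=d\omega_\nu$ d\'etermine ainsi $\omega_\nu$ par contraction radiale; via la forme volume elle correspond \`a un champ homog\`ene $Z$ \`a divergence nulle, dont le z\'ero est isol\'e, c'est-\`a-dire dont les composantes forment une suite r\'eguli\`ere. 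La difficult\'e se ram\`ene alors \`a l'\'etude de l'application de contraction $X\mapsto i_X\Omega$, reli\'ee \`a $L_X\omega_\nu=i_X\Omega+d(i_X\omega_\nu)$. Deux faits structurels facilitent la r\'esolution: une composante radiale $fR$ de $X$ contribue $f(\nu+1)\,\omega_\nu$ \`a $i_X\Omega$ et se trouve donc absorb\'ee dans le terme $b\,\omega_\nu$; et la r\'egularit\'e de la suite $(Z_1,Z_2,Z_3)$ (cons\'equence de la singularit\'e isol\'ee) rend exact, en degr\'e assez grand, le complexe de Koszul associ\'e \`a $Z$. C'est cette annulation cohomologique, et elle seule, qui permet d'\'ecrire toute $1-$forme $\eta$ de multiplicit\'e $>\nu$ sous la forme $\eta = b\,\omega_\nu - i_X\Omega - d(i_X\omega_\nu)$ avec $X$ de multiplicit\'e $\geq2$. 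Le point d\'elicat sera double: d'une part tenir la comptabilit\'e pr\'ecise des degr\'es garantissant que $X_t$ s'annule assez \`a l'origine, d'autre part assurer des estim\'ees uniformes en $t\in[0,1]$ permettant l'int\'egration du flot en un germe de diff\'eomorphisme.

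Pour le point $(2)$, je d\'eduirais l'\'enonc\'e du point $(1)$ par un argument de trivialisation analogue \`a la fin de la preuve de la Proposition \ref{prop3}. Dans des coordonn\'ees $(x_1,x_2,x_3,y_4,\ldots,y_n)$ adapt\'ees \`a la section $i$, \'ecrivons
\begin{equation*}
\omega = \sum_{i=1}^{3} A_i\,dx_i + \sum_{j=4}^{n} B_j\,dy_j,
\end{equation*}
de sorte que $i^*\omega=\sum_{i=1}^3 A_i(x,0)\,dx_i$ v\'erifie $(1)$. L'hypoth\`ese de singularit\'e isol\'ee sur la tranche force ${\rm Sing}\,(i^*\omega)$ \`a \^etre de codimension $2$, donc ${\rm Sing}\,\omega$ \`a \^etre petit; joint \`a l'int\'egrabilit\'e $\omega\wedge d\omega=0$, ceci doit entra\^iner, comme dans la Proposition \ref{prop3}, que chaque coefficient transverse appartient \`a l'id\'eal des coefficients de la tranche, $B_j\in\langle A_1,A_2,A_3\rangle$. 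C'est l\`a le point technique central de $(2)$. Les champs $X_j=\frac{\partial}{\partial y_j}-\sum_{i=1}^3\gamma_{ij}\frac{\partial}{\partial x_i}$ annulent alors $\omega$ par produit int\'erieur et sont transverses \`a la tranche; en les int\'egrant (apr\`es les avoir \'eventuellement mis en involution, ce que permet la petitesse de ${\rm Sing}\,\omega$ et l'involutivit\'e de $\ker\omega$) on obtient une submersion $j:\mathbb{C}^n,0\to\mathbb{C}^3,0$ avec $j\circ i=\mathrm{id}$ telle que $\omega=j^*(i^*\omega)$. Le point $(1)$ appliqu\'e \`a $i^*\omega$ donne enfin la conjugaison de $\omega$ \`a $(i^*\omega)_\nu$.
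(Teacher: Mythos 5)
Your plan is a genuinely different route from the paper's, and at the points where it differs it is incomplete precisely where the analytic content lies. For part $(1)$ the paper does not use the Moser path method at all: it first produces a \emph{global} germ of vector field $X$ with $i_X d\omega = \omega$, by solving this equation locally near every point of a punctured ball (possible because $d\omega$, like $d\omega_\nu$, vanishes only at the origin: Kupka phenomenon) and gluing the local solutions via Lemme \ref{lemme18} (the differences $X_i - X_j$ are multiples of the field $Z$ dual to $d\omega$, and Cartan's vanishing $H^1(B^*,\mathcal{O})=0$ plus Hartogs yield a global $X$). Comparing degree-$\nu$ parts and using the Euler identity $i_R d\omega_\nu = (\nu+1)\omega_\nu$, the isolated singularity of $d\omega_\nu$ and the inequality $\nu\geq 3$ (so that $\deg Z = \nu-1 \geq 2$ exceeds the degree of a linear field), one gets that the linear part of $X$ is $R/(\nu+1)$; Poincar\'e's linearization theorem then allows one to assume $X = R/(\nu+1)$, and $L_X\omega = \omega$ forces $\omega = \omega_\nu$ in those coordinates. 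All convergence questions are thus delegated to a classical theorem. In your scheme, by contrast, everything rests on solving the homological equation $\dot\omega_t + L_{X_t}\omega_t = b_t\,\omega_t$ with a field $X_t$ holomorphic in $(x,t)$, vanishing to order $\geq 2$ at $0$ and defined on a ball of radius \emph{uniform in} $t$; you state yourself that this is the delicate point and leave it open (\emph{si l'on sait r\'esoudre cette \'equation\ldots}). Koszul exactness of the sequence associated to $Z$ gives only the algebraic, degree-by-degree solvability, hence at best a \emph{formal} conjugacy; the passage from formal to convergent is exactly the substance of the theorem and cannot be waved through, since the hypotheses ($\nu\geq3$, isolated zero of $d\omega_\nu$) must enter the estimates somewhere. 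Note also that your solvability discussion is carried out for the pair $(\omega_\nu, \Omega=d\omega_\nu)$, i.e.\ the linearized equation at $t=0$, whereas the path method requires solving at each $t$ with the pair $(\omega_t, d\omega_t)$, which adds a perturbation argument again requiring uniform control.

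For part $(2)$ (which the paper does not reprove: it is quoted from \cite{Cam-Alc}), your reduction to a trivialization in the style of Proposition \ref{prop3} hinges on the membership $B_j \in \langle A_1,A_2,A_3\rangle$, which you justify only by analogy. But the mechanism of Proposition \ref{prop3} is unavailable here: there, the nonnilpotent $1$-jet made $\{A_1=A_2=0\}$ a \emph{smooth} codimension-$2$ complete intersection containing ${\rm Sing}\,\omega$, whence equality of the two sets and, the ideal being prime, the division $B_i \in \langle A_1,A_2\rangle$. In your situation the slice coefficients $A_i(x,0)$ vanish to order $\nu\geq 3$ at the origin, their common zero set is in general a singular cone of codimension $2$ (not $3$), and the ideal $\langle A_1,A_2,A_3\rangle$ has no reason to be radical; so even if you proved that the $B_j$ vanish on ${\rm Sing}\,\omega$, this would not give membership in the ideal. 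That step therefore needs a genuinely different argument (in \cite{Cam-Alc} it comes from the same contraction machinery $i_Xd\omega=\omega$ and an analysis of $\ker d\omega$ in dimension $n$), and as written your proof of $(2)$, like that of $(1)$, is a plausible program rather than a proof.
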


Le Th\'eor\`eme \ref{theo10} classifie les feuilletages en dimension $3$ \`a partie homog\`ene \emph{g\'en\'erique} pour $\nu \geq 3$. Qu'en est-il pour $\nu \leq 2$? Il se trouve que si $\omega_\nu \in \Omega^1(\mathbb{C}^n,0)$ est int\'egrable homog\`ene dicritique de degr\'e $\nu \leq 2$, il existe un polyn\^ome homog\`ene $Q_{\nu+1}$ de degr\'e $\nu+1$ tel que $\displaystyle{\frac{\omega_\nu}{Q_{\nu+1}}}$ soit ferm\'ee \cite{Can-Cer-Des}. Par exemple si $\nu =1$, $\omega_1$ est lin\'eairement conjugu\'e \`a $x_1dx_2 - x_2dx_1$ et $Q_2 = x_1x_2$ convient. 

On d\'eduit la classification (g\'en\'erique lorsque) $\nu\leq2$ en utilisant le r\'esultat suivant \'enonc\'e dans \cite{Cer}: 

\begin{theorem}\label{theo11}
Soit $\omega = \omega_\nu +\cdots \in \Omega^1(\mathbb{C}^n,0)$ int\'egrable avec $\omega_\nu$ homog\`ene dicritique de degr\'e $\nu$ et satisfaisant ${\rm Cod\ Sing}\ \omega_\nu \geq2$. On suppose qu'il existe un polyn\^ome homog\`ene $Q$ tel que $\displaystyle{\frac{\omega_\nu}{Q}}$ soit ferm\'ee. Il existe alors $f \in \mathcal{O}(\mathbb{C}^n,0)$ tel que $\displaystyle{\frac{\omega}{f}}$ soit ferm\'ee.
\end{theorem}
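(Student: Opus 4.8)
The condition ``$\omega/f$ is closed'' is equivalent to the integrating-factor equation
$$ f\,d\omega = df\wedge\omega. $$
I would look for $f$ as a germ whose Taylor expansion begins with $Q$, writing formally $f = Q + \sum_{k>q} f_k$ with $f_k$ homogeneous of degree $k$ and $q=\deg Q$. Note first that $i_R\omega_\nu=0$ together with $d(\omega_\nu/Q)=0$ gives $L_R(\omega_\nu/Q)=i_R d(\omega_\nu/Q)+d\,i_R(\omega_\nu/Q)=0$, which forces $q=\nu+1$; this is the dicritical analogue of the degree of $P_{\nu+1}$. Expanding the integrating-factor equation into homogeneous components and collecting coefficient-degree $q+\nu-1$, the lowest-order equation reads $Q\,d\omega_\nu = dQ\wedge\omega_\nu$, i.e.\ exactly the hypothesis $d(\omega_\nu/Q)=0$. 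So the ansatz is consistent and the construction can start.

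For the inductive step, suppose $f_q=Q,\dots,f_{q+N-1}$ have been found annihilating all homogeneous components of $f\,d\omega-df\wedge\omega$ up to coefficient-degree $q+\nu+N-2$. The next component is governed by the linear operator
$$ L(g) := g\,d\omega_\nu - dg\wedge\omega_\nu = -\,Q\,d\!\left(\tfrac{g}{Q}\right)\wedge\omega_\nu, $$
and the equation to solve for the unknown $f_{q+N}$ is $L(f_{q+N})=\Theta_N$, where $\Theta_N$ is an explicit homogeneous $2$-form built from $\omega_\nu,\dots,\omega_{\nu+N}$ and $f_q,\dots,f_{q+N-1}$. The compatibility of the right-hand side comes for free from integrability: setting $E:=f\,d\omega-df\wedge\omega$ for any formal $f$, one has the identity $\omega\wedge E = f\,\omega\wedge d\omega - \omega\wedge df\wedge\omega = 0$; comparing lowest-degree parts yields $\omega_\nu\wedge\Theta_N=0$. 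This is exactly the condition a $2$-form must satisfy in order to lie in the image of $L$, since $L(g)\wedge\omega_\nu=0$ automatically.

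The heart of the matter, and the step I expect to be the main obstacle, is then the surjectivity lemma: \emph{every homogeneous $2$-form $\Theta$ of the relevant degree with $\omega_\nu\wedge\Theta=0$ is of the form $L(g)$}. Writing $\eta=\omega_\nu/Q$ (closed, with $i_R\eta=0$), the equation $L(g)=\Theta$ becomes $d(g/Q)\wedge\eta = -\Theta/Q^{2}$. Since $\omega_\nu\wedge\Theta=0$ gives $\eta\wedge(\Theta/Q^{2})=0$, the de Rham--Saito division theorem (applicable thanks to ${\rm Cod\ Sing}\ \omega_\nu\geq 2$ and the control it gives on $\mathrm{Sing}\,\eta$) produces a $1$-form $\beta$ with $\Theta/Q^{2}=\eta\wedge\beta$; solving $L(g)=\Theta$ then amounts to writing $\beta\equiv dh \pmod{\eta}$ with $h=g/Q$. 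This is a vanishing statement for the first cohomology of the closed integrable form $\eta$ in positive homogeneous degree, which I would establish using the explicit multivalued first integral of $\eta$ furnished by its closedness, exactly as in the logarithmic and Liouvillian analysis recalled earlier. Controlling this cohomology uniformly in $N$ is where the genuine work lies, and it is here that the precise structure of $\eta$ (rather than merely of $\omega_\nu$) is indispensable.

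Finally, the recursion yields a formal integrating factor $\widehat f = Q + \sum_{k>q} f_k$. Since ``$\omega/f$ closed'' is an analytic system of equations in the coefficients of $f$, the existence of a formal solution upgrades to a convergent germ $f\in\mathcal{O}(\mathbb{C}^n,0)$ by M.\ Artin's approximation theorem, which completes the proof. One could instead run a majorant-series estimate directly on the recursion, but Artin's theorem dispenses with these calculations entirely.
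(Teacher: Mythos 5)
Your formal setup is correct as far as it goes: the computation forcing $\deg Q=\nu+1$, the identification of the lowest-order equation with the hypothesis $d(\omega_\nu/Q)=0$, the compatibility $\omega_\nu\wedge\Theta_N=0$, and the inclusion ${\rm Im}(L)\subset\{\Theta:\ \omega_\nu\wedge\Theta=0\}$ are all right. But the proposal stops exactly where the theorem lives. The surjectivity lemma --- every homogeneous $2$-forme $\Theta$ of the relevant degree with $\omega_\nu\wedge\Theta=0$ lies in ${\rm Im}(L)$ --- is never proved: you defer it explicitly (``I would establish\dots'', ``where the genuine work lies''), and the one tool you invoke to attack it is misapplied. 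De Rham--Saito division of a $p$-form by a $1$-form requires ${\rm Cod\ Sing}\geq p+1$; for your $2$-form $\Theta$ this means codimension $\geq 3$, whereas the hypothesis of Th\'eor\`eme \ref{theo11} only gives ${\rm Cod\ Sing}\ \omega_\nu\geq 2$. At codimension $2$, holomorphic division of $2$-forms genuinely fails: this is the Kupka phenomenon (for $\omega=x_1dx_2+2x_2dx_1$ in $\mathbb{C}^3$ one has $\omega\wedge d\omega=0$, yet $d\omega$ is nowhere of the form $\omega\wedge\beta$ with $\beta$ holomorphic, since $d\omega\neq0$ on ${\rm Sing}\ \omega$). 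Passing to the meromorphic form $\eta=\omega_\nu/Q$ does not evade this: a meromorphic division $\Theta/Q^2=\eta\wedge\beta$ may exist, but then $\beta$ has poles along $Q=0$, and your final equation $d(g/Q)\wedge\eta=-\Theta/Q^2$ must be solved with $g$ a \emph{holomorphic} homogeneous polynomial, so the pole orders have to be controlled --- none of which is addressed. In short, you have reformulated the theorem as a statement about ${\rm coker}(L)$ (equivalently, a relative cohomology vanishing for $\eta$), not proved it.

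The last step is also invalid as written: Artin's approximation theorem concerns analytic equations $F(x,y(x))=0$ in the unknown functions and does not apply to differential equations, and the integrating-factor equation $f\,d\omega=df\wedge\omega$ involves $df$. There is no general formal-to-convergent principle for such systems, and this very paper shows the issue is real: in the situation of the Lemme \ref{lemme37} and Th\'eor\`eme \ref{theo39}, formal integrating factors are shown to converge only under Diophantine conditions on the invariant $b$. So convergence must come from the specific dicritical structure, not from a soft general theorem. For comparison, the paper itself does not prove Th\'eor\`eme \ref{theo11} (it is quoted from \cite{Cer}); but where analogous statements are proved (Th\'eor\`eme \ref{theo30}, Th\'eor\`eme \ref{theo32'}), the route is geometric and bypasses both of your gaps: blow up the origin, use dicriticity to see that $E^{-1}({\mathcal F}_\omega)$ is generically transverse to $E^{-1}(0)\simeq\mathbb{P}^{n-1}_{\mathbb{C}}$ and induces on it the foliation defined by the closed form $\omega_\nu/Q$, extend this closed form to a closed meromorphic $1$-form defining $E^{-1}({\mathcal F}_\omega)$ near the divisor (local pull-back through the transverse projection, then the Levi extension theorem), and push it down by $E$. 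If you wish to keep a proof in the spirit of your recursion, it is the cokernel analysis of $L$ and a genuine convergence argument (majorant series) that must be supplied.
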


\begin{remark}
De la m\^eme fa\c con que dans le cas homog\`ene, une $1-$forme ferm\'ee \emph{m\'eromorphe} $\eta$ s'\'ecrit:
$$\eta = \sum\limits_{i=1}^{s}\lambda_i \frac{df_i}{f_i} + d\left( \frac{H}{f_1^{n_1}\cdots f_s^{n_s}}\right) $$ o\`u les $f_i$ et $H$ sont holomorphes, $\lambda_i \in \mathbb{C}$ et $n_i \in \mathbb{N}$. La fonction multivalu\'ee $\displaystyle{\sum \lambda_i} {\rm log} f_i + \frac{H}{f_1^{n_1}\cdots f_s^{n_s}}$ est une int\'egrale premi\`ere du feuilletage correspondant.
\end{remark}

\begin{corollary}
Soit $\omega = \omega_\nu + \cdots \in \Omega^1(\mathbb{C}^n,0)$ int\'egrable avec $\omega_\nu$ homog\`ene dicritique, ${\rm Cod\ Sing}\ \omega_\nu \geq2$ et $\nu \leq2$. Il existe $f\in \mathcal{O}(\mathbb{C}^n,0)$ tel que $\displaystyle{\frac{\omega}{f}}$ soit ferm\'ee. 
\end{corollary}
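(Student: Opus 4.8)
Le plan est de se ramener directement au Th\'eor\`eme \ref{theo11}. La seule chose \`a \'etablir est que son hypoth\`ese --- l'existence d'un polyn\^ome homog\`ene $Q$ rendant $\omega_\nu/Q$ ferm\'ee --- est automatiquement satisfaite d\`es que $\nu \leq 2$. Or c'est exactement le fait structurel rappel\'e plus haut, d\^u \`a \cite{Can-Cer-Des}: toute $1-$forme homog\`ene int\'egrable dicritique de degr\'e $\nu \leq 2$ admet un tel $Q_{\nu+1}$, de degr\'e $\nu+1$. Une fois cet ingr\'edient acquis, le Th\'eor\`eme \ref{theo11} fournit imm\'ediatement $f \in \mathcal{O}(\mathbb{C}^n,0)$ tel que $\omega/f$ soit ferm\'ee, ce qui est la conclusion cherch\'ee. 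Le corollaire est donc formel, pourvu que l'on dispose de la classification des mod\`eles de petit degr\'e.

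Pour rendre l'argument autonome, je v\'erifierais d'abord \`a la main l'existence de $Q$ dans le cas $\nu = 1$. \'Ecrivant $\omega_1 = \sum_i a_i dx_i$ avec les $a_i$ lin\'eaires, la condition dicritique $i_R\omega_1 = \sum_i x_i a_i \equiv 0$ exprime exactement que la matrice des coefficients est antisym\'etrique, i.e. $\omega_1 = -i_R\Omega$ pour une $2-$forme \`a coefficients constants $\Omega$, avec $d\omega_1 = -2\Omega$. La condition d'int\'egrabilit\'e $\omega_1 \wedge d\omega_1 = 0$ se r\'e\'ecrit alors $(i_R\Omega)\wedge\Omega = \frac{1}{2}i_R(\Omega\wedge\Omega) = 0$; en mettant $\Omega$ sous forme normale de Darboux (pour les $2-$formes constantes sur $\mathbb{C}^n$), on constate que ceci force $\Omega$ de rang $2$, puisque le rang $\geq 4$ donnerait $\Omega\wedge\Omega \neq 0$, donc $i_R(\Omega\wedge\Omega) \neq 0$. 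Ainsi $\omega_1$ est lin\'eairement conjugu\'ee \`a $x_2dx_1 - x_1dx_2$, et $Q_2 = x_1x_2$ convient, puisque $\frac{x_2dx_1 - x_1dx_2}{x_1x_2} = \frac{dx_1}{x_1} - \frac{dx_2}{x_2}$ est ferm\'ee.

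Pour $\nu = 2$, la classification des $1-$formes homog\`enes int\'egrables dicritiques de degr\'e $2$, \`a conjugaison lin\'eaire pr\`es, est finie (c'est le contenu de \cite{Can-Cer-Des}); sur chaque mod\`ele de la liste on exhibe explicitement le polyn\^ome $Q_3$ rendant $\omega_2/Q_3$ ferm\'ee. Je me contenterais de renvoyer \`a cette classification plut\^ot que de reproduire les calculs, l'hypoth\`ese ${\rm Cod\ Sing}\ \omega_\nu \geq 2$ du corollaire garantissant par ailleurs que l'on reste bien dans le cadre d'application du Th\'eor\`eme \ref{theo11}.

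L'obstacle principal ne r\'eside donc pas dans le corollaire lui-m\^eme, qui est formel une fois ces deux ingr\'edients en main, mais en amont: d'une part dans la classification des mod\`eles dicritiques de petit degr\'e assurant l'existence de $Q$ (o\`u la difficult\'e est simplement combinatoire et se r\'esout par \'enum\'eration), d'autre part et surtout dans le Th\'eor\`eme \ref{theo11}, o\`u se concentre la vraie difficult\'e --- propager la fermeture de $\omega_\nu/Q$ \`a la forme totale $\omega$, c'est-\`a-dire construire le facteur int\'egrant $f$ au-del\`a de sa partie initiale.
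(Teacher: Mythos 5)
Votre d\'emonstration est correcte et suit exactement la d\'emarche du papier : le corollaire y est pr\'esent\'e comme cons\'equence imm\'ediate du Th\'eor\`eme \ref{theo11} combin\'e au fait, cit\'e de \cite{Can-Cer-Des}, qu'une $1-$forme homog\`ene int\'egrable dicritique de degr\'e $\nu\leq2$ admet un polyn\^ome homog\`ene $Q_{\nu+1}$ tel que $\omega_\nu/Q_{\nu+1}$ soit ferm\'ee. Votre v\'erification directe du cas $\nu=1$ (matrice antisym\'etrique, rang $2$, mod\`ele $x_2dx_1-x_1dx_2$ avec $Q_2=x_1x_2$) ne fait qu'expliciter la remarque correspondante du texte.
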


\begin{remark}
Le Th\'eor\`eme \ref{theo11} s'\'etend stricto-sensu (avec une d\'emonstration diff\'erente) de la fa\c con suivante: s'il existe $Q$ rationelle homog\`ene telle que $\displaystyle{\frac{\omega_\nu}{Q}}$ soit ferm\'ee alors il existe $f$ m\'eromorphe telle que $\displaystyle{\frac{\omega}{f}}$ soit ferm\'ee.
\end{remark}

\begin{remark}
En dimension sup\'erieur \`a $4$ l'espace des formes int\'egrables homog\`enes dicritiques de degr\'e $\nu$ est une vari\'et\'e alg\'ebrique non irr\'eductible. ``Beaucoup'' de ses composantes sont associ\'es \`a des feuilletages d\'efinis par des $1-$formes ferm\'ees rationelles. C'est un cas typique  o\`u le Th\'eor\`eme \ref{theo11} s'applique.
\end{remark}

Ceci \'etant dit il semble int\'eressant d'\'etudier les d\'eg\'en\'erescences des cas \emph{g\'en\'eriques} -- ce que nous abordons dans les chapitres suivants.

\section{Cas dicritique d\'eg\'en\'er\'e avec $\nu\leq2$}

Dans le paragraphe qui pr\'ec\`ede l'hypoth\`ese ${\rm Cod\ Sing}\ \omega_\nu \geq2$ a \'et\'e essentielle. En fait si $\omega_\nu$ est homog\`ene dicritique avec $\nu\leq2$ et $\omega_\nu$ s'annule sur une hypersurface alors $\nu=2$ et \`a conjugaison lin\'eaire pr\`es $\omega_2$ est de l'un des deux types suivants: $$x_1(x_1dx_2 - x_2dx_1) \quad \mbox{ou} \quad x_3(x_1dx_2 - x_2dx_1).$$ 
Il existe des $1-$formes holomorphes $\omega \in \Omega^1(\mathbb{C}^n,0)$ dont le $2-$jet est $x_1(x_1dx_2 - x_2dx_1)$ pour lesquels il n'existe pas de $f \in \mathcal{O}(\mathbb{C}^2,0)$ tels que $\displaystyle{\frac{\omega}{f}}$ soit ferm\'ee: c'est d'ailleurs g\'en\'erique par la topologie de Krull. Pour ce qui concerne les dimensions sup\'erieures \`a 2 on peut poser la
\vskip0.2cm
\begin{center}
\begin{minipage}{15cm}
{\bf Question:} \emph{Soit $\omega \in \Omega^1(\mathbb{C}^n,0)$ int\'egrable dont le $2$-jet est $x_1(x_1dx_2 - x_2dx_1)$. Existe-t-il $F: \mathbb{C}^n,0 \rightarrow \mathbb{C}^2,0$ et $\omega_0 \in \Omega^1(\mathbb{C}^2,0)$ tels que $\omega\wedge F^*\omega_0 = 0$, i.e. le feuilletage $\mathcal{F}_\omega$ est-il le tir\'e en arri\`ere d'un feuilletage du plan?}
\end{minipage}
\end{center}
\vskip0.2cm
Pour ce qui concerne l'autre mod\`ele nous avons la:

\begin{proposition}\label{prop16}
Il n'existe pas de $1-$forme $\omega \in \Omega^1(\mathbb{C}^n,0)$, $n\geq3$, $\omega$ int\'egrable, ${\rm Cod\ Sing}\ \omega\geq2$ ayant pour $2-$jet $\omega_2 = x_3(x_1dx_2- x_2dx_1)$.
\end{proposition}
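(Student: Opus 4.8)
Le plan est de déduire de l'intégrabilité une \emph{symétrie infinitésimale} de $\omega$ à partie linéaire diagonale, puis d'exploiter la graduation qu'elle définit. Je traite d'abord le cas $n=3$ et pose $\eta = x_1dx_2 - x_2dx_1$, $\partial_i = \partial/\partial x_i$. Écrivant $\omega = a_1dx_1 + a_2dx_2 + a_3dx_3$, l'hypothèse sur le $2$-jet donne $a_1 = -x_2x_3 + \cdots$, $a_2 = x_1x_3 + \cdots$ et $a_3$ d'ordre $\geq 3$ (le $1$-jet est nul et $dx_3$ n'apparaît pas dans $\omega_2$). En dimension $3$ la condition $\omega\wedge d\omega = 0$ se réduit à l'annulation de sa composante selon $dx_1\wedge dx_2\wedge dx_3$; en introduisant le champ $X$ défini par $d\omega = i_X(dx_1\wedge dx_2\wedge dx_3)$, soit
\[
X = (\partial_2a_3 - \partial_3a_2)\,\partial_1 + (\partial_3a_1 - \partial_1a_3)\,\partial_2 + (\partial_1a_2 - \partial_2a_1)\,\partial_3,
\]
l'identité $\alpha\wedge i_X(dx_1\wedge dx_2\wedge dx_3) = (i_X\alpha)\,dx_1\wedge dx_2\wedge dx_3$ montre que $\omega\wedge d\omega = 0$ équivaut à $i_X\omega = 0$. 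Comme $i_X d\omega = i_X i_X(dx_1\wedge dx_2\wedge dx_3) = 0$ et que $\omega$ est intégrable ($i_X\omega=0$), la formule de Cartan donne $L_X\omega = i_Xd\omega + d(i_X\omega) = 0$: le champ $X$ est tangent à $\mathcal{F}_\omega$ et préserve $\omega$. Le $2$-jet de $\omega$ fournit enfin la partie linéaire de $X$,
\[
X = -x_1\partial_1 - x_2\partial_2 + 2x_3\partial_3 + \cdots,
\]
de partie linéaire semi-simple $S = \mathrm{diag}(-1,-1,2)$.

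Je mettrais alors $X$ sous forme normale de Poincar\'e--Dulac: il existe un diff\'eomorphisme formel $\Phi$ tangent \`a l'identit\'e tel que $\Phi_*X = S + N$, o\`u $N$ est d'ordre $\geq 2$, r\'esonnant, et $[S,N] = 0$. Comme $\Phi$ est tangent \`a l'identit\'e, $\widehat\omega := \Phi^*\omega$ a m\^eme $2$-jet $x_3\eta$ que $\omega$ et v\'erifie $L_{S+N}\widehat\omega = 0$. Graduant les formes par le poids de $S$, d\'efini par $w(x_1^{m_1}x_2^{m_2}x_3^{m_3}) = -m_1 - m_2 + 2m_3$ et $w(dx_i) = \lambda_i$ avec $(\lambda_1,\lambda_2,\lambda_3)=(-1,-1,2)$, j'\'ecris $\widehat\omega = \sum_w \widehat\omega^{(w)}$. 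Puisque $[S,N] = 0$, l'op\'erateur $L_N$ commute \`a $L_S$ et respecte donc les composantes de poids; et comme $N$ est d'ordre $\geq 2$, $L_N$ augmente le degr\'e, donc est localement nilpotent. L'\'egalit\'e $L_S\widehat\omega = -L_N\widehat\omega$ se lit alors, poids par poids, $(w + L_N)\widehat\omega^{(w)} = 0$; un op\'erateur localement nilpotent n'ayant aucune valeur propre non nulle, on en tire $\widehat\omega^{(w)} = 0$ pour $w\neq 0$. Ainsi $\widehat\omega$ est de poids $0$ --- et ce \emph{sans} avoir lin\'earis\'e $X$, ce qui est crucial.

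Il reste \`a conclure. \'Ecrivant $\widehat\omega = \widehat a_1dx_1 + \widehat a_2dx_2 + \widehat a_3dx_3$, le poids $0$ impose $w(\widehat a_1) = w(\widehat a_2) = 1$ et $w(\widehat a_3) = -2$. Or tout mon\^ome de poids $1$ v\'erifie $m_3 \geq 1$ (sinon $-m_1 - m_2 = 1$, impossible), et tout mon\^ome de poids $-2$ et de degr\'e $\geq 3$ v\'erifie \'egalement $m_3\geq 1$ (le cas $m_3 = 0$ force le degr\'e $2$). Comme $\widehat a_3$ est d'ordre $\geq 3$, on conclut que $x_3$ divise $\widehat a_1$, $\widehat a_2$ et $\widehat a_3$, donc que l'id\'eal $(\widehat a_1,\widehat a_2,\widehat a_3)$ est de hauteur $\leq 1$. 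La hauteur de l'id\'eal des coefficients \'etant pr\'eserv\'ee par l'automorphisme formel $\Phi$, il en va de m\^eme de $(a_1,a_2,a_3)$, ce qui contredit $\mathrm{Cod\ Sing}\ \omega \geq 2$.

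Le point le plus d\'elicat est l'\'etape de forme normale: les valeurs propres $-1,-1,2$ sont dans le domaine de Siegel (leur enveloppe convexe contient $0$), de sorte que $X$ n'est pas lin\'earisable en g\'en\'eral et que des termes r\'esonnants subsistent; c'est l'argument de nilpotence locale ci-dessus qui permet malgr\'e tout de conclure. On remarquera le contraste avec le mod\`ele $x_1\eta$, pour lequel le m\^eme champ vaut $X = 3x_1\partial_3 + \cdots$, de partie lin\'eaire \emph{nilpotente}: la graduation s'effondre, ce qui \'eclaire le fait que ce cas reste, lui, ouvert. Enfin, pour $n\geq 4$, on se ram\`enerait \`a la dimension $3$ en montrant, dans l'esprit de la Proposition \ref{prop3} et du Th\'eor\`eme \ref{theo10}, que $\mathcal{F}_\omega$ est l'image r\'eciproque par une submersion $\mathbb{C}^n,0\to\mathbb{C}^3,0$ d'un germe de feuilletage de $\mathbb{C}^3,0$ de m\^eme $2$-jet; c'est l'ingr\'edient qui demanderait le plus de soin dans le cas g\'en\'eral.
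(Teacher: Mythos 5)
Votre traitement du cas $n=3$, qui est le c\oe ur de l'\'enonc\'e, est correct et emprunte une route r\'eellement diff\'erente de celle du texte. Vous tirez de l'int\'egrabilit\'e le champ rotationnel $X$ (d\'efini par $d\omega = i_X(dx_1\wedge dx_2\wedge dx_3)$), qui v\'erifie $i_X\omega = 0$ et $L_X\omega = 0$, et dont la partie lin\'eaire $S = \mathrm{diag}(-1,-1,2)$ se lit sur le $2$-jet; la forme normale de Poincar\'e--Dulac et l'argument de graduation (puisque $[S,N]=0$, $L_N$ pr\'eserve les poids; puisque $N$ est d'ordre $\geq2$, $L_N$ augmente l'ordre et $w+L_N$ est injectif pour $w\neq0$) forcent la forme transform\'ee \`a \^etre de poids nul, donc divisible par $x_3$; la hauteur de l'id\'eal des coefficients \'etant la m\^eme dans ${\mathcal O}({\mathbb C}^3,0)$ et dans son compl\'et\'e (fid\`ele platitude), on contredit bien l'hypoth\`ese analytique ${\rm Cod\ Sing}\ \omega\geq2$. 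Le texte proc\`ede tout autrement: il \'eclate l'origine, montre via l'int\'egrabilit\'e que la forme relev\'ee $\tilde\omega$ s'annule sur la droite $x_1=x_3=0$ du diviseur exceptionnel avec $d\tilde\omega\neq0$ le long de cette droite (ph\'enom\`ene de Kupka), et conclut qu'un champ tangent au feuilletage, transverse au diviseur, propage cette droite singuli\`ere en une surface singuli\`ere. Les deux arguments aboutissent \`a la m\^eme contradiction (le lieu singulier contient une hypersurface); le v\^otre, formel et alg\'ebrique, \'evite tout \'eclatement et explique de surcro\^it pourquoi le mod\`ele $x_1(x_1dx_2-x_2dx_1)$ \'echappe \`a la m\'ethode (rotationnel \`a partie lin\'eaire nilpotente). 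Deux v\'etilles de r\'edaction: si $\Phi_*X = S+N$, c'est $(\Phi^{-1})^*\omega$ et non $\Phi^*\omega$ qui v\'erifie $L_{S+N}(\cdot)=0$ --- il suffit de renommer $\Phi^{-1}$ en $\Phi$; et \emph{localement nilpotent} doit se lire \emph{augmente l'ordre}, ce qui est exactement ce qu'utilise votre argument d'injectivit\'e.

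En revanche, le cas $n\geq4$ est chez vous une vraie lacune, et la strat\'egie que vous esquissez n'est pas la bonne: r\'ealiser ${\mathcal F}_\omega$ comme image r\'eciproque d'un feuilletage de ${\mathbb C}^3,0$ par une \emph{submersion} est un \'enonc\'e de structure \`a la Kupka--Reeb ou \`a la Proposition \ref{prop3}, dont aucune hypoth\`ese n'est ici disponible (le $1$-jet de $\omega$ est nul), et qui est essentiellement aussi difficile que la proposition elle-m\^eme. La r\'eduction du texte va dans l'autre sens: on coupe par un $3$-plan g\'en\'eral $i:{\mathbb C}^3,0\hookrightarrow{\mathbb C}^n,0$ (une \emph{immersion}). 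L'int\'egrabilit\'e de $i^*\omega$ est alors automatique; pour $i$ g\'en\'erique le $2$-jet de $i^*\omega$ s'\'ecrit $L_3(L_1dL_2 - L_2dL_1)$ avec $L_1$, $L_2$, $L_3$ lin\'eairement ind\'ependantes, donc est lin\'eairement conjugu\'e \`a $x_3(x_1dx_2-x_2dx_1)$; enfin ${\rm Cod\ Sing}\ i^*\omega\geq2$ pour une section g\'en\'erique, fait standard sur les sections planes (cf. \cite{Cer-Ma}) que le texte invoque en une ligne. En substituant cette r\'eduction \`a la v\^otre, votre d\'emonstration devient compl\`ete pour tout $n\geq3$.
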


\begin{proof} Quitte \`a couper par un $3-$plan g\'en\'eral, il suffit d'\'etablir la proposition pour $n=3$. Supposons qu'il existe $\omega \in \Omega^1(\mathbb{C}^3, 0)$ telle que $j^2\omega = \omega_2$ et ${\rm Cod\ Sing}\ \omega \geq2$. 

Consid\'erons l'\'eclatement $E:\tilde{\mathbb{C}}^3 \rightarrow \mathbb{C}^3,0$ de l'origine de $\mathbb{C}^3$. Le diviseur exceptionnel $E^{-1}(0) \simeq \mathbb{P}^2_{\mathbb{C}}$ est g\'en\'eriquement transverse au feuilletage $\tilde{\mathcal{F}}$ relev\'e de $\mathcal{F} = \mathcal{F}_\omega$ par $E$. En se pla\c cant dans la carte $(x_1,x_2,x_3)$ o\`u $E$ s'exprime sous la forme:
$$E(x_1,x_2,x_3) =(x_1,x_1x_2,x_1x_3) $$ on constate que $\tilde{\mathcal{F}}$ est d\'efini par:
$$\frac{E^*\omega}{x_1^3} = \tilde{\omega}= x_3dx_2 + \tilde{A}dx_1 + x_1\tilde{B} dx_2 + x_1\tilde{C}dx_3. $$

En \'ecrivant explicitement l'int\'egrabilit\'e de $\tilde{\omega}$ on constate que $\tilde{\omega}$ s'annule sur la droite $x_1=x_3=0$ contenue dans $E^{-1}(0)$. Un calcul imm\'ediat montre que:
$$d\tilde{\omega}\Big|_{x_1=x_3=0} = dx_3 \wedge dx_2 + dx_1 \wedge (\cdot) . $$ Ainsi $\tilde{\omega}$ annule un champ $\tilde{Z}$ dont la composant suivant $\displaystyle{\frac{\partial}{\partial x_1}}$ est non nulle le long de $x_1=x_3=0$. Ceci oblige $\tilde{\omega}$ \`a s'annuler sur une surface, contredisant l'hypoth\`ese ${\rm Cod\ Sing}\ \omega \geq2$. \end{proof}

Si l'on consid\`ere l'espace $\mathcal{J}_\nu \subset \Omega^1(\mathbb{C}^n,0)$ des germes de $1-$formes int\'egrables $\omega$ tels que $j^{\nu-1}\omega = 0$, $\omega_\nu = j^\nu \omega$ est homog\`ene de degr\'e $\nu$ dicritique, on h\'erite d'une projection (jet d'ordre $\nu$) $$j^\nu: \mathcal{J}_\nu \rightarrow [\mathcal{J}_\nu]  $$ \`a valeur dans $[\mathcal{J}_\nu] = \{\omega_\nu\ \mbox{homog\`ene int\'egrable dicritique de degr\'e}\ \nu\}$. La Proposition \ref{prop16} indique que cette projection n'est en g\'en\'eral pas surjective.

\begin{remark}
Le groupe des diff\'eomorphismes tangents \`a l'identit\'e Diff$_1(\mathbb{C}^n,0)$ agit sur $\mathcal{J}_\nu$ en pr\'eservant les fibres de $j^\nu$. Le Th\'eor\`eme \ref{theo10} de Camacho-Lins Neto s'interpr\`ete comme suit: si $\underline{\omega}_\nu$ satisfait les hypoth\`eses du Th\'eor\`eme \ref{theo10} alors l'action de Diff$_1(\mathbb{C}^n,0)$ sur la fibre $(j^\nu)^{-1}(\underline{\omega}_\nu)$ est transitive.
\end{remark}

\section{Un lemme utile et le Th\'eor\`eme de Camacho--Lins Neto revisit\'e}

Dans ce chapitre $\omega = \omega_\nu + \cdots \in \Omega^1(\mathbb{C}^3,0)$ est un germe de $1-$forme int\'egrable, ${\rm Cod\ Sing}\ \omega\ge2$. Ils nous sera utile de r\'epr\'esenter les germes consid\'er\'es sur une boule centr\'ee \`a l'origine. Nous la noterons $B$ et son rayon sera suppos\'e suffisament petit pour que le discours qui suit ait un sens. On pose $B^* = B - \{0\}$.

\begin{lemma}\label{lemme18} Supposons qu'il existe un recouvrement ouvert $\{U_k\}$ de $B^*$ et des champs de vecteurs holomorphes $X_k \in \Theta(U_k)$ tels que $\omega\big|_{U_k} = i_{X_k} d\omega\big|_{U_k}$. Il existe alors $X \in \Theta(\mathbb{C}^3,0)$ tel que $\omega = i_X d\omega$.
\end{lemma}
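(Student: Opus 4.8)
The plan is to read the hypothesis as a \emph{lifting problem} for the $\mathcal{O}$-linear sheaf morphism $\Phi\colon\Theta\to\Omega^1$ defined on $B^*$ by $\Phi(X)=i_X\,d\omega$, and then to kill the resulting obstruction by a cohomology vanishing on the punctured ball. Set $\mathcal{K}=\ker\Phi$ and $\mathcal{I}=\operatorname{im}\Phi$, so that on $B^*$ one has the short exact sequence of sheaves $0\to\mathcal{K}\to\Theta\xrightarrow{\Phi}\mathcal{I}\to0$. The hypothesis says precisely that $\omega|_{B^*}$ is a section of $\mathcal{I}$: lying in the image of $\Phi$ on each $U_k$ is a local condition. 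A global holomorphic $X$ on $B^*$ with $i_X\,d\omega=\omega$ is then nothing but a global lift of $\omega$ through $\Phi$, and the obstruction to its existence is the connecting class $\delta(\omega)\in H^1(B^*,\mathcal{K})$, represented by the \v{C}ech cocycle $\{X_k-X_l\}$ (indeed $i_{X_k-X_l}\,d\omega=\omega-\omega=0$, so $X_k-X_l\in\mathcal{K}(U_k\cap U_l)$). Everything thus reduces to showing $H^1(B^*,\mathcal{K})=0$.

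The key structural step — and the point where the dimension $3$ enters — is that $\mathcal{K}$ is \emph{localement libre de rang un}. Writing $d\omega=P\,dx_2\wedge dx_3+Q\,dx_3\wedge dx_1+S\,dx_1\wedge dx_2$ and $v=(P,Q,S)$, a direct computation gives $i_X\,d\omega=0\iff X\times v=0$ (produit vectoriel), because in $\mathbb{C}^3$ the $2$-form $d\omega$ is decomposable; equivalently, $Z=P\,\partial_{x_1}+Q\,\partial_{x_2}+S\,\partial_{x_3}$ spans $\ker\Phi$ wherever $d\omega\neq0$. Let $h=\gcd(P,Q,S)$ and $v=h\,v'$ with $v'=(P',Q',S')$ having coprime components; then $\{v'=0\}$ is of codimension $\geq2$, and any holomorphic $X$ with $X\times v'=0$ equals $g\,v'$ for a meromorphic $g$ whose polar set is contained in $\{v'=0\}$. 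Since the polar set of a meromorphic function is a divisor, it must here be empty, so $g$ is holomorphic. Hence $\mathcal{K}=\mathcal{O}\cdot Z'$ with $Z'=Z/h$, a free sheaf of rank one. (Here $d\omega\not\equiv0$: otherwise the hypothesis would force $\omega|_{U_k}=0$, contradicting ${\rm Cod\ Sing}\ \omega\geq2$.)

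It remains to invoke the vanishing of $H^1$ on a punctured ball in $\mathbb{C}^3$. As $\mathcal{K}\cong\mathcal{O}$ is locally free and $\{0\}$ has codimension $3$ in the Stein ball $B$, the local-cohomology exact sequence $H^1(B,\mathcal{O})\to H^1(B^*,\mathcal{O})\to H^2_{\{0\}}(B,\mathcal{O})$, together with $H^1(B,\mathcal{O})=0$ and $H^2_{\{0\}}(B,\mathcal{O})=0$ (since $\operatorname{depth}_0\mathcal{O}=3$), gives $H^1(B^*,\mathcal{K})\cong H^1(B^*,\mathcal{O})=0$. Therefore $\delta(\omega)=0$; concretely $X_k-X_l=g_{kl}\,Z'$ with $\{g_{kl}\}$ a cocycle in $H^1(B^*,\mathcal{O})=0$, so $g_{kl}=g_k-g_l$ and $X:=X_k-g_k\,Z'$ is a well-defined global vector field on $B^*$ satisfying $i_X\,d\omega=\omega$.

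Finally I would extend $X$ across the puncture: its coefficients are holomorphic on $B^*=B\setminus\{0\}$, and since $\{0\}$ is of codimension $3\geq2$, the Riemann extension theorem furnishes a holomorphic extension $X\in\Theta(\mathbb{C}^3,0)$, the identity $i_X\,d\omega=\omega$ persisting at the origin by continuity. I expect the main obstacle to be the structural claim that $\mathcal{K}$ is locally free rather than merely reflexive — one must control $\mathcal{K}$ at the zeros of $d\omega$, where the fibrewise kernel a priori jumps from rank $1$ to rank $3$ — because it is exactly local freeness (depth $3$), and not reflexivity (depth $2$), that forces $H^1(B^*,\mathcal{K})=0$.
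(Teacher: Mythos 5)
Your proposal is correct and is essentially the paper's own proof in sheaf-theoretic dress: the paper likewise introduces the dual field $Z$ (via $d\omega = i_Z\,\mathrm{vol}$), writes the differences $X_i - X_j = h_{ij}Z$ by the same codimension-$2$ division argument, kills the cocycle $\{h_{ij}\}$ using $H^1(B^*,\mathcal{O})=0$ (quoted from H.~Cartan rather than derived from depth/local cohomology), and extends across the puncture by Hartogs. The only cosmetic difference is that you normalize $Z$ by its gcd to make the kernel sheaf free, whereas the paper obtains $\mathrm{Cod}\,\{Z=0\}\geq 2$ directly from the hypothesis (since $\{d\omega=0\}\subset \mathrm{Sing}\,\omega$), so no normalization is needed.
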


\begin{proof} Soit $Z$ un champ de vecteurs tel que $d\omega = i_Z {\rm vol}$ o\`u ${\rm vol}  = dx_1\wedge dx_2 \wedge dx_3$ est une $3-$forme non nulle de $\mathbb{C}^3$. L'hypoth\`ese du lemme implique que l'ensemble des z\'eros ${\rm Sing}\ Z$ de $Z$ est contenu dans ${\rm Sing}\ \omega$ et donc de codimension au moins deux. Lorsque $U_i \cap U_j \neq \emptyset$, on a $i_{X_i-X_j} d\omega\big|_{U_i\cap U_j} = 0$; puisque ${\rm Cod\ Sing}\ Z\geq2$ il existe $h_{ij}\in \mathcal{O}(U_i\cap U_j)$ tels que $X_i-X_j = h_{ij}Z$. Comme le groupe de cohomologie $H^1(B^*,{\mathcal O})$ est trivial, c'est un r\'esultat de H. Cartan \cite{Car} (voir \cite{Can-Cer-Des}), on a, quitte \`a choisir un bon recouvrement, $h_{ij} = h_i-h_j$, $h_k \in \mathcal{O}(U_k)$. Ainsi les champs $X_i - h_i Z$ se recollent et d\'efinissent un champ global $X^*\in \Theta (B^*)$. Le Th\'eor\`eme de Hartogs permet de prolonger ce champ en un champ $X \in \Theta(B)$ qui v\'erifie $\omega = i_X d\omega$.
\end{proof}

\`A titre d'application nous pr\'esentons une preuve de l'\'enonc\'e de Camacho--Lins Neto, partie $(1)$.

\vskip0.2cm
\noindent {\em D\'emonstration du Th\'eor\`eme \ref{theo10}, partie $(1)$.} Puisque $d\omega_\nu$ ne s'annule qu'en $0$ il en est de m\^eme pour $d\omega$. Par suite en chaque point $m$ d'une boule \'epoint\'ee $B^*$ on peut trouver un champ $X_{,m} \in \Theta(B^*,m)$ tel que $\omega_{,m} = i_{X_{,m}}d\omega_{,m}$. D'apr\`es le Lemme \ref{lemme18} il existe $X \in \Theta(\mathbb{C}^3,0)$ tel que $\omega = i_X d\omega$. Il est clair que $X$ s'annule en $0$ sinon $d\omega_\nu$ s'annulerait sur une droite. Soit $X_1$ la partie lin\'eaire de $X$ en $0$; on a: $$\omega_\nu = i_{X_1}d\omega_\nu = i_{\frac{R}{\nu+1}}d\omega_\nu$$ qui conduit \`a $\displaystyle{i_{X_1 - \frac{R}{\nu+1}} d\omega_\nu = 0}$. Comme $\nu \geq 3$ et $d\omega_\nu$ est \`a singularit\'e isol\'ee on a $\displaystyle{X_1 = \frac{R}{\nu+1}}$. Le Th\'eor\`eme de lin\'earisation des champs de vecteur de Poincar\'e permet alors de supposer que $\displaystyle{X = X_1 = \frac{R}{\nu+1}}$. Un calcul direct (utilisant $L_{\frac{R}{\nu+1}} \omega = \omega$) montre que $\omega = \omega_\nu$. \qed

\begin{remark}
Dans l'article \cite{Cam-Alc} les auteurs substituent au Lemme \ref{lemme18} le Lemme de Division de de Rham-Saito \cite{S}. Il se trouve que le Lemme de Division peut se d\'emontrer (sur $\mathbb{C}$ et en dimension $3$) en utilisant l'argument de H. Cartan.
\end{remark}

\begin{remark}
Dans \cite{Alc} l'un de nous montre que la seule hypoth\`ese ${\rm Sing}\ d\omega = \{0\}$ conduit, \`a conjugaison pr\`es, \`a l'existence d'un champ de vecteurs radial \`a poids $X = \sum p_i x_i \frac{\partial}{\partial x_i}$, $p_i \in \mathbb{N}^*$, tel que $L_X\omega = p\omega$, $p\in \mathbb{N}$, ce qui permet de montrer que $\omega$ est {\em quasi-homog\`eneisable}. Le feuilletage $\mathcal{F}_\omega$ peut \^etre vue comme image r\'eciproque d'un feuilletage sur un espace projectif \`a poids de dimension $2$. 
\end{remark}

Le lemme qui suit va nous permettre de donner une g\'en\'eralisation des r\'esultats qui pr\'ec\`edent. Comme toujours on d\'esigne par $E:\tilde{\mathbb{C}}^3 \rightarrow \mathbb{C}^3,0$ l'\'eclatement de l'origine de $\mathbb{C}^3$; le diviseur exceptionnel $E^{-1}(0)$ est isomorphe \`a $\mathbb{P}^2_\mathbb{C}$.

\begin{lemma}\label{lemme21}
Soit $\omega \in \Omega^1(\mathbb{C}^3,0)$ un germe de $1-$forme int\'egrable \`a l'origine de $\mathbb{C}^3$ dont la partie homog\`ene $\omega_\nu$ est dicritique et satisfait ${\rm Cod\ Sing}\ \omega_\nu \geq 2$. On suppose qu'en chaque point $m \in E^{-1}(0)$ il existe un germe de champ de vecteur $\tilde{X}_{,m}\in \Theta(\tilde{\mathbb{C}}^3,m)$ tel que:
\begin{enumerate}[$(1)$]
\item $\tilde{X}_{,m}$ est tangent au feuilletage $E^{-1}(\mathcal{F}_\omega)$, i.e. $i_{\tilde{X}_{,m}} E^*\omega = 0$.
\item $\tilde{X}_{,m}$ est transverse \`a $E^{-1}(0)$ en tout point $m \in E^{-1}(0)$.
\end{enumerate}
Alors il existe $X \in \Theta(\mathbb{C}^3,0)$ tel que $i_Xd\omega = \omega = L_X\omega$.
\end{lemma}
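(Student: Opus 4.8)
L'idée est de se ramener au Lemme \ref{lemme18}: il suffit de construire un champ $X\in\Theta(\mathbb{C}^3,0)$ v\'erifiant $\omega = i_Xd\omega$. En effet l'int\'egrabilit\'e donne $0 = i_X(\omega\wedge d\omega) = (i_X\omega)\,d\omega - \omega\wedge i_Xd\omega = (i_X\omega)\,d\omega$, et comme $d\omega\not\equiv0$ (sinon $\omega_\nu$ serait ferm\'ee dicritique homog\`ene, donc nulle via $L_R\omega_\nu=(\nu+1)\omega_\nu$), on en d\'eduit $i_X\omega=0$, puis $L_X\omega = i_Xd\omega + d(i_X\omega) = \omega$. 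Tout repose donc sur la production, sur un recouvrement de $B^*$, de champs locaux $X_k$ tels que $\omega = i_{X_k}d\omega$, la conclusion venant alors du Lemme \ref{lemme18}.

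Pour fabriquer ces champs pr\`es de l'origine on utilise l'\'eclatement. Le caract\`ere dicritique de $\omega_\nu$, c'est-\`a-dire $P_{\nu+1}=i_R\omega_\nu\equiv0$, entra\^ine dans la carte $E(x_1,x_2,x_3)=(x_1,x_1x_2,x_1x_3)$ une divisibilit\'e renforc\'ee $E^*\omega = x_1^{\nu+1}\tilde\omega$, o\`u $\tilde\omega$ d\'efinit $\tilde{\mathcal F}=E^{-1}(\mathcal F_\omega)$ et dont l'image r\'eciproque au diviseur $\{x_1=0\}$ est $\tilde\omega_\nu$, sans composante en $dx_1$. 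On a donc
\[ E^*d\omega = (\nu+1)\,x_1^{\nu}\,dx_1\wedge\tilde\omega + x_1^{\nu+1}\,d\tilde\omega . \]
Soit $m\in E^{-1}(0)$ et $\tilde X_{,m}$ le champ fourni par l'hypoth\`ese, tangent \`a $\tilde{\mathcal F}$ ($i_{\tilde X_{,m}}\tilde\omega=0$) et transverse au diviseur ($dx_1(\tilde X_{,m})\neq0$ le long de $\{x_1=0\}$). Comme $\tilde\omega$ est int\'egrable et $\tilde X_{,m}$ tangent, on a $i_{\tilde X_{,m}}d\tilde\omega = c\,\tilde\omega$ avec $c$ holomorphe l\`a o\`u $\tilde\omega\neq0$. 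En posant $\tilde X = x_1\tilde X_{,m}$ un calcul direct donne
\[ i_{\tilde X}E^*d\omega = \bigl[(\nu+1)\,dx_1(\tilde X_{,m}) + x_1\,c\bigr]\,E^*\omega . \]

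Le facteur $g := (\nu+1)\,dx_1(\tilde X_{,m}) + x_1\,c$ vaut $(\nu+1)\,dx_1(\tilde X_{,m})$ sur $\{x_1=0\}$, non nul par transversalit\'e: c'est une unit\'e au voisinage de $m$. Quitte \`a remplacer $\tilde X_{,m}$ par $\tilde X_{,m}/g$ (ce qui pr\'eserve tangence et transversalit\'e et divise $c$ par $g$), on obtient $i_{\tilde X}E^*d\omega = E^*\omega$. Par compacit\'e de $E^{-1}(0)\simeq\mathbb{P}^2$, un nombre fini de tels champs $\tilde X$ est d\'efini sur un voisinage $E^{-1}(B)$ du diviseur, pour $B$ assez petit. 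Hors du diviseur $E$ est un isomorphisme; on pousse donc chaque $\tilde X$ en un champ $X_k = E_*\tilde X$ holomorphe sur un ouvert $U_k\subset B^*$, et la relation $i_{\tilde X}E^*d\omega = E^*\omega$ descend en $i_{X_k}d\omega = \omega$. Les $U_k$ recouvrent $B^*$, et le Lemme \ref{lemme18} fournit $X\in\Theta(\mathbb{C}^3,0)$ avec $\omega = i_Xd\omega$, d'o\`u $\omega = L_X\omega$ par le premier paragraphe.

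Le point d\'elicat est l'estimation de normalisation, \`a savoir que le facteur de proportionnalit\'e $g$ est une \emph{unit\'e}: elle exige simultan\'ement la transversalit\'e (pour $dx_1(\tilde X_{,m})\neq0$) et l'annulation dicritique $P_{\nu+1}\equiv0$, qui produit la puissance $x_1^{\nu+1}$ et l'absence de terme en $dx_1$ dans $\tilde\omega|_{\{x_1=0\}}$; sans ce gain d'une unit\'e de multiplicit\'e, la proportionnalit\'e ci-dessus d\'eg\'en\'ererait sur le diviseur. Il reste enfin \`a soigner les singularit\'es de $[\mathcal F_{\omega_\nu}]$ sur $E^{-1}(0)$, o\`u $\tilde\omega_\nu$ s'annule, la transversalit\'e fait d\'efaut et $c$ peut avoir des p\^oles: leurs directions forment dans $\mathbb{C}^3$ un ensemble de codimension au moins deux, que l'on franchit par le Th\'eor\`eme de Hartogs, soit dans la construction des $X_k$, soit au niveau du champ global obtenu par le Lemme \ref{lemme18}.
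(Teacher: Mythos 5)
Your proof is correct and follows the paper's overall strategy --- both arguments reduce to the Lemme \ref{lemme18} by constructing, near each point of $E^{-1}(0)$, a local field $\tilde X$ with $i_{\tilde X}\,d(E^*\omega) = E^*\omega$, then descend via $E$ (a biholomorphism off the divisor) to a covering of $B^*$ --- but the key local step is carried out differently. The paper straightens the hypothesized transverse tangent field to $\partial/\partial x_3$ by a fibered diffeomorphism, concludes that $E^*\omega$ is conjugate to the model $W\,x_3^{\nu+1}\bigl(A_\nu\,dx_1+B_\nu\,dx_2\bigr)$ with $W$ a unit, absorbs $W$ by the change $x_3\mapsto W^{1/(\nu+1)}x_3$, and exhibits the field explicitly on this model. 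You avoid all conjugations: tangency and integrability give $i_{\tilde X_{,m}}d\tilde\omega = c\,\tilde\omega$, and your identity $i_{x_1\tilde X_{,m}}\,dE^*\omega = \bigl[(\nu+1)\,dx_1(\tilde X_{,m})+x_1c\bigr]E^*\omega$ together with transversality makes the factor a unit, which you then normalize away by dividing the field. This is more economical (no flow box, no $(\nu+1)$-th root of a unit) and it isolates exactly where dicriticity (the power $x_1^{\nu+1}$) and transversality (the unit) enter; the paper's route yields in addition a local product normal form for $E^*\omega$, which is of independent interest.

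Two points require more care than you give them. First, at a singular point $m$ of $[\mathcal F_{\omega_\nu}]$ on the divisor it is not transversality that fails --- the hypothesis of the lemma provides $\tilde X_{,m}$ tangent and transverse at \emph{every} $m\in E^{-1}(0)$, singular or not --- but only the holomorphy of $c$, which the division argument yields a priori only off ${\rm Sing}\,\tilde\omega$. Since ${\rm Sing}\,\tilde\omega$ meets $E^{-1}(0)$ in the finitely many singular points of $[\mathcal F_{\omega_\nu}]$ and has codimension $\geq 2$, Hartogs extends $c$, hence the normalized field, across it: this is your first option, and it is the right one. Second, your alternative --- running the argument of Lemme \ref{lemme18} over $B^*$ deprived of the bad lines and applying Hartogs to the resulting global field --- does not work as stated: the proof of Lemme \ref{lemme18} rests on Cartan's vanishing $H^1(B^*,\mathcal O)=0$, which holds because the deleted set $\{0\}$ has codimension $3$; for the complement of a curve the corresponding group is in general nonzero (already $H^1\bigl(B\setminus\{x_1=x_2=0\},\mathcal O\bigr)\neq 0$ in $\mathbb C^3$, as a Laurent series argument shows), so the cocycle $h_{ij}$ need not split there. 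Retain therefore the first option only: extend $c$ by Hartogs \emph{before} descending, so that your charts cover all of $B^*$ and Lemme \ref{lemme18} applies verbatim.
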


\begin{proof} Nous allons montrer que les hypoth\`eses du Lemme \ref{lemme18} sont satisfaites. On \'ecrit
$$\omega = \omega_\nu + \omega_{\nu+1} + \cdots = (A_\nu + \cdots) dx_1 + (B_\nu + \cdots)dx_2 + (C_\nu + \cdots)dx_3 $$
o\`u les $A_\nu$, $B_\nu$, $C_\nu$ sont homog\`enes de degr\'e $\nu$ et satisfont:
$$x_1A_\nu + x_2B_\nu + x_3C_\nu = 0. $$ 

Pla\c cons nous dans la carte $(x_1,x_2,x_3)$ de $\tilde{\mathbb{C}}^3$ o\`u l'expression de $E$ est donn\'ee par $ E(x_1,x_2,x_3) = (x_3x_1,x_3x_2,x_3)$. Dans cette carte $E^{-1}(0)$ est donn\'e par $(x_3=0)$ et 
$$E^*\omega = x_3^{\nu+1} \Big[ (A_\nu(x_1,x_2,1) + x_3A') dx_1 + (B_\nu(x_1,x_2,1) + x_3B')dx_2 + C'dx_3 \Big] $$ o\`u les $A'$, $B'$, $C'$ sont holomorphes le long de $x_3=0$.

En chaque point $m \in E^{-1}(0)$, l'hypoth\`ese implique l'existence d'un champ de vecteur $Y_{,m}$:
$$Y_{,m} = \frac{\partial}{\partial x_3} + Y_1\frac{\partial}{\partial x_1} + Y_2 \frac{\partial}{\partial x_2}, \quad Y_i\ \mbox{holomorphes}, $$
tel que $i_{Y_{,m}} E^*\omega_{,m} = 0$. En conjugant localement $Y_{,m}$ \`a $\displaystyle{\frac{\partial}{\partial x_3}}$ par un diff\'eomorphisme local fibr\'e $\phi_{,m} = (\varphi,x_3)$, $\varphi(x_1,x_2,0) = (x_1,x_2)$, on conjugue $E^*\omega_{,m}$ \`a
$$W x_3^{\nu+1} (A_\nu(x_1,x_2,0)dx_1 + B_\nu(x_1,x_2,0) dx_2) $$ o\`u $W$ est une unit\'e, $W(x_1,x_2,0) = 1$. Quitte \`a changer $x_3$ en $W^{\frac{1}{\nu+1}} x_3$, on conjugue $E^*\omega_{,m}$ \`a:
$$\Omega = x_3^{\nu+1} (A_\nu(x_1,x_2,0)dx_1 + B_\nu(x_1,x_2,0) dx_2).$$ Un calcul \'el\'ementaire montre que $\displaystyle{i_{\frac{\partial}{\partial x_3}}} d\Omega = (\nu+1) \Omega$. De sorte que l'on peut modifier le champ $\tilde{X}_{,m}$ pour qu'il satisfasse $\displaystyle{i_{\tilde{X}_{,m}} dE^*\omega_{,m} = E^*\omega_{,m}}$. La collection des $\tilde{X}_{,m}$ nous permet de construire des champs $X_k$ satisfaisant le Lemme \ref{lemme18}.
\end{proof}

Nous allons maintenant d\'ecrire une premi\`ere cons\'equence du Lemme \ref{lemme21}. Une $1-$forme dicritique $\omega_\nu \in \Omega^1(\mathbb{C}^3)$, ${\rm Cod\ Sing}\ \omega_\nu \geq 2$, induit un feuilletage $[\mathcal{F}_{\omega_\nu}]$ de degr\'e $\nu-1$ sur $\mathbb{P}^2_\mathbb{C}$. Ce feuilletage poss\`ede exactement $\nu^2 - \nu + 1$ points singuliers compt\'es avec multiplicit\'e. Nous dirons que $[{\mathcal F}_{\omega_\nu}]$ est {\em sans nilpotence} si en tout point $m \in {\rm Sing}\ [{\mathcal F}_{\omega_\nu}]$ le feuilletage est d\'efini par une $1-$forme locale $\alpha_{,m}$ dont la partie lin\'eaire est non nilpotente (en particulier non nulle). Dit autrement $\alpha_{,m}$ est ou bien de type Kupka-Reeb, i.e. $d\alpha(m) \neq 0$, ou bien le $1-$jet de $\alpha$ en $m$ est \`a conjugaison pr\`es $d(x_1x_2)$. Dans ce dernier cas nous dirons que $m \in {\rm Sing}\ [{\mathcal F}_{\omega_\nu}]$ est de {\em type centre} ou {\em central} si $[{\mathcal F}_{\omega_\nu}]_{,m}$ poss\`ede une int\'egrale premi\`ere non constante: $\alpha_{,m} = gdf$, $f\in {\mathcal O}({\mathbb P}_{\mathbb C}^2,m)$, $g \in {\mathcal O}^*({\mathbb P}_{\mathbb C}^2,m)$. Notons qu'un tel $f$ est n\'ecessairement de Morse en $m$.

\begin{proposition}\label{prop22}
Soit $\omega = \omega_\nu + \cdots \in \Omega^1({\mathbb C}^3,0)$ un germe de $1-$forme int\'egrable \`a l'origine de ${\mathbb C}^3$ dont la partie homog\`ene $\omega_\nu$ est dicritique, $\nu \geq3$ et ${\rm Cod\ Sing}\ \omega_\nu \geq2$. Si $[{\mathcal F}_{\omega_\nu}]$ est sans nilpotence et n'a pas de point central, alors $\omega$ et $\omega_\nu$ sont conjugu\'es; en particulier ${\mathcal F}_{\omega}$ et ${\mathcal F}_{\omega_\nu}$ sont conjugu\'es.
\end{proposition}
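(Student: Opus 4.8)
The plan is to verify the hypotheses of Lemma~\ref{lemme21} and then conclude as in the proof of Theorem~\ref{theo10}(1). Thus I must produce, at every point $m$ of the exceptional divisor $E^{-1}(0)\simeq\mathbb{P}^2_{\mathbb C}$, a germ of vector field $\tilde X_{,m}$ tangent to $\tilde{\mathcal F}=E^{-1}(\mathcal F_\omega)$ (i.e. $i_{\tilde X_{,m}}E^*\omega=0$) and transverse to $E^{-1}(0)$. Away from $\mathrm{Sing}\,[\mathcal F_{\omega_\nu}]$ this is immediate: since $\omega_\nu$ is dicritique, $[\mathcal F_{\omega_\nu}]$ is exactly the restriction of $\tilde{\mathcal F}$ to the divisor, so at a regular point of $[\mathcal F_{\omega_\nu}]$ the form $\tilde\omega=E^*\omega/x_3^{\nu+1}$ is not proportional to $dx_3$, the leaf of $\tilde{\mathcal F}$ is transverse to $E^{-1}(0)$, and any tangent field with nonzero normal component works.

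The real content is at the finitely many points $m\in\mathrm{Sing}\,[\mathcal F_{\omega_\nu}]$. There I would apply Proposition~\ref{prop3} to the germ of $\tilde{\mathcal F}$ at $m$, taking for $i$ the local inclusion of $E^{-1}(0)$. By hypothesis $[\mathcal F_{\omega_\nu}]$ is sans nilpotence, so the $1$-jet of $i^*\tilde\omega$ is non nilpotent (either $d\alpha(m)\neq0$, or of type $d(x_1x_2)$), and Proposition~\ref{prop3} gives the alternative $(1)/(2)$. Alternative $(2)$ is excluded by the hypotheses: a holomorphic first integral of $\tilde{\mathcal F}$ near $m$ restricts to a holomorphic first integral $g$ of $[\mathcal F_{\omega_\nu}]$ at $m$; in the Kupka--Reeb case $\alpha=u\,dg$ would force $d\alpha(m)=du(m)\wedge dg(m)=0$ (as $dg(m)=0$), contradicting $d\alpha(m)\neq0$, while in the $d(x_1x_2)$ case $g$ would be of Morse type, i.e. $m$ would be central — contrary to assumption. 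Hence we are in alternative $(1)$: $\tilde{\mathcal F}=j^{-1}(i^{-1}\tilde{\mathcal F})$ for a submersion $j$ with $j\circ i=\mathrm{id}$. Since $i$ is the divisor, $E^{-1}(0)$ is a section of $j$, so $T(E^{-1}(0))\cap\ker dj=0$: the fibres of $j$, which are tangent to $\tilde{\mathcal F}$, are transverse to the divisor. A generator of $\ker dj$ is the desired $\tilde X_{,m}$.

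Lemma~\ref{lemme21} then yields $X\in\Theta(\mathbb{C}^3,0)$ with $i_Xd\omega=\omega=L_X\omega$, and I would finish as in Theorem~\ref{theo10}(1). Matching the degree $\nu-1$ parts of $i_Xd\omega=\omega$ gives $i_{X(0)}d\omega_\nu=0$, which forces the rotational field $Z$ (defined by $d\omega_\nu=i_Z\mathrm{vol}$, of degree $\nu-1\ge2$ and not identically zero, since a dicritique $\omega_\nu$ is never closed) to be collinear with the constant vector $X(0)$; hence $X(0)=0$. Writing $X=X_1+\cdots$ and $Y=X_1-\tfrac{R}{\nu+1}$, the lowest-degree part yields $i_Yd\omega_\nu=0$, whence also $i_Y\omega_\nu=0$ and $L_Y\omega_\nu=0$; so $Y$ descends to an infinitesimal automorphism $\bar Y$ of $[\mathcal F_{\omega_\nu}]$ on $\mathbb{P}^2$ vanishing at each of its singular points. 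As the singularities are simple (non nilpotent linear part), there are exactly $\nu^2-\nu+1\ge7$ of them; a nonzero projective field annihilates at most a line together with one point, and a foliation of degree $\nu-1$ has at most $\nu$ singularities on a line, so such a configuration is impossible for $\nu\ge3$. Thus $\bar Y=0$, i.e. $Y\in\langle R\rangle$, and $i_Yd\omega_\nu=\tfrac{c}{\nu+1}i_Rd\omega_\nu=c\,\omega_\nu=0$ forces $Y=0$. Therefore $X_1=\tfrac{R}{\nu+1}$, whose eigenvalues $\tfrac1{\nu+1}$ are non résonnants and in the Poincaré domain; linearising $X$ and using $L_{R/(\nu+1)}\omega=\omega$ forces $\omega$ to be homogeneous of degree $\nu$, so $\omega$ is conjuguée to $\omega_\nu$.

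The main obstacle is the singular-point step: verifying that Proposition~\ref{prop3} applies and, above all, that alternative $(2)$ is ruled out precisely by the absence of centres and of Kupka--Reeb first integrals — this is exactly where the two hypotheses on $[\mathcal F_{\omega_\nu}]$ are consumed, and what lets one bypass the tangency of $\tilde{\mathcal F}$ to the divisor that would otherwise obstruct Lemma~\ref{lemme21}. A secondary delicate point is the identification $X_1=\tfrac R{\nu+1}$, which in Theorem~\ref{theo10}(1) was free from $\mathrm{Sing}\,d\omega_\nu=\{0\}$ but here must be extracted from the count of singular points of $[\mathcal F_{\omega_\nu}]$ on $\mathbb{P}^2$.
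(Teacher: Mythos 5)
Your skeleton is the paper's own (Proposition~\ref{prop3} to verify the hypotheses of Lemme~\ref{lemme21}, then the conclusion of the proof of Th\'eor\`eme~\ref{theo10}), but two of your steps fail as written. First, at a Kupka--Reeb point you cannot exclude alternative $(2)$ of Proposition~\ref{prop3} the way you do. Having a holomorphic first integral $g$ means only $\alpha\wedge dg=0$, i.e. $dg=h\,\alpha$ with $h$ holomorphic and possibly vanishing at $m$; it does \emph{not} mean $\alpha=u\,dg$ with $u$ a unit. For instance $\alpha=2x_2\,dx_1+x_1\,dx_2$ has $d\alpha(m)\neq 0$ and admits the first integral $g=x_1^2x_2$, with $dg=x_1\alpha$: linearizable resonant saddles $p:-q$ with $p\neq q$ are Kupka--Reeb points carrying holomorphic first integrals, and nothing in \emph{sans nilpotence, sans centre} forbids them, since the notion of centre is defined (and excluded) only at points with $1$-jet $d(x_1x_2)$. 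So your contradiction evaporates. Fortunately it is not needed: the alternative of Proposition~\ref{prop3} is not exclusive, and its proof shows that $d\alpha(m)\neq0$ puts you directly in case $(1)$ (Kupka--Reeb phenomenon). The centre hypothesis is consumed only at the $d(x_1x_2)$ points: there, if ${\rm Sing}\, E^{-1}({\mathcal F}_\omega)$ has codimension $\geq 3$ near $m$, Malgrange gives a first integral of $E^{-1}({\mathcal F}_\omega)$ whose restriction to the divisor is non-constant (the divisor is not invariant, by dicriticity --- a point you also leave implicit) and exhibits $m$ as a centre, which is excluded; otherwise the explicit construction in the proof of Proposition~\ref{prop3} yields case $(1)$.

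Second, your derivation of $X_1=R/(\nu+1)$ is broken. You assert that the singularities are simple because their linear parts are non-nilpotent, hence that there are exactly $\nu^2-\nu+1\geq 7$ distinct singular points. This is false: a saddle-node (n\oe ud-col) has non-nilpotent linear part and Milnor number $\geq 2$, and such points are allowed by the hypotheses; the paper's Conclusion even recalls, after \cite{Cer-Des}, a degree-two foliation whose singular locus is a \emph{single} saddle-node of Milnor number $7$ --- it satisfies every hypothesis of Proposition~\ref{prop22} and has one singular point, not seven, so the zero set of a nonzero projective field (a line plus at most a point, or at most three points) can perfectly contain all the singularities, and your counting gives no contradiction. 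The conclusion $\bar Y=0$ is true but needs another argument: either remark that a nonzero global holomorphic field tangent to $[{\mathcal F}_{\omega_\nu}]$ would define that foliation, forcing its degree to be $\leq 1<\nu-1$; or, as the paper does, avoid the projective detour entirely: from $\omega_\nu=i_{R/(\nu+1)}d\omega_\nu$ and ${\rm Cod\ Sing}\ \omega_\nu\geq 2$ one gets ${\rm Cod\ Sing}\ d\omega_\nu\geq 2$, so writing $d\omega_\nu=i_Z\,{\rm vol}$, the equation $i_Y d\omega_\nu=0$ gives $Y\wedge Z=0$, hence $Y=hZ$ by division, which forces $Y=0$ since $Z$ is homogeneous of degree $\nu-1\geq 2$ while $Y$ is linear. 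Note that your step $X(0)=0$ silently uses the same codimension fact: collinearity of $Z$ with a nonzero constant field is absurd only because ${\rm Sing}\ Z$ cannot contain a hypersurface.
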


\begin{proof} D'apr\`es la Proposition \ref{prop3} les hypoth\`eses du Lemme \ref{lemme21} sont satisfaites. Il existe donc un champ $X \in \Theta(\mathbb{C}^3,0)$ tel que $i_X(d\omega) = \omega$. On conclut comme dans la preuve du Th\'eor\`eme \ref{theo10}: comme $\displaystyle{\omega_\nu = i_{\frac{R}{\nu+1}} d\omega_\nu}$ et ${\rm Cod\ Sing}\ \omega_\nu \geq 2$ on a ${\rm Cod\ Sing}\ d\omega_\nu \geq 2$. Par suite si $X_1 = j^1X$, l'\'egalit\'e $\displaystyle{i_{X_1 - \frac{R}{\nu+1}} d\omega_\nu} = 0$ implique que $\displaystyle{X_1 = \frac{R}{\nu+1}}$. On conclut en lin\'earisant $X$.
\end{proof}

\begin{remark}
La condition sans nilpotence, sans point central est g\'en\'erique dans l'espace des $1-$formes dicritiques de degr\'e $\nu$.
\end{remark}

On peut interpr\'eter la Proposition \ref{prop22} comme un \'enonc\'e de rigidit\'e ou de d\'e\-ter\-mi\-na\-tion finie.

\begin{proposition}
Soit $\omega = \omega_\nu + \cdots \in \Omega^1({\mathbb C}^3,0)$ un germe de $1-$forme int\'egrable \`a l'origine de ${\mathbb C}^3$ dont la partie homog\`ene $\omega_\nu$ est dicritique, $\nu \geq 3$ et ${\rm Cod\ Sing}\ \omega_\nu \geq2$. On suppose qu'en chaque point singulier $m$ de $[{\mathcal F}_{\omega_\nu}]$ celui ci est donn\'e par une $1-forme$ \`a $1-$jet non nul. Si ${\rm Sing}\ \omega$ est hom\'eomorphe \`a ${\rm Sing}\ \omega_\nu$, alors $\omega$ et $\omega_\nu$ sont conjugu\'es. 
\end{proposition}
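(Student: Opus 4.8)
Le plan est de calquer la preuve de la Proposition \ref{prop22}: on veut vérifier les hypothèses du Lemme \ref{lemme21}, c'est-à-dire construire en chaque point $m \in E^{-1}(0)$ un germe de champ $\tilde{X}_{,m}$ tangent à $E^{-1}(\mathcal{F}_\omega)$ et transverse au diviseur exceptionnel; on conclut alors par linéarisation exactement comme dans la Proposition \ref{prop22}, via $\omega_\nu = i_{R/(\nu+1)} d\omega_\nu$ et ${\rm Cod\ Sing}\ d\omega_\nu \geq 2$. Rappelons que pour la forme homogène $\omega_\nu$ elle-même le champ radial $R$ convient partout: il est tangent (dicriticité $i_R\omega_\nu = 0$) et transverse à $E^{-1}(0)$. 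Toute la question est donc la persistance de cette structure sous la déformation $\omega = \omega_\nu + \cdots$. Aux points $m$ où $[\mathcal{F}_{\omega_\nu}]$ est régulier, $\tilde{\mathcal{F}}$ est transverse à $E^{-1}(0)$ et $\tilde{X}_{,m}$ existe trivialement; le travail se concentre aux points singuliers de $[\mathcal{F}_{\omega_\nu}]$.

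D'abord j'utiliserais l'hypothèse topologique pour contrôler ${\rm Sing}\ \tilde\omega$ au voisinage de ces points. Le cône tangent de ${\rm Sing}\ \omega$ à l'origine est contenu dans ${\rm Sing}\ \omega_\nu$, réunion des droites au-dessus des points de ${\rm Sing}\ [\mathcal{F}_{\omega_\nu}]$. L'homéomorphisme ${\rm Sing}\ \omega \simeq {\rm Sing}\ \omega_\nu$ impose que les deux germes de courbes aient le même nombre de branches; joint à un argument de conservation de la multiplicité le long de chaque droite du cône tangent, ceci forcerait chaque branche de ${\rm Sing}\ \omega$ à être lisse, de tangente l'une des droites de ${\rm Sing}\ \omega_\nu$, à raison d'une branche par droite. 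La transformée stricte de ${\rm Sing}\ \omega$, contenue dans ${\rm Sing}\ \tilde\omega$, serait alors une courbe lisse $\Gamma$ transverse à $E^{-1}(0)$, rencontrant le diviseur exactement aux points de ${\rm Sing}\ [\mathcal{F}_{\omega_\nu}]$, comme dans le cas homogène.

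Il resterait à produire $\tilde{X}_{,m}$ suivant la nature du $1-$jet (non nul) en $m$. Si ce $1-$jet est non nilpotent et $m$ non central, la Proposition \ref{prop3} fournit directement la structure de produit, donc $\tilde{X}_{,m}$. Si $m$ est central, la Proposition \ref{prop3} donne une intégrale première holomorphe $f$ avec $\tilde\omega = g\,df$, $g$ unité, si bien que ${\rm Sing}\ \tilde\omega$ coïncide avec le lieu critique de $f$; ce lieu étant la courbe lisse $\Gamma$ transverse à $E^{-1}(0)$, le lemme de scindement holomorphe permet d'écrire $f$ comme une forme quadratique non dégénérée transversalement à $\Gamma$, donc indépendante du paramètre le long de $\Gamma$; le champ tangent à $\Gamma$ convient alors. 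Enfin si le $1-$jet est nilpotent, j'invoquerais la forme normale de Loray (Théorème \ref{loray}) qui réalise $\tilde{\mathcal{F}}$ comme image réciproque par $(x_1,f)$ d'un feuilletage du plan; la description de ${\rm Sing}\ \tilde\omega$ contraint alors les données $(l_1,l_2,f)$ et permet de sélectionner une fibre de $(x_1,f)$ transverse à $E^{-1}(0)$, fournissant $\tilde{X}_{,m}$.

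La difficulté principale est double. D'une part, convertir l'hypothèse purement topologique — a priori seulement l'égalité du nombre de branches, les entrelacs de courbes étant triviaux en grande codimension dans $S^5$ — en l'énoncé analytique de lissité et de transversalité de la transformée stricte: c'est le point délicat de comptage des multiplicités le long de chaque droite du cône tangent. D'autre part, le traitement du cas nilpotent via la forme de Loray, où l'articulation entre les coordonnées adaptées au feuilletage et le diviseur $E^{-1}(0)$ exige un contrôle fin du lieu singulier; c'est là que l'hypothèse topologique joue son rôle le plus essentiel, le cas non nilpotent non central étant déjà couvert par la Proposition \ref{prop22}.
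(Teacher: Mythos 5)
La preuve de cet \'enonc\'e n'est pas donn\'ee dans l'article (elle est ``laiss\'ee au lecteur''), il n'y a donc pas d'argument auquel comparer le v\^otre; votre strat\'egie --- v\'erifier les hypoth\`eses du Lemme \ref{lemme21} puis lin\'eariser comme dans la Proposition \ref{prop22} --- est certainement celle que les auteurs ont en t\^ete. Notez qu'aux points non nilpotents elle se referme plus simplement que vous ne le faites: la preuve m\^eme de la Proposition \ref{prop3} donne en un tel point $m$ la dichotomie ``ou bien ${\rm Cod\ Sing} = 2$ et on a la structure de produit (donc le champ $\tilde{X}_{,m}$ et une unique branche lisse du lieu singulier, transverse au diviseur), ou bien ${\rm Cod\ Sing} \geq 3$ (Malgrange) et aucune branche ne passe par $m$''. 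Chaque point non nilpotent porte donc au plus une branche, avec \'egalit\'e si et seulement si le champ cherch\'e existe; si $[{\mathcal F}_{\omega_\nu}]$ est sans nilpotence, le d\'ecompte (nombre de branches de ${\rm Sing}\ \omega$ $=$ nombre de points singuliers de $[{\mathcal F}_{\omega_\nu}]$) force l'existence du champ en chaque point et conclut, sans lemme de scindement ni assertion pr\'ealable de lissit\'e.

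Les lacunes s\'erieuses sont ailleurs. (a) Votre ``point d\'elicat'' n'est pas seulement non d\'emontr\'e: il ne peut pas l'\^etre par voie purement topologique. Un hom\'eomorphisme de germes de courbes de $({\mathbb C}^3,0)$ --- abstrait ou ambiant, car en codimension $\geq 3$ tout entrelacs est trivial et le germe est le c\^one sur son entrelacs --- ne retient que le nombre de branches. La lissit\'e des branches, la transversalit\'e des transform\'ees strictes au diviseur et la r\'epartition des branches parmi les points singuliers de $[{\mathcal F}_{\omega_\nu}]$ sont invisibles: une branche cuspidale, une branche dont la transform\'ee stricte est tangente au diviseur, ou une configuration ``deux branches au-dessus de $m$, z\'ero au-dessus de $m'$'' donnent toutes des germes hom\'eomorphes \`a la r\'eunion de $k$ droites. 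Votre ``conservation de la multiplicit\'e'' doit donc provenir de la structure de feuilletage, c'est-\`a-dire de l'analyse locale que la proposition demande pr\'ecis\'ement de faire: vous supposez l'essentiel de ce qu'il faut d\'emontrer. (b) Aux points nilpotents, votre argument est de plus circulaire et la dichotomie locale tombe en d\'efaut. ``S\'electionner une fibre de $(x_1,f)$ transverse au diviseur'' pr\'esuppose que $f$ restreinte au diviseur est une submersion, ce qui est exactement la trivialit\'e du d\'eploiement \`a \'etablir. Or il existe des d\'eploiements non triviaux dont la restriction au diviseur a un $1-$jet nilpotent non nul et une singularit\'e isol\'ee: par exemple $x_1dx_1 + (x_2^2-x_3^2)\,d(x_2^2-x_3^2)$, de restriction $d\big(\frac{x_1^2}{2} + \frac{x_2^4}{2}\big)$ \`a $\{x_3=0\}$ ($\mu = 3$), dont le lieu singulier a deux branches lisses $\{x_1=0,\ x_2=\pm x_3\}$ --- aucun champ tangent au feuilletage n'y est transverse \`a $\{x_3=0\}$, car son flot pr\'eserverait le lieu singulier et fixerait le croisement des deux branches, d'o\`u $\tilde{X}(0)=0$ ---, ou encore $x_1dx_1 + (x_2^2-x_3^3)\,d(x_2^2-x_3^3)$, dont le lieu singulier est une seule branche, cuspidale. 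Le premier mod\`ele, compens\'e par un centre ne portant aucune branche, et le second, qui porte exactement une branche, sont compatibles avec l'hypoth\`ese d'hom\'eomorphie: le seul d\'ecompte ne peut donc pas forcer la trivialit\'e aux points nilpotents. Une preuve compl\`ete doit ou bien extraire de l'int\'egrabilit\'e globale la lissit\'e et la transversalit\'e effectives des branches (ce que la conjugaison \`a $\omega_\nu$ donnerait, mais que l'hom\'eomorphie ne donne pas), ou bien traiter ces points par la machinerie de la Section 5 (Propositions \ref{prop24}, \ref{prop25} et Th\'eor\`eme \ref{theo26}); en l'\'etat, vous identifiez la bonne strat\'egie mais laissez ouvert exactement le contenu de l'\'enonc\'e.
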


\begin{proof} Laiss\'ee au lecteur. \end{proof}

\section{Feuilletages avec singularit\'es nilpotentes; probl\`emes de d\'eploiements}

Dans ce chapitre on identifie ${\mathbb C}^3$ \`a ${\mathbb C}^2 \times {\mathbb C}$ et l'on consid\`ere un feuilletage ${\mathcal F}_0$ sur ${\mathbb C}^2,0$ ayant une singularit\'e nilpotente non triviale en $0$, i.e. avec $1-$jet nilpotent non nul. Soit ${\mathcal F}$ un d\'eploiement de ${\mathcal F}_0$ sur ${\mathbb C}^3$, c'est \`a dire un germe de feuilletage en $0 \in {\mathbb C}^3$ dont la restriction \`a ${\mathbb C}^2 \times \{0\}$ est ${\mathcal F}_0$. Si $\omega \in \Omega^1({\mathbb C}^3,0)$ est une $1-$forme d\'efinissant ${\mathcal F}$, le feuilletage ${\mathcal F}_0$ est d\'efini par $\omega_0 \in \Omega^1({\mathbb C}^2,0)$ restriction de $\omega$ \`a ${\mathbb C}^2 \times \{0\}$. On fixe des coordonn\'ees $(x_1,x_2,x_3)$ de ${\mathbb C}^3 = {\mathbb C}^2 \times {\mathbb C}$; comme ${\mathcal F}_0$ est \`a singularit\'e nilpotente non triviale on peut supposer que le $1-$jet de $\omega_0$ est $x_1dx_1$. Il se pourrait que $\omega$ soit non singuli\`ere en $0$, c'est le cas par exemple pour le feuilletage par les niveaux de $x_1^2 + x_2^3 + x_3$; mais dans ce cas ${\mathcal F}$ et donc ${\mathcal F}_0$ poss\`ede une int\'egrale premi\`ere holomorphe non constante (Frobenius classique).

Supposons maintenant $\omega$ singuli\`ere en $0$. Le $1-$ jet de $\omega$ est du type:
$$\omega_1 = x_1dx_1 + x_3 dl + Ldx_3 $$ o\`u $l = l(x_1,x_2)$ et $L = L(x_1,x_2,x_3)$ sont des formes lin\'eaires. L'int\'egrabilit\'e de $\omega$ impliquant celle de $\omega_1$ on a: 
$$ (x_1dx_1 + x_3dl) \wedge d(l-L) \wedge dx_3 \equiv 0 $$ qui implique $d\omega(0) = d\omega_1(0) = \lambda dx_1\wedge dx_3$, $\lambda \in {\mathbb C}$.

Il y a deux cas suivant que $\lambda$ est nul ou non. Si $d\omega_1(0)\neq 0$ alors $l = l(x_1)$, $L=L(x_1,x_3)$ et nous avons un ph\'enom\`ene de Kupka-Reeb: il existe une submersion $$F: (x_1,x_2,x_3) \rightarrow (X_1(x_1,x_2,x_3), X_3(x_1,x_2,x_3)),\quad F(x_1,0,x_3) = (x_1,x_3),$$ telle que $\omega = F^*\alpha$ o\`u $\alpha$ est la restriction de $\omega$ au plan $x_2=0$. Si $$\omega = A dx_1 + Bdx_2 + Cdx_3 = x_1dx_1 + \cdots$$ on a $F^*\alpha = A(X_1,0,X_3)dX_1 + C(X_1,0,X_3)dX_3$. Ainsi $\omega_0$ est la restriction de $F^*\alpha$ au plan $x_3=0$. Un calcul direct \'el\'ementaire montre que le {\em nombre de Milnor} $\mu({\mathcal F}_0;0)$ de ${\mathcal F}_0$ en $0$ est plus grand ou \'egal \`a $3$. Dans ce cas il y a un facteur int\'egrant pour ${\mathcal F}$, et donc ${\mathcal F}_0$ (tout du moins formel). 

Supposons maintenant $\omega_1$ ferm\'ee: $$\omega_1 = d\left( \frac{x_1^2}{2} + x_3l  + \varepsilon x_3^2 \right) = dq, \quad \varepsilon \in {\mathbb C},\quad l = l(x_1,x_2)\ \mbox{lin\'eaire}. $$ De nouveau nous distinguons trois cas suivant le rang de la forme quadratique $q$.

Si $q$ est de rang $3$, la forme $\omega$ est \`a singularit\'e isol\'ee et les feuilletages ${\mathcal F}$ et donc ${\mathcal F}_0$ poss\`edent une int\'egrale premi\`ere holomorphe non constante (Th\'eor\`eme de Malgrange).

Si $q$ est de rang $2$, ${\displaystyle q = \frac{x_1^2}{2} + x_3(ax_1 + \varepsilon x_3)}$, alors d'apr\`es la Proposition \ref{prop3} ou bien ${\mathcal F}$ et donc ${\mathcal F}_0$ poss\`edent une int\'egrale premi\`ere holomorphe ou bien il existe une submersion $F: {\mathbb C}^3,0 \rightarrow {\mathbb C}^2,0 = (x_2=0)$, $$F:(x_1,x_2,x_3) \rightarrow (X_1(x_1,x_2,x_3), X_3(x_1,x_2,x_3)), $$ $F(x_1,0,x_3) = (x_1,x_3)$, dont les fibres sont tangents \`a ${\mathcal F}$. Avec les m\^emes notations que pr\'ec\'edemment le feuilletage ${\mathcal F}$ est donn\'e par $A(X_1,0,X_3)dX_1 + C(X_1,0,X_3)dX_3$ et ${\mathcal F}_0$ par $A(X_1,0,X_3)dX_1 + C(X_1,0,X_3)dX_3\Big|_{x_3=0}$. Comme ${\mathcal F}_0$ poss\`ede facteur int\'egrant formel, il en est de m\^eme pour ${\mathcal F}$ \cite{Cer-Ma,Cer-Mo}. Un calcul \'el\'ementaire montre que le nombre de Milnor de cette derni\`ere forme est plus grand que trois.

Avant d'attaquer le cas restant o\`u $q$ est de rang $1$, nous r\'esumons la situation dans la

\begin{proposition}\label{prop24}
Soit ${\mathcal F} = {\mathcal F}_\omega$ un d\'eploiement de ${\mathcal F}_0$ sur ${\mathbb C}^3,0 = {\mathbb C}^2,0 \times {\mathbb C}, 0$, ${\mathcal F}_0 = {\mathcal F}\big|_{{\mathbb C}^2,0 \times \{0\}}$. On suppose que ${\mathcal F}_0$ a une singularit\'e nilpotente non triviale en $0$. On est dans l'un des cas suivants:
\begin{enumerate}[$(1)$]
\item ${\mathcal F}$ et ${\mathcal F}_0$ ont une int\'egrale premi\`ere holomorphe non constante.
\item Le nombre de Milnor $\mu({\mathcal F}_0;0)$ est plus grand que $3$ et ${\mathcal F}$ et ${\mathcal F}_0$ ont un facteur int\'egrant (formel).
\item Le $1-$jet de $\omega$ en $0$ est $(x_1+\delta x_3) d(x_1+\delta x_3)$, $\delta \in {\mathbb C}$. \`A conjugaison lin\'eaire pr\`es on peut supposer que $\delta = 0$.
\end{enumerate}
\end{proposition}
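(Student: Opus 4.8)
The plan is to run through the cases already laid out above and record, for each, which of the three alternatives it yields; the one genuinely new point is the identification of the last case with $(3)$. First I would dispose of the non-singular case: if $\omega(0)\neq0$ the classical Frobenius theorem produces a non-constant holomorphic first integral of ${\mathcal F}$, and hence (by restriction to $\{x_3=0\}$) of ${\mathcal F}_0$, giving $(1)$. Assume then $\omega(0)=0$. As ${\mathcal F}_0$ has a non-trivial nilpotent singularity I normalise the $1$-jet of $\omega_0$ to $x_1dx_1$, so that $\omega_1=x_1dx_1+x_3\,dl+L\,dx_3$ with $l=l(x_1,x_2)$ and $L=L(x_1,x_2,x_3)$ linear; integrability of $\omega$ forces that of $\omega_1$ and hence, as computed above, $d\omega_1(0)=\lambda\,dx_1\wedge dx_3$ for some $\lambda\in{\mathbb C}$.

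I would then branch on $\lambda$. If $\lambda\neq0$ then $d\omega(0)\neq0$ and Kupka--Reeb presents ${\mathcal F}$ as a pull-back $\omega=F^*\alpha$ of a plane germ; the elementary computation above gives $\mu({\mathcal F}_0;0)\geq3$ together with a (formal) integrating factor for ${\mathcal F}$ and ${\mathcal F}_0$, which is $(2)$. If $\lambda=0$ then $\omega_1$ is closed, so $\omega_1=dq$ with $q=\tfrac12x_1^2+x_3l+\varepsilon x_3^2$, and $q$ has rank at least $1$ since $dq=\omega_1$ has the non-zero linear part $x_1dx_1$. When $q$ has rank $3$, $\omega$ has an isolated singularity and Malgrange's theorem yields a holomorphic first integral, i.e. $(1)$; when $q$ has rank $2$, Proposition~\ref{prop3} gives either a holomorphic first integral or a submersion trivialising ${\mathcal F}$, and in the latter case the formal integrating factor of ${\mathcal F}_0$ propagates to ${\mathcal F}$ with $\mu>3$, again $(2)$.

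The only surviving possibility is that $q$ has rank $1$, and I claim this is exactly $(3)$. Writing $l=ax_1+bx_2$, the symmetric matrix of $2q$ in $(x_1,x_2,x_3)$ is $\begin{pmatrix}1&0&a\\0&0&b\\a&b&2\varepsilon\end{pmatrix}$, of determinant $-b^2$; rank one forces $b=0$ (so $q$ is independent of $x_2$) and then $2\varepsilon=a^2$, whence $2q=(x_1+ax_3)^2$ and $\omega_1=dq=(x_1+\delta x_3)\,d(x_1+\delta x_3)$ with $\delta=a$. The linear substitution $x_1\mapsto x_1+\delta x_3$, keeping $x_2$ and $x_3$ and so fixing the plane $\{x_3=0\}$ together with the deployment structure, normalises $\delta$ to $0$, establishing $(3)$. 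The substantive content sits entirely in the $\lambda\neq0$ and rank-$2$ branches treated above, where one must verify that a genuine \emph{formal integrating factor} --- and not merely a pull-back presentation --- is produced and bound the Milnor number; the rank-one reduction to $(3)$ is then pure linear algebra, and this bookkeeping is the only real obstacle.
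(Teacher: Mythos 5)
Your proposal is correct and takes essentially the same route as the paper: the identical case division (non-singular plus Frobenius; $d\omega(0)\neq 0$ plus Kupka--Reeb; $\omega_1=dq$ closed with the trichotomy on the rank of $q$), invoking the same ingredients (Malgrange, Proposition~\ref{prop3}) and deferring the same Milnor-number and integrating-factor verifications to the discussion preceding the statement --- which is exactly what the paper does, the proposition being presented there as a summary of that discussion. Your one addition is the explicit determinant computation $\det = -b^2$ identifying the rank-one case with alternative $(3)$ and the normalisation $\delta=0$ by a linear change fixing $\{x_3=0\}$ pointwise; the paper leaves this step implicit, and your version is a correct completion of it.
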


Le cas restant correspondant au cas $(3)$ o\`u rang $q=1$ se traite de la fa\c con suivante, tout de moins lorsque $\mu({\mathcal F}_0;0) = 2$.

\begin{proposition}\label{prop25}
Soit ${\mathcal F} = {\mathcal F}_\omega$ un d\'eploiement de ${\mathcal F}_0 = {\mathcal F}_{\omega_0} ={\mathcal F}\big|_{{\mathbb C}^2,0 \times \{0\}}$; on suppose que le $1-$ jet de $\omega$  est $j^1\omega = x_1dx_1$. Si $\mu({\mathcal F}_0;0) = 2$, alors on est dans l'une des situations suivantes: 
\begin{enumerate}[$(1)$]
\item ${\mathcal F}_0$ (et ${\mathcal F}$) ont une int\'egrale premi\`ere holomorphe.
\item Le d\'eploiement ${\mathcal F}$ de ${\mathcal F}_0$ est trivial, plus pr\'ecis\'ement il existe un champ de vecteurs $\xi \in \Theta({\mathbb C}^3,0)$ tangent \`a ${\mathcal F}$ et transverse \`a ${\mathbb C}^2,0 \times \{0\}$.
\end{enumerate}
\end{proposition}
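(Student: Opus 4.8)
The plan is to run everything through Loray's preparation theorem (Th\'eor\`eme~\ref{loray}) and read off the dichotomy from the resulting data. Since $j^1\omega = x_1dx_1$, Th\'eor\`eme~\ref{loray} furnishes coordinates in which
$$\omega = x_1dx_1 + \bigl(l_1(f) + x_1l_2(f)\bigr)\,df, \qquad f = f(x_2,x_3),$$
so that $\mathcal{F} = G^*\mathcal{G}$, where $G = (x_1,f)\colon \mathbb{C}^3,0 \to \mathbb{C}^2,0$ and $\mathcal{G}$ is the planar foliation defined by $\tilde\omega = u\,du + (l_1(v)+ul_2(v))\,dv$. The key point, addressed below, is that because $\omega$ is a d\'eploiement and its resonant direction $x_1$ is internal to the fibre, the preparation should be carried out \emph{fibr\'ee} over the parameter $x_3$, i.e. by a conjugacy of the form $(x_1,x_2,x_3)\mapsto(\varphi_1,\varphi_2,x_3)$; this guarantees $f=f(x_2,x_3)$ and that $f_0 := f(x_2,0)$ is the Loray datum of $\omega_0$ on the slice $\{x_3=0\}$.

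Next I would translate the hypothesis $\mu(\mathcal{F}_0;0)=2$ into a constraint on the pair $(l_1,f_0)$. Restricting, $\omega_0 = x_1dx_1 + (l_1(f_0)+x_1l_2(f_0))f_0'(x_2)\,dx_2$, so that $\mu(\mathcal{F}_0;0) = \dim_{\mathbb{C}}\mathcal{O}/\bigl(x_1,\ (l_1(f_0)+x_1l_2(f_0))f_0'\bigr)$, which upon reduction modulo $x_1$ equals $\mathrm{ord}_0\bigl(l_1(f_0)\,f_0'\bigr)$. Writing $k = \mathrm{ord}_0 l_1$ and $m = \mathrm{ord}_0 f_0$ — both finite, since $l_1\not\equiv 0$ and $f_0$ is non-constant (otherwise $\mathcal{F}_0$ would be non-singular, resp. singular in codimension one) — this reads $(k+1)m - 1 = 2$, i.e. $(k+1)m = 3$. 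Here the primality of $3 = \mu+1$ is decisive: it forbids any intermediate factorisation and forces either $k=0,\ m=3$, or $k=2,\ m=1$.

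In the first case $l_1(0)\neq0$, so $\tilde\omega(0) = l_1(0)\,dv\neq0$ and $\mathcal{G}$ is non-singular; by the classical (non-singular) Frobenius theorem it admits a submersive holomorphic first integral $\Phi$, whence $\Phi\circ G$ is a non-constant holomorphic first integral of $\mathcal{F}$, and its restriction one of $\mathcal{F}_0$ — this is conclusion $(1)$. In the second case $m=1$ means $\partial f/\partial x_2(0)=f_0'(0)\neq0$; the field $\xi = \frac{\partial f}{\partial x_2}\frac{\partial}{\partial x_3} - \frac{\partial f}{\partial x_3}\frac{\partial}{\partial x_2}$ satisfies $\xi(x_1)=\xi(f)=0$, hence $dG(\xi)=0$ and $i_\xi\omega=0$, so it is tangent to $\mathcal{F}$; moreover $\xi(x_3)=\partial f/\partial x_2$ is a unit, so $\xi$ is transverse to $\mathbb{C}^2\times\{0\}$, giving conclusion $(2)$. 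Note that $l_2$ plays no role, as befits a free deformation parameter.

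The main obstacle is exactly the compatibility asserted in the first paragraph: that Loray's preparation can be performed fibered over $x_3$. Without it the field $\xi$ would only be transverse to a slicing in the new coordinates, not to the original $\{x_3=0\}$, and the notion of \emph{triviality} of the d\'eploiement would escape control. I expect this to require either a parametric version of the preparation theorem of \cite{L} or a careful check that the conjugacy produced by Th\'eor\`eme~\ref{loray} may be chosen to preserve the projection $(x_1,x_2,x_3)\mapsto x_3$, exploiting that the nilpotent direction $x_1$ lies inside the fibre $\mathbb{C}^2\times\{0\}$. This is where the bulk of the work will lie; the number-theoretic step $(k+1)m=3$ and the two constructions above are then essentially immediate.
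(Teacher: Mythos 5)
Your proposal is, in its skeleton, exactly the paper's proof: Loray's normal form, restriction to the slice $\{x_3=0\}$, the count $\mu(\mathcal{F}_0;0)=(k+1)m-1$ (the paper writes $\mu+1=k(l+1)$, its pair $(k,l)$ being your $(m,k)$), and the same two endgames --- $l_1$ a unit gives a Frobenius first integral pulled back by $(x_1,f)$, while $f_0$ a submersion gives the kernel field of $(x_1,f)$, which is your $\xi$. The only divergence is the point you single out as the main obstacle. There the paper is simply blunter than you expect: it reads Th\'eor\`eme \ref{loray} as asserting that, up to a \emph{multiplicative unit only} and without changing the ambient coordinates, $\omega = X_1\,dX_1+(l_1(f)+X_1l_2(f))\,df$ with $X_1=x_1+\cdots\in\mathcal{O}(\mathbb{C}^3,0)$ a function of all three variables and $f\in\mathcal{O}(\mathbb{C}^2,0)$ a function of the \emph{original} variables $(x_2,x_3)$ alone; with that reading, restriction to $\{x_3=0\}$ is immediate and no parametric preparation theorem is needed or proved.

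You are right that this adapted form is not literally what Th\'eor\`eme \ref{loray} states, but you overestimate its cost: it can be dispatched with the very order count you already set up, so it is not ``where the bulk of the work lies''. When $l_1(0)\neq0$ no compatibility is needed at all: $\mathcal{G}$ is regular, $\Phi\circ G$ is a first integral of $\mathcal{F}$, and its restriction to the slice is non-constant because the slice is not invariant ($\omega|_{x_3=0}\not\equiv0$). When $l_1(0)=0$, keep the (possibly non-fibred) data $(X_1,F)$ and compare 1-jets: $j^1\omega=x_1dx_1$ forces $dX_1(0)$ proportional to $dx_1$ (take $d$ of the 1-jet identity to kill the term $l_2(0)\,dX_1(0)\wedge dF(0)$, then compare ranks of the resulting quadratic forms), so $z_1:=X_1|_{x_3=0}$ is a coordinate on the slice. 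Your $\xi$ exists and is transverse to $\{x_3=0\}$ unless $dF(0)$ lies in the span of $dx_1,dx_3$; but in that degenerate position $F^0:=F|_{x_3=0}$ has linear part proportional to $z_1$, and then, along the smooth curve $z_1=\psi(z_2)$ (with $\mathrm{ord}\,\psi\geq2$) where the $dz_1$-component of $\omega_0$ vanishes, the two factors $l_1(F^0)+z_1l_2(F^0)$ and $\partial_{z_2}F^0$ of its $dz_2$-component have orders at least $2$ and $1$ respectively, whence $\mu(\mathcal{F}_0;0)\geq3$. So the hypothesis $\mu=2$ itself excludes every bad position of the slice relative to the preparation, and your two constructions apply verbatim.
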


\begin{proof} Elle repose sur la forme normale de Loray. En fait l'\'enonc\'e \ref{loray} dit que l'on peut supposer \`a unit\'e multiplicative pr\`es que $$\omega = X_1dX_1 + (l_1(f) + X_1 l_2(f))df $$ avec $f = f(x_2,x_3) \in {\mathcal O}({\mathbb C}^2,0)$, $l_1$ et $l_2 \in {\mathcal O}({\mathbb C},0)$ et $X_1 = x_1 + \cdots \in {\mathcal O}({\mathbb C}^3,0)$.

Le feuilletage ${\mathcal F}_0$ est alors donn\'e par $$\omega_0 = X_1(x_1,x_2,0)dX_1(x_1,x_2,0) + (l_1(f(x_2,0))+ X_1 l_2(f(x_2,0)) )df(x_2,0) $$ qui est conjugu\'e \`a $${\omega}'_0 = x_1dx_1 + (l_1(f_0(x_2)) + x_1l_2(f_0(x_2))) df_0(x_2)$$ avec $f_0(x_2) = f(x_2,0)$. Si $f_0$ est une submersion alors $(X,f)$ aussi et l'on est dans le cas $(2)$. On peut donc supposer que $f_0(x_2) = x_2^k$ avec $k\geq 2$. 

Remarquons que si $l_1$ est une unit\'e alors ${\omega}'_0$ poss\`ede une int\'egrale premi\`ere holomorphe et nous sommes dans le cas $(1)$. En effet ${\omega}'_0$ apparait comme image r\'eciproque par l'application $(x_1,x_2^k)$ de 
$${\omega}''_0 = x_1dx_1 + (l_1(u) + x_1l_2(u))du $$ qui se trouve non singuli\`ere, donc avec int\'egrale premi\`ere. On peut donc supposer que $l_1 = \varepsilon u^l + \cdots$, $l\geq1$, $\varepsilon \neq 0$; l'id\'eal des composantes de ${\omega}'_0$ est donc $<x_1,x_2^{lk+k-1}>$. Comme $k\geq 2$ et $l\geq 1$, $k(l+1)-1\geq 3$ et donc $\mu({\mathcal F}_0;0)\geq 3$, cas exclus par hypoth\`ese.
\end{proof}

Dans \cite{Cer-Des} on classifie les feuilletages de degr\'e $2$ du plan ${\mathbb P}^2_{\mathbb C}$ n'ayant qu'une seule singularit\'e. Il d\'ecoule de cette classification que si $m$ est un point singulier de type nilpotent non trivial d'un feuilletage ${\mathcal F}_0$ de degr\'e $2$ alors $\mu({\mathcal F}_0;m)\leq 6$.

En fait la Proposition \ref{prop25} s'\'etend comme suit:

\begin{theorem}\label{theo26}
Soit ${\mathcal F} = {\mathcal F}_\omega$ un feuilletage de ${\mathbb C}^3,0$ d\'eploiement de ${\mathcal F}_{\omega_0} = {\mathcal F}_0 = {\mathcal F}\big|_{{\mathbb C}^2,0 \times \{0\}}$; on suppose que le $1-$jet de $\omega$ est $x_1dx_1$.
\begin{enumerate}[$(1)$]
\item Si $\mu({\mathcal F}_0;0) = 3$ alors ou bien le d\'eploiement ${\mathcal F}$ est trivial ou bien ${\mathcal F}_0$ (et donc ${\mathcal F}$) poss\`ede un facteur int\'egrant formel $g_0 \in \widehat{{\mathcal O}({\mathbb C}^2,0)}$ (respectivement $g \in {\mathcal O}({\mathbb C}^3,0)$) tel que ${\displaystyle d\Big( \frac{\omega_0}{g_0}\Big) = 0}$ (respectivement ${\displaystyle d\Big( \frac{\omega}{g}\Big) = 0}$).
\item Si $\mu({\mathcal F}_0;0)+1$ est un nombre premier alors ou bien le d\'eploiement ${\mathcal F}$ est trivial ou bien ${\mathcal F}_0$ (et donc ${\mathcal F}$) poss\`ede une int\'egrale premi\`ere holomorphe non constante.
\end{enumerate} 
\end{theorem}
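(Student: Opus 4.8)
Le plan consiste \`a reprendre la strat\'egie de la Proposition \ref{prop25} en exploitant plus finement le nombre de Milnor. D'abord j'appliquerais la forme normale de Loray (Th\'eor\`eme \ref{loray}) pour \'ecrire, \`a conjugaison et unit\'e multiplicative pr\`es,
$$\omega = X_1\,dX_1 + \big(l_1(f) + X_1\,l_2(f)\big)\,df, \qquad f = f(x_2,x_3),\quad X_1 = x_1 + \cdots,$$
ce qui pr\'esente ${\mathcal F} = {\mathcal F}_\omega$ comme image r\'eciproque $\Phi^*{\mathcal G}$ du feuilletage plan ${\mathcal G} = {\mathcal F}_\eta$, $\eta = u\,du + (l_1(v)+u\,l_2(v))\,dv$, par $\Phi = (X_1,f)$. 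La restriction \`a $\{x_3=0\}$ donne, apr\`es conjugaison, ${\mathcal F}_0 = \Psi^*{\mathcal G}$ avec $\Psi = (x_1,x_2^k)$ et $f_0(x_2) = f(x_2,0) = x_2^k$, $k\geq 1$. En notant $l$ l'ordre d'annulation de $l_1$ \`a l'origine (de sorte que $\mu({\mathcal G};0)=l$), le m\^eme calcul d'id\'eal que dans la Proposition \ref{prop25} fournit, pour $k\geq 2$ et $l\geq 1$, la relation cl\'e
$$\mu({\mathcal F}_0;0)+1 = k\,(l+1).$$

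Je distinguerais ensuite trois cas. Si $k=1$, $f_0$ et donc $\Phi$ sont des submersions, le noyau de $d\Phi(0)$ est transverse \`a $\{x_3=0\}$, et un champ holomorphe tangent aux fibres de $\Phi$ (donc \`a ${\mathcal F}$) trivialise le d\'eploiement. Si $l=0$, i.e. $l_1$ est une unit\'e, alors $\eta$ est non singuli\`ere \`a l'origine (son coefficient en $dv$ vaut $l_1(0)\neq0$), donc ${\mathcal G}$, et par image r\'eciproque ${\mathcal F}_0$ et ${\mathcal F}$, poss\`ede une int\'egrale premi\`ere holomorphe; l'\'ecriture $\omega = g\,dH$ donne alors $d(\omega/g)=0$. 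Reste le cas $k\geq 2$, $l\geq 1$, o\`u $\mu({\mathcal F}_0;0)+1 = k(l+1)$ est un produit de deux entiers $\geq 2$.

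Cette remarque r\'esout imm\'ediatement le point $(2)$: si $\mu({\mathcal F}_0;0)+1$ est premier, le cas $k\geq2,\,l\geq1$ est interdit, et ${\mathcal F}$ est trivial ($k=1$) ou poss\`ede une int\'egrale premi\`ere holomorphe ($l=0$). Pour le point $(1)$, l'\'egalit\'e $\mu({\mathcal F}_0;0)=3$ impose $k(l+1)=4$, dont la seule factorisation avec $k\geq2$ et $l+1\geq2$ est $k=2$, $l=1$; alors $\mu({\mathcal G};0)=1$ et la singularit\'e de ${\mathcal G}$ est non d\'eg\'en\'er\'ee (le d\'eterminant de sa partie lin\'eaire vaut $\varepsilon\neq0$). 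J'invoquerais alors le fait classique qu'une telle singularit\'e admet un facteur int\'egrant m\'eromorphe formel $\eta/G$ (forme normale de Poincar\'e--Dulac: mod\`ele lin\'eaire dans le cas non r\'esonnant, col ou n\oe ud r\'esonnant sinon, le facteur \'etant de la forme $uv$ fois une s\'erie dans le mon\^ome r\'esonnant $u^qv^p$), puis je tirerais $\eta/G$ en arri\`ere par $\Psi$ et $\Phi$ pour obtenir les facteurs int\'egrants $g_0 = G\circ\Psi$ et $g = G\circ\Phi$. Joint aux cas $k=1$ et $l=0$, ceci \'etablirait le point $(1)$.

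La difficult\'e principale r\'esidera dans le cas r\'esiduel $k=2$, $l=1$: il faut \'etablir, dans chaque r\'egime du rapport des valeurs propres (irrationnel, rationnel positif -- n\oe ud --, rationnel n\'egatif -- col r\'esonnant, le plus d\'elicat), l'existence effective du facteur int\'egrant formel de ${\mathcal G}$, puis contr\^oler la nature (formelle ou holomorphe) des tir\'es en arri\`ere $G\circ\Psi$ et $G\circ\Phi$ -- en particulier justifier que le facteur sur ${\mathbb C}^3$ peut \^etre pris holomorphe alors que celui sur ${\mathbb C}^2$ n'est en g\'en\'eral que formel. Un point de vigilance secondaire sera la compatibilit\'e de la normalisation de Loray avec la structure de d\'eploiement, c'est-\`a-dire avec la restriction \`a $\{x_3=0\}$ d\'efinissant ${\mathcal F}_0$.
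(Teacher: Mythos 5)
Votre d\'emonstration est correcte et suit essentiellement la m\^eme d\'emarche que celle du texte : forme normale de Loray, passage au feuilletage plan ${\mathcal G}$ via $(X_1,f)$ et $(x_1,x_2^k)$, relation arithm\'etique $\mu({\mathcal F}_0;0)+1=k(l+1)$ dans le cas non trivial, exclusion de ce cas si $\mu+1$ est premier, et, pour $\mu=3$, r\'eduction \`a $(k,l)=(2,1)$ o\`u la singularit\'e de ${\mathcal G}$ est non d\'eg\'en\'er\'ee et la th\'eorie \'el\'ementaire des formes normales (le texte cite \cite{Cer-Ma}, vous invoquez Poincar\'e--Dulac, ce qui revient au m\^eme) fournit le facteur int\'egrant formel que l'on tire en arri\`ere. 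Les deux points de vigilance que vous signalez sont pertinents mais ne vous s\'eparent pas du texte : la pr\'eparation de Loray s'obtient sans bouger les variables $(x_2,x_3)$, ce qui assure la compatibilit\'e avec la restriction \`a $\{x_3=0\}$, et la d\'emonstration du texte ne produit elle aussi qu'un facteur int\'egrant formel en dimension $3$.
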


\begin{proof} Elle proc\`ede comme la pr\'ec\'edente; lorsque le d\'eploiement est non trivial on a, avec les m\^emes notations:
$$\omega'_0 = x_1dx_1 + (l_1(x_2^k) + xl_2(x_2^k))kx_2^{k-1}dx_2, \quad k\geq 2. $$
Visiblement $\omega'_0$ est image r\'eciproque par $(x_1,x_2^k)$ de:
$$\omega''_0 = x_1dx_1 + (l_1(u) + x_1l_2(u))du. $$
Comme dans la preuve pr\'ec\'edente si $l_1$ est unit\'e $\omega''_0$ poss\`ede une int\'egrale premi\`ere holomorphe et par suite ${\mathcal F}_0$ et ${\mathcal F}$ aussi. Sinon le $1-$jet de $\omega''_0$ est $x_1dx_1 + (\varepsilon u + \lambda x_1)du $ avec $\varepsilon = l'_1(0)$ et $\lambda = l_2(0)$.

Ce $1-$jet est nilpotent si et seulement si $\varepsilon = \lambda = 0$; s'il est non nilpotent, la th\'eorie \'el\'ementaire des formes normales \cite{Cer-Ma} assure que $\omega''_0$ (et donc $\omega'_0$ et puis $\omega$) poss\`ede un facteur int\'egrant formel. On pose $l_1(u) = \varepsilon_p u^p + \cdots$, $\varepsilon_p \neq 0$, de sorte que $$l_1(x_2^k) = \varepsilon_p x_2^{kp} + \cdots, \quad k \geq 2.$$ L'id\'eal des composantes de $\omega'_0$ est donc $<x_1,x_2^{k(p+1)-1}>$ et par suite $\mu({\mathcal F}_0;0)+1 = k(p+1)$. Comme $k\geq2$, si $\mu({\mathcal F}_0;0)+1$ est premier alors $p=0$ et $l_1$ est une unit\'e; de sorte que ${\mathcal F}_0$ a une int\'egrale premi\`ere holomorphe non constante. Lorsque $\mu({\mathcal F}_0;0) = 3$, alors ou bien $(k,p) = (4,0)$ et ${\mathcal F}_0$ a une int\'egrale premi\`ere holomorphe, ou bien $(k,p) = (2,1)$, cas o\`u ${\mathcal F}_0$ a un facteur int\'egrant formel. \end{proof}

\begin{remark}\label{rem11}
Comme on l'a dit, pour une singularit\'e nilpotente d'un feuilletage de degr\'e $2$ on a que $\mu \in \{2,3,4,5,6\}$; seul le nombre $5$ n'apparait pas dans la liste du Th\'eor\`eme \ref{theo26}. Dans ce cas avec les notations pr\'ec\'edentes on a, lorsque $k\geq2$, les possibilit\'es $(k,p) = (6,0)$ (pr\'esence d'int\'egrale premi\`ere holomorphe), $(k,p) = (3,1)$ (pr\'esence de facteur int\'egrant formel) mais aussi \`a priori $(k,p)=(2,2)$; dans ce cas, si $\lambda \neq 0$, il y a encore un facteur int\'egrant formel.
\end{remark}

\begin{remark}
Le cas $\mu = 2$, trait\'e dans la Proposition \ref{prop25}, l'est aussi dans le Th\'eor\`eme \ref{theo26}. Nous avons souhait\'e s\'eparer ces deux cas pour des raisons de lisibilit\'e. Remarquons aussi que l'\'enonc\'e \ref{theo26} se g\'en\'eralise en dimension $n$ quelconque avec une preuve identique.
\end{remark}

\noindent {\em Exemple:} Soit ${\mathcal F}_0$ le feuilletage de degr\'e $2$ d\'efini par $\omega_0 = (x_1-x_2^3)dx_1 + x_1x_2^2 dx_2 = (x_1 - x_2^3)dx_1 + \frac{1}{3} x_1dx_2^3$. On a $\mu({\mathcal F}_0;0)=5$, $k=3$, $p=1$ et ${\mathcal F}_0$ poss\`ede l'int\'egrale premi\`ere ${\displaystyle \frac{3x_1 - 2x_2^3}{x_1^3}}$.

\section{Th\'eor\`eme de Dulac: pr\'esentation et applications}\label{section6}

Rappelons un r\'esultat c\'el\`ebre d'Henri Dulac \cite{D} sous sa forme g\'en\'eralis\'ee dans \cite{Cer-Alc}:

\begin{theorem}\label{theo29}
Soit ${\mathcal F}$ un feuilletage de degr\'e deux sur le plan projectif ${\mathbb P}^2_{\mathbb C}$. Supposons qu'il existe un point singulier $m \in {\rm Sing}\ {\mathcal F}$ de type centre. Alors ${\mathcal F}$ est d\'efini par une $1-$forme ferm\'ee rationelle $\eta$.
\end{theorem}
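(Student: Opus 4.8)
The plan is to pass from the local analytic structure at the center to a global Darboux (closed rational) $1$-form, the whole difficulty being concentrated in the transition, which I expect to be controlled by the very low degree of $\mathcal{F}$. First I would normalize: in homogeneous coordinates $[x:y:z]$ write $\mathcal{F}$ as the foliation defined by $\omega = a\,dx + b\,dy + c\,dz$ with $a,b,c$ homogeneous of degree $3$ satisfying the Euler relation $xa + yb + zc = 0$, the standard description of a degree $2$ foliation, which carries $d^2+d+1 = 7$ singular points counted with multiplicity. Since $m$ is of type centre, the holomorphic Morse lemma identifies the germ $\mathcal{F}_{,m}$ with the foliation $d(uv)=0$. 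Two consequences are crucial and are what I would propagate globally: $m$ carries two smooth separatrices $S_1,S_2$ meeting transversally, and the holonomy of $\mathcal{F}$ along each $S_i$ around $m$ is the \emph{identity}. This triviality of the local holonomy is the analytic signature of the centre.

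The strategy is to produce enough invariant algebraic curves to assemble a Darboux form, while reading the construction through the integrability hierarchy recalled in the introduction (transversely projective $\supset$ transversely affine $\supset$ closed rational). The centre surrounds $m$ with a one-parameter family of closed leaves $\{uv = \mathrm{const}\}$ accumulating on $S_1 \cup S_2$; using the triviality of the holonomy together with the degree $2$ constraint, I would argue that $S_1$ and $S_2$ extend to invariant algebraic curves. The key leverage is the index calculus: the $7$ singularities with multiplicity, the Camacho--Sad index relation along any invariant curve, and the Baum--Bott relation severely restrict the admissible configurations, and analyzing them should recover the finite list of centre configurations (this is Dulac's classification, here in complex form), in each of which the invariant curves $\{P_i = 0\}$ are explicit lines and conics.

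Once the $\{P_i = 0\}$ are located, I would look for residues $\lambda_i$ and a rational $H$ such that
\[
\eta = \sum_i \lambda_i \frac{dP_i}{P_i} + d\!\left(\frac{H}{\prod_i P_i^{n_i}}\right)
\]
defines $\mathcal{F}$. The requirements $\eta \wedge \omega = 0$ and $d\eta = 0$ translate into linear conditions on the $\lambda_i$, and their solvability should follow precisely because the centre forces the relevant residues to match. In the language of transverse structures, the mechanism is the following: the Liouvillian (here locally holomorphic) first integral yields, by Singer's criterion, a closed rational $\eta_1$ with $d\omega = \eta_1 \wedge \omega$, i.e.\ a transversely affine structure; the trivial holonomy at $m$ then kills the linear (scaling) part of the affine monodromy, so the structure is by translations, and a primitive of the translation cocycle exponentiates to the sought closed rational $\eta$.

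The main obstacle is establishing integrability \emph{at all}, that is, ruling out a ``non-integrable centre'' and promoting the transversely affine structure to a closed rational one by showing the scaling monodromy is trivial. I expect the hypothesis $\deg \mathcal{F} = 2$ to be indispensable exactly here: it bounds the number and configuration of singularities and invariant curves to finitely many types, which is what makes the residue conditions solvable and, case by case if necessary, forces the monodromy to be by translations. The truly delicate point, as in Dulac's original 1908 analysis, is the \emph{completeness} of this list of centre configurations.
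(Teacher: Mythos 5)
Your argument has a genuine gap, located exactly where you yourself place ``the main obstacle'': the appeal to Singer's criterion is invalid. Singer's theorem \cite{Sin} requires a \emph{Liouvillian} first integral, that is, a first integral lying in a Liouvillian extension of the field of rational functions of ${\mathbb P}^2_{\mathbb C}$ --- a global, algebraic hypothesis. A centre provides only a germ of holomorphic first integral at the single point $m$, and no general theorem promotes such a germ to a Liouvillian first integral: that promotion \emph{is} the content of Dulac's theorem, so this step is circular. The paper itself exhibits, at the end of Section \ref{section6}, the decisive obstruction to any such mechanism: take a generic foliation ${\mathcal F}'$ of degree $\geq 2$, which carries no invariant algebraic curve \cite{J} and hence is defined by no closed rational $1$-form, and pull it back by the birational map $\sigma(x_1,x_2)=(x_2,x_1x_2)$; then $\sigma^{-1}({\mathcal F}')$ has a Morse centre at the origin yet is not defined by any closed rational $1$-form. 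Your chain ``centre $\Rightarrow$ (Singer) transversely affine $\Rightarrow$ (trivial holonomy) closed rational form'' uses the hypothesis $\deg {\mathcal F}=2$ nowhere --- the degree enters your sketch only afterwards, in the residue count --- so it would apply verbatim to this example, a contradiction.

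The other substantive step --- that the separatrices $S_1,S_2$ extend to invariant algebraic curves and that Camacho--Sad and Baum--Bott index relations recover the finite list of centre configurations --- is asserted rather than proved, and this is where the entire difficulty of the theorem lives. What is actually known is computational: in \cite{Cer-Alc} it is shown, by formal calculations, that a centre on a degree-two foliation forces the existence of an invariant \emph{line}; sending that line to infinity places one in the hypotheses of Dulac's 1908 memoir \cite{D}, whose list of normal forms is again obtained by long formal computations. Note that the paper does not reprove Theorem \ref{theo29}: it cites these two results, stresses that the intuition behind this local-to-global jump remains mysterious, and explicitly lists the production of a geometric, non-computational proof as an open problem in its conclusion --- which is precisely what your index-calculus sketch would amount to, were the argument actually supplied.
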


En fait Dulac donne en carte affine une liste de formes normales pour les feuilletages de degr\'e $2$ ayant une singularit\'e de type centre sous l'hypoth\`ese que la {\em droite \`a l'infini} est invariante (il consid\`ere en effet des $1-$formes de type $Adx + Bdy$ o\`u $A$ et $B$ sont des polyn\^omes de degr\'e inf\'erieur ou \'egal \`a $2$). Dans \cite{Cer-Alc} on montre que la pr\'esence d'une singularit\'e de type centre implique celle d'une droite invariante, ce qui permet d'invoquer Dulac. Les preuves reposent sur des calculs formels longs typiques du d\'ebut du 20e si\`ecle et l'intuition d'un tel r\'esultat reste myst\'erieuse; il serait int\'eressant d'ailleurs d'en produire une d\'emonstration g\'eom\'etrique.

Ce th\'eor\`eme doit \^etre interpr\'et\'e comme suit: une certaine condition d'int\'egrabilit\'e {\em locale} (condition de centre) conduit \`a un r\'esultat d'int\'egrabilit\'e {\em globale} ({\em primitive} d'une forme ferm\'ee rationelle).

Voici la liste des mod\`eles obtenus pas H. Dulac. Les feuilletages {\em g\'en\'eriques} ont une singularit\'e de type centre. Par contre il y a des feuilletages qui n'ont pas de singularit\'es centrales; ces deux situations sont contenues dans la liste ci-dessous.

\begin{theorem}{\rm(Dulac \cite{D})}
La $1-$forme ferm\'ee rationelle $\eta$ est d'un des types suivants:
\begin{enumerate}[{\rm(a)}]
\item $\eta = dq$, o\`u $q$ est un polyn\^ome de degr\'e $3$.
\item $\eta = \sum_{i=1}^{3} \lambda_j \frac{dp_j}{p_j}$, o\`u $\lambda_j \in {\mathbb C}^*$ et $p_j$ est un polyn\^ome de degr\'e $1$, $1\leq j \leq 3$.
\item $\eta = \sum_{i=1}^{2} \lambda_j \frac{dp_j}{p_j}$, o\`u $\lambda_j \in {\mathbb C}^*$, $j=1,2$, {\rm deg}$(p_1) = 2$ et {\rm deg}$(p_2)=1$.
\item $\eta = \sum_{i=1}^{2} \lambda_j \frac{dp_j}{p_j} + dq$, o\`u $\lambda_j \in {\mathbb C}^*$, {\rm deg}$(p_j) = 1$, $j=1,2$, et {\rm deg}$(q)=1$.
\item $\eta = \sum_{i=1}^{2} \lambda_j \frac{dp_j}{p_j} + d\big(\frac{q}{p_1}\big)$, o\`u $\lambda_1$, $\lambda_2$, $p_1$, $p_2$, $q$ sont comme en {\rm (d)}.
\item $\eta = \frac{dp}{p} + d \big( \frac{q}{p^2} \big)$, o\`u {\rm deg}$(p)=1$ et {\rm deg}$(q)=2$.
\item $\eta = \frac{dp}{p} + d \big( \frac{q}{p} \big)$, o\`u $p$ et $q$ sont comme en {\rm (f)}.
\item $\eta = \frac{dp}{p} + dq$, o\`u $p$ et $q$ sont comme en {\rm (f)}.
\item $\eta = \frac{dp}{p} + dq$, o\`u {\rm deg}$(p)=2$ et {\rm deg}$(q)=1$.
\item $\eta = 3\frac{df}{f} - 2\frac{dg}{g}$, o\`u {\rm deg}$(f)=2$, {\rm deg}$(g)=3$ et $3gdf - 2fdg$ est divisible par une fonction affine.
\end{enumerate}
\end{theorem}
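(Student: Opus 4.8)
The plan is to leverage the two facts already in hand: by Th\'eor\`eme \ref{theo29} the foliation $\mathcal{F}$ is defined by a closed rational $1$-form $\eta$, and the analysis leading to it (via \cite{Cer-Alc}) furnishes an $\mathcal{F}$-invariant line, which I place at infinity. Everything then reduces to classifying the closed rational $1$-forms on $\mathbb{P}^2_{\mathbb{C}}$ whose associated foliation has degree $2$ and admits the line at infinity as a separatrix (or, in the exceptional dicritical case, a rational first integral).

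First I would write $\eta$ in its canonical form as a closed rational $1$-form,
$$\eta = \sum_{i=1}^{s}\lambda_i\frac{dp_i}{p_i} + d\!\left(\frac{H}{p_1^{n_1}\cdots p_s^{n_s}}\right),$$
where the $p_i$ are the reduced irreducible equations of the polar components $\Gamma_i = \{p_i=0\}$ and $H$ is a polynomial. Since $\eta$ is closed, each $\Gamma_i$ is $\mathcal{F}$-invariant, and the polar divisor $(\eta)_\infty = \sum_i m_i\,\Gamma_i$ (with $m_i = n_i+1$) is an algebraic invariant curve. The key structural input is the degree formula for closed rational $1$-forms: when no polar component is dicritical one has $\deg\mathcal{F} = \deg(\eta)_\infty - 2$, so that $\sum_i m_i\deg\Gamma_i = 4$; a dicritical (non-invariant) component lowers this count by its contribution and corresponds to the presence of a rational first integral, as in the case where $3g\,df - 2f\,dg$ acquires an affine factor.

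The heart of the argument is then a finite enumeration. In the non-dicritical regime I would list all invariant polar configurations of weighted degree $4$: one line with a pole of order $4$ (exact case, giving $\eta=dq$ with $\deg q = 3$, i.e.\ type (a)); four lines of simple pole (type (b), the fourth being the line at infinity once the residue condition $\sum_i\lambda_i\deg p_i = 0$ on $\mathbb{P}^2$ is accounted for); a conic plus a line (type (c)); and the various mixed configurations in which part of the pole order is carried by an exact term $d(H/\prod_i p_i^{n_i})$, which produce (d)--(i) according to the degrees of $p$, $q$ and the pole orders, bearing in mind that a given foliation may admit several closed representatives so that the exact model must be selected. For each configuration I would normalize residues, degrees and the polynomial $H$ by $\mathrm{PGL}(3,\mathbb{C})$ and rescaling to reach exactly the stated model. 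The remaining dicritical case is treated separately: there $\mathcal{F}$ has a rational first integral, which by the degree count must be of the form $f^3/g^2$ with $\deg f = 2$, $\deg g = 3$, and the divisibility of $3g\,df - 2f\,dg$ by an affine factor is precisely the condition forcing $\deg\mathcal{F}=2$; this yields type (j).

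The main obstacle is twofold. First, establishing the degree formula together with the exact bookkeeping of how an exact summand $d(H/\prod_i p_i^{n_i})$ raises pole orders and how a dicritical component lowers the effective degree: getting these multiplicities right is what makes the weighted-degree constraint $\sum_i m_i\deg\Gamma_i = 4$ (resp.\ $=5$ in the pencil case) reliable. Second, proving that the enumeration is genuinely exhaustive, i.e.\ that every invariant polar configuration compatible with the degree bound actually occurs as the polar divisor of a closed rational form admitting a center (this is where Dulac's original center computations intervene) and that the $\mathrm{PGL}(3,\mathbb{C})$-normalization collapses each case to a single model with the stated residues. I expect the bookkeeping of pole orders in the exact part, and the correct accounting of the dicritical reduction, to be the most delicate points.
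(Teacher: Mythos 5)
First, a point of comparison that matters for this review: the paper contains no proof of this statement. It is quoted as Dulac's theorem from the 1908 memoir \cite{D}; the only input added by the authors is the remark (from \cite{Cer-Alc}) that a center forces an invariant line, which legitimizes placing that line at infinity and invoking Dulac, and they explicitly describe the known arguments as long formal computations while calling for a geometric proof. Your sketch is an attempt at precisely such a geometric proof, so it must stand on its own; as it stands, it has genuine gaps.

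The decisive gap is circularity: the exhaustiveness of your enumeration --- the fact that every admissible polar configuration collapses, after normalization, onto one of the models (a)--(j) --- is exactly the step you defer to ``Dulac's original center computations''. Those computations \emph{are} the theorem, so they cannot be invoked. Moreover, the enumeration you outline is not exhaustive as written: the constraint $\deg(\eta)_\infty=\deg\mathcal F+2=4$ also allows polar divisors absent from the list, for instance two conics or a doubled conic (any pencil of conics, e.g.\ a representative of the form $d(Q_1/Q_2)$ cleared of denominators), or an irreducible cubic plus a line with residues proportional to $(1,-3)$. Each of these genuinely occurs for degree-two foliations and must be shown to admit \emph{another} closed representative of a listed type: the generic conic pencil, rewritten on two of its three singular members, becomes four lines, i.e.\ type (b); the cubic-plus-line case becomes $dq$, type (a), in the chart where the line is at infinity. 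Similarly, every degree-$5$ configuration with a linear zero divisor other than conic-plus-cubic (e.g.\ quartic plus line) must be excluded or reduced. You note that the closed representative is not unique, but you give no mechanism performing these reductions, and that is where the real work lies. Finally, a technical slip: the polar components of a closed rational $1$-form are automatically invariant (locally $\eta=\alpha/p^m$ and $d\eta=0$ give $p\,d\alpha=m\,dp\wedge\alpha$, hence $\alpha\wedge dp=0$ on $p=0$), so your ``dicritical (non-invariant) polar component'' never occurs; in case (j) the affine factor is a component of the \emph{zero} divisor of $3g\,df-2f\,dg$, not a non-invariant pole, so your degree count there is right but for the wrong reason. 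The skeleton --- degree formula for the polar divisor, $\mathrm{PGL}(3,\mathbb C)$-normalization, separate treatment of the pencil case --- is the natural attack and is compatible with Théorème \ref{theo29}, but at its key step the proposal assumes the result it is meant to prove.
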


Voici une cons\'equence du Th\'eor\`eme de Dulac:

\begin{theorem}\label{theo30}
Soit $\omega = \omega_3 + \cdots \in \Omega^1({\mathbb C}^3,0)$ un germe de $1-$forme int\'egrable \`a l'origine de ${\mathbb C}^3$ dont la partie homog\`ene $\omega_3$ est dicritique et ${\rm Cod\ Sing}\ \omega_3 \geq2$. Soit $[{\mathcal F_{\omega_3}}]$ le feuilletage de degr\'e deux de ${\mathbb P}^2_{\mathbb C}$ induit par ${\omega_3}$. 
\begin{enumerate}[$(1)$]
\item Si $[{\mathcal F}_{\omega_3}]$ poss\`ede un point central alors ${\mathcal F}_\omega$ est d\'efini par une $1-$forme ferm\'ee m\'eromorphe $\eta$.
\item Si $[{\mathcal F}_{\omega_3}]$ ne poss\`ede pas de point singulier de type centre ou nilpotent, alors ${\mathcal F}_\omega$ est conjugu\'e au feuilletage homog\`ene ${\mathcal F}_{\omega_3}$.
\end{enumerate}
\end{theorem}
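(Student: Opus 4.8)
The plan is to handle the two alternatives separately: $(1)$ rests on Dulac's theorem together with the rational version of Theorem \ref{theo11}, while $(2)$ is essentially Proposition \ref{prop22} in the case $\nu=3$.

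For $(1)$, note first that ${\rm Cod\ Sing}\ \omega_3\ge 2$ guarantees that the dicritical form $\omega_3$ induces a genuine degree-two foliation $[\mathcal{F}_{\omega_3}]$ on $\mathbb{P}^2_{\mathbb{C}}$. By hypothesis this foliation has a center-type singularity, so Theorem \ref{theo29} (Dulac) provides a closed rational $1$-form $\eta$ on $\mathbb{P}^2_{\mathbb{C}}$ defining $[\mathcal{F}_{\omega_3}]$. I would then lift $\eta$ to $\mathbb{C}^3$: writing $\pi:\mathbb{C}^3\setminus\{0\}\to\mathbb{P}^2_{\mathbb{C}}$ for the canonical projection, $\pi^*\eta$ is a closed rational $1$-form on $\mathbb{C}^3$ whose foliation is exactly $\mathcal{F}_{\omega_3}$. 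Here the dicriticity of $\omega_3$ is essential: it makes $R$ tangent to $\mathcal{F}_{\omega_3}$, so that the foliation genuinely descends to $\mathbb{P}^2_{\mathbb{C}}$ and lifts back. Since $\pi^*\eta$ and $\omega_3$ define the same codimension-one foliation they are proportional, $\omega_3=Q\,\pi^*\eta$ for a rational function $Q$; comparing radial weights ($L_R\omega_3=4\,\omega_3$ against $L_R\pi^*\eta=0$) shows that $Q$ is homogeneous of degree $4$, and in particular $\omega_3/Q$ is closed. With a homogeneous rational $Q$ such that $\omega_3/Q$ is closed in hand, I would conclude by the rational version of Theorem \ref{theo11} recorded in the remark following it: the existence of such a $Q$ forces a meromorphic $f$ with $\omega/f$ closed, so $\mathcal{F}_\omega$ is defined by the closed meromorphic $1$-form $\eta'=\omega/f$.

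For $(2)$, I read the hypothesis in the sense of the definition preceding Proposition \ref{prop22}: every singular point of $[\mathcal{F}_{\omega_3}]$ has non-nilpotent (hence nonzero) linear part --- so it is either of Kupka-Reeb type or has $1$-jet conjugate to $d(x_1x_2)$ --- and none of the latter is central. This is exactly the hypothesis of Proposition \ref{prop22} with $\nu=3$, which produces a diffeomorphism conjugating $\omega$ to $\omega_3$, hence $\mathcal{F}_\omega$ to $\mathcal{F}_{\omega_3}$. The genuinely nilpotent and the $1$-jet-null singularities are precisely the degeneracies excluded here and treated separately.

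The main obstacle is the middle step of $(1)$: ensuring that Dulac's purely projective, two-dimensional conclusion translates faithfully into the homogeneous statement ``$\omega_3/Q$ is closed'' on $\mathbb{C}^3$ that feeds Theorem \ref{theo11}. The delicate point is the bookkeeping with radial weights, needed to guarantee that $Q$ comes out genuinely homogeneous; the subsequent passage from the homogeneous model $\omega_3$ to the full germ $\omega$ is then a black-box application of the rational Theorem \ref{theo11} and demands no further effort.
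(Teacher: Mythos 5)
Your proposal is correct, but for part $(1)$ it takes a genuinely different route from the paper's. For part $(2)$ you do exactly what the paper does: you read the hypothesis as \emph{sans nilpotence, sans centre} (which is indeed how the paper intends it, since its own proof consists of the single sentence ``on applique la Proposition \ref{prop22}''), and Proposition \ref{prop22} applies verbatim. For part $(1)$, however, the paper never passes through Theorem \ref{theo11}: after invoking Dulac (Th\'eor\`eme \ref{theo29}), it blows up the origin, observes that the restriction of $E^*\underline{\eta}$ to the exceptional divisor $E^{-1}(0)\simeq \mathbb{P}^2_{\mathbb{C}}$ defines $[\mathcal{F}_{\omega_3}]$ there, uses L\'evi's extension theorem to produce a closed meromorphic $1$-form $\tilde{\eta}$ on a neighborhood of $E^{-1}(0)$ defining the lifted foliation $E^{-1}(\mathcal{F}_\omega)$ of the \emph{full} germ --- the geometric input being that, by dicriticality of the initial part, $E^{-1}(\mathcal{F}_\omega)$ is generically transverse to the divisor and traces $[\mathcal{F}_{\omega_3}]$ on it --- and then blows down, again by L\'evi, to obtain the desired closed meromorphic form at the origin. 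You instead pull Dulac's form back by the radial projection, identify it with $\omega_3/Q$ for a rational homogeneous $Q$ of degree $4$ (your weight computation $L_R\omega_3=4\omega_3$ versus $L_R\pi^*\eta=0$ is correct, and dicriticality is rightly what makes $\mathcal{F}_{\omega_3}$ the cone over $[\mathcal{F}_{\omega_3}]$), and then invoke the rational version of Theorem \ref{theo11} recorded in the remark following it. This is legitimate, since that remark is asserted in the paper with exactly the hypotheses you verify, and it makes the deformation step from $\omega_3$ to $\omega$ a clean black box. What you trade away is self-containedness: the remark is stated without proof (``avec une d\'emonstration diff\'erente''), and the blow-up/L\'evi argument the paper gives is, in effect, a proof of precisely that statement in the case at hand; so the paper's route carries the analytic content that your route outsources. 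Both arguments are sound.
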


\begin{proof}
Si $[{\mathcal F}_{\omega_3}]$ satisfait le point $(2)$ on applique la Proposition \ref{prop22}. Si $[{\mathcal F}_{\omega_3}]$ a un point central on invoque le Th\'eor\`eme de Dulac: $[{\mathcal F}_{\omega_3}]$ est d\'efini par une $1-$forme ferm\'ee rationelle $\eta$. Soit $E: {\tilde {\mathbb C}^3},0 \rightarrow {\mathbb C}^3,0$ l'\'eclatement de l'origine. La restriction $E^*{\underline \eta}\big|_{E^{-1}(0)}$ d\'efini le feuilletage $[{\mathcal F}_{\omega_3}]$ sur $E^{-1}(0) \simeq {\mathbb P}^2_{\mathbb C}$. Le Th\'eor\`eme d'extension de L\'evi permet de construire une $1-$forme m\'eromorphe ${\tilde \eta}$ au voisinage de $E^{-1}(0)$ et d\'efinissant $E^{-1}({\mathcal F})$. Toujours par L\'evi, il existe $\eta$ m\'eromorphe \`a l'origine de ${\mathbb C}^3$ telle que $E^*\eta = {\tilde \eta}$. Cet $\eta$ convient.
\end{proof}

Avant de s'int\'eresser \`a des centres {\em d\'eg\'en\'er\'es} mentionnons que l'\'enonc\'e \ref{theo29} ne se g\'en\'eralise pas en degr\'e plus grand que deux. En voici la raison; consid\'erons un feuilletage ${\mathcal F}'$ de degr\'e $\geq2$ en ${\mathbb P}^2_{\mathbb C}$. Si ${\mathcal F}'$ est g\'en\'erique il ne poss\`ede pas de courbe alg\'ebrique invariante (\cite{J, Cer-Alc}) et par suite il ne peut \^etre d\'efini par une $1-$forme ferm\'ee. Choisissons une carte affine ${\mathbb C}^2 = \{(x_1,x_2)\} \subset {\mathbb P}^2_{\mathbb C}$ telle que l'origine $(0,0)$ soit un point non singulier de ${\mathcal F}'$ et telle que la feuille passant par $(0,0)$ soit tangente \`a l'axe $x_2=0$ en $(0,0)$; en d'autre termes ${\mathcal F}'$ est d\'efini \`a l'origine par une $1-$forme du type $dx_2 + \cdots$. Consid\'erons l'application birationelle d\'efinie par $\sigma(x_1,x_2) = (x_2,x_1x_2)$; le feuilletage $\sigma^{-1}({\mathcal F}') = {\mathcal F}$ a une int\'egrale premi\`ere de Morse en $(0,0)$, c'est donc un centre, mais n'est pas d\'efini par une $1-$forme ferm\'ee rationnelle puisque ${\mathcal F}'$ ne l'est pas.

\section{Points nilpotents  g\'en\'eriques}

Nous nous proposons de pr\'eciser le r\'esultat du Th\'eor\`eme \ref{theo30} en \'etablissant d'abord un th\'eor\`eme de type Dulac dans le cas nilpotent g\'en\'erique. Une d\'emonstration de ce m\^eme r\'esultat a \'et\'e propos\'ee par C. Rousseau et C. Christopher suite \`a une discussion avec l'un des auteurs (communication personnelle).

\begin{theorem}\label{theo31}
Soit ${\mathcal F}$ un feuilletage de degr\'e deux sur le plan projectif ${\mathbb P}^2_{\mathbb C}$. On suppose que ${\mathcal F}$ poss\`ede un point singulier $m$ nilpotent tel que $\mu({\mathcal F},m) = 2$. Si ${\mathcal F}_{,m}$ poss\`ede une int\'egrale premi\`ere holomorphe non constante, alors ${\mathcal F}$ est d\'efini par une $1-$forme ferm\'ee rationelle.
\end{theorem}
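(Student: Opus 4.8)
Le plan est de calquer la démonstration sur celle du Théorème \ref{theo29} (Dulac), en commençant par fixer le modèle local. Les hypothèses déterminent complètement le germe $\mathcal{F}_{,m}$: si $f$ est une intégrale première holomorphe, alors $\mathcal{F}_{,m}=\mathcal{F}_{df}$ et ${\rm Sing}\,\mathcal{F}_{,m}=\{df=0\}$, de sorte que $\mu(\mathcal{F},m)=\mu(f;m)=2$. Or une fonction holomorphe de nombre de Milnor $2$ en deux variables est, à équivalence analytique près, la singularité $A_2$, c'est-à-dire $f=x_1^2+x_2^3$ (sa partie quadratique $x_1^2$ est de rang $1$, ce qui est bien compatible avec la nilpotence du $1$-jet $x_1dx_1$). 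Je commencerais donc par observer que $\mathcal{F}_{,m}$ est conjugué au feuilletage cuspidal $d(x_1^2+x_2^3)=0$, dont l'unique séparatrice est la courbe cuspidale locale $\{x_1^2+x_2^3=0\}$ (une seule branche, irréductible).

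La stratégie est de montrer que cette intégrabilité \emph{locale} force une structure \emph{globale}. Le point clef, analogue du fait ``centre $\Rightarrow$ droite invariante'' de Cerveau--Lins Neto sous-jacent au Théorème \ref{theo29}, est d'\'etablir l'existence d'une courbe algébrique invariante passant par $m$. Je procéderais par les théorèmes d'indices: la singularité $A_2$ impose des valeurs précises des indices de Camacho--Sad et de Baum--Bott en $m$; comme $\mathcal{F}$ est de degré $2$, la contrainte globale $\sum_p\mu(\mathcal{F},p)=7$ ainsi que les relations d'indices limitent fortement la configuration des autres singularités. L'objectif est d'en déduire que la séparatrice cuspidale s'algébrise en une courbe algébrique invariante $\Gamma$ (droite, conique, ou cubique cuspidale selon les cas).

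Une fois $\Gamma$ obtenue, je réduirais à une liste finie explicite, exactement comme dans l'esprit de Dulac. En coordonnées homogènes adaptées à $\Gamma$ et à la position normalisée de la cusp, on écrit la $1$-forme de degré $2$ la plus générale compatible avec ces données, puis on résout les équations; chaque famille obtenue devrait se reconnaître comme définie par une $1$-forme fermée rationnelle $\eta$ (du type figurant dans la liste de Dulac). De façon équivalente, l'accumulation de courbes algébriques invariantes en degré $2$ devrait fournir, par un argument de type Darboux--Jouanolou, soit une intégrale première rationnelle, soit un facteur intégrant rationnel, d'où une forme $\eta$ fermée rationnelle définissant $\mathcal{F}$.

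L'obstacle principal est la deuxième étape, le passage local-global. Contrairement au cas d'un centre, qui possède deux séparatrices transverses, la cusp n'a \emph{qu'une seule} branche; on ne peut donc pas espérer se ramener au Théorème \ref{theo29} par une transformation birationnelle envoyant $m$ sur un centre, car une application birationnelle ne sépare pas l'unique branche de la cusp en les deux branches d'un n\oe ud. Il faut par conséquent faire tourner directement la machinerie indices/classification dans le cas cuspidal: le point délicat sera d'exclure l'existence d'une séparatrice transcendante (afin de garantir l'algébricité de $\Gamma$) et de vérifier qu'aucune des familles produites par la classification n'échappe à la liste des feuilletages définis par une $1$-forme fermée rationnelle.
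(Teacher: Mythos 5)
Your first step coincides with the paper's: invoking minimality of the first integral (Mattei--Moussu) and the generalized-curve property, the local model at $m$ is the cusp $f = x_1^2 - x_2^3$ up to holomorphic conjugation. But the core of your plan --- step 2, showing that the cuspidal separatrix at $m$ \emph{algebrizes} into an invariant algebraic curve $\Gamma$ --- is not merely unproved (as you acknowledge): it is false in general under the hypotheses of the theorem, so no index-theoretic or Darboux--Jouanolou argument can deliver it. The paper's own computation exhibits the obstruction. Expanding $\omega \wedge df = 0$ degree by degree in an affine chart centered at $m$ forces $B_2 = -3x_2^2$ and $A_2 = x_1(ax_1+bx_2)$, and in the case $a = 0$ one lands on
\[
\omega = x_1\bigl(2+bx_2-Qx_2^2\bigr)dx_1 + \bigl(Qx_1^2x_2-3x_2^2\bigr)dx_2,
\]
which is the pullback of a form non-singular at the origin under $(x_1,x_2)\mapsto(x_1^2,x_2)$ (hence nilpotent, $\mu=2$, with local holomorphic first integral), and the closed rational form defining $\mathcal{F}$ is $\omega$ divided by $\bigl(1+\frac{b}{2}x_2-\frac{Q}{2}x_2^2\bigr)\bigl(bQx_1^2-3Qx_2^2+6\bigr)$. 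Its polar curves --- two lines and a conic --- have nonzero constant terms, so none passes through $m$; for generic $(b,Q)$ these polar curves are the only invariant algebraic curves, and the cuspidal separatrix at $m$ is a transcendental leaf. The same example disposes of your fallback: a fixed degree-two foliation without rational first integral has only finitely many invariant algebraic curves, so there is no ``accumulation'' to feed into a Darboux-type argument.

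What the paper actually does is the Dulac-style explicit computation you postponed to step 3, but run \emph{without} any a priori curve $\Gamma$: after normalizing $f = x_1^2 - x_2^3 - \sum_{i\geq 4} f_i$ (with $a_{2i,0}=0$), the identity $\omega\wedge df=0$ at orders $4$, $5$, $6$ determines $R$, $P$ (and $Q$ when $a\neq0$) in terms of $a,b$, leaving exactly two cases. In each case an integrating factor is then exhibited by direct calculation --- in case $a=0$ the product of invariant curves away from $m$ above, in case $a\neq0$ the product $lG$ of an invariant line $l = x_1+bx_2+8$ and an explicit cubic $G$ with $8\omega = l\,dG - 3G\,dl$. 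In other words, the invariant algebraic curves are an \emph{output} of the low-order formal computation, not an input obtained by a local-to-global principle; the mechanism you hoped to parallel from the centre case (Théorème \ref{theo29}) genuinely breaks for the cusp, exactly as you feared, and the only known remedy is the computation itself.
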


\begin{proof} Nous choisissons une carte affine ${\mathbb C}^2 = \{(x_1,x_2)\} \subset {\mathbb P}^2_{\mathbb C}$ dont l'origine est le point $m$. Le feuilletage ${\mathcal F}$ est donc d\'efini par une $1-$forme du type:
$$\omega = 2x_1dx_1 + A_2dx_1 + B_2dx_2 + q(x_1dx_2 - x_2dx_1) $$ o\`u les $A_2$, $B_2$ et $q$ sont des polyn\^omes de degr\'e $2$.

Comme le germe ${\mathcal F}_{,0}$ a une int\'egrale premi\`ere holomorphe non triviale, c'est une {\em courbe g\'en\'eralis\'ee} au sens de \cite{Cam-N-S}. En particulier si $f \in {\mathcal O}({\mathbb C}^2,0)$ est une int\'egrale premi\`ere minimale de ${\mathcal F}_{,0}$ (\cite{Ma-Mo}), alors la multiplicit\'e alg\'ebrique de $f$ est $2$ et son nombre de Milnor $\mu(f;0) = 2$ aussi. En r\'esulte que $f$ est holomorphiquement conjugu\'e \`a $x_1^2 - x_2^3$. Ceci nous permet de supposer, \`a automorphisme de ${\mathbb P}^2_{\mathbb C}$ fixant $0$ pr\`es, que
$$ f = x_1^2 -  x_2^3 - \sum\limits_{i\geq4} f_i = \sum\limits_{i\geq2}f_i$$ o\`u les $f_i$ sont des polyn\^omes homog\`enes de degr\'e $i$; on note 
$$f_k = \sum\limits_{i+j=k} a_{ij}x_1^ix_2^j. $$ Remarquons que l'on peut, quitte \`a changer $f$ en $l(f)$, $l \in {\rm Diff}({\mathbb C},0)$, supposer que $a_{2i,0} = 0$ pour tout $i\geq 2$.

Le but est maintenant d'obtenir des renseignements sur les polyn\^omes $A_2$, $B_2$ en exploitant formellement l'\'egalit\'e
\begin{equation}\label{eq2}
0 = \omega \wedge df = (2x_1 + A_2 - qx_2) \left(-3x_2^2 + \frac{\partial f_4}{\partial x_2} + \cdots \right) - (B_2 + qx_1) \left( 2x_1 + \frac{\partial f_4}{\partial x_1} + \cdots \right).
\end{equation}
Visiblement on a $B_2 = -3x_2^2$; en consid\'erant les termes de degr\'e $4$ dans (\ref{eq2}) on constate que:
$$ 2x_1 \frac{\partial f_4}{\partial x_2} - 3x_2^2 A_2 - 2x_1^2q = 0$$ et $A_2$ est divisible par $x_1$:
$$A_2 = x_1(ax_1 + bx_2),\quad a,b\in {\mathbb C}. $$

Si l'on pose $q = Px_1^2 + Qx_1x_2 + Rx_2^2$ on obtient (en utilisant la remarque $a_{2i,0} = 0$):
$$f_4 = \frac{3}{8}bx_2^4 + \left(\frac{a}{2} + \frac{R}{3} \right)x_1x_2^3 + \frac{Q}{2}x_1^2x_2^2 + Px_1^3x_2. $$
L'\'etude des termes d'ordre $5$ dans (\ref{eq2}) donne ${\displaystyle R = - \frac{3a}{8}}$ et
$$f_5 = \left(-\frac{3b^2}{20} - \frac{3Q}{5} \right)x_2^5 - \left(\frac{21}{64} ab + \frac{3P}{2} \right)x_2^4x_1 - \left(\frac{3a^2}{16} + \frac{Qb}{6} \right)x_2^3x_1^2 - \frac{(Qa+Pb)}{4}x_2^2x_1^3 - \frac{Pa}{2}x_1^4x_2+a_{50}x_1^5. $$
On fait ensuite $x_2=0$ dans les termes d'ordre $6$ de (\ref{eq2}) et l'on obtient ${\displaystyle P = -\frac{3}{32}ab}$. Finalement on a
$$A_2 = x_1(ax_1+bx_2); \quad B_2 = -3x_2^2;\quad q = -\frac{3}{32}abx_1^2 +Qx_1x_2 - \frac{3a}{8}x_2^2 $$
et nous allons distinguer deux cas suivant que $a$ est nul ou non.

\vskip0.2cm
\noindent {\em Cas $a=0$.} La $1-$forme $\omega$ s'\'ecrit donc:
$$\omega = x_1(2+bx_2 - Qx_2^2)dx_1 + (Qx_1^2x_2 - 3x_2^2)dx_2 = \Big(1+ \frac{b}{2}x_2 - \frac{Q}{2}x_2^2\Big)dx_1^2 + (Qx_1^2x_2 - 3x_2^2)dx_2 $$ et apparait comme image r\'eciproque de
$$\omega'=   \Big(1+ \frac{b}{2}y - \frac{Q}{2}y^2\Big)dx + (Qxy - 3y^2)dy$$ par l'application $\sigma:(x_1,x_2)\rightarrow(x_1^2,x_2)$. Remarquons que toute forme de type $\omega'$ pr\'ec\'edente, puisque no singuli\`ere poss\`ede une int\'egrale premi\`ere holomorphe et donc $\omega$ aussi. Ainsi l'existence d'une int\'egrale premi\`ere ne donne pas plus d'information sur les coefficients $b$ et $Q$.

Un calcul direct montre que la conique d'\'equation ${\mathcal C} = bQx - 3Qy^2 + 6 = 0$ est invariante par $\omega'$. Par suite ${\mathcal C} \circ \sigma = 0$ est invariante par $\omega$. On v\'erifie que $ \omega' / \Big(1+ \frac{b}{2}y - \frac{Q}{2}y^2\Big) {\mathcal C}$ est ferm\'ee et par suite que $ \omega / \Big(1+ \frac{b}{2}x_2 - \frac{Q}{2}x_2^2\Big) (bQx_1^2 - 3Qx_2^2 + 6)$ aussi; ceci d\'emontre le th\'eor\`eme lorsque $a=0$.

\vskip0.2cm
\noindent {\em Cas $a\neq0$.} On peut supposer $a=1$, \`a isomorphisme pr\`es. Donc
$$\omega = \left[2x_1 + x_1(x_1+bx_2) - x_2 \Big( -\frac{3}{32}bx_1^2+Qx_1x_2 - \frac{3}{8}x_2^2 \Big)\right]dx_1$$ $$ + \left[-3x_2^2 + x_1 \Big( -\frac{3}{32}bx_1^2+Qx_1x_2 - \frac{3}{8}x_2^2 \Big) \right] dx_2.$$ Il faut ici calculer explicitement les coefficients $a_{05}$ et $a_{15}$, ce qui se fait sans difficult\'e \`a partie de (\ref{eq2}): $$a_{15} = \frac{147}{1280}b^2 + \frac{3Q}{5},\quad a_{15} + \frac{5}{8}a_{05} = 0 $$ pour en d\'eduire que ${\displaystyle Q = \frac{3}{32} b^2}$. 

On constate alors que la droite $l = x_1 + bx_2 + 8=0$ est invariante par $\omega$. Un calcul direct montre que $8\omega = ldG - 3Gdl$ o\`u ${\displaystyle G = x_1^2 - x_2^3 + \frac{3}{8}bx_1^2x_2 + \frac{3}{8} x_1^3}$; ainsi ${\displaystyle \frac{\omega}{lG}}$ est ferm\'ee, ce qui prouve le th\'eor\`eme dans ce second cas.
\end{proof}

Le Th\'eor\`eme \ref{theo31}, combin\'e \`a la Proposition \ref{prop25}, nous permet d'am\'eliorer le Th\'eor\`eme \ref{theo30} comme il suit. Soit $[{\mathcal F}_{\omega_3}]$ un feuilletage de degr\'e $2$ sur ${\mathbb P}^2_{\mathbb C}$; nous dirons que $[{\mathcal F}_{\omega_3}]$ satisfait la propri\'et\'e ${\mathcal P}$ lorsque:

\begin{description}
\item [${\mathcal P}_1$] Pour tout point singulier $m \in [{\mathcal F}_{\omega_3}]$, si $\theta$ est une $1-$forme d\'efinissant $[{\mathcal F}_{\omega_3}]_{,m}$ alors le $1-$jet de $\theta$ en $m$ est non nul.
\item [${\mathcal P}_2$] Si $j^1\theta_{,m}$ est nilpotent, i.e. conjugu\'e \`a $x_1dx_1$, alors $\mu([{\mathcal F}_{\omega_3}];m) = 2$.
\end{description}

\begin{remark}
La condition ${\mathcal P}$ est satisfaite pour un ouvert de Zariski de feuilletages de degr\'e deux.
\end{remark}

\begin{remark}
Si ${\mathcal P}$ n'est pas satisfaite, alors ou bien il existe un point $m$ tel que, avec les notations pr\'ec\'edentes, $j^1\theta_{,m} = 0$, ou bien il y a un point nilpotent $m$ en lequel $\mu([{\mathcal F}_{\omega_3}];m) \geq 3$. Nous \'etudions ces deux situations plus loin.
\end{remark}

\begin{theorem}
Soit $\omega = \omega_3 + \cdots \in \Omega^1({\mathbb C}^3,0)$ un germe de $1-$forme int\'egrable dont la partie homog\`ene $\omega_3$ est dicritique et ${\rm Cod\ Sing}\ \omega_3\geq 2$. Si $[{\mathcal F}_{\omega_3}]$ satisfait la propri\'et\'e ${\mathcal P}$ on est dans l'une des deux situations suivantes (non exclusives):
\begin{itemize}
\item ${\mathcal F}_\omega$ est d\'efini par une $1-$forme ferm\'ee m\'eromorphe.
\item ${\mathcal F}_\omega$ est holomorphiquement conjugu\'e \`a ${\mathcal F}_{\omega_3}$.
\end{itemize}
\end{theorem}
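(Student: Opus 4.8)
Le plan est de ramener la dichotomie de l'\'enonc\'e \`a l'\'etude de l'\'eclatement $E:\tilde{\mathbb C}^3\to{\mathbb C}^3,0$ de l'origine. Le diviseur exceptionnel $E^{-1}(0)\simeq{\mathbb P}^2_{\mathbb C}$ est g\'en\'eriquement transverse au feuilletage relev\'e $\tilde{\mathcal F}=E^{-1}({\mathcal F}_\omega)$ et porte le feuilletage $[{\mathcal F}_{\omega_3}]$ de degr\'e $2$; ses points singuliers sont exactement les points de tangence de $\tilde{\mathcal F}$ avec $E^{-1}(0)$. Gr\^ace \`a ${\mathcal P}_1$ chacun de ces points a un $1-$jet non nul, donc est de type Kupka-Reeb, de $1-$jet $d(x_1x_2)$ (type selle), ou nilpotent; dans ce dernier cas ${\mathcal P}_2$ assure $\mu([{\mathcal F}_{\omega_3}];m)=2$. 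L'id\'ee directrice est la suivante: si aucun point singulier ne porte d'int\'egrale premi\`ere holomorphe locale ad\'equate, on v\'erifiera les hypoth\`eses du Lemme \ref{lemme21} pour conclure \`a la conjugaison ${\mathcal F}_\omega\simeq{\mathcal F}_{\omega_3}$; sinon un th\'eor\`eme de type Dulac fournira une $1-$forme ferm\'ee m\'eromorphe.

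Je traiterais d'abord le cas o\`u l'une des deux \emph{obstructions} appara\^it. Si $[{\mathcal F}_{\omega_3}]$ poss\`ede un point central, le Th\'eor\`eme \ref{theo30}$(1)$ donne directement une $1-$forme ferm\'ee m\'eromorphe d\'efinissant ${\mathcal F}_\omega$. Si $[{\mathcal F}_{\omega_3}]$ poss\`ede un point nilpotent $m$ (n\'ecessairement de nombre de Milnor $2$ par ${\mathcal P}_2$) en lequel le germe planaire $[{\mathcal F}_{\omega_3}]_{,m}$ admet une int\'egrale premi\`ere holomorphe non constante, le Th\'eor\`eme \ref{theo31} s'applique: $[{\mathcal F}_{\omega_3}]$ est d\'efini sur ${\mathbb P}^2_{\mathbb C}$ par une $1-$forme ferm\'ee rationnelle. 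On remonte alors cette forme, exactement comme dans la preuve du Th\'eor\`eme \ref{theo30}$(1)$, par extension de L\'evi le long de $E^{-1}(0)$ puis descente, en une $1-$forme ferm\'ee m\'eromorphe d\'efinissant ${\mathcal F}_\omega$. Dans les deux cas on aboutit \`a la premi\`ere alternative.

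Supposons d\'esormais qu'aucune de ces obstructions n'appara\^it. Je v\'erifierais alors, en chaque point $m\in E^{-1}(0)$, l'existence d'un champ $\tilde X_{,m}$ tangent \`a $\tilde{\mathcal F}$ et transverse au diviseur, de fa\c con \`a invoquer le Lemme \ref{lemme21}. Aux points r\'eguliers de $[{\mathcal F}_{\omega_3}]$ la feuille de $\tilde{\mathcal F}$ est transverse \`a $E^{-1}(0)$ et tout champ directeur convient. Aux points de type Kupka-Reeb, pour lesquels $d\tilde\omega$ ne s'annule pas, la Proposition \ref{prop3} place imm\'ediatement dans le cas $(1)$. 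Aux points de type selle, l'\'eventualit\'e $(2)$ de la Proposition \ref{prop3} (int\'egrale premi\`ere holomorphe de $\tilde{\mathcal F}$) est exclue: sa restriction au diviseur serait soit non constante, donc une int\'egrale premi\`ere de Morse faisant de $m$ un point central, soit constante, rendant $E^{-1}(0)$ invariant et contredisant le caract\`ere dicritique; on est donc encore dans le cas $(1)$, d'o\`u une submersion trivialisante et le champ transverse cherch\'e. Aux points nilpotents, $\tilde{\mathcal F}$ au voisinage de $m$ est un d\'eploiement du germe planaire nilpotent de nombre de Milnor $2$; la Proposition \ref{prop24} (dont l'\'eventualit\'e \`a facteur int\'egrant est \'ecart\'ee puisque $\mu=2$) ram\`ene au $1-$jet $x_1dx_1$, puis la Proposition \ref{prop25} offre l'alternative int\'egrale premi\`ere / d\'eploiement trivial; l'int\'egrale premi\`ere ayant \'et\'e exclue, le d\'eploiement est trivial et fournit le champ $\xi$ tangent \`a $\tilde{\mathcal F}$, transverse \`a $E^{-1}(0)$. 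Le Lemme \ref{lemme21} produit alors $X\in\Theta({\mathbb C}^3,0)$ avec $i_Xd\omega=\omega=L_X\omega$; comme dans les preuves du Th\'eor\`eme \ref{theo10} et de la Proposition \ref{prop22}, de ${\rm Cod\ Sing}\ \omega_3\geq2$ on tire ${\rm Cod\ Sing}\ d\omega_3\geq2$, puis $j^1X=\frac{R}{\nu+1}$, et la lin\'earisation de $X$ donne $\omega=\omega_3$ \`a conjugaison pr\`es.

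Le point le plus d\'elicat est le traitement des points nilpotents: il faut identifier proprement la structure locale de $\tilde{\mathcal F}$ le long du diviseur \`a un d\'eploiement au sens du chapitre pr\'ec\'edent, afin d'enclencher les Propositions \ref{prop24} et \ref{prop25}; c'est pr\'ecis\'ement l\`a que l'hypoth\`ese $\mu=2$ (cons\'equence de ${\mathcal P}_2$) est cruciale, \`a la fois pour \'ecarter le cas \`a facteur int\'egrant et pour autoriser l'emploi du Th\'eor\`eme \ref{theo31} dans l'autre branche. Le second point technique est la descente de L\'evi de la forme ferm\'ee rationnelle de ${\mathbb P}^2_{\mathbb C}$ en un germe ferm\'e m\'eromorphe \`a l'origine de ${\mathbb C}^3$.
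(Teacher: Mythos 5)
Votre d\'emonstration est correcte et suit essentiellement la m\^eme d\'emarche que celle du texte: la branche ``conjugaison'' s'obtient via le Lemme \ref{lemme21} et l'argument radial de la Proposition \ref{prop22}, tandis que les obstructions aux points singuliers (centre, point nilpotent \`a int\'egrale premi\`ere, analys\'e par les Propositions \ref{prop24} et \ref{prop25}) sont converties en $1-$formes ferm\'ees via les Th\'eor\`emes \ref{theo30} et \ref{theo31} puis remont\'ees par extension de L\'evi. La seule diff\'erence est d'ordre r\'edactionnel: vous proc\'edez par disjonction directe des cas l\`a o\`u le texte raisonne par contrapos\'ee (si ${\mathcal F}_\omega$ n'est pas conjugu\'e \`a ${\mathcal F}_{\omega_3}$, le Lemme \ref{lemme21} \'echoue en un certain point singulier), ce qui revient au m\^eme.
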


\begin{proof} Si $[{\mathcal F}_{\omega_3}]$ est sans nilpotence on invoque le Th\'eor\`eme \ref{theo30}. Sinon $[{\mathcal F}_{\omega_3}]$ a un point singulier $m$ nilpotent non trivial en lequel $\mu([{\mathcal F}_{\omega_3}];m) = 2$. Si ${\mathcal F}_\omega$ n'est pas conjugu\'e \`a ${\mathcal F}_{\omega_3}$ c'est qu'on ne peut appliquer le Lemme \ref{lemme21} en un certain point singulier (pour conclure comme dans la Proposition \ref{prop22}); si c'est en $m$, d'apr\`es les Propositions \ref{prop24} et \ref{prop25} le germe $[{\mathcal F}_{\omega_3}]_{,m}$ poss\`ede une int\'egrale premi\`ere holomorphe non constante. D'apr\`es le Th\'eor\`eme \ref{theo31}, $[{\mathcal F}_{\omega_3}]$ est d\'efini par une $1-$forme ferm\'ee rationnelle ${\underline \eta}$. Si c'est en un point $m'$ o\`u le $1-$jet est non nilpotent, alors $m'$ est central et $[{\mathcal F}_{\omega_3}]$ est encore d\'efini par une $1-$forme ferm\'ee. \end{proof}

\begin{theorem}\label{theo32'}
Soit $\omega = \omega_3 + \cdots \in \Omega^1({\mathbb C}^3,0)$ un germe de $1-$forme int\'egrable dont la partie homog\`ene $\omega_3$ est dicritique et ${\rm Cod\ Sing\ }\omega_3 \geq2$. S'il existe un point $m \in {\rm Sing}\ [{\mathcal F}_{\omega_3}]$ tel que $j^1\theta_{,m} \equiv0$ ($\theta$ d\'efinissant $[{\mathcal F}_{\omega_3}]$ en $m$) alors ${\mathcal F}_\omega$ est transversalement affine, i.e. il existe une $1-$forme m\'eromorphe ferm\'ee $\omega_1$ telle que $d\omega = \omega \wedge \omega_1$.
\end{theorem}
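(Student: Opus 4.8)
The plan is to build a closed meromorphic $1$-form $\omega_1$ on $(\mathbb{C}^3,0)$ with $d\omega=\omega\wedge\omega_1$. Since $\omega$ is integrable, the de Rham--Saito division \cite{S} (available on $\mathbb{C}^3$ via the Cartan argument of Lemme \ref{lemme18}) produces, near each point, a local $1$-form $\eta$ with $d\omega=\omega\wedge\eta$, unique modulo $\mathcal{O}\cdot\omega$; taking $d$ gives $\omega\wedge d\eta=0$, so that $\mathcal{F}_\omega$ is transversalement affine exactly when $\eta$ can be chosen globally closed. Off ${\rm Sing}\ \omega$ this is automatic, since Frobenius gives $\omega=g\,df$ and then $\eta=dg/g$ is closed; the real content is that the transverse (multivalued) structure carried by the local first integrals $f$ be affine rather than merely projective.

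Following the strategy of Th\'eor\`eme \ref{theo30}, I would first transfer the problem to the exceptional divisor. Blowing up the origin, $E:\tilde{\mathbb{C}}^3\to\mathbb{C}^3$, the divisor $\mathbb{P}^2_{\mathbb C}$ carries the degree $2$ foliation $[\mathcal{F}_{\omega_3}]$ and is generically transverse to $E^{-1}(\mathcal{F}_\omega)$. Over the generic part of $\mathbb{P}^2_{\mathbb C}$, and over each singular point of $[\mathcal{F}_{\omega_3}]$ with nonzero $1$-jet, the earlier results already supply a closed local connection: a pulled-back $dg/g$ at Kupka--Reeb and pull-back points (Proposition \ref{prop3}), a conjugacy or a closed defining form where Proposition \ref{prop22} or Th\'eor\`eme \ref{theo30} applies, and an integrating factor (hence $\eta=dh/h$) at the nilpotent points with $\mu=2$ via Propositions \ref{prop24}--\ref{prop25} and Th\'eor\`eme \ref{theo31}. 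So the decisive issue is concentrated at the single point $m\in\mathbb{P}^2_{\mathbb C}$ where $j^1\theta_{,m}=0$.

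The main obstacle is to show that $[\mathcal{F}_{\omega_3}]$ is transversalement affine across $m$, where Lemme \ref{lemme21} gives nothing. Here I would run a normal-form analysis of the quadratic germ $\theta=\theta_2+\theta_3$ in the spirit of the proof of Th\'eor\`eme \ref{theo31}, distinguishing two subcases. If $i_R\theta_2\equiv0$, then $\theta_2=g\,(x_2\,dx_1-x_1\,dx_2)$ with $g$ linear, the blow-up of $m$ is non-invariant, and $\mathcal{F}$ is locally a pull-back of a foliation of a curve, yielding a local meromorphic first integral and a closed $\eta$. If $i_R\theta_2\not\equiv0$, the exceptional divisor is invariant and I would reduce the singularity until only logarithmic, transversalement affine models remain, again reading off a closed meromorphic connection near $m$. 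The crux is twofold: carrying out this local classification, and checking that the affine transverse structure found for the leading term persists under the higher-order terms of $\omega$ at this one degenerate singularity.

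It remains to glue and descend. Any two of these local closed connections differ by $\eta_i-\eta_j=h_{ij}\,\omega$ (they wedge equally with $\omega$, and ${\rm Cod\ Sing}\ \omega\geq2$), and their common closedness forces $dh_{ij}\wedge\omega=h_{ij}\,\eta_i\wedge\omega$, so $\{h_{ij}\}$ is a cocycle for the flat transverse structure of $\mathcal{F}_\omega$. I expect its class to vanish by the Cartan triviality $H^1(B^*,\mathcal{O})=0$ of \cite{Car} invoked in Lemme \ref{lemme18}, the quadratic point being precisely what forces this transverse structure to be abelian affine. Gluing then gives a global closed meromorphic connection on a neighbourhood of $\mathbb{P}^2_{\mathbb C}$ in $\tilde{\mathbb{C}}^3$; a Hartogs--Levi extension followed by pushforward through $E$, exactly as in the proof of Th\'eor\`eme \ref{theo30}, delivers a closed meromorphic $\omega_1$ on $(\mathbb{C}^3,0)$ with $d\omega=\omega\wedge\omega_1$, i.e. $\mathcal{F}_\omega$ is transversalement affine.
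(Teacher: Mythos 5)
Your outer skeleton (blow up the origin, produce a closed connection form near $E^{-1}(0)$, extend by L\'evi, descend through $E$) is the same as the paper's, but the two steps that carry the actual content are, respectively, left open and based on an argument that fails. At the point $m$ the paper does not run a local normal-form analysis or a reduction of singularities: it exploits the global degree-two structure. Since $[{\mathcal F}_{\omega_3}]$ has degree two and its $1$-jet vanishes at $m$, in an affine chart centered at $m$ the defining form is \emph{exactly} the polynomial $\zeta=\zeta_2+q\,(x_1dx_2-x_2dx_1)$, with $\zeta_2$ quadratic homogeneous and $q$ a quadratic form. Blowing up $m$ (chart $(x_1,x_2)=(x,tx)$) gives, after division by $x^2$, the form $P(t)\,dx+x\bigl(B(t)+xQ(t)\bigr)\,dt$, i.e.\ a Bernoulli equation over the pencil of lines through $m$, and one checks directly that $\underline{\eta}=-\frac{B+P'}{P}\,dt-2\,\frac{dx}{x}$ is closed and satisfies $d\zeta=\zeta\wedge\underline{\eta}$ (when $q\equiv0$ the equation is linear and $\zeta$ is even proportional to a closed rational form). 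This $\underline{\eta}$ is rational \emph{on all of} ${\mathbb P}^2_{\mathbb C}$, so the other singular points need no discussion whatsoever; your pointwise invocation of Proposition \ref{prop22} and Th\'eor\`eme \ref{theo30} there is in any case a misapplication, since those are global statements about $[{\mathcal F}_{\omega_3}]$, not local ones. Note also that your subcase $i_R\theta_2\equiv0$ is vacuous: it would make $\theta_2$ collinear with $x_1dx_2-x_2dx_1$, hence $\theta$ a multiple of $x_1dx_2-x_2dx_1$, and the saturated foliation would be the pencil of lines through $m$, of degree $0$ rather than $2$. Your remaining subcase (``reduce until only logarithmic, transversally affine models remain'') is precisely the crux of the theorem, and you leave it as a sketch.

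The second and more serious gap is the gluing. If $\eta_i$, $\eta_j$ are two local closed connections, then indeed $\eta_i-\eta_j=h_{ij}\,\omega$ with $d(h_{ij}\omega)=0$; but this means the cocycle $\{h_{ij}\}$ takes values in the sheaf of meromorphic solutions of $d(h\omega)=0$ — along a regular point where $\omega=g\,df$ these are the germs $h=\phi(f)/g$, a sheaf of leafwise-constant type — and \emph{not} in ${\mathcal O}$. Cartan's vanishing $H^1(B^*,{\mathcal O})=0$, which in Lemme \ref{lemme18} is applied to an honest ${\mathcal O}$-valued cocycle, says nothing about this sheaf. The point is not cosmetic: a foliation can admit a closed meromorphic connection near every point without being transversally affine; this local-to-global failure is exactly the phenomenon illustrated in the paper after Th\'eor\`eme \ref{theo29}, where a foliation with a center (hence defined by a closed form near that point) need not be defined by a closed rational form, and it is why Th\'eor\`eme \ref{theo32'} has content at all. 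The paper never meets this obstruction because the connection on the exceptional divisor is a single globally defined rational form $\underline{\eta}$ from the start; the passage to a neighbourhood of $E^{-1}(0)$ combines the local product structure given by Frobenius with L\'evi's extension theorem, and the descent through $E$ is again by L\'evi. To repair your route, replace the patchwork of unrelated local connections by the global $\underline{\eta}$ above — which is the paper's proof.
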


\begin{remark}
Il y a une g\'en\'eralisation du Th\'eor\`eme \ref{theo32'} en tout ordre, i.e. en rempla\c cant $\omega_3$ par $\omega_\nu$ et la condition $j^1\theta_{,m}\equiv0$ par $j^{\nu-2}\theta_{,m}\equiv0$. La conclusion et la preuve sont les m\^emes.
\end{remark}

\begin{remark}
D'un point de vue formel, la fonction multivalu\'ee $F = {\rm exp}-\int \omega_1$ est un facteur int\'egrant de $\omega$: $d\big(\frac{\omega}{f}\big) = 0$.
\end{remark}

\noindent {\em D\'emonstration du Th\'eor\`eme \ref{theo32'}.} On se place dans la carte affine $x_3 = 1$ et l'on supose que le $1-$jet de $\zeta = \omega_3\big|_{x_3=1}$ est nul au point $(0:0:1)$, i.e.: $$\zeta = \zeta_2 + q(x_1dx_2 -x_2dx_1)$$ o\`u $\zeta_2$ est une $1-$forme \`a coefficients homog\`enes de degr\'e $2$ non colin\'eaire \`a $x_1dx_2 - x_2dx_1$. Un calcul \'el\'ementaire (par \'eclatement par exemple) montre qu'il existe une $1-$forme m\'eromorphe ferm\'ee ${\underline \eta}$ telle que $d\zeta = \zeta \wedge {\underline \eta}$. Comme dans la preuve du Th\'eor\`eme \ref{theo30}, en utilisant le Th\'eor\`eme de Frobenius et le Th\'eor\`eme d'Extension de L\'evi, on construit une $1-$forme m\'eromorphe ferm\'ee ${\tilde \eta}$ d\'efinie au voisinage de $E^{-1}(0)$ telle que $dE^*\omega = E^*\omega \wedge \tilde{\eta}$. Toujours d'apr\`es L\'evi il existe une $1-$forme m\'eromorphe ferm\'ee $\omega$ telle que $E^*\omega_1 = {\tilde \eta}$. Cet $\omega_1$ convient. \qed

\vskip0.2cm
\noindent {\bf Conjecture.} {\em Soit $[{\mathcal F}_{\omega_3}]$ un feuilletage de degr\'e deux sur ${\mathbb P}^2_{\mathbb C}$. Alors si $\omega = \omega_3 + \cdots \in \Omega^1({\mathbb C}^3,0)$ est int\'egrable on a l'alternative:}
\vskip0.2cm
\begin{minipage}{13cm}
\begin{enumerate}[$(1)$]
\item  {\em ${\mathcal F}_{\omega}$ et ${\mathcal F}_{\omega_3}$ sont conjugu\'es holomorphiquement.}
\item  {\em ${\mathcal F}$ est d\'efini par une $1-$forme ferm\'ee ou bien est transversalement affine.}
\end{enumerate}
\end{minipage}
\vskip0.2cm

Dans les paragraphes qui suivent on traite plusieurs nouveaux cas allant dans le sens de la Conjecture.

\section{Vers la conjecture: le cas $\mu = 3$}

Nous gardons la notation du paragraphe pr\'ec\'edent et nous proposons quelques r\'esultats en pr\'esence d'un point nilpotent $m$ tel que $\mu([{\mathcal F}_{\omega_3}];m) \geq 3$. Comme on l'a dit pr\'ec\'edemment un feuilletage de degr\'e deux a au plus $7$ points singuliers compt\'es avec multiplicit\'e, i.e. $\mu([{\mathcal F}_{\omega_3}];m) \leq 7$ et lorsque l'on classifie, parmi ces feuilletages, ceux qui ne pr\'esentent qu'un point singulier (i.e. $\mu([{\mathcal F}_{\omega_3}];m) = 7$) on constate que le $1-$jet de $[{\mathcal F}_{\omega_3}]_{,m}$ est non nilpotent \cite{Cer-Des}; de sorte que si $m$ est une singularit\'e nilpotente de $[{\mathcal F}_{\omega_3}]$ on a $\mu([{\mathcal F}_{\omega_3}];m) \in \{2,\ldots,6\}$.

En vertu du Th\'eor\`eme \ref{theo26} nous allons nous int\'eresser au feuilletages de degr\'e deux $[{\mathcal F}_{\omega_3}]$ poss\'edant un point singulier nilpotent $m$ en lequel $[{\mathcal F}_{\omega_3}]_{,m}$ poss\`ede un facteur int\'egrant formel. Pour cel\`a nous avons besoin de r\'esultats pr\'eparatoires \'el\'ementaires.

Soit ${\mathcal F}$ un feuilletage de degr\'e $2$ donn\'e dans la carte affine $\{(x_1,x_2)\} = {\mathbb C}^2 \subset {\mathbb P}_{\mathbb C}^2$ par la $1-$forme polynomiale: $$\theta = (x_1 + A_2 - qx_2) dx_1 + (B_2 + qx_1)dx_2 $$ o\`u les $A_2$, $B_2$ et $q$ sont des polyn\^omes homog\`enes de degr\'e $2$. 

\begin{lemma}\label{lemme33}
Si $\mu({\mathcal F};0) \geq 3$ alors la droite $x_1=0$ est invariante.
\end{lemma}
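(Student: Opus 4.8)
The plan is to translate both the hypothesis and the conclusion into the local algebra of the singularity at $0$ and to reduce everything to a single order computation. Write $\theta = a\,dx_1 + b\,dx_2$ with $a = x_1 + A_2 - qx_2$ and $b = B_2 + qx_1$. Since $A_2, B_2, q$ are homogeneous of degree $2$, the $1$-jet of $\theta$ is $x_1\,dx_1$ (a nilpotent singularity) and, crucially, $\partial a/\partial x_1(0) = 1 \neq 0$, so $\{a = 0\}$ is a smooth branch through the origin. By definition $\mu(\mathcal{F};0) = \dim_{\mathbb{C}}\mathcal{O}(\mathbb{C}^2,0)/(a,b)$, which is the intersection multiplicity at $0$ of the curves $\{a=0\}$ and $\{b=0\}$.

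First I would record the invariance criterion. As $\theta \wedge dx_1 = -b\,dx_1\wedge dx_2$, the line $\{x_1=0\}$ is invariant if and only if $x_1 \mid b$, i.e. $b(0,x_2)\equiv 0$. Writing $B_2 = \beta_1 x_1^2 + \beta_2 x_1 x_2 + \beta_3 x_2^2$ and noting that $qx_1$ is divisible by $x_1$, one gets $b(0,x_2) = \beta_3 x_2^2$. Hence $\{x_1=0\}$ is invariant exactly when $\beta_3 = 0$, and it suffices to prove that $\mu(\mathcal{F};0)\geq 3$ forces $\beta_3 = 0$.

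To compute $\mu$ I would exploit the smoothness of $\{a=0\}$. Collecting the $x_1$-divisible part of $a$ gives $a = x_1 U + V$ with $U = 1 + \cdots$ a unit and $V = V(x_2) = O(x_2^2)$ (the purely quadratic-and-higher terms in $x_2$). Thus $\{a=0\}$ is the graph $x_1 = \phi(x_2) = -V/U = O(x_2^2)$, and $\mu(\mathcal{F};0) = \mathrm{ord}_{x_2}\,b(\phi(x_2),x_2)$. On the other hand $b = \beta_3 x_2^2 + x_1 R$ with $R(0,0) = 0$; substituting $x_1 = \phi = O(x_2^2)$, the term $\phi\,R(\phi,x_2)$ is $O(x_2^2)\cdot O(x_2) = O(x_2^3)$, so $b(\phi(x_2),x_2) = \beta_3 x_2^2 + O(x_2^3)$. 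Therefore $\mathrm{ord}_{x_2}\,b(\phi,x_2) = 2$ whenever $\beta_3 \neq 0$; equivalently, $\mu(\mathcal{F};0)\geq 3$ implies $\beta_3 = 0$, hence $x_1\mid b$ and $\{x_1=0\}$ is invariant.

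There is no genuine obstacle beyond the low-order bookkeeping; the one point deserving care is the estimate $\phi = O(x_2^2)$, since it is precisely this that makes the $x_1$-divisible part of $b$ contribute only at order $\geq 3$ along $\{a=0\}$, leaving the coefficient $\beta_3$ of $x_2^2$ as the sole quantity that decides whether the Milnor number exceeds $2$.
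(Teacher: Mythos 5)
Your proof is correct and takes essentially the same route as the paper: the paper's one-line argument is exactly the contrapositive you prove, namely that if $x_1=0$ is not invariant then $B_2(0,x_2)\not\equiv 0$ (your $\beta_3\neq 0$) and consequently $\mu(\mathcal{F};0)=2$. Your computation of $\mathrm{ord}_{x_2}\,b(\phi(x_2),x_2)$ along the smooth branch $\{a=0\}$ simply supplies the order bookkeeping that the paper leaves implicit.
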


\begin{proof} Si ce n'est pas le cas alors $B_2(0,x_2)$ est non nul et $\mu({\mathcal F};0) = 2$. \end{proof}

Nous allons adopter les notations suivantes:
$$ A_2 = \alpha x_1^2 + \beta x_1 x_2 + \gamma x_2^2,\quad B_2 = x_1(ax_1 + bx_2),\quad q = Px_1^2 + Qx_1x_2 + Rx_2^2.$$
En restriction \`a $x_1=0$ on a $\theta = (\gamma x_2^2 - Rx_2^3) dx_1$; notons que $\gamma$ et $R$ ne peuvent \^etre simultan\'ement nuls. Si $\gamma$ est non nul, ${\mathcal F}$ a deux singularit\'es distinctes sur la droite $x_1=0$ (et peut-\^etre d'autres ailleurs); dans ce cas on peut supposer, \`a automorphisme de ${\mathbb P}_{\mathbb C}^2$ pr\`es, que $\gamma=1$ et $R=0$. Apr\`es cette normalisation les singularit\'es de ${\mathcal F}$ sur $x_1=0$ sont en $0$ et \`a {\em l'infini}. \`A l'inverse si $\gamma$ est nul, ${\mathcal F}$ a une seule singularit\'e sur $x_1=0$ qui est bien s\^ur le point $0$. Dans ce cas on peut supposer que $R=1$.

Nous r\'esumons ces faits dans le

\begin{lemma}\label{lemme34}
Si $\mu({\mathcal F};0)\geq 3$ alors \`a automorphisme de ${\mathbb P}_{\mathbb C}^2$ pr\`es $\theta$ appartient \`a l'une des deux familles:
$$\Omega_1:=\{\theta_1 = x_1[(1+\alpha x_1 + \beta x_2) dx_1 + (ax_1 + bx_2) dx_2] + (Px_1^2 + Qx_1x_2 + x_2^2)(x_1dx_2 - x_2dx_1)\} $$
$$\Omega_2:=\{\theta_2 = x_1[(1+\alpha x_1 + \beta x_2) dx_1 + (ax_1 + bx_2) dx_2] + x_2^2dx_1 +(Px_1^2 + Qx_1x_2)(x_1dx_2 - x_2dx_1)\}.$$
\end{lemma}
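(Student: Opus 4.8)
Le plan est de partir des conclusions du Lemme \ref{lemme33} et de normaliser, \`a l'aide d'un automorphisme de ${\mathbb P}^2_{\mathbb C}$, la position et la multiplicit\'e des singularit\'es port\'ees par la droite invariante $x_1=0$. D'abord, l'invariance de $x_1=0$ force $B_2$ \`a ne comporter aucun terme en $x_2^2$, d'o\`u l'\'ecriture $B_2 = x_1(ax_1+bx_2)$; avec les notations $A_2 = \alpha x_1^2 + \beta x_1x_2 + \gamma x_2^2$ et $q = Px_1^2 + Qx_1x_2 + Rx_2^2$, je calculerais la restriction $\theta\big|_{x_1=0} = (\gamma x_2^2 - Rx_2^3)\,dx_1$. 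Comme un feuilletage de degr\'e deux n'a que des singularit\'es isol\'ees, cette restriction ne peut \^etre identiquement nulle: $\gamma$ et $R$ ne sont pas simultan\'ement nuls.

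Ensuite, j'identifierais les points singuliers de ${\mathcal F}$ situ\'es sur la droite projective $\{x_1=0\}\simeq{\mathbb P}^1_{\mathbb C}$. Le champ de vecteurs tangent \`a ${\mathcal F}$, restreint \`a cette droite, est proportionnel \`a $x_2^2(\gamma - Rx_2)\frac{\partial}{\partial x_2}$: l'origine $0$ en est un z\'ero d'ordre au moins deux, et il y a au plus un z\'ero suppl\'ementaire, soit au point fini $x_2 = \gamma/R$ (lorsque $R\neq0$), soit \`a l'infini de la droite. Ainsi, lorsque $\gamma\neq0$ il y a exactement deux singularit\'es distinctes sur $x_1=0$, tandis que lorsque $\gamma=0$ (donc $R\neq0$) la droite ne porte que la singularit\'e $0$.

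La normalisation s'effectue au moyen du sous-groupe des automorphismes de ${\mathbb P}^2_{\mathbb C}$ qui fixent $0$, pr\'eservent la droite $x_1=0$ et conservent, \`a unit\'e multiplicative pr\`es, le $1-$jet nilpotent $x_1dx_1$; ces transformations agissent sur la droite $x_1=0$ comme des homographies fixant $0$, et leur partie lin\'eaire en $0$ reste du type $x_1\mapsto\lambda x_1$, $x_2\mapsto\mu x_2 + \nu x_1$. Dans le cas $\gamma\neq0$, je choisirais une telle transformation envoyant le second point singulier \`a l'infini de la droite, ce qui annule $R$, puis un changement d'\'echelle pour obtenir $\gamma=1$: on tombe alors dans la famille $\Omega_2$. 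Dans le cas $\gamma=0$, il suffit d'un changement d'\'echelle pour obtenir $R=1$, et l'on tombe dans $\Omega_1$.

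La difficult\'e principale sera de v\'erifier que l'homographie qui envoie le second point singulier \`a l'infini pr\'eserve effectivement la forme normale: il faut contr\^oler que la droite $x_1=0$ reste invariante, que l'origine demeure fixe et que le $1-$jet reste proportionnel \`a $x_1dx_1$, de sorte que la $1-$forme transform\'ee se r\'e\'ecrive encore sous la forme $(x_1+A_2'-q'x_2)dx_1 + (B_2'+q'x_1)dx_2$ avec $B_2'=x_1(a'x_1+b'x_2)$ et $R'=0$. Une fois ce point acquis, les coefficients restants $\alpha,\beta,a,b,P,Q$ demeurent arbitraires et la discussion se r\'eduit \`a l'alternative $\gamma=0$ / $\gamma\neq0$ fournissant respectivement $\Omega_1$ / $\Omega_2$.
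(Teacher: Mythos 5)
Your proposal is correct and follows essentially the same route as the paper: the paper likewise restricts $\theta$ to the invariant line to get $(\gamma x_2^2 - Rx_2^3)\,dx_1$, notes that $\gamma$ and $R$ cannot vanish simultaneously, and splits into the cases $\gamma\neq 0$ (two distinct singularities on the line, normalized by an automorphism of ${\mathbb P}^2_{\mathbb C}$ to $\gamma=1$, $R=0$, giving $\Omega_2$) and $\gamma=0$ (normalized to $R=1$, giving $\Omega_1$). The verification you flag as the main difficulty --- that the homography sending the second singularity to infinity preserves the invariant line, the fixed origin and the nilpotent $1$-jet, hence the shape of the normal form --- is exactly what the paper leaves implicit in its one-line normalization claim.
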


\begin{remark}\label{remarque35} On note que pour la premi\`ere famille $\Omega_1$ on a $\mu({\mathcal F}_{\theta_1};0)\geq4$. Dans la seconde famille $\Omega_2$ on a $\mu({\mathcal F}_{\theta_2};0)=3$ si et seulement si le coefficient $b$ est non nul. Si $b=0$ et $a\neq0$ on a $\mu({\mathcal F}_{\theta_2};0)=4$; si $a=b=0$ et $Q\neq0$ alors $\mu({\mathcal F}_{\theta_2};0)=5$ et enfin si $a=b=Q=0$ alors $\mu({\mathcal F}_{\theta_2};0)=6$.
\end{remark}

\noindent {\em Exemples.} La $1-$forme $\theta_1 = (x_1 - x_2^3) dx_1 + x_1x_2^2dx_2$ fait partie de la premi\`ere famille $\Omega_1$; on a $\mu({\mathcal F}_{\theta_1};0)=5$ et ${\mathcal F}_{\theta_1}$ poss\`ede l'int\'egrale premi\`ere rationnelle $ \frac{1}{2x_1^2} - \frac{1}{3} \frac{x_2^3}{x_1^3}$. Dans la seconde famille $\Omega_2$ on trouve par exemple (\`a homoth\'etie pr\`es) $\theta_2 = (x_1 + x_2^2)dx_1 + x_1d(x_1 + x_2^2) = (2x_1 + x_2^2)dx_1 + 2x_1x_2dx_2$ qui est ferm\'ee avec int\'egrale premi\`ere $x_1(x_1+x_2^2)$. Ici on a $\mu({\mathcal F}_{\theta_2};0)=3$ et $b=2$.

\vskip0.2cm
G\'en\'eriquement sur les param\`etres $(\alpha,\beta,a,b,P,Q)$ un feuilletage ${\mathcal F}_{\theta_2}$ induit par un \'el\'ement $\theta_2 \in \Omega_2$ poss\`ede \`a l'origine deux germes de courbes invariantes distinctes: l'une est l'axe $x_1=0$ et l'autre est lisse et tangente \`a $x_1=0$. Pour des valeurs sp\'eciales des param\`etres ceci n'est pas toujours le cas. Pour voir ceci voici comment l'on proc\`ede; le germe lin\'eaire $(x_1 + x_2) dx_1 - x_1dx_2$ poss\`ede un seul germe de courbe invariante \`a l'origine, l'axe $x_1=0$. Son image r\'eciproque $\theta_3$ par le rev\^etement ramifi\'e $(x_1,x_2^2)$:
$$\theta_3 = (x_1 + x_2^2)dx_1 - 2x_1x_2dx_2 $$ poss\`ede la m\^eme propri\'et\'e. La $1-$forme $\theta_3$ est un \'el\'ement de $\Omega_2$, avec $b=-2$ et tous les autres param\`etres nuls. Le feuilletage ${\mathcal F}_{\theta_3}$ est d\'efini par la $1-$forme ferm\'ee ${\displaystyle \frac{\theta_3}{x_1^3}}$, ce qui fait appara\^itre le facteur int\'egrant $x_1^3$ qui poss\`ede donc de la multiplicit\'e. Ce facteur int\'egrant est unique \`a constante multiplicative pr\`es.

Soit $\chi \subset {\mathbb Q}$ l'ensemble: $$\chi:= \left\{ \frac{l-2}{k-1}, l\in {\mathbb N}, k \in {\mathbb N} - \{1\} \right\}  = {\mathbb Q}_{>0} \cup \{-{\mathbb N}\} \cup \left\{ \frac{-1}{{\mathbb N}} \right\} \cup \left\{ \frac{-2}{{\mathbb N}} \right\}$$ avec des notations \'evidentes. \`A l'inverse de cet exemple on a sous une hypoth\`ese de non r\'esonance le lemme technique suivant:

\begin{lemma}\label{lemme37}
Soit $\theta_2$ un \'el\'ement de $\Omega_2$. On suppose que $\theta_2$ poss\`ede un facteur int\'egrant formel \break $f \in \widehat{{\mathcal O}({\mathbb C}^2,0)}$: $d(\frac{\theta_2}{f} )= 0$. Si $b \notin \chi$, alors \`a constante multiplicative non nulle pr\`es $f = x_1 g$ o\`u $g \in \widehat{{\mathcal O}({\mathbb C}^2,0)}$ est une submersion formelle satisfaisant $ \frac{\partial g}{\partial x_1}(0) \neq 0$ et $g(0,x_2) = x_2^2$.
\end{lemma}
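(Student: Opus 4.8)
The plan is to turn the existence of a formal integrating factor into a scalar transport equation along the foliation and then to recover the shape of $f$ by restricting that equation to the invariant line $\{x_1=0\}$. Write $\theta_2 = A\,dx_1 + B\,dx_2$ with $A,B$ the explicit polynomials coming from the definition of $\Omega_2$. By Lemma~\ref{lemme33} the line $\{x_1=0\}$ is invariant, so $B=x_1\widetilde B$, and substituting gives the boundary data $A(0,x_2)=x_2^2$, $\widetilde B(0,x_2)=bx_2$ and $D(0,x_2)=(b-2)x_2$, where $D=\partial_{x_1}B-\partial_{x_2}A$. With $X=B\,\partial_{x_1}-A\,\partial_{x_2}$ (so that $i_X\theta_2=0$), the relation $d(\theta_2/f)=0$ is equivalent to $f\,d\theta_2=df\wedge\theta_2$, that is, to the single equation
$$X(f)=D\,f.$$

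First I would exploit the divisibility of $f$ by $x_1$. A nonzero formal series has a well-defined order $m\ge0$ along $\{x_1=0\}$: write $f=x_1^m u$ with $u_0:=u(0,x_2)\not\equiv0$. Since $X(x_1)=B=x_1\widetilde B$, the transport equation becomes $X(u)=(D-m\widetilde B)\,u$. Restricting to $\{x_1=0\}$, where $B$ vanishes and $A=x_2^2$, kills the $\partial_{x_1}$--term and leaves the Euler equation
$$x_2\,u_0'(x_2)=\bigl(2+b(m-1)\bigr)\,u_0(x_2).$$
A nonzero formal solution exists only when the exponent $\lambda:=2+b(m-1)$ is a nonnegative integer, and then $u_0=c\,x_2^{\lambda}$ with $c\neq0$.

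This integrality condition is exactly where the hypothesis $b\notin\chi$ enters, and it is the heart of the lemma. For any $m\neq1$, writing $\lambda=l\in\mathbb{N}$ gives $b=(l-2)/(m-1)$, which is precisely the defining form of an element of $\chi$ (with the parameter $k=m\in\mathbb{N}\setminus\{1\}$); thus $b\notin\chi$ is incompatible with $u_0\not\equiv0$ unless $m=1$. For $m=1$ the exponent equals $2$ unconditionally, so $u_0=c\,x_2^2$; after normalising $f$ by $1/c$ we obtain $f=x_1g$ with $g:=u$ and $g(0,x_2)=x_2^2$. This already yields divisibility by $x_1$ to order exactly one and the prescribed trace on $\{x_1=0\}$.

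It remains to show that $g$ is a submersion. Since $g(0,x_2)=x_2^2$ we have $\partial_{x_2}g(0)=0$, so it suffices to prove $\partial_{x_1}g(0)\neq0$. For this I would push the expansion one step further: rewriting the $m=1$ equation as $D\,g=\widetilde B\,(g+x_1\partial_{x_1}g)-A\,\partial_{x_2}g$ and collecting the coefficient of $x_1$ produces a first--order linear ODE for $p(x_2):=\partial_{x_1}g(0,x_2)$ whose value at $x_2=0$ is $p(0)=2/(b+2)$. As $-2\in\chi$, the hypothesis forces $b\neq-2$, hence $p(0)\neq0$ and $g$ is a formal submersion (geometrically $\{g=0\}$ is the smooth second separatrix tangent to $\{x_1=0\}$, which degenerates exactly at the resonant value $b=-2$, as in the example $\theta_3$). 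I expect the only delicate point to be the control of the multiplicity $m$: everything hinges on the restriction-to-the-line computation producing the Euler exponent $2+b(m-1)$ and on matching its admissible resonances with the set $\chi$, the remaining steps being routine bookkeeping on the power series coefficients.
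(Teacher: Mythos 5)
Your proof is correct and follows essentially the same route as the paper's: your transport equation $X(f)=D\,f$ is the paper's relation $f\,d\theta_2+\theta_2\wedge df=0$, the restriction to the invariant line with $f=x_1^m u$ reproduces the paper's Euler equation $x_2u_0'=(2+b(m-1))u_0$ whose resonances are matched against $\chi$ to force $m=1$ and $u_0=c\,x_2^2$, and your order-one expansion yielding $p(0)=2/(b+2)\neq 0$ (legitimate since $-2\in\chi$) is exactly the paper's computation of $\varphi_1(0)$. The only difference is cosmetic: you work with the dual vector field $X=B\,\partial_{x_1}-A\,\partial_{x_2}$ instead of manipulating the forms directly.
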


\begin{proof} La condition $d(\frac{\theta_2}{f} )= 0$ s'exprime comme suit: 
\begin{equation} \label{eq3} fd\theta_2 + \theta_2 \wedge df = 0. \end{equation} Remarquons que si $f$ est constant alors $\theta_2$ est ferm\'ee; ceci implique que $b=2$, ce qui est interdit par hypoth\`ese. On \'ecrit alors $f = x_1^k g(x_1,x_2)$ avec $k \in {\mathbb N}$ et $g \in \widehat{{\mathcal O}({\mathbb C}^2,0)}$ une s\'erie formelle telle que $g(0,x_2)\not\equiv0$. 

En d\'evelopant (\ref{eq3}) et en faisant $x_1=0$ apr\`es simplification par $x_1^k$ on obtient:
\begin{equation}\label{eq4} g(0,x_2) (b(1-k) - 2) + x_2 \frac{\partial g}{\partial x_2}(0,x_2) \equiv 0.   \end{equation} Quitte \`a multiplier $f$ par une constante, on peut supposer que $$g(0,x_2) = x_2^l + \cdots\ \mbox{ avec}\ l\in {\mathbb N}. $$ On obtient alors d'apr\`es (\ref{eq4}) $$b(1-k) - 2 + l = 0 $$ qui conduit, puisque $b \notin \chi$, \`a $k=1$ et $l=2$.

Puisque maintenant $k=1$ la solution g\'en\'erale de (\ref{eq4}) est $\{\lambda x_2^2,\lambda \in {\mathbb C}\}$, si bien que $g(0,x_2) \equiv x_2^2$. On \'ecrit le d\'eveloppement en s\'erie de $g$ sous la forme suivante: $$g = x_2^2 + x_1\varphi_1(x_2) + x_1^2 \varphi_2(x_2) + \cdots + x_1^i \varphi_i(x_2) + \cdots $$ o\`u les $\varphi_i$ sont des s\'eries formelles en $x_2$. Un calcul \'el\'ementaire montre que (\ref{eq3}) implique:
\begin{equation}\label{eq5} x_2^2 ((a-\beta) + 5Qx_2) + 2x_2(1 + \beta x_2 - Qx_2^2) + x_2^2 \varphi'_1(x_2) - (b+2)x_2 \varphi_1(x_2) = 0.   \end{equation} En particulier le calcul du coefficient de $x_2$ dans (\ref{eq5}) conduit \`a $$2 - (b+2)\varphi_1(0) = 0 $$ et par suite $\varphi_1(0) = \frac{\partial g}{\partial x_1}(0)$ est non nul.
\end{proof}

\begin{remark}
Il y a une possibilit\'e de preuve directe en utilisant que pour $b \notin {\mathbb Q}$ la r\'eduction des singularit\'es en $0$ est celle de $(x_1 + x_2^2)dx_1 + bx_1x_2dx_2$.
\end{remark}

Nous \'enon\c cons maintenant deux r\'esultats voisins qui vont se substituer dans certains cas au Th\'eo\-r\`e\-me de Dulac. Comme nous n'utiliserons pas explicitement le premier nous n'en donnons pas la preuve que nous avons v\'erifi\'ee au prix d'un calcul long et lourd ne mettant en jeu que des arguments utilis\'es dans le Th\'eor\`eme \ref{theo31}.

\begin{theorem}\label{theo38}
Soit ${\mathcal F}$ un feuilletage de degr\'e deux poss\'edant une singularit\'e nilpotente en $0$ telle que $\mu({\mathcal F};0) = 3$. Si ${\mathcal F}_{,0}$ poss\`ede un facteur int\'egrant formel r\'eduit du type $f = x_1 g(x_1,x_2)$, $g = x_1 + \cdots$ alors ${\mathcal F}$ est d\'efini par une $1-$forme ferm\'ee rationnelle.
\end{theorem}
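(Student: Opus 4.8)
Je me placerais dans une carte affine $\{(x_1,x_2)\} \subset {\mathbb P}^2_{\mathbb C}$ dont l'origine est le point nilpotent consid\'er\'e. D'apr\`es le Lemme \ref{lemme34}, \`a automorphisme de ${\mathbb P}^2_{\mathbb C}$ pr\`es, ${\mathcal F}$ est d\'efini par une $1-$forme $\theta = \theta_2 \in \Omega_2$; comme $\mu({\mathcal F};0) = 3$, la Remarque \ref{remarque35} assure que le param\`etre $b$ est non nul (en particulier $b \neq 2$, de sorte que $\theta_2$ n'est pas d\'ej\`a ferm\'ee). Le facteur int\'egrant r\'eduit $f = x_1 g$, $g = x_1 + \cdots$, met en \'evidence deux s\'eparatrices locales: la droite $\{x_1 = 0\}$, invariante par le Lemme \ref{lemme33}, et la courbe lisse $\{g = 0\}$, tangente \`a $\{x_1=0\}$ \`a l'ordre $2$ (puisque $g(0,x_2) = x_2^2$). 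Comme dans le Th\'eor\`eme \ref{theo31}, l'id\'ee serait d'exploiter formellement la relation de fermeture
$$ f\, d\theta_2 + \theta_2 \wedge df = 0 $$
d\'ej\`a utilis\'ee en (\ref{eq3}), afin de d\'eterminer \`a la fois la s\'erie $g$ et les param\`etres $\alpha,\beta,a,b,P,Q$.

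Concr\`etement, j'\'ecrirais $g = x_1 + x_2^2 + \sum_{i+j\geq3} c_{ij}\,x_1^i x_2^j$ et je d\'evelopperais la relation de fermeture ordre par ordre en $(x_1,x_2)$, exactement selon la m\'ethode de la preuve du Lemme \ref{lemme37}. Chaque \'egalit\'e de coefficients exprime les $c_{ij}$ d'ordre sup\'erieur en fonction des param\`etres du feuilletage et impose simultan\'ement des relations alg\'ebriques entre $\alpha,\beta,a,b,P,Q$. Le point d\'ecisif que j'attends de ce calcul est qu'il \emph{sature} \`a un ordre fini: les relations forcent $c_{ij} = 0$ d\`es que $i+j$ est assez grand, de sorte que $g$ est en r\'ealit\'e un polyn\^ome et que la s\'eparatrice $\{g=0\}$ est une courbe \emph{alg\'ebrique} invariante de ${\mathcal F}$. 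C'est ici que r\'eside la principale difficult\'e, et c'est pr\'ecis\'ement le calcul long et lourd annonc\'e: faire passer $g$ du formel \`a l'alg\'ebrique. La sur-d\'etermination du syst\`eme, qui rend ceci possible, provient de ce que ${\mathcal F}$ est un feuilletage alg\'ebrique de degr\'e $2$; les arguments restent, comme dans le Th\'eor\`eme \ref{theo31}, purement \'el\'ementaires.

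Une fois $g$ reconnue alg\'ebrique, la globalisation serait directe. Le feuilletage ${\mathcal F}$ poss\'ederait alors, outre la droite $\{x_1=0\}$ et la courbe $\{g=0\}$, et le cas \'ech\'eant la droite \`a l'infini $\ell_\infty$, une famille de courbes invariantes $C_1,\ldots,C_r$ dont la somme des degr\'es vaut $\deg {\mathcal F} + 2 = 4$ (le facteur int\'egrant homog\`ene \'etant l'homog\'en\'eis\'e de $x_1 g$, \'eventuellement multipli\'e par une puissance de la coordonn\'ee \`a l'infini). Je poserais alors
$$ \eta = \sum_i \lambda_i \frac{dC_i}{C_i}, $$
o\`u les r\'esidus $\lambda_1,\lambda_2$ attach\'es \`a $\{x_1=0\}$ et $\{g=0\}$ se lisent sur la d\'ecomposition logarithmique locale de $\theta_2/f$, le dernier r\'esidu \'etant fix\'e par la condition globale $\sum_i \lambda_i \deg C_i = 0$ (absence de p\^ole parasite \`a l'infini, garantissant que $\eta$ d\'efinit bien un feuilletage de degr\'e $2$). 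Cette $1-$forme est automatiquement ferm\'ee. Il resterait \`a v\'erifier l'\'egalit\'e $\theta_2 \wedge \eta = 0$: puisque le facteur int\'egrant est r\'eduit, les p\^oles de $\theta_2/f$ sont simples et sa partie exacte locale est holomorphe, si bien que $\eta$ et $\theta_2/f$ d\'efinissent le m\^eme feuilletage au voisinage de l'origine; par irr\'eductibilit\'e de ${\mathcal F}$ et prolongement analytique, on en d\'eduit $\theta_2 \wedge \eta = 0$ partout, ce qui ach\`eve la preuve.
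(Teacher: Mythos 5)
First, a point of comparison: the paper contains \emph{no} proof of Th\'eor\`eme \ref{theo38}. The authors state that, since the result is not used explicitly elsewhere, they omit the proof, which they verified ``au prix d'un calcul long et lourd ne mettant en jeu que des arguments utilis\'es dans le Th\'eor\`eme \ref{theo31}''. Your plan follows exactly this announced strategy --- expand $f\,d\theta_2+\theta_2\wedge df=0$ order by order as in Lemme \ref{lemme37}, pin down the parameters, exhibit invariant algebraic curves and a logarithmic form --- but it stops precisely where the theorem begins. The decisive step, which you yourself flag (``le point d\'ecisif que j'attends de ce calcul est qu'il sature \`a un ordre fini''), is not carried out: nothing beyond what Lemme \ref{lemme37} already gives is established, and the saturation, the algebraicity of the separatrix $\{g=0\}$ and the resulting constraints on $(\alpha,\beta,a,b,P,Q)$ constitute the entire content of the statement. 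As written, this is a plan, not a proof. (Note also that the expected outcome need not be $c_{ij}=0$ for $i+j$ large in your normalization: if the theorem holds, the global polynomial integrating factor restricts at $0$ to $x_1$ times a polynomial times a \emph{unit}, so one should expect $g$ to be algebraic rather than literally polynomial.)

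Second, the final globalization would fail even granting the algebraicity of $g$. From the local decomposition $\theta_2/f=\lambda_1\,dx_1/x_1+\lambda_2\,dg/g+dV$ and your logarithmic form $\eta=\sum_i\lambda_i\,dC_i/C_i$ with matching residues and $\sum_i\lambda_i\deg C_i=0$, you infer that $\eta$ and $\theta_2/f$ ``d\'efinissent le m\^eme feuilletage au voisinage de l'origine''. That inference is incorrect: two closed meromorphic $1-$formes with the same simple polar divisor and the same residues differ by an exact \emph{holomorphic} form $dW$, and $\theta_2/f+dW$ defines the same foliation as $\theta_2$ only if $dW\wedge\theta_2=0$ --- which is precisely what has to be proved. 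Prescribing the polar part of a closed form does not prescribe its exact part, and the exact part moves the foliation. This is exactly why the proof of Th\'eor\`eme \ref{theo31} ends with explicit identities (direct verification that $\omega'$ divided by the product of the invariant conic and the integrating unit is closed when $a=0$; the identity $8\omega=l\,dG-3G\,dl$ when $a\neq0$), and why Th\'eor\`eme \ref{theo39} replaces such computations by a rigidity argument: the non-periodic holonomy of the invariant line forces two closed forms defining the foliation on a transversal to be ${\mathbb C}-$colin\'eaires, after which one extends by prolongement. Your argument needs one of these two mechanisms; ``irr\'eductibilit\'e de ${\mathcal F}$ et prolongement analytique'' is not a substitute.
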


Comme on l'a vu si ${\mathcal F}$ a une singularit\'e nilpotente telle que $\mu({\mathcal F};0) = 3$, alors ${\mathcal F}$ est d\'ecrit, \`a isomorphisme pr\`es, par une $1-$forme $\theta_2 \in \Omega_2$ et le nombre $b = b({\mathcal F})$ est bien d\'efini. On a le:

\begin{theorem}\label{theo39}
Soit ${\mathcal F}$ un feuilletage de degr\'e deux poss\'edant une singularit\'e nilpotente en $0$ telle que $\mu({\mathcal F};0) = 3$. On suppose que ${\mathcal F}_{,0}$ poss\`ede un facteur int\'egrant formel. Si $b = b({\mathcal F}) \notin {\mathbb Q}$ alors ${\mathcal F}$ est d\'efini par une $1-$forme ferm\'ee rationnelle.
\end{theorem}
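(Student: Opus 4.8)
The plan is to follow closely the strategy used to prove Théorème~\ref{theo31}, replacing the holomorphic first integral by the formal integrating factor and exploiting the non-résonance $b\notin\mathbb{Q}$ to force the relevant local object to be algebraic. First I would normalise $\mathcal{F}$: since the singularity is nilpotent with $\mu(\mathcal{F};0)=3\geq3$, Lemme~\ref{lemme34} applies, and because every member of $\Omega_1$ satisfies $\mu\geq4$ (Remarque~\ref{remarque35}), the foliation is given, à automorphisme de $\mathbb{P}^2_{\mathbb{C}}$ près, by some $\theta_2\in\Omega_2$ with $b=b(\mathcal{F})\neq0$. As $\chi\subset\mathbb{Q}$, the hypothesis $b\notin\mathbb{Q}$ gives $b\notin\chi$, so Lemme~\ref{lemme37} applies to the formal integrating factor and yields, à constante multiplicative près, $f=x_1g$ with $g\in\widehat{\mathcal{O}(\mathbb{C}^2,0)}$ a formal submersion satisfying $g(0,x_2)=x_2^2$ and $\partial g/\partial x_1(0)\neq0$.

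The heart of the proof is then to show that $g$ is in fact a polynomial of degree two, i.e.\ that the separatrix $\{g=0\}$ is an algebraic conic. Writing $g=x_2^2+\sum_{i\geq1}x_1^i\varphi_i(x_2)$ and continuing, order by order in $x_1$, the computation already begun in Lemme~\ref{lemme37} from the integrability equation~(\ref{eq3}), each $\varphi_i$ is the solution of a linear ODE in $x_2$ whose homogeneous operator is of type $x_2^2\,\partial_{x_2}-c_i(b)x_2$, with $c_i$ an affine function of $b$. The non-résonance $b\notin\mathbb{Q}$ guarantees that this ODE has a unique holomorphic solution at every step, that the constraints for solvability force the parameters to satisfy $P=Q=0$ and $\beta=a(b+2)/b$, and — most importantly — that all terms of order $\geq3$ in $x_1$ vanish; concretely one expects to obtain
$$g=x_2^2+\frac{2a}{b}x_1x_2+\frac{2}{b+2}x_1+\frac{\alpha}{1+b}x_1^2,$$
all denominators being nonzero precisely because $b\notin\mathbb{Q}$. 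I expect this last point, the termination of the recursion (algebraicity of the formal separatrix), to be the main obstacle: it is exactly where the irrationality of $b$ is used, through factors of the type $1+bj/2$ and $(3b+2)/b$ that are nonzero only because $b\notin\mathbb{Q}$, and it is what rules out the resonant behaviour of the example $\theta_3$ (where $b=-2$ and the integrating factor is the non-reduced $x_1^3$).

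Once $g$ is known to be this conic, I would simply verify the identity $\theta_2=g\,dx_1+\tfrac{b}{2}\,x_1\,dg$ by matching the (finitely many) coefficients of $A$ and $B$; equivalently, the residues of $\theta_2/(x_1g)$ along the two smooth reduced invariant curves $\{x_1=0\}$ and $\{g=0\}$ are the constants $\lambda_1=1$ and $\lambda_2=b/2$, with no residual exact part. It follows that
$$\frac{\theta_2}{x_1g}=\frac{dx_1}{x_1}+\frac{b}{2}\,\frac{dg}{g}$$
is a closed $1$-form with simple poles. Since $x_1$ is a line and $g$ a conic, this is a genuine rational $1$-form on $\mathbb{C}^2$, hence extends to a closed rational $1$-form on $\mathbb{P}^2_{\mathbb{C}}$; as it defines the same foliation as $\theta_2$, it exhibits $\mathcal{F}$ as defined by a closed rational form, which is the assertion. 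Note that, contrary to Théorème~\ref{theo30}, no extension de Lévi is needed here: the globalisation is automatic because the non-résonant recursion already produces an algebraic separatrix.
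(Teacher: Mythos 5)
Your route is genuinely different from the paper's, and the endpoint you are aiming at is in fact correct: one can check directly that, for $\theta_2\in\Omega_2$ with $\mu=3$, the identity
\[
\theta_2=g\,dx_1+\frac{b}{2}\,x_1\,dg,\qquad g=x_2^2+\frac{2a}{b}\,x_1x_2+\frac{2}{b+2}\,x_1+\frac{\alpha}{1+b}\,x_1^2,
\]
holds if and only if $P=Q=0$ and $\beta=a(b+2)/b$, and that under these constraints the recursion for the $\varphi_i$ of Lemme~\ref{lemme37} terminates ($\varphi_i=0$ for $i\geq3$), so that $x_1g$ is a polynomial integrating factor and $\theta_2/(x_1g)$ is the closed rational form you describe. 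The genuine gap is that the central claim --- that the solvability obstructions of the formal recursion \emph{force} $P=Q=0$ and $\beta=a(b+2)/b$ when $b\notin\mathbb{Q}$ --- is nowhere established; you yourself present it as what ``one expects''. It is not a routine verification. Writing $g=x_2^2+\sum_{i\geq1}x_1^i\varphi_i(x_2)$, the equation $f\,d\theta_2+\theta_2\wedge df=0$ yields at each order $n$ in $x_1$ an equation $x_2^2\varphi_n'-(2+nb)x_2\varphi_n=\psi_n(\varphi_{n-1},\varphi_{n-2})$, and, since $b\notin\mathbb{Q}$, exactly one scalar obstruction per order: the vanishing of the constant term of $\psi_n$. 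Carrying this out, order $1$ gives no condition, order $2$ gives precisely $\beta=a(b+2)/b$, but order $3$ gives only a single linear relation between $P$ and $Q$ (a proportionality with coefficients rational in $a,b$), not $P=Q=0$. So the two missing conditions can only emerge at order $4$ or beyond, and one must then also prove that all remaining (infinitely many) obstructions follow from the finitely many found --- which is exactly the termination statement. None of this is done, and it is precisely the ``calcul long et lourd'' that the authors invoke for Th\'eor\`eme~\ref{theo38} and deliberately avoid in their proof of Th\'eor\`eme~\ref{theo39}. Note also that you cannot shortcut the constraints by invoking the theorem itself (closed rational form $\Rightarrow$ Dulac's list $\Rightarrow$ line plus tangent conic $\Rightarrow$ $P=Q=0$): that would be circular.

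For comparison, the paper's proof takes an entirely analytic--arithmetic route: from Lemme~\ref{lemme37} it writes $\theta_2/(x_1g)=\lambda\,dx_1/x_1+\mu\,dg/g+dV$ with $\mu/\lambda=b/2$; it invokes Poincar\'e--Siegel theorems under Brjuno-type diophantine conditions on $b$ to get \emph{convergence} of the formal integrating factor, together with linearization at the singular point at infinity of $x_1=0$; it then reduces arbitrary irrational $b$ to the diophantine case by letting field automorphisms of $\mathbb{C}$ act (using Liouville's theorem for algebraic $b$); finally it glues the two local closed forms along $x_1=0$ using the non-periodicity of the holonomy, and extends the resulting closed meromorphic form to all of $\mathbb{P}^2_{\mathbb{C}}$. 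If your computation were completed, it would yield a stronger and more elementary statement than the paper's (an explicit polynomial integrating factor, no diophantine condition, no Galois-type descent, no extension argument); as written, however, the proposal is a plan whose decisive step is missing.
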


Le Lemme \ref{lemme37} fait que le Th\'eor\`eme \ref{theo39} est un corollaire de Th\'eor\`eme \ref{theo38}.
\vskip0.2cm
\noindent {\em D\'emonstration du Th\'eor\`eme \ref{theo39}.} Au point $0$ le feuilletage ${\mathcal F}$ est d\'efini par une $1-$forme de type $\theta_2\in \Omega_2$; d'apr\`es le Lemme \ref{lemme37} le facteur int\'egrant formel est de type $f = x_1g$, $g(0,x_2) = x_2^2$, $\frac{\partial g}{\partial x_1}(0) \neq 0$. Comme $\frac{\theta_2}{f}$ est ferm\'ee, il existe des nombres complexes $\lambda$ et $\mu$ et $V \in \widehat{{\mathcal O}({\mathbb C}^2,0)}$ tels que (voir \cite{Cer-Ma} par exemple): $$\frac{\theta_2}{f} = \frac{\theta_2}{x_1g} = \lambda \frac{dx_1}{x_1} + \mu \frac{dg}{g} + dV. $$ Un calcul \'el\'ementaire (direct ou en utilisant la r\'eduction des singularit\'es de ${\mathcal F}_{,0}$) montre que $\frac{\mu}{\lambda} = \frac{b}{2}$. En r\'esulte que si $b$ est non r\'eel positif ou un irrationnel mal approch\'e par les rationnels (conditions de Siegel-Brujno), alors $f$ est holomorphe en $0$ (voir \cite{Cer-Ma} par exemple): c'est une cons\'equence des th\'eor\`emes de Poincar\'e-Siegel appliqu\'es en un point convenable de la r\'eduction des singularit\'es de ${\mathcal F}_{,0}$.

Au point singulier \`a l'infini de $x_1=0$, le feuilletage est d\'efini par une $1-$forme $\theta'$ dont le $1-$jet est:
$$j^1\theta'= (x_3 - Qx_1)dx_1 - (b+1)x_1dx_3. $$ Sous les m\^emes conditions diophantiennes sur $b$, $\theta'$ est holomorphiquement lin\'earisable et poss\`ede donc un facteur int\'egrant holomorphe.

Maintenant, on observe que le probl\`eme est invariant sous l'action d'automorphismes du corps ${\mathbb C}$ (existence d'un facteur int\'egrant formel, feuilletage d\'efini par une forme ferm\'ee rationnelle); si le nombre $b$, qui est non rationnel, est alg\'ebrique il satisfait aux conditions diophantiennes exig\'ees (c'est un r\'esultat de Liouville). Si $b$ est transcendant on peut via l'action d'un automorphisme $\sigma$ se ramener au cas o\`u encore ces conditions sont satisfaites.

Remarquons que l'holonomie de la vari\'et\'e invariante $x_1=0$ du feuilletage d\'efini par $x_3dx_1 - (b+1)x_1dx_3$ n'est pas d'ordre fini si $b\notin {\mathbb Q}$; il en est donc de m\^eme pour l'holonomie de la vari\'et\'e invariante $\big((x_1=0) - \{0\}\big)$ pour ${\mathcal F}$. Consid\'erons maintenant $\theta_2$ vue comme $1-$forme rationnelle globale sur ${\mathbb P}_{\mathbb C}^2$. On dispose en $0$ d'un facteur int\'egrant holomorphe $f$: $d(\frac{\theta_2}{f}) = 0$. Comme ${\mathcal F}$ n'a pas de singularit\'es sur $\Sigma = {\mathbb C}^2 \cap \big((x_1=0) - \{0\}\big)$ qui est invariante on peut prolonger $f$ holomorphiquement le long de $\Sigma$. Au point \`a l'infini $\infty \in \overline{\Sigma} - \Sigma$, on dispose cette fois d'un facteur int\'egrant m\'eromorphe $f_\infty$ h\'erit\'e de la lin\'earisation de $\theta'$. Si $\Delta$ est un petit disque transverse \`a $\Sigma$ au point $m_0 \in \Sigma$, $m_0$ proche de $\infty$, les $1-$formes m\'eromorphes ferm\'ees ${\displaystyle \frac{\theta_2}{f}\Big|_{\Delta}}$ et ${\displaystyle \frac{\theta_2}{f_\infty}\Big|_{\Delta}}$ sont invariantes par l'holonomie $h$ de la vari\'et\'e invariante $\Sigma$ r\'epr\'esent\'ee sur la transversale $\Delta$. Comme $h$ n'est pas d'ordre fini ${\displaystyle \frac{\theta_2}{f}\Big|_{\Delta}}$ et ${\displaystyle \frac{\theta_2}{f_\infty}\Big|_{\Delta}}$ sont ${\mathbb C}-$colin\'eaires. On peut donc, quitte \`a multiplier $f_\infty$ par une constante, supposer que: $$ \frac{\theta_2}{f}\Big|_{\Delta} = \frac{\theta_2}{f_\infty}\Big|_{\Delta}.$$
Par suite au voisinage de $x_1=0$ dans ${\mathbb P}_{\mathbb C}^2$ le feuilletage ${\mathcal F}$ est d\'efini par une $1-$forme m\'eromorphe ferm\'ee. Cette $1-$forme s'\'etend en une $1-$forme m\'eromorphe globale sur ${\mathbb P}_{\mathbb C}^2$, qui est donc rationnelle. \qed

Avec les notations pr\'ec\'edentes on a le:

\begin{corollary}\label{cor40} Soit $\omega = \omega_3 + \cdots \in \Omega^1({\mathbb C}^3,0)$ un germe de $1-$forme int\'egrable \`a l'origine de ${\mathbb C}^3$ dont la partie homog\`ene $\omega_3$ est dicritique et ${\rm Cod\ Sing}\ \omega_3 \geq 2$. On suppose que le feuilletage de degr\'e deux $[{\mathcal F}_{\omega_3}]$ poss\`ede un point singulier $m \in {\mathbb P}_{\mathbb C}^2$ tel que $\mu([{\mathcal F}_{\omega_3}];m)=3$; si $b = b([{\mathcal F}_{\omega_3}])$ est non rationnel on a l'alternative:
\begin{enumerate}[$(1)$]
\item ${\mathcal F}_\omega$ est d\'efini par une $1-$forme ferm\'ee m\'eromorphe.
\item ${\mathcal F}_\omega$ est holomorphiquement conjugu\'e \`a ${\mathcal F}_{\omega_3}$.
\end{enumerate}
\end{corollary}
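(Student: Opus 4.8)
Le plan est de reproduire, au niveau du diviseur exceptionnel, la dichotomie du Th\'eor\`eme \ref{theo30}, en rempla\c{c}ant l'appel au Th\'eor\`eme de Dulac au point distingu\'e $m$ par le Th\'eor\`eme \ref{theo39}. On commence par \'eclater l'origine, $E:\tilde{\mathbb{C}}^3 \rightarrow \mathbb{C}^3,0$; comme $\omega_3$ est dicritique, le feuilletage relev\'e $\tilde{\mathcal{F}} := E^{-1}(\mathcal{F}_\omega)$ est g\'en\'eriquement transverse \`a $E^{-1}(0) \simeq \mathbb{P}^2_{\mathbb{C}}$ et sa restriction au diviseur est $[\mathcal{F}_{\omega_3}]$. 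L'observation cl\'e est qu'au voisinage de chaque point $m' \in {\rm Sing}\ [\mathcal{F}_{\omega_3}]$ le germe de $\tilde{\mathcal{F}}$ est pr\'ecis\'ement un d\'eploiement du germe plan $[\mathcal{F}_{\omega_3}]_{,m'}$, le diviseur jouant le r\^ole de $\mathbb{C}^2 \times \{0\}$ et la direction normale celui du param\`etre de d\'eploiement; de plus, l'existence du champ $\tilde{X}_{,m'}$ tangent \`a $\tilde{\mathcal{F}}$ et transverse au diviseur exig\'e par le Lemme \ref{lemme21} \'equivaut exactement \`a la trivialit\'e de ce d\'eploiement.

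Muni de ce dictionnaire, je distinguerais deux situations. Si le d\'eploiement est trivial en tout point singulier de $[\mathcal{F}_{\omega_3}]$, les hypoth\`eses du Lemme \ref{lemme21} sont satisfaites en tout point de $E^{-1}(0)$ (transversalit\'e hors du lieu singulier, trivialit\'e aux points singuliers); on obtient donc $X \in \Theta(\mathbb{C}^3,0)$ avec $i_X d\omega = \omega = L_X\omega$, et la lin\'earisation de $X$ comme dans la Proposition \ref{prop22} montre que $\omega$ et $\omega_3$ sont conjugu\'ees: c'est l'alternative $(2)$. Sinon, le d\'eploiement n'est pas trivial en un certain point $m'$, et je s\'eparerais selon la nature de $m'$. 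Si le $1-$jet en $m'$ est non nilpotent, la Proposition \ref{prop3} force une int\'egrale premi\`ere holomorphe locale, donc $m'$ est central et le Th\'eor\`eme de Dulac (sous la forme du Th\'eor\`eme \ref{theo30}$(1)$) fournit une $1-$forme ferm\'ee rationnelle $\underline{\eta}$ d\'efinissant $[\mathcal{F}_{\omega_3}]$. Si $m'$ est nilpotent avec $\mu = 2$, la Proposition \ref{prop25} donne une int\'egrale premi\`ere holomorphe locale et le Th\'eor\`eme \ref{theo31} fournit \`a nouveau un tel $\underline{\eta}$. Enfin, si $m' = m$ est le point nilpotent distingu\'e avec $\mu = 3$, la Proposition \ref{prop24} et le Th\'eor\`eme \ref{theo26}, le d\'eploiement \'etant non trivial, produisent un facteur int\'egrant formel en $m$; l'hypoth\`ese $b \notin \mathbb{Q}$ permet alors d'invoquer le Th\'eor\`eme \ref{theo39} pour conclure, encore, que $[\mathcal{F}_{\omega_3}]$ est d\'efini par une $1-$forme ferm\'ee rationnelle $\underline{\eta}$ sur $\mathbb{P}^2_{\mathbb{C}}$.

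Dans chacun des cas non triviaux, je transporterais ensuite $\underline{\eta}$ vers $\mathbb{C}^3$ comme dans la preuve du Th\'eor\`eme \ref{theo30}: puisque $\underline{\eta}$ d\'efinit $[\mathcal{F}_{\omega_3}]$ sur $E^{-1}(0)$, le Th\'eor\`eme d'extension de L\'evi produit une $1-$forme m\'eromorphe ferm\'ee $\tilde{\eta}$ au voisinage de $E^{-1}(0)$ d\'efinissant $\tilde{\mathcal{F}}$, et une seconde application de L\'evi la redescend en une $1-$forme m\'eromorphe ferm\'ee $\eta$ \`a l'origine de $\mathbb{C}^3$ avec $E^*\eta = \tilde{\eta}$. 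Cette $\eta$ d\'efinit $\mathcal{F}_\omega$ et donne l'alternative $(1)$.

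Le point d\'elicat me semble double. D'une part, il faut justifier soigneusement que le germe relev\'e en un point singulier de $[\mathcal{F}_{\omega_3}]$ est bien un d\'eploiement dont la trivialit\'e co\"incide avec l'existence du champ du Lemme \ref{lemme21}: c'est ce qui soude la th\'eorie locale des d\'eploiements \'etablie plus haut \`a l'\'enonc\'e global. D'autre part, la condition arithm\'etique $b \notin \mathbb{Q}$ est indispensable: c'est elle qui rend le Th\'eor\`eme \ref{theo39} disponible et qui, via la non-finitude de l'holonomie de la droite invariante, garantit que le facteur int\'egrant local trouv\'e en $m$ provient d'une unique $1-$forme ferm\'ee rationnelle globale. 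Il resterait enfin \`a v\'erifier que les \'eventuels autres points singuliers de $[\mathcal{F}_{\omega_3}]$ rel\`event des cas pr\'ec\'edents (non nilpotents, ou nilpotents de petit nombre de Milnor), de sorte qu'ils produisent eux aussi soit la trivialit\'e, soit l'alternative $(1)$; ceci se fait conform\'ement \`a la d\'emarche du Th\'eor\`eme \ref{theo30}.
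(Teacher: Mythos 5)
Your overall machinery coincides with the paper's: the blow-up/unfolding dictionary at the singular points of $[{\mathcal F}_{\omega_3}]$, the dichotomy between ``Lemme \ref{lemme21} applies everywhere'' (giving conjugation to ${\mathcal F}_{\omega_3}$) and ``some point carries a non-trivial unfolding'', the case analysis by the nature of that point (Proposition \ref{prop3} plus Dulac for non-nilpotent points, Proposition \ref{prop25} plus Th\'eor\`eme \ref{theo31} for nilpotent points with $\mu=2$, Proposition \ref{prop24}, Th\'eor\`eme \ref{theo26} and Th\'eor\`eme \ref{theo39} at $m$), and the L\'evi extension step. However, there is a genuine gap in the step you defer to your last sentence: it is \emph{not} true that the remaining singular points automatically fall into the cases ``non-nilpotent, or nilpotent with small Milnor number'', and this verification has nothing to do with ``la d\'emarche du Th\'eor\`eme \ref{theo30}'', whose proof never meets a nilpotent point. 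The Milnor count ($3$ at $m$, at least $1$ at the point $m'$ at infinity of the invariant line $x_1=0$, total $7$) still allows a \emph{second} nilpotent singular point $m''\neq m$ with $\mu([{\mathcal F}_{\omega_3}];m'')=3$, and this configuration does occur. At such an $m''$ your argument breaks down: if the unfolding there is non-trivial, Th\'eor\`eme \ref{theo26} only yields a formal integrating factor, and you cannot invoke Th\'eor\`eme \ref{theo39}, because the hypothesis $b\notin{\mathbb Q}$ of the Corollary concerns only the distinguished point $m$; the resonance number $b(m'')$ attached to $m''$ could perfectly well be rational.

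The paper devotes the first half of its proof precisely to this configuration. Starting from the normal form $\theta_2\in\Omega_2$ at $m$, it observes that $m''$ cannot lie on $x_1=0$, produces via the Lemme \ref{lemme33} a second invariant line $L$ through $m''$, which must meet $x_1=0$ at $m'$ (since $[{\mathcal F}_{\omega_3}]_{,m}$ has only one invariant curve); after a projective transformation preserving $\Omega_2$ and the coefficient $b$, one may take $L$ to be the line at infinity, and an explicit computation then shows that a nilpotent $\mu=3$ singularity at $m''$ forces $\alpha=\beta=a=0$. In that degenerate case $\theta_2=(x_1+x_2^2)dx_1+bx_1x_2dx_2$ is directly defined by a closed rational $1-$form (it is quasi-homogeneous), so alternative $(1)$ holds outright, without any appeal to Th\'eor\`eme \ref{theo39}. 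Only after settling this case may one assume that every singular point other than $m$ has Milnor number $\leq 2$ (note also that the bound $\mu(m'')\leq 3$ is what rules out vanishing $1-$jets at the other points, since a zero $1-$jet forces $\mu\geq4$) and run the dichotomy you describe. Your proposal needs this analysis added; as written, the case analysis is incomplete exactly where the arithmetic hypothesis on $b$ stops protecting you.
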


\begin{proof}
Commen\c cons par quelques pr\'ecisions. On peut supposer que $[{\mathcal F}_{\omega_3}]$ est donn\'e en carte affine par:
$$\theta_2 = [x_1(1+\alpha x_1 + \beta x_2) + x_2^2 - x_2(Px_1^2 + Qx_1x_2)]dx_1 + x_1(ax_1 + bx_2 + Px_1^2 + Qx_1x_2)dx_2. $$ Au point singulier $m'$ \`a l'infini de $x_1=0$, le feuilletage est donn\'ee par $\theta'\in \Omega^1({\mathbb P}_{\mathbb C}^2,m')$, avec pour $1-$jet $j^1\theta'= (x_3 - Qx_1)dx_1 - (b+1)x_1dx_3$. En particulier $\mu([{\mathcal F}_{\omega'}];m')$ est plus grand ou \'egal que $1$ (et m\^eme vaut $1$ si $b$ est diff\'erent de $-1$). Par suite si $m''$ est un point singulier distinct de $m$ et $m'$ alors $\mu([{\mathcal F}_{\omega_3}];m'')\leq 3$. En r\'esulte que si $\theta''\in \Omega^1({\mathbb P}_{\mathbb C}^2,m'')$ est une $1-$forme d\'efinissant $[{\mathcal F}_{\omega_3}]_{,m''}$ alors le $1-$jet $j^1\theta''_{,m''}$ est non identiquement nul. Supposons que $j^1\theta''_{,m''}$ soit nilpotent avec $\mu([{\mathcal F}_{\omega_3}];m'') = 3$. 

Comme $m''$ ne peut \^etre situ\'e sur $x_1=0$ et $[{\mathcal F}_{\omega_3}]_{,m}$ ne poss\`ede qu'une droite invariante, le feuilletage $[{\mathcal F}_{\omega_3}]$ poss\`ede une seconde droite invariante $L$ passant par $m''$ et coupant $x_1=0$ n\'ecessairement en $m'$; c'est une cons\'equence du Lemme \ref{lemme33}. Par suite $L$ est ou bien une droite $x_1 = \varepsilon$ ou bien la droite \`a l'infini $x_3=0$. Un calcul \'el\'ementaire montre que, quitte \`a conjuguer par une transformation de type $ \left( \frac{x_1}{x_1 - \varepsilon},i\varepsilon^{1/2}\frac{x_2}{x_1 - \varepsilon}\right)$, on peut supposer que $L$ est la droite \`a l'infini $x_3=0$, cette transformation laissant invariant l'espace $\Omega_2$ et le coefficient $b$. Ainsi $\theta_2$ est du type: $$ \theta_2 = (x_1(1+\alpha x_1 + \beta x_2) + x_2^2)dx_1 + x_1(ax_1+bx_2)dx_2 .$$ 

Sous nos hypoth\`eses ${\mathcal F}_{\theta_2}$ a pour singularit\'es les trois points $m$, $m'$, $m''$. Un second calcul \'el\'ementaire montre que ${{\mathcal F}_{\theta_2,m''}}$ a une singularit\'e nilpotente \`a nombre de Milnor $3$ si et seulement si $\alpha = \beta = a = 0$. Mais dans ce cas ${\mathcal F}_{\theta_2}$ est donn\'e par une forme ferm\'ee m\'eromorphe; en r\'esulte que le feuilletage ${\mathcal F}_\omega$ aussi.

Dans la suite on peut donc supposer que le point $m$ est le seul point nilpotent \`a nombre de Milnor $3$; les autres points singuliers ont donc leur nombre de Milnor $\leq2$. Si ${\mathcal F}_\omega$ n'est pas conjugu\'e \`a ${\mathcal F}_{\omega_3}$ c'est que l'on ne peut appliquer le Lemme \ref{lemme21}. En utilisant les Th\'eor\`emes \ref{theo26}, \ref{theo29}, \ref{theo31}, \ref{theo39} et la Proposition \ref{prop24} suivant la nature des points singuliers, on constate que $[{\mathcal F}_{\omega_3}]$ est donn\'e par une $1-$forme ferm\'ee m\'eromorphe, et donc ${\mathcal F}_\omega$ aussi.
\end{proof}

\section{Le cas $\mu=4$}

Soit ${\mathcal F}$ un feuilletage de degr\'e $2$ ayant une singularit\'e nilpotente en $0$; on suppose que $\mu({\mathcal F};0)=4$. Alors ${\mathcal F}$ appartient, \`a conjugaison pr\`es, \`a la famille $\Omega_1$, i.e. est donn\'e par une $1-$forme de type $\theta_1$ avec $b\neq 0$, ou bien \`a la famille $\Omega_2$ avec $b=0$ et $a\neq 0$. Nous dirons suivant le cas que ${\mathcal F}$ {\em est de type} $\Omega_1$ ou $\Omega_2$. Si ${\mathcal F}$ est de type $\Omega_2$, nous dirons que ${\mathcal F}$ est {\em non radial \`a l'infini} si $Q = Q(\theta_2) = Q({\mathcal F})$ est non nul.

\begin{proposition}\label{prop7}
Soit ${\mathcal F}$ un feuilletage de degr\'e deux sur ${\mathbb P}_{\mathcal C}^2$ de type $\Omega_2$; on suppose que $b({\mathcal F}) = Q({\mathcal F}) = 0$, i.e. $\mu({\mathcal F};0)\geq 4$ et ${\mathcal F}$ est radial \`a l'infini. Alors ${\mathcal F}$ est transversalement projectif.
\end{proposition}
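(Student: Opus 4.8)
The plan is to recognise $\theta_2$, under the hypothesis $b=Q=0$, as a \emph{Riccati} foliation relative to the pencil of lines through the point $m'=(0:1:0)$ at infinity of $x_1=0$, and then to exhibit the transverse projective structure explicitly as an $SL(2,\mathbb{C})$-triplet in the sense recalled in the introduction. First I expand a form $\theta_2\in\Omega_2$ with $b=Q=0$:
\begin{equation*}
\theta_2 = \big(x_2^2 + (\beta x_1 - Px_1^2)x_2 + (x_1+\alpha x_1^2)\big)\,dx_1 + x_1^2(a+Px_1)\,dx_2 .
\end{equation*}
The decisive feature is that the coefficient $p(x_1):=x_1^2(a+Px_1)$ of $dx_2$ depends on $x_1$ \emph{alone}, while the coefficient of $dx_1$ has degree $2$ in $x_2$. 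This is precisely what the hypothesis provides: for a general element of $\Omega_2$ the $dx_2$-coefficient equals $x_1^2(a+Px_1)+x_1x_2(b+Qx_1)$, which is free of $x_2$ exactly when $b=Q=0$. Thus $\theta_2$ is a Riccati $1$-form for the rational fibration $\pi:[x_1:x_2:x_3]\mapsto[x_1:x_3]$, whose generic fibre is a line through $m'$; with $b=Q=0$ the $1$-jet of $\mathcal{F}$ at $m'$ is the radial form $x_3dx_1-x_1dx_3$, tangent to this pencil, whence the name ``radial \`a l'infini''.

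Next I would write the projective structure down. Dividing by $p$ and setting $x=x_1,\ y=x_2$, the foliation is $\omega_0=dy-(Ay^2+By+C)\,dx$ with $A,B,C\in\mathbb{C}(x_1)$, namely $A=-1/p$, $B=-(\beta x_1-Px_1^2)/p$, $C=-(x_1+\alpha x_1^2)/p$. A direct check, using only that $A,B,C$ are independent of $y$, shows that the rational $1$-forms
\begin{equation*}
\omega_1=-(2Ay+B)\,dx,\qquad \omega_2=-2A\,dx
\end{equation*}
satisfy $d\omega_0=\omega_0\wedge\omega_1$, $d\omega_1=\omega_0\wedge\omega_2$ and $d\omega_2=\omega_1\wedge\omega_2$. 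Since $A$ and $B$ are rational in $x_1$, the triplet $(\omega_0,\omega_1,\omega_2)$ is made of rational $1$-forms on $\mathbb{P}^2_{\mathbb C}$, and therefore $\mathcal{F}=\mathcal{F}_{\theta_2}$ is transversally projective, which is the assertion.

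The one degeneracy to clear away is $p\equiv0$, i.e. $a=P=0$: then $\theta_2$ is a multiple of $dx_1$ and, after deleting its non-isolated zero locus, $\mathcal{F}$ would be the pencil $\{x_1=\mathrm{const}\}$ itself — trivially transversally projective — but this contradicts $\deg\mathcal{F}=2$, so in fact $p\not\equiv0$. I expect no real obstacle: once the structural observation ``$b=Q=0\Rightarrow$ Riccati'' is made the conclusion follows at once, the only point deserving care being that the triplet is a \emph{global} rational object on $\mathbb{P}^2_{\mathbb C}$, its polar locus supported on the $\pi$-invariant fibres $x_1=0$, $a+Px_1=0$ and the line at infinity rather than on a divisor transverse to $\mathcal{F}$.
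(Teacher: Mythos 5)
Your proof is correct and is essentially the paper's own argument made explicit: the paper notes that the singularity at $(0:1:0)$ has radial $1$-jet $x_1dx_3-x_3dx_1$ and invokes the fact that a degree-two foliation with a radial singularity is a Riccati equation relative to the pencil of lines through that point, hence transversally projective. Your contribution is only to spell out what the parenthetical \emph{(\'equation de Ricatti)} hides — the verification that $b=Q=0$ makes the $dx_2$-coefficient depend on $x_1$ alone, the explicit rational $SL(2,\mathbb{C})$-triplet, and the exclusion of the degenerate case $a=P=0$ — which is a faithful, if more detailed, rendering of the same proof.
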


\begin{proof} ${\mathcal F}$ est d\'efini au point $(0:1:0)$ par une $1-$forme dont le $1-$jet est $x_1dx_3 - x_3dx_1$; tout feuilletage de degr\'e $2$ ayant une singularit\'e de type radial est transversalement projectif (\'equation de Ricatti).
\end{proof}

Nous allons d\'emontrer le r\'esultat qui suit.

\begin{theorem}\label{theo41}
Soit $\omega = \omega_3 + \cdots \in \Omega^1({\mathbb C}^3,0)$ int\'egrable, $\omega_3$ dicritique, ${\rm Cod\ Sing}\ \omega_3 \geq2$. On suppose que $[{\mathcal F}_{\omega_3}]$ poss\`ede une singularit\'e nilpotente $m$ avec $\mu([{\mathcal F}_{\omega_3}];m)=4$ de type $\Omega_i$. Alors ${\mathcal F}_\omega$ est conjugu\'e \`a ${\mathcal F}_{\omega_3}$.
\end{theorem}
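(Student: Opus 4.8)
L'essentiel du plan est de v\'erifier les hypoth\`eses du Lemme \ref{lemme21} en chaque point du diviseur exceptionnel $E^{-1}(0)\simeq\mathbb{P}^2_\mathbb{C}$ obtenu en \'eclatant l'origine, puis de conclure comme dans la Proposition \ref{prop22}: on obtient un champ $X\in\Theta(\mathbb{C}^3,0)$ v\'erifiant $i_Xd\omega=\omega=L_X\omega$ dont le $1-$jet est $R/4$ (car $\omega_3=i_{R/4}\,d\omega_3$ et ${\rm Cod\ Sing}\ d\omega_3\ge2$), et la lin\'earisation de Poincar\'e fournit la conjugaison $\omega\simeq\omega_3$. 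En un point r\'egulier de $[\mathcal{F}_{\omega_3}]$ le feuilletage relev\'e est transverse au diviseur par dicriticit\'e, ce qui donne imm\'ediatement un champ tangent transverse; tout revient donc \`a prouver que le d\'eploiement de $E^{-1}(\mathcal{F}_\omega)$ dans la direction transverse au diviseur est trivial en chacun des points singuliers de $[\mathcal{F}_{\omega_3}]$, lesquels sont au nombre de $7$ compt\'es avec multiplicit\'e et comprennent le point nilpotent $m$ avec $\mu=4$.

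Le point d\'ecisif est le traitement de $m$, et il repose sur le fait que $\mu([\mathcal{F}_{\omega_3}];m)+1=5$ est premier. On raisonne par l'absurde en supposant le d\'eploiement en $m$ non trivial. La classification des d\'eploiements nilpotents (Propositions \ref{prop24}, \ref{prop25} et surtout Th\'eor\`eme \ref{theo26}) ram\`ene alors $[\mathcal{F}_{\omega_3}]_{,m}$ \`a un tir\'e en arri\`ere par $(x_1,x_2^k)$ avec $k\ge2$, et l'\'egalit\'e $\mu+1=k(p+1)=5$ force $k=5$, $p=0$: la $1-$forme $l_1$ est alors une unit\'e, de sorte que $[\mathcal{F}_{\omega_3}]_{,m}$ poss\`ede une int\'egrale premi\`ere holomorphe et est, \`a conjugaison pr\`es, le feuilletage $A_4$ d\'efini par $d(x_1^2+x_2^5)$. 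Or l'unique s\'eparatrice de $A_4$ est la courbe irr\'eductible $\{x_1^2+x_2^5=0\}$, qui ne contient aucune droite, tandis que le Lemme \ref{lemme33} impose (puisque $\mu(m)\ge3$) la droite invariante $x_1=0$ en $m$. Cette contradiction force la trivialit\'e du d\'eploiement en $m$.

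Il faut encore \'ecarter les d\'eploiements \emph{d\'eg\'en\'er\'es} de la Proposition \ref{prop24}, o\`u le $1-$jet tridimensionnel n'est pas $x_1dx_1$. Dans le cas de Kupka--Reeb, $[\mathcal{F}_{\omega_3}]_{,m}$ est un tir\'e en arri\`ere d'une singularit\'e de Kupka par une application de rang $1$, et un calcul de multiplicit\'e donne $\mu=2k-1$ \emph{impair}; la valeur $\mu=4$ est donc exclue. Le cas quadratique de rang $2$ se traite de mani\`ere analogue, le m\^eme m\'ecanisme de tir\'e en arri\`ere conduisant soit \`a un $\mu$ incompatible, soit au mod\`ele $A_4$ d\'ej\`a \'ecart\'e. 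Aux autres points singuliers de $[\mathcal{F}_{\omega_3}]$, de multiplicit\'e totale $3$, on localise chaque singularit\'e gr\^ace aux formes normales $\Omega_1$, $\Omega_2$ du Lemme \ref{lemme34}: aux points \`a $1-$jet non nilpotent non centraux la Proposition \ref{prop3} donne la trivialisation, et lorsque le feuilletage est de type $\Omega_2$ et radial \`a l'infini on exploite la structure de Riccati fournie par la Proposition \ref{prop7}.

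Le principal obstacle est de garantir l'absence de singularit\'e \emph{centrale} parmi ces points r\'esiduels: par le Th\'eor\`eme de Dulac (\ref{theo29}), un centre contraindrait $[\mathcal{F}_{\omega_3}]$ \`a \^etre d\'efini par une $1-$forme ferm\'ee rationnelle, d'o\`u une int\'egration liouvillienne de $\mathcal{F}_\omega$ (Th\'eor\`eme \ref{theo30}) et non la conjugaison voulue. Il s'agit donc de montrer qu'un feuilletage de degr\'e deux portant une singularit\'e nilpotente de type $\Omega_1$ ou $\Omega_2$ avec $\mu=4$ n'admet pas de point central --- autrement dit que la singularit\'e $m$ \'epuise assez la structure singuli\`ere pour rendre g\'en\'eriques les trois unit\'es de multiplicit\'e restantes. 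Ce contr\^ole se fait par examen direct \`a partir des normalisations $\Omega_1$, $\Omega_2$; une fois la trivialit\'e du d\'eploiement \'etablie en tout point de $E^{-1}(0)$, le Lemme \ref{lemme21} s'applique et livre $\mathcal{F}_\omega\simeq\mathcal{F}_{\omega_3}$.
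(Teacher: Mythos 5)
Your treatment of the nilpotent point $m$ itself is correct and genuinely different from the paper's. You exploit the primality of $\mu+1=5$ via le Th\'eor\`eme \ref{theo26}: a non-trivial unfolding with $1$-jet $x_1dx_1$ forces $(k,p)=(5,0)$ in the Loray normal form, hence a holomorphic first integral, and the germ is then conjugate to $d(x_1^2+x_2^5)$, whose unique separatrix is an irreducible cuspidal curve; this contradicts the smooth invariant line $x_1=0$ guaranteed by le Lemme \ref{lemme33} since $\mu\geq3$. The paper instead proves an unnumbered key lemma -- a $\mu=4$ germ of type $\Omega_i$ admits \emph{no formal integrating factor} -- by showing that the holonomy of $Ex(2)$ in the resolution contains both an involution (Mattei--Moussu at the resonant corner $p_3$) and a parabolic germ $x\mapsto x+cx^2+\cdots$ coming from a saddle-node, which cannot commute. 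Your separatrix argument is an attractive substitute \emph{for that one case}.

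The proof is nevertheless incomplete, and the gaps sit exactly where the paper's key lemma does all the work. First, the residual singular points (total multiplicity $3$): you correctly observe that a center there would, via Dulac (Th\'eor\`eme \ref{theo29}), make $[{\mathcal F}_{\omega_3}]$ defined by a closed rational form and destroy the conjugation conclusion, but you defer the proof that no center exists to an \emph{examen direct} that is never carried out. The same problem arises for a possible nilpotent point of $\mu=2$ with holomorphic first integral (Th\'eor\`eme \ref{theo31} again produces a closed form), and for a possible second nilpotent point with $\mu=3$, a case you do not mention at all and which the paper excludes explicitly by an eight-singularities count. Your $A_4$ argument cannot absorb any of these scenarios: if $[{\mathcal F}_{\omega_3}]$ were defined by a closed meromorphic form, the germ at $m$ would merely inherit a meromorphic integrating factor, which is perfectly compatible with having an invariant line, so no contradiction follows from your setup. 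The paper, by contrast, funnels every failure at a residual point into an integrating factor at $m$ and kills it with the holonomy lemma; without that lemma (or the direct computation you postpone), the conclusion ``${\mathcal F}_\omega$ conjugu\'e \`a ${\mathcal F}_{\omega_3}$'' is not established.

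Second, your dismissal of the degenerate unfoldings of la Proposition \ref{prop24} rests on the unproven assertion that the Kupka--Reeb case forces $\mu=2k-1$ odd. That formula is clear for the model $G=(x_1,x_2^k)$ pulled back against a generic Kupka germ, but you give no argument that the rank-one restriction $G=F\big|_{x_3=0}$ of the Kupka--Reeb submersion has this form, nor that degenerations of the target germ cannot alter the multiplicity; ``le cas quadratique de rang $2$ se traite de mani\`ere analogue'' is likewise only a declaration. In the paper these cases require no separate discussion, because la Proposition \ref{prop24} places them in the situation ``facteur int\'egrant formel et $\mu\geq3$'', which the key lemma rules out. In short: a valid new idea at $m$, but the residual points and the degenerate jets are not controlled, so the proposal as it stands does not prove the theorem.
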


\noindent {\em D\'emonstration.} Dans la suite on suppose $m = (0:0:1)$ et que $[{\mathcal F}_{\omega_3}]$ est donn\'e par une forme de type $\theta_i$. Dans les deux cas la r\'eduction des singularit\'es du point nilpotent se fait en deux \'eclatements. On note $Ex(1)$ le premier diviseur apparaissant au premier \'eclatement, $Ex(2)$ le second; sur $Ex(1)$ il y a un seul point singulier $p_3 = Ex(1) \cap Ex(2)$. Pour les deux configurations $\Omega_1$, $\Omega_2$, le point $p_3$ est une singularit\'e {\em r\'esonnante} \cite{Mar-Ra} avec $1-$jet de type $xdy + 2ydx$, $(y=0) = Ex(1)_{,p_3}$, $(x=0) = Ex(2)_{,p_3}$. Comme sur $Ex(1)$ il n'y a qu'une singularit\'e, l'holonomie de la vari\'et\'e invariante locale $(y=0)$ est triviale et d'apr\`es Mattei-Moussu \cite{Ma-Mo} il y a une int\'egrale premi\`ere de type $x^2y$ en $p_3$. Par suite l'holonomie de la vari\'et\'e invariante locale $Ex(2)_{,p_3}$ est une involution $x \rightarrow -x$. On note $p_1 \in Ex(2)$ le point singulier o\`u aboutit le transform\'e strict de la droite invariante $x_1=0$ pour $\theta_i$. Enfin il y a un troisi\`eme point singulier, not\'e $p_2$. 

Dans la situation o\`u ${\mathcal F}$ est de type $\Omega_2$, le point singulier $p_1$ est r\'esonnant, tandis que le point $p_2$ est de type noeud-col. En $p_2$ arrive une courbe invariante formelle transverse \`a $Ex(2)$. \`A l'inverse dans le cas o\`u ${\mathcal F}$ est de type $\Omega_1$, le point $p_2$ est r\'esonnant tandis que le point $p_1$ est un noeud-col {\em mal-orient\'e} au sens o\`u la {\em vari\'et\'e faible} invariante en $p_1$ est pr\'ecisement la droite $x_1=0$.  

Concernant la configuration dans ${\mathbb P}_{\mathbb C}^2$ des autres points singuliers, nous affirmons qu'il n'existe pas d'autre point singulier nilpotent \`a nombre de Milnor $3$. En effet s'il existait un tel point $m'$, il serait de type $\Omega_2$ (avec $b\neq0$); $m'$ n'\'etant pas sur $x_1=0$, il y aurait une droite invariante passant par $m'$ et il y aurait donc sur cette droite un autre point singulier $m''$ avec $1-$jet non nilpotent; donc $m''$ est distinct de $m$, ce qui nous donne au moins $8$ points singuliers compt\'es avec multiplicit\'e et est donc absurde.

Par suite les autres points singuliers ont leur nombre de Milnor inf\'erieur ou \'egal a $2$, en particulier en chacun des points singuliers le $1-$jet est non nul. En chaque point singulier $m'$ autre que $m$, on a $\mu([{\mathcal F}_{\omega_3}];m')\leq2$; s'il n'existe pas de champ tangent \`a $E^{-1}({\mathcal F}_\omega)$ en $m'$ et transverse au diviseur $E^{-1}(0)_{,m}$ alors $[{\mathcal F}_{\omega_3}]$ et donc ${\mathcal F}_\omega$ est d\'efini par une forme ferm\'ee m\'eromorphe. Reste \`a \'etudier $E^{-1}({\mathcal F}_\omega)$ au point $m = (0:0:1)$. S'il n'y a pas de champ $X_{,m}$ comme ci-dessus on sait que $[{\mathcal F}_{\omega_3}]_{,m} = {\mathcal F}_{\theta_i}$ poss\`ede un facteur int\'egrant formel $f$: ${\displaystyle d\big(\frac{\theta_2}{f}\big)=0}$. 

Pour terminer la d\'emonstration du th\'eor\`eme il nous suffit d'\'etablir le

\begin{lemma}
Si $\mu({\mathcal F}_{\theta_i};0)=4$, avec $\theta_i \in \Omega_i$, alors $\theta_i$ n'a pas de facteur int\'egrant formel.
\end{lemma}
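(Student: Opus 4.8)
Le plan est de supposer par l'absurde l'existence d'un facteur int\'egrant formel $f$ et d'en tirer une contrainte d'int\'egralit\'e incompatible avec la configuration de la r\'eduction d\'ecrite ci-dessus. On pose $\Theta = df/f$: c'est une $1-$forme ferm\'ee formelle m\'eromorphe v\'erifiant $d\theta_i = \Theta \wedge \theta_i$ et dont tous les r\'esidus sont entiers, puisqu'ils comptent les z\'eros et p\^oles de $f$. R\'eciproquement, l'existence d'un facteur int\'egrant formel \'equivaut \`a celle d'une telle $\Theta$ \`a r\'esidus entiers. D'apr\`es le Lemme \ref{lemme33} la droite $x_1=0$ est invariante, ce qui permet d'\'ecrire $f = x_1^k g$ avec $g(0,x_2)\not\equiv 0$ comme dans le Lemme \ref{lemme37}.

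On remonterait ensuite $\Theta$ par la r\'eduction (les deux \'eclatements) en une $1-$forme ferm\'ee $\tilde\Theta = d\log \tilde f$ d\'efinie au voisinage du diviseur $Ex(1) \cup Ex(2)$, adapt\'ee au feuilletage r\'eduit et encore \`a r\'esidus entiers (on ne modifie $\Theta$ que par des multiples entiers des $\frac{dE}{E}$). Notons $r_1 = \mathrm{ord}_{Ex(1)} \tilde f$ et $r_2 = \mathrm{ord}_{Ex(2)} \tilde f$ les r\'esidus constants de $\tilde\Theta$ le long des composantes du diviseur. Le point d\'ecisif est le point noeud-col situ\'e sur $Ex(2)$ -- le point $p_1$ dans le cas $\Omega_1$, le point $p_2$ dans le cas $\Omega_2$ -- dont $Ex(2)$ est pr\'ecis\'ement la vari\'et\'e forte. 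Un noeud-col n'ayant pas d'int\'egrale premi\`ere formelle, son facteur int\'egrant formel est unique \`a constante multiplicative pr\`es; sur la forme normale $y(1+\lambda x^p)dx - x^{p+1}dy$ il vaut $x^{p+1}y$, de sorte que $\tilde f$ s'annule \`a l'ordre $p+1$ le long de $Ex(2)$ et \`a l'ordre $1$ le long de la s\'eparatrice faible transverse. On obtient ainsi la valeur forc\'ee $r_2 = p+1 \geq 2$.

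Il resterait \`a confronter cette valeur aux contraintes venant des points r\'esonnants. En $p_3 = Ex(1) \cap Ex(2)$, de $1-$jet $x\,dy + 2y\,dx$ (donc de champ dual $x\partial_{x} - 2y\partial_{y}$ et de rapport de valeurs propres $1:-2$), la relation liant les r\'esidus d'une forme ferm\'ee logarithmique aux valeurs propres s'\'ecrit $r_2 = 2r_1 - 1$. En posant $\gamma = \tilde\Theta - r_2 \frac{dz}{z}$, o\`u $(z=0)$ d\'efinit $Ex(2)$, forme ferm\'ee se restreignant \`a $Ex(2) \simeq \mathbb{P}^1_{\mathbb C}$, le th\'eor\`eme des r\'esidus donne $r_1 + 1 + \rho = 0$, o\`u $\rho$ est le r\'esidu transverse au second point r\'esonnant; enfin la r\'esonance en ce point relie $\rho$, $r_2$ et le rapport (rationnel, connu) des valeurs propres. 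Le syst\`eme lin\'eaire obtenu en $(r_1, r_2, \rho)$ n'admet alors pas de solution enti\`ere, d'o\`u la contradiction. En particulier, si l'ordre $p$ du noeud-col est impair, $r_2 = p+1$ est pair et $r_1 = (r_2+1)/2$ n'est pas entier: la contradiction est imm\'ediate.

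La principale difficult\'e sera le contr\^ole exact des invariants locaux de la r\'eduction, \`a savoir l'ordre $p$ du noeud-col et le rapport de valeurs propres au second point r\'esonnant, n\'ecessaires pour rendre l'incompatibilit\'e arithm\'etique effective; il faudra aussi justifier soigneusement la rigidit\'e du facteur int\'egrant du noeud-col (via \cite{Ma-Mo, Mar-Ra}) et la conservation de l'int\'egralit\'e des r\'esidus sous \'eclatement. Une variante purement calculatoire consisterait \`a prolonger l'analyse du Lemme \ref{lemme37}: en injectant $f = x_1^k g$ avec $g = g(0,x_2) + x_1 \varphi_1(x_2) + \cdots$ dans $f(B_{x_1} - A_{x_2}) = f_{x_1} B - f_{x_2} A$, la restriction \`a $x_1=0$ fixe $k$ et $g(0,x_2)$ (on trouve $k=1$ et $g(0,x_2) = x_2^3$ dans le cas $\Omega_1$), apr\`es quoi les \'equations diff\'erentielles successives r\'egissant les $\varphi_j$ deviennent incompatibles lorsque $\mu = 4$, alors qu'elles \'etaient r\'esolubles pour $\mu = 3$.
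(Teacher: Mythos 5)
Votre strat\'egie est r\'eellement diff\'erente de celle du texte: vous remplacez l'argument d'holonomie par une comptabilit\'e arithm\'etique des r\'esidus entiers de $\Theta = df/f$ le long du diviseur de r\'eduction. Le squelette en est correct et vos deux relations locales cl\'es se justifient bien: au coin $p_3 = Ex(1)\cap Ex(2)$, de $1-$jet $xdy+2ydx$ avec $(y=0)=Ex(1)$ et $(x=0)=Ex(2)$, la comparaison des termes de plus bas degr\'e dans l'identit\'e $f\,d\theta = df\wedge\theta$ (avec $f = x^{r_2}y^{r_1}\times$ unit\'e, les z\'eros d'un facteur int\'egrant \'etant invariants) donne $r_2 = 2r_1-1$, donc $r_2$ impair, sans m\^eme invoquer l'int\'egrale premi\`ere $x^2y$; et au n\oe ud-col, dont $Ex(2)$ est la s\'eparatrice forte, l'unicit\'e \`a constante pr\`es du facteur int\'egrant formel (un n\oe ud-col n'a pas d'int\'egrale premi\`ere m\'eromorphe formelle non constante) force $r_2 = p+1$.

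Mais il y a un trou v\'eritable, que vous signalez d'ailleurs vous-m\^eme comme \emph{la principale difficult\'e}: toute la contradiction repose sur la parit\'e de $p$, et vous ne d\'eterminez jamais $p$. Si $p$ \'etait pair, $r_2 = p+1$ serait impair, $r_1 = (p+2)/2$ serait entier, et vos deux contraintes seraient parfaitement compatibles; votre argument de repli -- le syst\`eme lin\'eaire en $(r_1,r_2,\rho)$ sans solution enti\`ere -- est affirm\'e sans preuve et ne peut m\^eme pas \^etre v\'erifi\'e, puisque ni $p$ ni le rapport de valeurs propres au troisi\`eme point singulier ne sont connus. L'invariant manquant est pr\'ecis\'ement celui que la preuve du texte \'enonce et utilise: pour $\theta_i\in\Omega_i$ avec $\mu({\mathcal F}_{\theta_i};0)=4$, le n\oe ud-col de la r\'eduction a nombre de Milnor $2$, c'est-\`a-dire $p=1$ (cela se v\'erifie par le calcul explicite des deux \'eclatements). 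Avec ce fait, votre argument se cl\^ot imm\'ediatement: $r_2 = 2$ est pair alors que $r_2 = 2r_1-1$ est impair; sans lui, il n'aboutit pas. Pour comparaison, le texte proc\`ede tout autrement: facteur int\'egrant formel $\Rightarrow$ holonomie ab\'elienne de $Ex(2)$ \cite{Cer-Ma}, puis exhibition de deux \'el\'ements qui ne commutent pas, \`a savoir l'involution $x\mapsto -x$ en $p_3$ (via l'int\'egrale premi\`ere $x^2y$ donn\'ee par Mattei-Moussu \cite{Ma-Mo}) et l'application parabolique $x\mapsto x+cx^2+\cdots$, $c\neq0$, d'holonomie de la s\'eparatrice forte du n\oe ud-col de Milnor $2$. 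Les deux d\'emonstrations consomment les m\^emes invariants locaux de la r\'eduction; la v\^otre a l'avantage de ne requ\'erir que des $1-$jets et des ordres d'annulation (ni holonomie, ni Mattei-Moussu), mais elle restera incompl\`ete tant que le nombre de Milnor du n\oe ud-col n'aura pas \'et\'e effectivement calcul\'e.
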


\begin{proof} Si tel \'etait le cas, l'holonomie du diviseur exceptionnel $Ex(2)$ serait ab\'elienne \cite{Cer-Ma}. Mais pour chaque $\theta_i$ il y a un point $p_i$ en lequel la singularit\'e est de type noeud-col avec nombre de Milnor $2$. En r\'esulte que l'holonomie de la vari\'et\'e invariante locale $Ex(2)_{,p_1}$ est du type \break $\varphi: x \rightarrow x + cx^2 + \cdots$, $c\neq0$. Mais un tel $\varphi$ ne peut commuter \`a une involution $x \rightarrow -x$. \end{proof}

\section{Le cas $\mu=5$}

Soit ${\mathcal F}$ un feuilletage de degr\'e deux \`a singularit\'e nilpotente \`a l'origine de ${\mathbb C}^2$ avec $\mu({\mathcal F};0)=5$. On peut supposer ${\mathcal F}$ donn\'e par une $1-$forme $\theta$ appartenant \`a l'une des familles $\Omega_1$, $\Omega_2$. Si $\theta = \theta_1 \in \Omega_1$ on v\'erifie que $b=0$, i.e. que $$\theta_1 = [x_1(1+\alpha x_1 + \beta x_2) - x_2(Px_1^2 + Qx_1x_2 + x_2^2)]dx_1 + x_1(ax_1 + Px_1^2 + Qx_1x_2 + x_2^2)dx_2. $$ Remarquons que l'on peut faire $P$ ou $Q$ nuls.

Si $\theta = \theta_2 \in \Omega_2$ on a $a=b=0$ et $Q \neq 0$, i.e. apr\`es normalisation $$\theta_2 = [x_1(1+\alpha x_1 + \beta x_2) + x_2^2 - x_2(Px_1^2 + x_1x_2)]dx_1 + x_1^2(Px_1+x_2)dx_2. $$

Sous les hypoth\`eses pr\'ec\'edentes on a la

\begin{proposition}\label{prop8}
Si $\theta_2$ poss\`ede un facteur int\'egrant formel, alors ${\mathcal F}_{\theta_2}$ est d\'efini par une $1-$forme ferm\'ee rationnelle.
\end{proposition}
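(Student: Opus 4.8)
The plan is to follow the architecture of the proof of Theorem \ref{theo39}, replacing the Diophantine input (which required $b\notin\mathbb{Q}$ and is unavailable here, since the $\mu=5$ normalization forces $a=b=0$) by the rigidity coming from the global configuration of ${\mathcal F}_{\theta_2}$ on $\mathbb{P}^2_{\mathbb C}$. First I would record the singular scheme: ${\mathcal F}_{\theta_2}$ has degree $2$, hence total Milnor number $7$; with $\mu({\mathcal F}_{\theta_2};0)=5$ at the nilpotent point $0$ this leaves only Milnor number $2$ for the remaining singularities. A direct inspection of $\theta_2=A\,dx_1+B\,dx_2$ (with $B=x_1^2(Px_1+x_2)$) shows that, besides $0$, the residual Milnor number is carried by one nondegenerate affine point $m''$ on the line $x_2=-Px_1$ and one point at infinity $m'=(0:1:0)$ lying on $x_1=0$, the unique invariant line through $0$ (by Lemma \ref{lemme33}), whose $1$-jet is resonant. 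So the whole drama takes place along $x_1=0$, joining the nilpotent point $0$ to the resonant point $m'$.

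Next I would pin down the shape of the formal integrating factor. Assuming $d(\theta_2/f)=0$, I would run the argument of Lemma \ref{lemme37}: writing $f=x_1^{k}g$ with $g(0,x_2)\not\equiv 0$, expanding $f\,d\theta_2+\theta_2\wedge df=0$ and restricting to the invariant line $x_1=0$ (on which $\theta_2=x_2^2\,dx_1$) fixes $k$ and the leading term of $g$ along $x_1=0$. Then I would resolve the nilpotent singularity at $0$: for this $\mu=5$ model the reduction is a short chain of blow-ups producing resonant saddles and possibly a saddle-node; the existence of a formal integrating factor forces the holonomy of the exceptional divisor, read on a transversal, to be \emph{abelian} (\cite{Cer-Ma}), exactly as in the $\mu=4$ discussion. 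This already constrains the local analytic type of $\theta_2/f$ near $0$ and shows $\theta_2/f$ is a genuine closed meromorphic germ there.

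The decisive step is the transfer along $x_1=0$ to $m'$ and the gluing. I would compute the holonomy $h$ of the leaf $(x_1=0)\smallsetminus\{0,m'\}$, generated by the local holonomies at $0$ and at $m'$. When $h$ has infinite order, the two closed meromorphic germs $\theta_2/f$ at $0$ and $\theta_2/f_{m'}$ at $m'$ are both invariant by $h$ on a common transversal $\Delta$; since the space of $h$-invariant closed meromorphic $1$-forms on $\Delta$ is then one-dimensional, they are $\mathbb{C}$-proportional and, after rescaling, agree. Hence $\theta_2/f$ extends to a neighbourhood of $x_1=0$ and, by the Levi extension theorem exactly as in Theorem \ref{theo39}, to a global closed meromorphic — thus rational — $1$-form defining ${\mathcal F}_{\theta_2}$. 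When $h$ has finite order one instead produces a meromorphic (indeed rational) first integral, which a fortiori gives a closed rational defining form.

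The main obstacle is precisely the failure of the Siegel--Bruno argument: with $b=0$ one cannot linearize at the relevant resolution point to cheaply guarantee convergence of $f$, so the crux is to control the resonant point $m'$ and to rule out, or exploit, a possibly parabolic holonomy there. I expect the cleanest backstop to be the explicit route of Theorem \ref{theo31}: impose $d(\theta_2/f)=0$ as formal identities, solve for the algebraic constraints they force on $(\alpha,\beta,P)$, then exhibit the invariant algebraic curves (a line together with a conic or cubic through $0$) and verify by direct computation that $\theta_2$ divided by the product of their equations is closed. Either way the output is a closed rational $1$-form, which is the assertion of Proposition \ref{prop8}.
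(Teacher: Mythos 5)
Your global skeleton (convergence of an integrating factor at $0$, extension along the invariant line $x_1=0$ via the infinite-order holonomy, then Levi extension to a closed rational $1-$form on ${\mathbb P}^2_{\mathbb C}$) coincides with the paper's, but the decisive step --- convergence of the formal integrating factor $f$ --- is not established by what you wrote. You assert that abelianity of the holonomy of the exceptional divisor ``shows $\theta_2/f$ is a genuine closed meromorphic germ''; this is backwards. Abelian holonomy is a \emph{consequence} of the existence of a formal integrating factor (\cite{Cer-Ma}, and this is exactly how it is used in the $\mu=4$ case), and it carries no convergence information whatsoever: the gap between a formal and a convergent integrating factor is precisely the point at issue, the one you yourself flag as ``the main obstacle'' and then leave unresolved. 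Your fallback (redo Theorem \ref{theo31}-style explicit identities, find invariant algebraic curves, check closedness by hand) is not carried out, and there is no reason to expect it to work as stated: here the hypothesis is only a formal integrating factor at a point with $\mu=5$, not a holomorphic first integral at a point with $\mu=2$, and the invariant-curve computation of Theorem \ref{theo31} has no analogue proved for this situation.

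The idea you are missing is the exploitation of the explicit model at infinity. For $\theta_2\in\Omega_2$ with $a=b=0$ and $Q$ normalized to $1$, the singular point $(0:1:0)$ is of type $(x+y)dx-xdy$, a $1{:}1$ Jordan node, hence holomorphically linearizable (Poincar\'e); in particular it admits the meromorphic integrating factor coming from $\frac{(x+y)dx-xdy}{x^2}=d\bigl(\log x - \frac{y}{x}\bigr)$, and the holonomy of the invariant line $x_1=0$ is the explicit parabolic M\"obius map $y\mapsto \frac{y}{1-y}$. Consequently, in the reduction of the nilpotent point $0$, the resonant singularity $p_1$ has separatrix holonomy conjugate to this explicit map, so by Martinet--Ramis \cite{Mar-Ra} it is holomorphically normalizable and has no holomorphic first integral; at such a point the formal integrating factor is unique up to a multiplicative constant and is in fact convergent. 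Lifting $f$ to $p_1$ then forces $f$ itself to converge at $0$. From there the argument is the one you describe (and the one of Theorem \ref{theo39}): prolong $\frac{\theta_2}{f}$ along $x_1=0$, glue with the integrating factor at infinity using the non-periodicity of the holonomy, and globalize. Without the identification of the Jordan-node model at infinity and the resulting Martinet--Ramis rigidity at $p_1$, your proof does not close.
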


\begin{proof} La singularit\'e $(0:1:0)$ est de type $(x+y)dx -xdy$. En r\'esulte que l'holonomie de la vari\'et\'e invariante $x_1=0$ est de la forme ${\displaystyle y \rightarrow \frac{y}{1-y}}$. Dans la r\'esolution des singularit\'es de ${\mathcal F}_{\theta_2,0}$, la singularit\'e r\'esonnante $p_1$ est donc holomorphiquement normalisable \cite{Mar-Ra}, sans int\'egrale premi\`ere holomorphe. En particulier en $p_1$ il y a un unique facteur int\'egrant formel, \`a constante multiplicative pr\`es, et celui-ci est en fait convergent. Par suite si $f$ est un facteur int\'egrant formel de $\theta_2$ en $0$, son relev\'e en $p_1$ est convergent; en r\'esulte que $f$ converge. On prolonge alors la forme m\'eromorphe ferm\'ee ${\displaystyle \frac{\theta_2}{f}}$ le long de $x_1=0$, comme on l'a fait en Th\'eor\`eme \ref{theo39}. Ceci est possible car l'holonomie de $x_1=0$ est non p\'eriodique.\end{proof}

De la m\^eme fa\c con on a la:

\begin{proposition}
Si $\theta_1$ poss\`ede un facteur int\'egrant formel $f$ alors ${\mathcal F}_{\theta_1}$ est d\'efini par une $1-$forme ferm\'ee rationnelle.
\end{proposition}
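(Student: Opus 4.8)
Le plan est de reproduire la d\'emonstration de la Proposition \ref{prop8}, en l'adaptant au fait que le point \`a l'infini $(0:1:0)$ de la droite $x_1=0$ est ici un point r\'egulier de ${\mathcal F}_{\theta_1}$ (un calcul direct montre que la $1-$forme r\'eduite de $\theta_1$ n'y est pas nulle), contrairement au cas $\Omega_2$ o\`u il \'etait r\'esonnant. Je r\'eduirais d'abord la singularit\'e nilpotente de $\theta_1$ \`a l'origine ($\mu=5$, $b=0$, famille $\Omega_1$) en suivant l'analyse de la r\'esolution men\'ee dans la preuve du Th\'eor\`eme \ref{theo41}: la r\'esolution fait appara\^itre une cha\^ine de diviseurs exceptionnels portant un point r\'esonnant $p_1$. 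D'apr\`es Martinet-Ramis \cite{Mar-Ra}, $p_1$ est holomorphiquement normalisable; lorsqu'il est sans int\'egrale premi\`ere holomorphe il n'admet, \`a constante multiplicative pr\`es, qu'un unique facteur int\'egrant formel, lequel converge. Comme le relev\'e de $f$ en $p_1$ co\"incide avec ce facteur, $f$ converge: $f \in {\mathcal O}({\mathbb C}^2,0)$.

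Pour le recollement, l'ingr\'edient cl\'e est la non-p\'eriodicit\'e de l'holonomie. Puisque $(0:1:0)$ est r\'egulier, je l'extrairais du point r\'esonnant $p_1$ lui-m\^eme: \'etant normalisable sans int\'egrale premi\`ere holomorphe, son holonomie le long du diviseur invariant est n\'ecessairement d'ordre infini, sans quoi Mattei-Moussu \cite{Ma-Mo} fournirait une int\'egrale premi\`ere. C'est donc le m\^eme point $p_1$ qui garantit la convergence de $f$ et la rigidit\'e requise.

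Disposant d'un facteur $f$ holomorphe, la $1-$forme $\theta_1/f$ est m\'eromorphe ferm\'ee au voisinage de l'origine, et je la prolongerais comme au Th\'eor\`eme \ref{theo39}: l'invariance par holonomie du germe de forme ferm\'ee sur une transversale, jointe \`a la non-p\'eriodicit\'e, force la ${\mathbb C}-$colin\'earit\'e des prolongements candidats le long du diviseur, d'o\`u un recollement coh\'erent; le Th\'eor\`eme d'extension de L\'evi produit enfin une $1-$forme m\'eromorphe ferm\'ee globale sur ${\mathbb P}_{\mathbb C}^2$, donc rationnelle, d\'efinissant ${\mathcal F}_{\theta_1}$. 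Reste le cas o\`u $p_1$ poss\`ede une int\'egrale premi\`ere holomorphe: l'holonomie est alors d'ordre fini et ${\mathcal F}_{\theta_1}$ poss\`ede une int\'egrale premi\`ere rationnelle -- comme dans l'exemple $\theta_1 = (x_1-x_2^3)dx_1 + x_1x_2^2dx_2$, d'int\'egrale $\frac{1}{2x_1^2} - \frac{1}{3}\frac{x_2^3}{x_1^3}$ -- qui fournit directement une $1-$forme ferm\'ee rationnelle.

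La difficult\'e principale r\'eside dans l'analyse de la r\'esolution du point nilpotent $\mu=5$ de type $\Omega_1$: le point $(0:1:0)$ \'etant r\'egulier, il faut localiser \`a l'int\'erieur de la r\'esolution un point r\'esonnant \`a la fois normalisable (pour la convergence de $f$) et sans int\'egrale premi\`ere holomorphe (pour la non-p\'eriodicit\'e de l'holonomie). Contr\^oler cette combinatoire -- nombre d'\'eclatements, rapports de valeurs propres et termes r\'esonnants aux points singuliers du diviseur -- est la partie calculatoire, mais \'el\'ementaire, de l'argument.
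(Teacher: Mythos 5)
Votre strat\'egie d\'ecalque celle de la Proposition \ref{prop8}, mais l'ingr\'edient qui la faisait fonctionner est pr\'ecis\'ement celui qui manque ici, et vous ne le remplacez par rien. Dans le cas $\Omega_2$, la normalisabilit\'e holomorphe du point r\'esonnant $p_1$ n'est pas une cons\'equence g\'en\'erale de \cite{Mar-Ra} -- au contraire, la th\'eorie de Martinet--Ramis montre qu'un col r\'esonnant n'est g\'en\'eriquement \emph{pas} normalisable (modules fonctionnels); elle y est \emph{d\'eduite} du fait que l'autre singularit\'e de la droite $x_1=0$, le point $(0:1:0)$ de type $(x+y)dx - xdy$, force l'holonomie de cette droite \`a \^etre la transformation de M\"obius $y \mapsto y/(1-y)$, c'est-\`a-dire l'holonomie du mod\`ele formel. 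Comme vous le remarquez vous-m\^eme, dans le cas $\Omega_1$ le point $(0:1:0)$ est r\'egulier: il n'existe donc aucun moyen analogue de contr\^oler l'holonomie des s\'eparatrices des points r\'esonnants de la r\'esolution, et l'affirmation ``d'apr\`es Martinet--Ramis, $p_1$ est holomorphiquement normalisable'' reste sans justification; renvoyer ce point \`a une ``partie calculatoire \'el\'ementaire'' ne comble pas le trou, car ce n'est pas une question de calcul mais d'analyse (l'hypoth\`ese d'un facteur int\'egrant formel ne donne pas, \`a elle seule, la convergence en un point r\'esonnant). Votre cas de repli est lui aussi injustifi\'e: une int\'egrale premi\`ere holomorphe locale en un point de la r\'esolution ne se globalise pas en une int\'egrale premi\`ere rationnelle de ${\mathcal F}_{\theta_1}$.

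La preuve du texte contourne compl\`etement les points r\'esonnants: trois \'eclatements successifs (grosso modo, on pose $x_1 = s x_2^3$) font appara\^itre sur le dernier diviseur un point singulier de $1-$jet $6udv - vdu$, donc de rapport de valeurs propres $6$, dans le domaine de Poincar\'e. En un tel point, les th\'eor\`emes de lin\'earisation et de normalisation de Poincar\'e garantissent que \emph{tout} facteur int\'egrant formel converge, sans aucune hypoth\`ese de normalisabilit\'e \`a v\'erifier; la convergence de $f$ en d\'ecoule. Notez enfin que l'\'etape de prolongement est ici plus simple que dans le Th\'eor\`eme \ref{theo39}, et que votre machinerie d'holonomie y est superflue: l'unique singularit\'e de ${\mathcal F}_{\theta_1}$ sur $\overline{x_1=0}$ \'etant l'origine (c'est exactement ce que dit l'annulation \`a l'ordre $3$ de $\theta_1\big|_{x_1=0}$), le compl\'ementaire $\overline{x_1=0} - \{0\}$ est simplement connexe et la $1-$forme ferm\'ee m\'eromorphe $\theta_1/f$ se prolonge le long de cette droite, puis \`a tout ${\mathbb P}^2_{\mathbb C}$, sans obstruction de monodromie ni argument de colin\'earit\'e.
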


\begin{proof} Si l'on proc\`ede \`a trois \'eclatements successifs (grosso modo, on pose $x_1 = sx_2^3$) on trouve un point singulier dont le $1-$jet est $6udv - vdu$. Les th\'eor\`emes de lin\'earisation et normalisation de Poincar\'e assurent qu'en un tel point tout facteur int\'egrant formel est en fait convergent. Ceci implique que $f$ est convergent; la $1-$forme ferm\'ee m\'eromorphe ${\displaystyle \frac{\theta_1}{f}}$ locale se prolonge le long de $\overline{x_1=0} \subset {\mathbb P}^2_{\mathbb C}$ puis \`a tout ${\mathbb P}^2_{\mathbb C}$.\end{proof}

Consid\'erons un d\'eploiement du germe de feuilletage ${\mathcal F}_{\theta_2}$; il suit des Proposition \ref{prop25} et Remarque \ref{rem11} que si ce d\'eploiement est non trivial et si $\theta_2$ n'a pas de facteur int\'egrant formel alors la forme normale de Loray de $\theta_2$ est de type: $$\theta_L = xdx + (y^4 + xl(y^2))dy^2\ {\rm avec}\ \lambda = l(0); $$ plus pr\'ecis\'ement il existe des coordonn\'ees $x,y$ et une unit\'e $U = 1+u_1x + u_2y + \cdots$ telles que $\theta_2 = U\theta_L$; on constate que $d\theta_2 = (-\beta x_1 - 2x_2 + \cdots) dx_1\wedge dx_2$. En particulier la tangente aux z\'eros de $d\theta_2$ ($2x_2 + \beta x_1=0$) est diff\'erente du c\^one tangent $x_1=0$ de $\theta_2$. Maintenant le c\^one tangent de $\theta_L$ est $x=0$ et celui de $d\theta_L$ est $2\lambda y - u_2x=0$. Pour qu'ils soient distincts il est n\'ecessaire que $\lambda$ soit non nul. La Remarque \ref{rem11} dit que, sous cette hypoth\`ese, $\theta_2$ poss\`ede un facteur int\'egrant formel.

\begin{remark}
La $1-$forme $\theta_2 = (x_1 + x_2^2 - x_1x_2^2)dx_1 + x_1^2x_2dx_2$ satisfait la Proposition \ref{prop8}, i.e. ${\mathcal F}_{\theta_2}$ est d\'efini par une $1-$forme ferm\'ee rationnelle.
\end{remark}

Examinons maintenant le cas o\`u $\theta = \theta_1 \in \Omega_1$; toujours sous l'hypoth\`ese d'un d\'eploiement non trivial la forme normale de Loray de $\theta_1$ est, si $\theta_1$ n'a pas de facteur int\'egrant, encore de la forme: $$\theta_L = xdx + (y^4 + xl(y^2))dy^2. $$ On exploite ici le fait que l'involution $(x,y)\rightarrow (x,-y)$ laisse invariante $\theta_L$. Le feuilletage ${\mathcal F}_{\theta_1}$ est donc lui aussi laiss\'e invariant par une involution $I$ de type $(x,-y)$. On consid\`ere alors le sch\'ema de r\'eduction des singularit\'es de $\theta_1$; le premier \'eclatement produit un premier diviseur $Ex(1)$ contenant un seul point singulier $p_1$ correspondant \`a $x_1=0$. On \'eclate alors $p_1$, ce qui produit un nouveau diviseur $Ex(2)$ et deux points singuliers; le croisement $Ex(1) \cap Ex(2)$ not\'e encore $p_1$ et un point $p_2 \neq p_1$ o\`u aboutit $x_1=0$. En $p_1$ le feuilletage est r\'eduit, mais il faut encore \'eclater $p_2$, ce qui produit un diviseur $Ex(3)$ et trois points singuliers distincts: $p_2 = Ex(2)\cap Ex(3)$, $p_3$ o\`u aboutit $x_1=0$ et un troisi\`eme point singulier $p_4$ distinct de $p_2$ et $p_3$. On rel\`eve l'involution $I$ \`a cette configuration pour obtenir une involution $\widetilde{I}$. La restriction de $\widetilde{I}$ \`a $Ex(3)$ fixe les points $p_2$ et $p_3$. Un calcul \'el\'ementaire montre que $\widetilde{I}\big|_{Ex(3)}$ est de type $s \rightarrow -s$. Remarquons que puisque $I$ laisse invariante le feuilletage ${\mathcal F}_{\theta_2}$, le point $p_4$ doit \^etre laiss\'e fixe par $\widetilde{I}$; ce qui est absurde. Ce cas n'arrive donc pas.

On en d\'eduit en utilisant les m\^emes id\'ees que pr\'ec\'edemment le:

\begin{theorem} Soit $\omega = \omega_3 + \cdots \in \Omega^1({\mathbb C}^3,0)$ int\'egrable, $\omega_3$ dicritique, ${\rm Cod\ Sing}\ \omega_3 \geq 2$. On suppose que $[{\mathcal F}_{\omega_3}]$ poss\`ede un point nilpotent $m$ tel que $\mu([{\mathcal F}_{\omega_3}];m)=5$. Alors ou bien ${\mathcal F}_{\omega}$ et ${\mathcal F}_{\omega_3}$ sont conjugu\'ees ou bien ${\mathcal F}_{\omega}$ est d\'efini par une $1-$forme ferm\'ee m\'eromorphe.
\end{theorem}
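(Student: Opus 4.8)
Ma strat\'egie serait de reprendre mot pour mot le sch\'ema des Th\'eor\`emes \ref{theo30} et \ref{theo41} et du Corollaire \ref{cor40}. Je commencerais par placer le point nilpotent en $m=(0:0:1)$ et supposer $[\mathcal{F}_{\omega_3}]$ donn\'e en carte affine par une $1-$forme $\theta_i\in\Omega_i$ avec $\mu([\mathcal{F}_{\omega_3}];m)=5$. Puisqu'un feuilletage de degr\'e deux poss\`ede au plus $7$ points singuliers compt\'es avec multiplicit\'e et que $m$ en absorbe $5$, les points singuliers restants ont une multiplicit\'e totale au plus $2$; leur $1-$jet est donc non nul et il ne peut exister de second point nilpotent \`a nombre de Milnor $\geq 3$ (tout au plus un point nilpotent suppl\'ementaire \`a nombre de Milnor $2$).

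L'alternative se lirait alors sur le feuilletage relev\'e $E^{-1}(\mathcal{F}_\omega)$ par l'\'eclatement $E$ de l'origine. Si en chaque point $\tilde{m}\in E^{-1}(0)$ ce feuilletage admet un germe de champ de vecteurs tangent et transverse au diviseur exceptionnel, le Lemme \ref{lemme21} fournirait un champ global $X$ v\'erifiant $i_Xd\omega=\omega=L_X\omega$, et je conclurais exactement comme dans la Proposition \ref{prop22} que $\omega$ et $\omega_3$ sont conjugu\'es: c'est la premi\`ere branche de l'\'enonc\'e. Dans le cas contraire, il existe un point singulier $\tilde{m}$ de $[\mathcal{F}_{\omega_3}]$ o\`u le d\'eploiement est non trivial, et je discuterais suivant la nature de $\tilde{m}$. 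Si $\tilde{m}$ a un $1-$jet non nilpotent, l'absence de champ transverse tangent force, via la Proposition \ref{prop3}, une int\'egrale premi\`ere holomorphe locale, donc un centre, et le Th\'eor\`eme de Dulac \ref{theo29} donne une $1-$forme ferm\'ee rationnelle pour $[\mathcal{F}_{\omega_3}]$. Si $\tilde{m}$ est l'\'eventuel second point nilpotent (n\'ecessairement \`a nombre de Milnor $2$), les Propositions \ref{prop24} et \ref{prop25} produisent une int\'egrale premi\`ere holomorphe et le Th\'eor\`eme \ref{theo31} conclut de m\^eme.

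Le point central de la preuve serait le cas $\tilde{m}=m$, c'est-\`a-dire un d\'eploiement non trivial du germe nilpotent \`a nombre de Milnor $5$. J'invoquerais ici l'analyse pr\'eparatoire men\'ee dans ce paragraphe: pour $\theta_2\in\Omega_2$, la comparaison du c\^one tangent de $d\theta_2$ avec celui de la forme normale de Loray, jointe \`a la Remarque \ref{rem11}, interdit l'absence de facteur int\'egrant formel; pour $\theta_1\in\Omega_1$, l'argument d'involution $(x,y)\mapsto(x,-y)$ relev\'e \`a la r\'eduction des singularit\'es montre que le cas sans facteur int\'egrant ne peut se produire. Dans les deux cas $\theta_i$ poss\`ede donc un facteur int\'egrant formel, et la Proposition \ref{prop8} (ou sa variante pour $\theta_1$) assure que $[\mathcal{F}_{\omega_3}]=\mathcal{F}_{\theta_i}$ est d\'efini par une $1-$forme ferm\'ee rationnelle $\underline{\eta}$. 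Il resterait \`a globaliser: comme au Th\'eor\`eme \ref{theo30}, je rel\`everais $\underline{\eta}$ par $E$, utiliserais le Th\'eor\`eme d'extension de L\'evi pour construire une $1-$forme m\'eromorphe ferm\'ee $\tilde{\eta}$ d\'efinissant $E^{-1}(\mathcal{F}_\omega)$ au voisinage de $E^{-1}(0)$, puis je descendrais en une $1-$forme m\'eromorphe ferm\'ee $\eta$ \`a l'origine avec $E^*\eta=\tilde{\eta}$, d\'efinissant $\mathcal{F}_\omega$.

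Le principal obstacle que j'anticipe n'est pas la charpente — identique aux cas $\mu=3$ et $\mu=4$ d\'ej\`a trait\'es — mais le fait de garantir que la non-trivialit\'e du d\'eploiement \emph{au point $m$ lui-m\^eme} entra\^ine effectivement un facteur int\'egrant formel pour $\theta_i$. C'est pr\'ecis\'ement ce que r\`eglent, en degr\'e $\mu=5$, l'argument d'involution dans la famille $\Omega_1$ et l'argument de c\^one tangent combin\'e \`a la Remarque \ref{rem11} dans la famille $\Omega_2$; tout le reste se ram\`ene \`a la traduction d\'eploiement/champ transverse et \`a l'application r\'ep\'et\'ee des th\'eor\`emes de type Dulac suivant la nature des points singuliers.
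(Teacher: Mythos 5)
Votre proposition est correcte et reprend essentiellement la d\'emarche du texte: m\^eme dichotomie via le Lemme \ref{lemme21} et la Proposition \ref{prop22}, m\^eme traitement des points singuliers auxiliaires (Th\'eor\`eme \ref{theo29}, Propositions \ref{prop24} et \ref{prop25}, Th\'eor\`eme \ref{theo31}), et, au point nilpotent de nombre de Milnor $5$, m\^eme r\'eduction \`a l'existence d'un facteur int\'egrant formel (comparaison des c\^ones tangents et Remarque \ref{rem11} pour $\Omega_2$, argument d'involution relev\'e \`a la r\'eduction des singularit\'es pour $\Omega_1$), suivie de la Proposition \ref{prop8} et de sa variante pour $\theta_1$, puis de la globalisation par le Th\'eor\`eme de L\'evi comme au Th\'eor\`eme \ref{theo30}. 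Votre reconstruction est fid\`ele \`a l'argument du papier, qui se contente d'ailleurs de renvoyer \`a ces m\^emes ingr\'edients.
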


\section{Le cas $\mu=6$} 

Soit ${\mathcal F}$ un feuilletage de degr\'e deux sur ${\mathbb P}^2_{\mathbb C}$ ayant une singularit\'e nilpotente $m=(0:0:1)$ avec $\mu({\mathcal F};m)=6$. On peut supposer que ${\mathcal F}$ est d\'efini par une $1-$forme $\theta$ appartenant \`a l'une des familles $\Omega_i$. Le lemme qui suit est \'el\'ementaire:

\begin{lemma}\label{lemme7} Si $\mu({\mathcal F};m)=6$ alors $\theta$ appartient \`a la famille $\Omega_2$; avec les notations habituelles on a de plus $a=b=Q=0$.
\end{lemma}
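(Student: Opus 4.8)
Le plan est de se ramener, grâce au Lemme \ref{lemme34}, à l'alternative $\theta \in \Omega_1$ ou $\theta \in \Omega_2$ (licite puisque $\mu({\mathcal F};m) = 6 \geq 3$), puis d'écarter entièrement la famille $\Omega_1$. Une fois $\theta \in \Omega_2$ établi, la Remarque \ref{remarque35} fournit sans calcul supplémentaire l'équivalence $\mu({\mathcal F}_{\theta_2};m) = 6 \Longleftrightarrow a = b = Q = 0$, c'est-à-dire exactement la conclusion cherchée. Toute la substance du lemme se concentre donc sur l'exclusion de $\Omega_1$.

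Pour montrer qu'aucun élément de $\Omega_1$ n'atteint $\mu = 6$, je calculerais le nombre de Milnor en $m = (0:0:1)$ en exploitant la droite invariante $x_1 = 0$. En écrivant $\theta_1 = U\,dx_1 + V\,dx_2$, on constate que $V = x_1 W$ avec $W = ax_1 + bx_2 + Px_1^2 + Qx_1x_2 + x_2^2$, tandis que $U = x_1 - x_2^3 + \cdots$ a pour partie linéaire $x_1$. La courbe $\{U = 0\}$ étant lisse, elle se paramètre par $x_1 = \phi(x_2)$, et l'identification des termes de bas degré donne $\phi(x_2) = x_2^3 + \cdots$ (l'ordre $3$ provenant du monôme $-x_2^3$ de $U$). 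Le nombre de Milnor est alors la multiplicité d'intersection des deux courbes, c'est-à-dire l'ordre en $x_2$ de $V\big(\phi(x_2),x_2\big) = \phi(x_2)\,W\big(\phi(x_2),x_2\big)$.

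Puisque $\phi$ est d'ordre $3$ et que $W\big(\phi(x_2),x_2\big) = bx_2 + x_2^2 + \cdots$, on obtient $\mu = 4$ lorsque $b \neq 0$ et $\mu = 5$ lorsque $b = 0$; en particulier $\mu({\mathcal F}_{\theta_1};m) \in \{4,5\}$, de sorte que la valeur $6$ ne se produit jamais dans $\Omega_1$ — ce qui recoupe d'ailleurs la description obtenue dans les paragraphes consacrés à $\mu = 4$ et $\mu = 5$. On en déduit $\theta \in \Omega_2$, puis on conclut par la Remarque \ref{remarque35}. Le seul ingrédient véritablement calculatoire est la détermination des ordres de $\phi$ et de $W \circ (\phi, \mathrm{id})$, mais il s'agit d'un calcul de multiplicité d'intersection élémentaire, sans difficulté conceptuelle : c'est tout au plus le point qui demande un peu d'attention.
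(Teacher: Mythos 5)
Your proof is correct and follows exactly the route the paper intends: the paper states this lemma without proof (declaring it \emph{\'el\'ementaire}), and your argument --- reduction to the alternative $\Omega_1$ or $\Omega_2$ via the Lemme \ref{lemme34}, exclusion of $\Omega_1$ by the intersection-multiplicity computation along the smooth branch $\{U=0\}$ (parametrized by $x_1=\phi(x_2)=x_2^3+\cdots$, giving $\mu = 3 + \mathrm{ord}\,W(\phi(x_2),x_2)$), then conclusion via the Remarque \ref{remarque35} --- supplies precisely the missing elementary verification. Your finding that $\mu({\mathcal F}_{\theta_1};0)=4$ when $b\neq 0$ and $\mu({\mathcal F}_{\theta_1};0)=5$ when $b=0$, so that $\Omega_1$ never reaches $\mu=6$, is accurate and is consistent with what the paper itself asserts in its sections on the cases $\mu=4$ and $\mu=5$.
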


Ainsi ${\mathcal F}$ est d\'efini par une $1-$forme de type
$$\theta_2 = [x_1(1+\alpha x_1 + \beta x_2) + x_2^2 - Px_1^2x_2]dx_1 + Px_1^3 dx_2 $$ avec $P\neq0$. Quitte \`a faire un automorphisme de ${\mathbb P}^2_{\mathbb C}$ {\em ad-hoc}, on se ram\`ene \`a $P=1$, $\alpha = \beta = 0$. On note $\theta_2^0$ la $1-$forme correspondant. Il y a donc un seul feuilletage de ce type \`a conjugaison pr\`es not\'e ${\mathcal F}_0$.

\begin{proposition}\label{prop9} Le feuilletage ${\mathcal F}_0$ est transversalement projectif.\end{proposition}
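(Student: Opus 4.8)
Le plan est de reconna\^itre ${\mathcal F}_0$ comme feuilletage de Riccati et de conclure comme dans la Proposition \ref{prop7}. L'observation de d\'epart est que, $\mu({\mathcal F}_0;m)$ valant $6$, le Lemme \ref{lemme7} impose $a=b=Q=0$; en particulier $Q=0$, de sorte que ${\mathcal F}_0$ est de type $\Omega_2$ et \emph{radial \`a l'infini}, c'est-\`a-dire pr\'ecis\'ement sous les hypoth\`eses de la Proposition \ref{prop7} (dont le pr\'esent \'enonc\'e est donc un cas particulier). Je commencerais n\'eanmoins par rendre la g\'eom\'etrie explicite: le point base du pinceau de droites $\{x_1 = \mathrm{cste}\}$ est $p=(0:1:0)$, et dans la carte affine $(x_1,x_3)=(X_0/X_1,X_2/X_1)$ centr\'ee en $p$ le $1$-jet de la forme d\'efinissant ${\mathcal F}_0$ est $x_1\,dx_3 - x_3\,dx_1$: c'est une singularit\'e de type radial, exactement comme dans la Proposition \ref{prop7}.

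La v\'erification cl\'e est un calcul direct dans la carte adapt\'ee au pinceau. En posant $z=1/x_1$ et $w=x_2/x_1$, la forme $\theta_2^0$ s'\'ecrit, \`a multiplication par $z^4$ pr\`es,
\[
dw - (z+w^2)\,dz,
\]
autrement dit l'\'equation de Riccati ${dw}/{dz}=w^2+z$. Le coefficient de $dw$ valant $1$, ${\mathcal F}_0$ est transverse \`a la fibre g\'en\'erique $\{z=\mathrm{cste}\}$ du pinceau, les seules fibres non transverses \'etant les deux droites invariantes $\{x_1=0\}$ et la droite \`a l'infini. Apr\`es \'eclatement de $p$, on obtient une fibration en $\mathbb{P}^1$ sur la surface de Hirzebruch $\mathbb{F}_1$ \`a laquelle ${\mathcal F}_0$ est transverse hors d'un nombre fini de fibres: ${\mathcal F}_0$ est bien de Riccati.

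Il resterait alors \`a invoquer le fait classique que tout feuilletage de Riccati est transversalement projectif: l'holonomie du pinceau agit sur la fibre $\mathbb{P}^1$ par homographies, projectivisation de la monodromie du syst\`eme lin\'eaire de rang deux associ\'e (ici l'\'equation $\psi''+z\psi=0$), ce qui fournit directement la structure transverse projective, ou, de fa\c con \'equivalente, le triplet $SL(2,\mathbb{C})$ $(\omega_0,\omega_1,\omega_2)$. La principale difficult\'e, en r\'ealit\'e mineure ici, est de s'assurer que la singularit\'e tr\`es d\'eg\'en\'er\'ee ($\mu=6$) en $m$ ne d\'etruit pas la transversalit\'e g\'en\'erique aux fibres; c'est pr\'ecis\'ement ce que garantit le calcul ci-dessus, o\`u $m$ (situ\'e sur $\{x_1=0\}$, donc dans une fibre sp\'eciale du pinceau) n'intervient pas.
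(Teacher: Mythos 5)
Your proof is correct and takes essentially the same route as the paper: blow up the radial point $(0:1:0)$ and invoke the resulting Riccati structure, i.e. transversality of ${\mathcal F}_0$ to the pencil $x_1=\mathrm{cte}$ away from finitely many fibres — indeed, as you observe, the statement is the special case $b=Q=0$ of Proposition \ref{prop7}, and your normal form $dw-(z+w^2)\,dz$ is precisely the Airy equation the paper records right after the proposition. One harmless inaccuracy: the line at infinity (the fibre $z=0$) is \emph{not} invariant for ${\mathcal F}_0$ (since $q=x_1^2\not\equiv 0$ the foliation is transverse to it at every point), the unique invariant, non-transverse fibre being $x_1=0$; this only makes the transversality statement stronger, so the argument is unaffected.
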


\begin{proof} Apr\`es \'eclatement du point \`a l'infini $(0:1:0)$, ${\mathcal F}_0$ est transverse \`a la fibration $x_1= {\rm cte}$ en dehors de la droite $x_1=0$, ce qui implique la proposition.\end{proof}

Le feuilletage ${\mathcal F}_0$ est d\'efini dans la carte $x_1=1$ par $$dx_2 - (x_3+x_2^2)dx_3. $$ Il s'agit de la fameuse \'equation d'Airy utilis\'ee pour les probl\`emes de diffraction. Liouville a d\'emontr\'e que les solutions de cette \'equation diff\'erentielle ne sont pas {\em Liouvilliennes}. Ceci signifie, en utilisant les travaux de Singer \cite{Sin} et Casale \cite{Cas} que ${\mathcal F}_0$ n'est pas transversalement affine: il n'existe pas de $1-$forme ferm\'ee rationnelle $\eta$ telle que $d\theta_2^0 = \theta_2^0 \wedge \eta$. En particulier ${\mathcal F}_0$ n'est pas d\'efini par une $1-$forme ferm\'ee rationnelle.

En fait il y a une version locale de ce qui pr\'ec\`ede.

\begin{lemma}\label{lemme8} La $1-$forme $\theta_2^0$ ne poss\`ede pas de facteur int\'egrant formel.\end{lemma}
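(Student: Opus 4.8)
The plan is to argue by contradiction, reducing this local statement to the global non-integrability of the Airy foliation $\mathcal{F}_0$ just recalled. Suppose $\theta_2^0$ admitted a formal integrating factor $f \in \widehat{\mathcal{O}(\mathbb{C}^2,0)}$, so that $d(\theta_2^0/f) = 0$. Setting $\eta = -df/f$, this is equivalent to $d\theta_2^0 = \theta_2^0 \wedge \eta$ with $\eta$ a formal closed $1$-form; in other words $\mathcal{F}_0$ would be \emph{formally} transversally affine at $m = (0:0:1)$, and would in fact be defined near $m$ by the formal closed meromorphic $1$-form $\theta_2^0/f$. I would then promote this formal datum to a global one and contradict the fact --- obtained from Liouville's theorem on the Airy equation through Singer and Casale (\cite{Sin,Cas}) --- that $\mathcal{F}_0$ is not transversally affine, a fortiori not defined by a closed rational $1$-form.

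The first and principal step is to show that $f$ is actually convergent. For this I would pass to the reduction of singularities $E$ of the nilpotent point $m$ and follow the scheme already used in Proposition~\ref{prop8} and Theorem~\ref{theo39}: the pullback $E^*(\theta_2^0/f)$ is a formal closed meromorphic $1$-form whose germ at each singular point of the resolution is a formal integrating factor of the reduced foliation there. Among these reduced singularities one must locate a point, lying on the chain of divisors over $m$, that is holomorphically normalizable (a Poincar\'e--Siegel or Martinet--Ramis resonant point \cite{Mar-Ra}) and carries \emph{no} holomorphic first integral; at such a point the integrating factor is unique up to a multiplicative constant and is automatically convergent. Since the lift of $f$ must coincide with it up to a constant, $f$ converges. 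The delicate part is precisely this: one has to read off the resolution tree of the $\mu = 6$ nilpotent singularity and exhibit such a normalizable point --- the analogue of the singularity with $1$-jet $6udv - vdu$, respectively of the resonant point $p_1$, encountered in the cases $\mu = 5$.

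Once $f$ converges, $\theta_2^0/f$ is a germ of closed meromorphic $1$-form defining $\mathcal{F}_0$ near $m$, and I would extend it along the invariant line $x_1 = 0$, whose only singularities of $\mathcal{F}_0$ are $m$ and the point at infinity $(0:1:0)$. As in Theorem~\ref{theo39}, the extension is forced by the fact that the holonomy of $x_1 = 0$ is of infinite order: blowing up $(0:1:0)$ exhibits $\mathcal{F}_0$ as a Riccati foliation (Proposition~\ref{prop9}), so the holonomy along $x_1 = 0$ is a non-periodic M\"obius map, and the holonomy-invariant closed $1$-forms obtained near the two singular points are proportional over $\mathbb{C}$ and glue. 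This produces a closed meromorphic $1$-form in a neighborhood of $\overline{\{x_1 = 0\}} \subset \mathbb{P}^2_{\mathbb{C}}$, which by the extension theorems of Levi and Hartogs spreads to a global closed rational $1$-form $\eta_0$ with $\mathcal{F}_0 = \mathcal{F}_{\eta_0}$, contradicting the non-Liouvillian character of the Airy equation recalled above. The main obstacle, on which everything rests, is the convergence step: producing in the resolution of the $\mu = 6$ nilpotent point a singularity whose normal form forces a formal integrating factor to converge.
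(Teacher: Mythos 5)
Your reduction of the lemma to the global non-integrability of the Airy foliation founders on the very feature that makes the case $\mu=6$ special: since $Q=0$, the singularity of ${\mathcal F}_0$ at $(0:1:0)$ is of \emph{radial} type, with local meromorphic first integral $y/x$, and therefore the holonomy of the invariant line $x_1=0$ is \emph{trivial} --- not, as you assert, a non-periodic M\"obius map. (The Riccati structure of Proposition \ref{prop9} concerns transversality to the fibration $x_1=\mathrm{cte}$, whose base minus one point is simply connected; it gives no such holonomy.) This false premise destroys both halves of your plan. For the extension step: the mechanism of Th\'eor\`eme \ref{theo39} and Proposition \ref{prop8} rests on the holonomy of $x_1=0$ having infinite order, so that two holonomy-invariant closed $1$-forms on a transversal are forced to be ${\mathbb C}$-colinear and glue; with trivial holonomy that invariance is vacuous and nothing glues. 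For the convergence step --- which you rightly identify as the crux --- the singular point you need (holomorphically normalizable, resonant, with \emph{no} holomorphic first integral, so that the formal integrating factor is unique up to constant and convergent) does not exist in this resolution: precisely because the holonomy of $x_1=0$ is trivial, Mattei--Moussu \cite{Ma-Mo} gives at the point $p_1$, where the strict transform of $x_1=0$ arrives, a holomorphic first integral of type $uv^2$, so integrating factors there are neither unique (one may multiply by any formal series in $uv^2$) nor forced to converge; and the other point $p_2$ is a saddle-node of Milnor number $4$, of type $(x+\cdots)dy+y^4dx$, where a formal integrating factor contains the equation of the weak (formal, in general divergent) separatrix and need not converge \cite{Mar-Ra0}. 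There is no analogue here of the point with $1$-jet $6udv-vdu$, nor of the resonant point with holonomy $y\mapsto y/(1-y)$, that you relied on in the cases $\mu=5$.

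The paper's proof turns the trivial holonomy from an obstacle into the weapon, and stays entirely local: the first integral $uv^2$ at $p_1$ forces the holonomy of the divisor $Ex(2)$ around $p_1$ to be an involution $v\mapsto -v$, while the saddle-node at $p_2$ contributes a holonomy germ $y\mapsto y+cy^4+\cdots$, $c\neq0$; these two germs cannot commute, so the holonomy group of $Ex(2)$ is non-abelian, whereas a formal integrating factor for $\theta_2^0$ would force that group to be abelian \cite{Cer-Ma}. Note also the logical direction: the lemma is stated in the paper as the \emph{local} counterpart of the global (Liouville--Singer--Casale) fact and is proved independently of it; your plan tries to deduce the local statement from the global one, which requires exactly the convergence-and-extension bridge that is unavailable here.
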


\begin{proof} Comme $Q=0$, la singularit\'e $(0:1:0)$ de ${\mathcal F}_0$ est de type radial, i.e. poss\`ede une int\'egrale premi\`ere de la forme ${\displaystyle \frac{y}{x} = {\rm cte}}$. En r\'esulte que l'holonomie de la vari\'et\'e invariante $x_1=0$ est triviale. Dans la r\'esolution des singularit\'es ceci implique, d'apr\`es \cite{Ma-Mo}, qu'en $p_1$ le feuilletage transform\'e de ${\mathcal F}_0$ poss\`ede au point $p_1$ une int\'egrale premi\`ere de type $uv^2 = {\rm cte}$ avec $(v=0) = Ex(2)_{,p_1}$. Ceci implique aussi que, pour un bon choix de coordonn\'ees, l'holonomie de la vari\'et\'e invariante $Ex(2)$ au point $p_1$ est la sym\'etrie $v \rightarrow -v$. On se place maintenant au point $p_2$ o\`u le point singulier est de type noeud-col avec nombre de Milnor $4$: au point $p_2$ le feuilletage est \`a conjugaison pr\`es du type $$(x+\cdots)dy + y^4dx $$ o\`u cette fois $(y=0) = Ex(2)_{,p_2}$. Le diff\'eomorphisme d'holonomie da la vari\'et\'e invariante $Ex(2)_{,p_2}$ en $p_2$ est donc de la forme $y + cy^4 + \cdots$, $c \in {\mathbb C}^*$. Notons qu'un tel diff\'eomorphisme ne peut commuter avec une involution de type $y \rightarrow -y$. Or si $\theta_2^0$ poss\'edait un facteur int\'egrant formel, le groupe d'holonomie du diviseur $Ex(2)$ serait ab\'elien. D'o\`u le lemme.\end{proof}

On en d\'eduit le

\begin{theorem} Soit $\omega = \omega_3 + \cdots \in \Omega^1({\mathbb C}^3,0)$ int\'egrable, $\omega_3$ dicritique, ${\rm Cod\ Sing}\ \omega_3 \geq 2$. On suppose que $[{\mathcal F}_{\omega_3}]$ poss\`ede une singularit\'e nilpotente $m$ avec $\mu([{\mathcal F}_{\omega_3}];m)=6$. Alors ${\mathcal F}_{\omega}$ est conjugu\'e \`a ${\mathcal F}_{\omega_3}$.
\end{theorem}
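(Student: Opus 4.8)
Le plan est de v\'erifier, en tout point $m\in E^{-1}(0)\simeq{\mathbb P}^2_{\mathbb C}$, les hypoth\`eses du Lemme \ref{lemme21} : on cherche un germe de champ de vecteurs tangent au feuilletage relev\'e $E^{-1}({\mathcal F}_\omega)$ et transverse au diviseur exceptionnel. Aux points o\`u $[{\mathcal F}_{\omega_3}]$ est r\'egulier, $E^{-1}({\mathcal F}_\omega)$ est transverse au diviseur et un tel champ existe trivialement ; il suffit donc de traiter les points singuliers de $[{\mathcal F}_{\omega_3}]$. D'apr\`es le Lemme \ref{lemme7} on peut supposer que $[{\mathcal F}_{\omega_3}] = {\mathcal F}_0$ est le feuilletage d\'efini par $\theta_2^0$. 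Comme un feuilletage de degr\'e deux poss\`ede exactement $7$ points singuliers compt\'es avec multiplicit\'e et que $\mu([{\mathcal F}_{\omega_3}];m)=6$, l'unique autre point singulier $m'$ v\'erifie $\mu([{\mathcal F}_{\omega_3}];m')=1$ ; un calcul \'el\'ementaire (d\'ej\`a utilis\'e dans la preuve du Lemme \ref{lemme8}) montre que $m' = (0:1:0)$ est de type radial.

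Le point radial $m'$ se traite directement : son $1-$jet est non nilpotent, de sorte que la Proposition \ref{prop3} s'applique \`a la section par le diviseur. Une singularit\'e radiale \'etant de type Kupka-Reeb, elle n'admet pas d'int\'egrale premi\`ere holomorphe non constante, ce qui exclut l'alternative $(2)$ ; on est donc dans l'alternative $(1)$, qui fournit une submersion $j$ \'egale \`a l'identit\'e sur le diviseur. Les fibres de $j$ sont alors transverses au diviseur et tangentes \`a $E^{-1}({\mathcal F}_\omega)$ : c'est le champ cherch\'e.

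Le c\oe ur de la preuve est le point nilpotent $m$. Au voisinage de $m$, le feuilletage relev\'e $E^{-1}({\mathcal F}_\omega)$ se lit comme un d\'eploiement du germe nilpotent $[{\mathcal F}_{\omega_3}]_{,m} = {\mathcal F}_{\theta_2^0}$, la direction transverse au diviseur jouant le r\^ole du param\`etre ; la trivialit\'e de ce d\'eploiement \'equivaut \`a l'existence du champ requis par le Lemme \ref{lemme21}. On applique alors la Proposition \ref{prop24}. Le Lemme \ref{lemme8} affirmant que $\theta_2^0$ n'a pas de facteur int\'egrant formel -- et donc a fortiori pas d'int\'egrale premi\`ere holomorphe, qui en fournirait un -- les cas $(1)$ et $(2)$ de la Proposition \ref{prop24} sont exclus : on se trouve dans le cas $(3)$, o\`u $j^1\omega = x_1dx_1$. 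Le Th\'eor\`eme \ref{theo26} s'applique donc avec $\mu([{\mathcal F}_{\omega_3}];m)+1 = 7$ ; ce nombre \'etant premier, sa partie $(2)$ donne l'alternative : ou bien le d\'eploiement est trivial, ou bien ${\mathcal F}_{\theta_2^0}$ poss\`ede une int\'egrale premi\`ere holomorphe non constante. La seconde \'eventualit\'e \'etant \`a nouveau exclue par le Lemme \ref{lemme8}, le d\'eploiement est trivial et le champ transverse existe en $m$.

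Les hypoth\`eses du Lemme \ref{lemme21} \'etant ainsi satisfaites en tout point de $E^{-1}(0)$, il existe $X\in\Theta({\mathbb C}^3,0)$ tel que $i_Xd\omega = \omega = L_X\omega$. On conclut exactement comme dans la preuve de la Proposition \ref{prop22} : de $\omega_3 = i_{R/4}\,d\omega_3$ et ${\rm Cod\ Sing}\ \omega_3\geq2$ on tire $j^1X = R/4$, puis la lin\'earisation de $X$ (domaine de Poincar\'e, sans r\'esonance) donne $\omega = \omega_3$ \`a conjugaison pr\`es, d'o\`u ${\mathcal F}_\omega$ conjugu\'e \`a ${\mathcal F}_{\omega_3}$. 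L'obstacle principal est le traitement de $m$ : tout repose sur la conjonction de la primalit\'e de $\mu+1 = 7$, qui d\'eclenche le Th\'eor\`eme \ref{theo26}, et de l'absence de facteur int\'egrant formel de $\theta_2^0$ (Lemme \ref{lemme8}), cette derni\`ere d\'ecoulant de la non-commutativit\'e de l'holonomie du diviseur $Ex(2)$ avec une involution de type $y\mapsto-y$.
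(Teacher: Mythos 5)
Your proof is correct and is essentially the paper's own argument written out in full: the paper compresses the proof to ``c'est une cons\'equence du Lemme \ref{lemme8}'', but the intended chain is exactly yours --- reduction via le Lemme \ref{lemme7} \`a la forme $\theta_2^0$, v\'erification du crit\`ere du Lemme \ref{lemme21} aux deux points singuliers, Proposition \ref{prop24} puis Th\'eor\`eme \ref{theo26}$(2)$ (avec $\mu+1=7$ premier) au point nilpotent, les deux branches non triviales \'etant tu\'ees par le Lemme \ref{lemme8} (une int\'egrale premi\`ere holomorphe fournissant bien un facteur int\'egrant holomorphe, \`a savoir l'\'equation r\'eduite d'une fibre), et la conclusion par lin\'earisation \`a la Poincar\'e comme dans la Proposition \ref{prop22}. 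Seule retouche \`a faire: au point radial $m'$, votre exclusion de l'alternative $(2)$ de la Proposition \ref{prop3} est \`a la fois inutile et mal justifi\'ee (une singularit\'e de type Kupka-Reeb peut admettre une int\'egrale premi\`ere holomorphe, par exemple $x_1dx_2+2x_2dx_1$ admet $x_1^2x_2$; de plus l'alternative $(2)$ porte sur le feuilletage ambiant, dont l'int\'egrale premi\`ere pourrait a priori se restreindre en une constante sur la section) --- le chemin correct et plus court est que $d\theta'(m')\neq 0$ place directement dans l'alternative $(1)$ par le ph\'enom\`ene de Kupka-Reeb, comme dans la preuve m\^eme de la Proposition \ref{prop3}.
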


\begin{proof} C'est une cons\'equence du Lemme \ref{lemme8}.   \end{proof}

\section{Krull-d\'eformations pour certains feuilletages transversalement affines}

On s'int\'eresse ici aux feuilletages ${\mathcal F}_\omega$ o\`u $\omega = \omega_3 + \cdots \in \Omega^1({\mathbb C}^3,0)$ et $[{\mathcal F}_{\omega_3}]$ est transversalement affine comme dans le Th\'eor\`eme \ref{theo32'}. On suppose que $[{\mathcal F}_{\omega_3}]$ est donn\'e en carte affine par $$ \theta_0 = \omega_2 + q(x_1dx_2 - x_2dx_1)$$ o\`u $\omega_2$ est une $1-$forme \`a coefficients homog\`enes de degr\'e deux et $q$ une forme quadratique. Pour un choix g\'en\'erique de $\omega_2$ et $q$, le feuilletage ${\mathcal F}_{\theta_0}$ n'est pas d\'efini par une $1-$forme ferm\'ee. En fait le feuilletage local ${\mathcal F}_{\theta_0}$ ne poss\`ede pas en g\'en\'eral de facteur int\'egrant formel. Nous allons faire les hypoth\`eses (g\'en\'eriques) suppl\'ementaires suivantes:
\vskip0.2cm
\begin{minipage}{14cm}
\begin{enumerate}[(i)]
\item \`A conjugaison lin\'eaire pr\`es ${\displaystyle \omega_2 = x_1x_2(x_1 - x_2)\Big(\lambda_1 \frac{dx_1}{x_1} + \lambda_2 \frac{dx_2}{x_2} + \lambda_3\frac{d(x_2 - x_1)}{x_2 - x_1}\Big)}$, \break $\lambda_1 + \lambda_2 + \lambda_3=1$, condition qui est r\'ealis\'ee d\`es que le c\^one tangent est r\'eduit.

\item $\mu({\mathcal F}_{\theta_0};0)=4$, i.e. $\lambda_1 \lambda_2 \lambda_3 \neq 0$.

\item Aux autres points singuliers $m_i$, que l'on peut supposer \`a distance finie, la condition de Kupka $d\theta(m_i) \neq 0$ est satisfaite.

\item Les $\lambda_i$ sont deux \`a deux distincts.
\item ${\mathcal F}_{\theta_0}$ n'est pas d\'efini par une $1-$forme ferm\'ee.
\end{enumerate}
\end{minipage}
\vskip0.2cm

Un calcul \'el\'ementaire montre que $$d\theta_0 + \theta_0 \wedge \left( (1+\lambda_1)\frac{dx_1}{x_1} + (1 + \lambda_2) \frac{dx_2}{x_2} + (1 + \lambda _3) \frac{d(x_2-x_1)}{x_2-x_1} \right) = 0 $$ ce qui produit d'ailleurs une $1-$forme $\omega_1$ comme dans le Th\'eor\`eme \ref{theo32'}. Notons que puisque ${\mathcal F}_{\theta_0}$ n'est pas d\'efini par une forme ferm\'ee, les $\lambda_i$ ne sont pas dans ${\mathbb Z}$.

Soit $[{\mathcal F}_{\omega_3}]$ d\'efini par $\theta_0$ comme ci-dessus. Nous dirons que $[{\mathcal F}_{\omega_3}]$ est {\em transversalement affine g\'en\'erique (TAG)} si les conditions (i), (ii), (iii), (iv), (v) sont satisfaites.

\begin{theorem} Soit ${\mathcal F}_\omega$ un germe de feuilletage \`a l'origine de ${\mathbb C}^3$ donn\'e par $\omega = \omega_3 +\cdots$, $\omega_3$ dicritique. Si $[{\mathcal F}_{\omega_3}]$ est TAG alors ${\mathcal F}_{\omega}$ et ${\mathcal F}_{\omega_3}$ sont conjugu\'es.
\end{theorem}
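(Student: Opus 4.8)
The plan is to verify the hypotheses of Lemma \ref{lemme21} at every point $m$ of the exceptional divisor $E^{-1}(0)\simeq\mathbb{P}^2_{\mathbb{C}}$ and then to conclude exactly as in Proposition \ref{prop22}. At a point where $[\mathcal{F}_{\omega_3}]$ is regular the lifted foliation $E^{-1}(\mathcal{F}_\omega)$ is transverse to the divisor, so a local vector field tangent to $E^{-1}(\mathcal{F}_\omega)$ and transverse to $E^{-1}(0)$ exists trivially. At a singular point $m_i$ satisfying the Kupka condition (iii) one has $d\theta(m_i)\neq 0$, hence the $1$-jet of the restricted form is non-nilpotent and Proposition \ref{prop3} (the Kupka--Reeb mechanism) furnishes the required trivialising field. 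Everything therefore reduces to the single quadratic point $m=(0:0:1)$, where $j^1\theta_0=0$ and neither argument applies.

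First I would invoke Theorem \ref{theo32'}: $\mathcal{F}_\omega$ is transversely affine, so the deployment of $[\mathcal{F}_{\omega_3}]_{,0}$ in the transverse ($x_3$) direction carried by $E^{-1}(\mathcal{F}_\omega)$ near $m$ inherits a transverse affine structure, whose defining closed form has residues $1+\lambda_j$ along the three separatrices. What must be shown is that this deployment is \emph{trivial}, i.e.\ that it admits a holomorphic tangent field transverse to the divisor. To expose the local geometry I would resolve $[\mathcal{F}_{\omega_3}]_{,0}$: since the tangent cone $i_R\theta_0=x_1x_2(x_1-x_2)$ is reduced (condition (i)), a single blow-up of the fibre over $m$ separates the three lines, the new exceptional divisor $D$ is invariant, and by (ii) it carries three reduced singularities $s_1,s_2,s_3$ whose eigenvalue ratios are the residues $\lambda_j$, distinct by (iv).

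The heart of the argument is the rigidity of this resolved configuration under deployment. Condition (v) forces the $\lambda_j\notin\mathbb{Z}$, so the holonomy of $D\setminus\{s_1,s_2,s_3\}$ is non-periodic — through an irrational multiplier, or, in a resonant case, through a parabolic resonant germ exactly as in Propositions \ref{prop8} and \ref{prop9}. At each $s_j$ the deployed reduced singularity keeps the ratio $\lambda_j$, hence is holomorphically rigid: it is linearisable in the Poincar\'e--Siegel regime, the residual rational cases being controlled by the convergent normal form and a field automorphism precisely as in the proof of Theorem \ref{theo39}. The deployment is thus a local product near each $s_j$ and, by transversality, near every smooth point of $D$; the non-periodicity of the holonomy of $D$ forces these local trivialising fields to agree up to the rigid affine holonomy, so they glue to a field defined along $D$, which descends under the blow-down to the sought $\tilde X_{,m}$. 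This verifies the hypothesis of Lemma \ref{lemme21} at the quadratic point.

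With Lemma \ref{lemme21} now applicable at every point of $\mathbb{P}^2_{\mathbb{C}}$, it produces $X\in\Theta(\mathbb{C}^3,0)$ with $i_Xd\omega=\omega=L_X\omega$; since ${\rm Cod\ Sing}\ d\omega_3\geq 2$ its linear part equals $\frac{R}{4}$, and linearising $X$ by Poincar\'e yields $\omega=\omega_3$, i.e.\ $\mathcal{F}_\omega\cong\mathcal{F}_{\omega_3}$. The main obstacle is plainly the rigidity and gluing at the quadratic singularity: one must rule out any variation of the conjugacy class of the holonomy of $D$ along the deployment, and this is exactly where (iv) and (v) are indispensable — distinctness and the escape from $\mathbb{Z}$ of the residues are what make the holonomy representation rigid and thereby trivialise the deployment, whereas a periodic holonomy (as for the form $\theta_3$ preceding Lemma \ref{lemme37}) would leave room for nontrivial deployments.
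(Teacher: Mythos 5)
Your global scheme --- reducing, via Lemme \ref{lemme21} and the conclusion of Proposition \ref{prop22}, to the construction of a tangent field transverse to the exceptional divisor at every point, which is automatic at regular points and follows from Proposition \ref{prop3} at the Kupka points of condition (iii), so that only the quadratic point $m=(0:0:1)$ remains --- is the same as the paper's. The gap lies in your treatment of that quadratic point, which is the entire content of the theorem, and it is twofold. First, the claim ``$\lambda_j\notin\mathbb{Z}$, hence the resolved singularities are holomorphically rigid, linearisable in the Poincar\'e--Siegel regime'' is unjustified: the TAG conditions do not exclude eigenvalue ratios that are negative rationals or Liouville-type irrationals, for which linearisation fails; and the field-automorphism trick of the proof of Th\'eor\`eme \ref{theo39} cannot be transplanted here, since there it acts on polynomial data (a degree-two foliation of $\mathbb{P}^2_{\mathbb{C}}$, the Galois-invariant properties ``formal integrating factor'' and ``closed rational defining form''), whereas your object is a convergent unfolding germ in $\mathbb{C}^3$, which automorphisms of $\mathbb{C}$ do not preserve. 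Second, and decisively, the gluing step ``the non-periodicity of the holonomy of $D$ forces these local trivialising fields to agree\ldots so they glue'' is not an argument: local triviality of an unfolding near each point of $D$ is never the issue (Kupka already gives it at the reduced singularities), the obstruction to gluing is cohomological, and non-periodicity of the individual holonomy maps does not kill it.

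Indeed your mechanism is refuted by an explicit germ. Take $\theta_0$ purely logarithmic, satisfying (i)--(iv) with $\lambda_j$ distinct, irrational, even diophantine, and consider the unfolding $\theta = x_1x_2f_3\bigl(\lambda_1\frac{dx_1}{x_1}+\lambda_2\frac{dx_2}{x_2}+\lambda_3\frac{df_3}{f_3}\bigr)$ with $f_3=x_2-x_1+x_3$, so that $\theta\big|_{x_3=0}=\theta_0$. After one blow-up all three singularities on $D$ are linearisable with non-periodic holonomy, i.e.\ every local hypothesis you invoke holds; yet the unfolding is non-trivial: a field $\partial_{x_3}+a\partial_{x_1}+b\partial_{x_2}$ tangent to $\mathcal{F}_\theta$ forces $a=x_1A$, $b=x_2B$ with $A(\lambda_1f_3-\lambda_3x_1)+B(\lambda_2f_3+\lambda_3x_2)=-\lambda_3$, which has no holomorphic solution because both coefficients vanish along the curve $x_1=\lambda_1x_3$, $x_2=-\lambda_2x_3$, $f_3=\lambda_3x_3$ while the right-hand side does not. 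What distinguishes this germ from the TAG situation is exactly the full strength of (v): here the projective holonomy of $D$ is abelian, there it is not --- and your proof never uses non-abelianness, only its weak consequence $\lambda_j\notin\mathbb{Z}$. The paper uses (v) essentially and by a completely different route: it extends the transverse affine structure to a closed meromorphic $\theta_1$ with $d\theta+\theta\wedge\theta_1=0$ near $m$, normalises its three polar surfaces to $x_1=0$, $x_2=0$, $x_2-x_1+x_3^k=0$, and shows by averaging over $x_3\mapsto e^{2\pi i/k}x_3$ that $k\geq1$ would yield a deployment with normal-crossing invariant divisor, forcing the holonomy of the blown-up $\mathcal{F}_{\theta_0}$ to be abelian, contradicting (v); then $k=0$ gives the invariant surface $x_1x_2(x_2-x_1)=0$, whence $\theta=F^*\theta_0$ for a submersion $F$, whose fibres furnish the field required by Lemme \ref{lemme21}. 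Some use of the invariant surfaces of the unfolding and of non-abelian holonomy along these lines is unavoidable; without it, your argument would ``prove'' triviality in the abelian case above, where it is false.
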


\begin{proof} Apr\`es un \'eclatement dans la carte o\`u $E = (x_1x_3,x_2x_3,x_3)$ on a avec les notations pr\'ec\'edentes: $$\theta = \frac{E^*\omega}{x_3^4} = \omega_2 + q(x_1dx_2 - x_2dx_1) + x_3(Adx_1 + Bdx_2) + hdx_3. $$ Comme $\theta\big|_{x_3=0} = \theta_0$, il existe une $1-$forme ferm\'ee m\'eromorphe $\theta_1$ telle que $d\theta + \theta \wedge \theta_1 = 0$, avec $$\theta_1\big|_{x_3=0} = (1+\lambda_1)\frac{dx_1}{x_1} + (1 + \lambda_2) \frac{dx_2}{x_2} + (1 + \lambda _3) \frac{d(x_2-x_1)}{x_2-x_1}. $$ Puisque $\theta_1$ est construite par prolongement de $\theta_1\big|_{x_3=0}$, $\theta_1$ n'a pas de p\^ole le long de $x_3=0$. La condition (iv) implique que $\theta_1$ a ses p\^oles le long de trois surfaces lisses deux \`a deux transverses dont les intersections avec $x_3=0$ sont $x_1x_2(x_2-x_1) = 0$. \`A conjugaison holomorphe locale pr\`es on peut supposer que $\theta_1$ a ses p\^oles le long de: $$x_1=0, \quad x_2 = 0,\quad x_2 - x_1 + z^k = 0 $$ pour un certain $k \in {\mathbb N}$. En r\'esulte que la forme ferm\'ee $\theta_1$ est du type suivant en $(0,0,0)$: $$\theta_1 = (1+\lambda_1)\frac{dx_1}{x_1} + (1 + \lambda_2) \frac{dx_2}{x_2} + (1 + \lambda _3) \frac{d(x_2-x_1 + z^k)}{x_2-x_1+z^k} + df $$ avec $f$ holomorphe en $(0,0,0)$, $f(x_1,x_2,0)\equiv0$.

Soit $U$ une unit\'e, $U(0)=1$, et $\phi$ le diff\'eomorphisme: $$\phi = (x_1U^k, x_2U^k, x_3U). $$ On pose $\theta'_1 = \theta_1 - df$; un calcul direct montre que $$\phi^*\theta'_1 = \theta'_1 + 4k\frac{dU}{U}. $$ Par suite, si $k$ est non nul, on peut supposer que $\theta'_1 = \theta'$ (choisir $U = {\rm exp} \frac{-f}{4k}$). Apr\`es cette transformation on a: $$d\theta + \theta \wedge \left( (1+\lambda_1)\frac{dx_1}{x_1} + (1 + \lambda_2) \frac{dx_2}{x_2} + (1 + \lambda _3) \frac{d(x_2-x_1+x_3^k)}{x_2-x_1+x_3^k} \right) = d\theta + \theta \wedge \theta_1. $$ On consid\`ere alors l'application $\sigma$: $$\sigma = (x_1,x_2,e^{\frac{2\pi i}{k}} x_3). $$ On a $\sigma^*\theta_1 = \theta_1$; on introduit la moyenne $\overline{\theta}$ de $\theta$ sous l'action de $\sigma$:   $$ \overline{\theta} = \theta + \sigma^* \theta + \cdots + \sigma^{k-1 *}\theta$$ qui est bien s\^ur invariante sous l'action de $\sigma$; on a $\overline{\theta}\big|_{x_3=0} = k \theta_0$, ce qui indique que $\overline{\theta}$ est non triviale. L'invariance de $\overline{\theta}$ sous l'action de $\sigma$ dit que $\overline{\theta}$ est image r\'eciproque par l'application $\varphi = (x_1,x_2,x_3^k)$ d'une certaine $1-$forme $\alpha$ v\'erifiant $\alpha\big|_{x_3=0} = k\theta_0$ et: $$d\alpha + \alpha \wedge \left( (1+\lambda_1)\frac{dx_1}{x_1} + (1 + \lambda_2) \frac{dx_2}{x_2} + (1 + \lambda _3) \frac{d(x_2-x_1+x_3)}{x_2-x_1+x_3} \right) = d\alpha + \alpha \wedge \beta_1. $$ On a donc un nouveau d\'eploiement de $\theta_0$ mais avec $x_1x_2(x_2-x_1-x_3)=0$ comme hypersurface invariante.

Comme cette hypersurface est un croisement normal, lorsque l'on \'eclate l'origine par $E$, l'holonomie du diviseur exceptionnel $E^{-1}(0)$ pour ${\mathcal F}_{E^*\alpha}$ est ab\'elienne. En r\'esulte que l'holonomie du diviseur exceptionnel apr\`es un \'eclatement du feuilletage ${\mathcal F}_{\theta_0}$ est aussi ab\'elienne. Ce qui est exclus par la condition (v). Par suite $k=0$ et $\theta$ poss\`ede la surface invariante $x_1x_2(x_2-x_1)=0$. Il est facile de voir que $\theta = F^*\theta_0$ o\`u $F$ est une submersion {\em ad-hoc}. On conclut en invoquant le Lemme \ref{lemme8}. \end{proof}

\section{Perturbations de $1-$formes ferm\'ees: exemples}

On consid\`ere dans ce paragraphe des feuilletages ${\mathcal F}_\omega$, $\omega = \omega_3 + \cdots$ dont la partie de degr\'e deux $[{\mathcal F}_{\omega_3}]$ est d\'efini par une $1-$forme ferm\'ee rationnelle. Comme on l'a dit au paragraphe \ref{section6}, ${\mathcal F}_\omega$ est aussi d\'efini par une $1-$forme ferm\'ee. Le cas le plus simple est celui o\`u $[{\mathcal F}_{\omega_3}]$ est un pinceau g\'en\'erique de coniques. Un tel pinceau est \`a conjugaison lin\'eaire pr\`es donn\'e par les niveaux de ${\displaystyle \frac{Q_1}{Q_2} = \frac{(x_0-x_1)(x_0+x_1)}{(x_0-x_2)(x_0+x_2)}}$. Le feuilletage associ\'e est donn\'e en coordonn\'ees homog\`enes par $$\omega_3 = Q_1dQ_2 - Q_2dQ_1. $$ Vu dans ${\mathbb P}^2_{\mathbb C}$ le feuilletage $[{\mathcal F}]_{\omega_3}$ a sept singularit\'es, dont quatre sont de type radial ($(1:1:1)$, $(1:1:-1)$, $(1:-1:1)$, $(-1:1:1)$) et trois sont des centres ($(1:0:0)$, $(0:1:0)$, $(0:0:1)$). Vu dans ${\mathbb C}^3$ le feuilletage ${\mathcal F}_{\omega_3}$ a donc sept lignes de singularit\'es dont quatre sont de type Kupka ($Q_1=Q_2=0$), et donc {\em persistantes} par perturbation. Soit $L = a_0x_0 + a_1x_1 + a_2x_2$, $a_i \in {\mathbb C}$, une forme lin\'eaire; introduisons la $1-$forme int\'egrable $\omega_L \in \Omega^1({\mathbb C}^3,0)$ $$\omega_L := Q_1dQ_2 - Q_2dQ_1 + Q_1Q_2dL. $$ On remarque de $\omega_L$ s'annule sur les quatre droites $Q_1=Q_2=0$ et que ${\mathcal F}_{\omega_L}$ a l'int\'egrale premi\`ere m\'e\-ro\-mor\-phe ${\displaystyle e^{-L}\frac{Q_1}{Q_2}}$. Consid\'erons l'\'eclatement $E: \widetilde{{\mathbb C}}^3 \rightarrow {\mathbb C}$ de l'origine; dans la premi\`ere carte $(x_0,t_1,t_2)$ o\`u $E = (x_0,t_1x_0,t_2x_0)$, on a: $$E^*\frac{\omega}{Q_1Q_2} = \frac{-2t_1 dt_1}{1-t_1^2} + \frac{2dt_2}{1-t_2^2} + (a_0 + a_1t_1 + a_2t_2)dx_0 + x_0(a_1dt_1 + a_2dt_2).$$ En particulier si $a_0\neq 0$, la singularit\'e initiale de $[{\mathcal F}_{\omega_3}]$ en $(1:0:0)$ a disparu et devient une tangence entre le feuilletage \'eclat\'e $E^{-1}({\mathcal F}_{\omega_L})$ et le diviseur exceptionnel $E^{-1}(0)$. Si $a_0a_1a_2 \neq 0$, les singularit\'es de $E^{-1}({\mathcal F}_{\omega_L})$ sont pr\'ecis\'ement sur les transform\'es strictes de $Q_1=Q_2=0$. Le long de ces quatre droites $D_1$, $D_2$, $D_3$, $D_4$ le feuilletage $E^{-1}({\mathcal F}_{\omega_L})$ est de type Kupka-radial. De sorte que si l'on \'eclate l'origine puis ces quatre droites par l'application $\sigma: M = \widetilde{{\mathbb C}}^3_{D_1,D_2,D_3,D_4} \rightarrow {\mathbb C}^3$, le feuilletage transform\'e $\sigma^{-1}({\mathcal F})$ est maintenant sans singularit\'es. On obtient ainsi en dimension trois un exemple de feuilletage {\em absolument dicritique} au sens de Cano-Corral \cite{Can-Cor}. On peut d'ailleurs g\'en\'eraliser cet exemple en prenant pour les $Q_i$ des polyn\^omes de degr\'e $k\geq2$ g\'en\'eriques.

Remarquons qu'en annulant certains $a_i$ on peut obtenir pour ${\mathcal F}_{\omega_L}$ quatre, cinq, six ou sept droites singuli\`eres. Ainsi sur cet exemple on peut \'eliminer de fa\c cons ind\'ependentes les droites singuli\`eres venant d'une configuration centrale.

En choisissant des entiers non nuls $k_0$, $k_1$, $k_2$ on peut aussi consid\'erer les feuilletages donn\'es par $\omega = Q_1dQ_2 - Q_2dQ_1 + Q_1Q_2 df $ avec $f = a_0x_0^{k_0} + a_1x_1^{k_1} + a_2x_2^{k_2}$. Par exemple dans $E^{-1}({\mathbb C}^3) = \widetilde{{\mathbb C}}^3$ le feuilletage relev\'e a au point $(1:0:0)$ une singularit\'e avec int\'egrale premi\`ere du type $xy + a_0z^{k_1}$. On en d\'eduit une infinit\'e non d\'enombrable de d\'eformations de ${\mathcal F}_{\omega_3}$ non conjugu\'ees.

On peut proc\'eder de m\^eme dans le cas de formes logarithmiques de type $(1,1,1,1)$ suivant la notation de \cite{Cer-Alc}. Un feuilletage g\'en\'erique de ce type est \`a conjugaison pr\`es de la forme: $$\omega_3 = x_0x_1x_2(x_0+x_1+x_2)\left( \lambda_0 \frac{dx_0}{x_0} + \lambda_1 \frac{dx_1}{x_1} + \lambda_2 \frac{dx_2}{x_2} - \frac{d(x_0+x_1+x_2)}{x_0+x_2+x_2} \right) $$ avec $\lambda_0 + \lambda_1 + \lambda_2 = 1$. Les points singuliers dans ${\mathbb P}^2_{\mathbb C}$ sont au nombre de sept; parmi eux six correspondent aux intersections des quatre droites invariantes. Ce sont des singularit\'es logarithmiques g\'en\'eriques pour des valeurs g\'en\'eriques des param\`etres $\lambda_k$. La septi\`eme singularit\'e est de type centre et se trouve au point $(\lambda_0:\lambda_1:\lambda_2)$. Vu dans ${\mathbb C}^3$ le feuilletage ${\mathcal F}_{\omega_3}$ a donc sept droites singuli\`eres. On peut comme pr\'ec\'edemment introduire $$\omega_L = \omega_3 + x_0x_1x_2 (x_0+x_1+x_2)dx_0. $$ Le feuilletage ${\mathcal F}_{\omega_L}$ a seulement six droites singuli\`eres. On peut aussi consid\'erer les $1-$formes \break $\omega_3 + x_0x_1x_2(x_0+x_1+x_2)df_k$ avec $f_k = x_0^k$ produisant une famille d\'enombrable de feuilletages avec m\^eme partie homog\`ene de degr\'e $3$ qui ne sont pas \'equivalents.

Terminons ce paragraphe par le cas exceptionnel. On consid\`ere sur ${\mathbb P}^3_{\mathbb C}$ le feuilletage ${\mathcal F}_\Gamma$ dont les feuilles sont les niveaux de la fonction rationnelle ${\displaystyle \frac{F^2}{G^3}}$ o\`u $F$ et $G$ sont donn\'es en coordonn\'ees homog\`enes par: ${\displaystyle F = x_3x_4^2 - x_1x_2x_4 + \frac{x_1^3}{3}}$, ${\displaystyle G = x_2x_4 - \frac{x_1^2}{2}}$. Il s'agit d'un feuilletage de degr\'e $2$, donn\'e par ${\displaystyle \Omega_3 = \frac{1}{x_4}(2GdF - 3FdG)}$, appel\'e {\em feuilletage exceptionnel}; il est exceptionnel \`a plusieurs titres. L'orbite de ${\mathcal F}_\Gamma$ sous l'action du groupe d'automorphismes Aut$({\mathbb P}^3_{\mathbb C})$, ou plut\^ot sa fermeture, est une composante irr\'eductible de l'espace des feuilletages de degr\'e $2$ sur ${\mathbb P}^3_{\mathbb C}$ \cite{Cer-Alc}. D'autre part ses feuilles sont des surfaces de degr\'e $6$, ce qui en un certain sens est un {\em degr\'e anormal} pour un feuilletage de degr\'e $2$. Son lieu singulier $\Gamma$ est constitu\'e d'une droite, d'une conique et d'une cubique gauche tangentes en un point $q$ \cite{Cer-Alc}. Soit $H \simeq {\mathbb P}^2_{\mathbb C}$ un hyperplan g\'en\'erique, i.e. transverse \`a $\Gamma$. La restriction de ${\mathcal F}_\Gamma$ \`a $H$ est un feuilletage de degr\'e $2$, qui a donc sept points singuliers compt\'es avec multiplicit\'e. Comme $\Gamma - \{q\}$ est constitu\'e des lignes de Kupka et $H$ est transverse \`a $\Gamma$, les six points de $H \cap \Gamma$ sont des points singuliers de ${\mathcal F}_\Gamma \big|_H$ qui ont pour nombre de Milnor $1$. Il y a donc un autre point $p \in H$ singulier pour ${\mathcal F}_\Gamma\big|_H$ avec nombre de Milnor \'egal a $1$. Comme $p$ n'appartient pas \`a $\Gamma$, le feuilletage ${\mathcal F}_\Gamma$ a une int\'egrale premi\`ere local $\varphi$ submersive en $p$, et la restriction $\varphi\big|_H$ est n\'ec\'essairement de Morse en $p$. Ainsi $p$ est un centre de ${\mathcal F}_\Gamma\big|_H$. Supposons $H$ donn\'e par $x_4 = ax_1 + bx_2 + cx_3 = L$, avec $abc \neq 0$, et notons $f$ et $g$ les restrictions de $F$ et $G$ \`a $H$: $f = F(x_1,x_2,x_3,L)$, $g = G(x_1,x_2,x_3,L)$. Le feuilletage ${\mathcal F}_\Gamma\big|_H$ est d\'efini par la forme: $$ \omega_3^H = \omega_3 = \frac{1}{L}(2gdf - 3fdg) \in \Omega^1({\mathbb C}^3,0).$$ Le feuilletage ${\mathcal F}_{\omega_3}$ a, pour $L$ g\'en\'erique, sept droites singuli\`eres dont six sont de type Kupka et la septi\`eme de type centre, en dehors de $0$.

On consid\`ere maintenant la perturbation $\omega = \omega_3 + \cdots$, de $3-$jet $\omega_3$, d\'efini par: $$\omega = \omega_3 + 2fgdL = \frac{1}{L} (2gdf - 3fdg + fgdL^2). $$ Cette $1-$forme est int\'egrable puisque: $$ \omega = \frac{fg}{L}\left(2 \frac{df}{f} - 3 \frac{dg}{g} + dL^2\right).$$ Notons que pour $H$ g\'en\'erique, le point $p \in H$ est en dehors des z\'eros de $F$ et $G$ et donc de $[f^{-1}(0)]$ et $[g^{-1}(0)]$. Si l'on proc\`ede \`a l'\'eclatement de $\omega$ par $E = (y_1,y_1y_2,y_1y_3)$ on obtient:
$$E^*\omega = \frac{y_1^5 \tilde{f} \tilde{g}}{y_1(a+by_2+cy_3)} \left(2\frac{d\tilde{f}}{\tilde{f}} - 3\frac{d\tilde{g}}{\tilde{g}} + dy_1^2(a+by_2+cy_3)^2 \right) $$ avec $f \circ E = y_1^3\tilde{f}$, $g \circ E = y_1^2 \tilde{g}$. On note que $\tilde{f}$ et $\tilde{g}$ sont des unit\'es en $p$ et comme ${\mathcal F}_\Gamma\big|_H$ a un centre en $p$, le $1$-jet de ${\displaystyle 2\frac{d\tilde{f}}{\tilde{f}} - 3\frac{d\tilde{g}}{\tilde{g}}}$ en $p$ est de type $udv + vdu$ pour un bon choix de coordonn\'ees $u,v$. Par suite le feuilletage $E^{-1}{\mathcal F}_\omega$ a en $p$ une singularit\'e isol\'ee; en r\'esulte que l'ensemble singulier de ${\mathcal F}_\omega$ est constitu\'e de six courbes lisses et non sept comme ${\mathcal F}_{\omega_3}$. Par suite ${\mathcal F}_\omega$ et ${\mathcal F}_{\omega_3}$ ne sont pas conjugu\'es.

On peut aussi consid\'erer les feuilletages ${\mathcal F}_{\omega^k}$ associ\'es aux $1-$formes: $$\omega^k = \omega_3 + fgkL^{k-1}dL,\quad k \in {\mathbb N}. $$ En \'etudiant la singularit\'e $E^{-1}{\mathcal F}_{\omega^k,p}$ on constate que les feuilletages ${\mathcal F}_{\omega^k}$ ne sont pas holomorphiquement conjugu\'es. Ceci indique en particulier que le feuilletage ${\mathcal F}_{\omega_3}$ n'est pas de d\'etermination finie; il est probable par contre que les ${\mathcal F}_{\omega^k}$ pour $k \geq1$ le soient.

\section{Conclusion}

Rappelons que la classification des feuilletages de degr\'e deux de ${\mathbb P}^2_{\mathbb C}$ n'ayant qu'une singularit\'e (donc \`a nombre de Milnor $7$) est faite dans \cite{Cer-Des}; tous sont d\'efinis par une $1-$forme ferm\'ee m\'eromorphe sauf un mod\`ele ayant une singularit\'e noeud-col, donc satisfaisant la condition de Kupka. Tous les r\'esultats qui pr\'ec\`edent sont r\'esum\'ees dans le Th\'eor\`eme \ref{main}, pr\'esent\'e dans l'introduction. On obtient en particulier la classification des feuilletages ${\mathcal F}_\omega$ sur ${\mathbb C}^3,0$ \`a partie initiale ${\rm In}(\omega)$ dicritique de degr\'e $3$, ${\rm Cod\ Sing\ In}(\omega)\geq2$ (modulo la condition de non r\'esonnance locale $b\notin {\mathbb Q}$).

Voici maintenant une liste de probl\`emes ouverts.

\begin{enumerate}[$1-$]

\item Traiter le cas r\'esonnant d'une singularit\'e de type $\Omega_2$ avec $b \in {\mathbb Q}$.

\item Soit $\omega_\nu$ une $1-$forme int\'egrable homog\`ene de degr\'e $\nu$, ${\rm Cod\ Sing}\ \omega_\nu \geq 2$; on consid\`ere l'ensemble $${\mathcal F}{\rm IN}(\omega_\nu):= \{{\rm feuilletages}\ {\mathcal F} = {\mathcal F}_\omega\ {\rm avec\ In}(\omega) = \omega_\nu\}.$$ Sur ${\mathcal F}{\rm IN}(\omega_\nu)$, il y a la relation d'\'equivalence de conjugaison holomorphe: ${\mathcal F}_\omega \sim {\mathcal F}_{\omega'}$ si et seulement si il existe un diff\'eomorphisme $\phi$ tangent \`a l'identit\'e tel que $\phi^*{\mathcal F}_\omega = {\mathcal F}_{\omega'}$. D\'ecrire l'espace de module ${\mathcal M}(\omega_\nu) = {\mathcal F}{\rm IN}(\omega_\nu)/\sim$; par exemple si $\omega_\nu$ satisfait le point 3 du Th\'eor\`eme \ref{main}, alors ${\mathcal M}(\omega_\nu)$ se r\'eduit \`a un point. Par contre nous venons de voir des exemples o\`u ${\mathcal M}(\omega_\nu)$ est infini ($\omega_\nu = Q_1dQ_2 - Q_2dQ_1$ comme ci-dessus).

\item G\'en\'eraliser les \'enonc\'es du Th\'eor\`eme \ref{main} aux dimensions sup\'erieures \`a $3$. La classification de \cite{Cer-Alc} permet de se ramener au cas o\`u la partie initiale ne d\'epend que de $3$ variables.

\item Donner une d\'emonstration g\'eom\'etrique, i.e. non calculatoire, du Th\'eor\`eme de Dulac.

\end{enumerate}


\end{document}